\documentclass[a4paper,12pt]{amsart}
\usepackage[italian,english]{babel}

\usepackage{geometry}
 \geometry{
 a4paper,
 total={170mm,257mm},
 left=20mm,
 top=30mm,
 bottom= 30mm
 }
\linespread{1.2}
\usepackage{enumitem}
\usepackage{mathrsfs}
\usepackage{comment}
\usepackage[colorlinks, linkcolor=blue,anchorcolor=Periwinkle,
citecolor=blue,urlcolor=Emerald]{hyperref}
\usepackage{leftidx}

\usepackage{amsmath}
\usepackage{amsthm}
\usepackage[T1]{fontenc}
\usepackage[utf8]{inputenc}
\usepackage{amssymb}
\usepackage{csquotes}

\usepackage{mathrsfs}

\usepackage{graphicx}
\usepackage{tikz-cd}
\tikzset{dynkdot/.style={circle,draw,scale=.38}}

\newtheorem{prop}{Proposition}[section]
\newtheorem{lem}[prop]{Lemma}
\newtheorem{teo}[prop]{Theorem}
\newtheorem{cor}[prop]{Corollary}

\theoremstyle{definition}
\newtheorem{de}[prop]{Definition}

\newtheorem{rem}[prop]{Remark}
\newtheorem{ex}[prop]{Example}

\newtheorem{con}[prop]{Construction}

{\left\lbrace\begin{array}{@{}l@{}}}%
{\end{array}\right.}

\title{A quantum cluster algebra structure on the semi-derived Hall algebra}

\subjclass{Primary: 13F60; secondary: 17B37.}
\keywords{
Quantum cluster algebras; semi-derived Hall algebras; braid group actions.
}

\author{Alessandro Contu}

\address{
Université Paris Cité, Sorbonne Université,
CNRS,
Institut de Mathématiques de Jussieu- Paris Rive Gauche,
F-75013 Paris, France
}
\email{alessandro.contu@imj-prg.fr}

\setcounter{tocdepth}{2}

\usepackage{comment}

\begin{document}

\maketitle

\begin{abstract}
Using Hernandez--Leclerc's isomorphism between the derived Hall algebra of a representation-finite quiver $Q$ and the quantum Grothendieck ring of the quantum loop algebra of the Dynkin type of $Q$, we lift the (quantum) cluster algebra structure of the quantum Grothendieck ring to the semi-derived Hall algebra, introduced by Gorsky, of the category of bounded complexes of projective modules over the path algebra of $Q$. We also construct a braid group action on the semi-derived Hall algebra, lifting Kashiwara--Kim--Oh--Park's braid group action on the quantum Grothendieck ring.
\end{abstract}

\section{introduction}

\subsection{Semi-derived Hall algebras}
Let $\mathcal{A}$ be an abelian \emph{finitary} category, that is, a category  whose morphism spaces and $\mathrm{Ext}^1$-spaces have finite cardinality. To the category $\mathcal{A}$, Ringel \cite{Ringel_Hall_algebras_1990} associated the Hall algebra $\mathcal{H}(\mathcal{A})$, defined as the $\mathbb{Q}$-vector space with basis the isomorphism classes of objects of $\mathcal{A}$ and with multiplication:
\[ [A]*[B] = \sum_{[C]} g_{A,B}^C [C],\]
where the coefficients $g_{A,B}^C$ essentially count the extensions of $B$ by $A$ with middle term $C$ (see \ref{de_Hall_algebra}
for the precise definition). Ringel's definition was extended to exact categories by Hubery \cite{Hubery2006}. Moreover, for suitable dg categories, Toën constructed an analogous algebra, called the \emph{derived Hall algebra} \cite{Toen2006}. His construction was later extended to suitable triangulated categories by Xiao and Xu~\cite{Xiao-Xu2008}. Notice that the definitions of (derived) Hall algebras that we can find in the literature often differ by a twist of the multiplication, a rescaling of the generators and/or the passage to the opposite algebra (cf. section ~\ref{sec_Hall}). In the following, we write $\mathcal{H}_{tw}$ and $\mathcal{DH}_{tw}$ to highlight the presence of one of these twists.

One of the reasons for the interest in Hall algebras is their close link with the theory of quantum groups first established by Ringel~\cite{Ringel_Hall_algebra_quant_group_1990}. Let $\mathfrak{g}$ be a simply laced finite-dimensional simple Lie algebra of Dynkin type $\Delta$, let $Q$ be a quiver with underlying diagram $\Delta$ and let $\Bbbk$ be a finite field with $q$ elements. Denote the square root of $q$ by $v$. Write $\mathcal{H}_{tw}(Q)$ for the (twisted) Hall algebra of the category $\mathrm{mod}(\Bbbk Q)$ of finite-dimensional modules over the path algebra $\Bbbk Q$. 
Let $U_v(\mathfrak{n})$ be the \emph{positive part} of the quantum group $U_v(\mathfrak{g})$. Ringel realized $U_v(\mathfrak{n})$ as an Hall algebra, that is, he provided an isomorphism 
\[ U_q(\mathfrak{n}) \xrightarrow{\sim} \mathcal{H}_{tw}(Q)\]
sending the generator $E_i$ to the class $[S_i]$ of the simple module of at the vertex~$i$. 
Toën's construction of the derived Hall algebra was partially motivated by the problem of realizing the whole quantum group $U_v(\mathfrak{g})$ as a Hall algebra. This was not achieved, but Hernandez-Leclerc \cite{HL_quantum_Groth_rings_derived_Hall} proved that there is an isomorphism
\begin{equation}
\label{intro_eq_HL_iso}
\Phi:\mathcal{K}_v(\mathscr{C}_\mathfrak{g}^{\mathbb{Z}})\xrightarrow{\sim} \mathcal{DH}_{tw}(Q),
\end{equation}
where $\mathcal{K}_v(\mathscr{C}_\mathfrak{g}^{\mathbb{Z}})$ is the specialization at $t=v$ of the quantum Grothendieck ring of the category $\mathscr{C}_{\mathfrak{g}}^{\mathbb{Z}}$ (a full subcategory of the category of finite-dimensional modules of type 1 of the quantum loop algebra of $\mathfrak{g}$) and $\mathcal{DH}_{tw}(Q)$ is the (twisted) derived Hall algebra of the derived category $D^b(Q)$  of $\mathrm{mod}(\Bbbk Q)$.
Let us discuss the isomorphism $\Phi$ in more detail. By the theory of $q$-characters \cite{FrenkelReshetikhin98, Frenkel_Mukhin_combinatorics_q_characters}, the simple objects of the category $\mathscr{C}_\mathfrak{g}^{\mathbb{Z}}$ are parametrized by certain monic monomials in the variables $(Y_{(i,p)})_{i\in I_\mathfrak{g}, p\in \mathbb{Z}}$, called \emph{dominant monomials}. For any dominant monomial $m$, let $L(m)$ be the associated simple module. It is the head of the \emph{standard module} $M(m)$. In the quantum Grothendieck ring of $\mathscr{C}_\mathfrak{g}^{\mathbb{Z}}$, we have analogues $L_t(m)$ and $M_t(m)$ of the isomorphism classes of these modules. They are called, respectively, the \emph{$(q,t)$-characters} of the simple module $L(m)$ and of the standard module $M(m)$. They satisfy a trianagular system of equations (\cite[Thm.~8.1]{Nakajima2004})
\[
L_t(m)=M_t(m)+\sum_{m<m'}a_{m,m'}M_t(m'),
\]
where the coefficients $a_{m,m'}$ are in $t\mathbb{Z}[t]$ and the relation $<$ is Nakajima's order on dominant monomials, cf subsection \ref{subsec_categoryC0}.  Both the $(q,t)$-characters of the simple modules and of the standard modules form bases of the quantum Grothendieck ring.
On the other hand, the twisted derived Hall algebra $\mathcal{DH}_{tw}(Q)$ is generated by the isomorphism classes  $Z_{M,i}$ of the shifted modules $M[i]$, $M\in \mathrm{mod}(\Bbbk Q),\ i\in \mathbb{Z}$. For an object $V$ of $D^b(Q)$ with a decomposition $V=\bigoplus_{k} M_k[i_k]$ into a direct sum of shifted modules $M_k[i_k]$, let $Z_V$ be the element 
\[Z_V =  \prod^\curvearrowright_k Z_{M_k,i_k},\]
where the factors are ordered so that 
\[\mathrm{Ext}^1_{\mathcal{D}^b(Q)}(M_{k'}[i_{k'}],M_k[i_k])=0\]
if $Z_{M_k,i_k}$ precedes $Z_{M_{k'},i_{k'}}$. Then the set of the elements $Z_V$, where $V$ runs through the isoclasses of $D^b(Q)$, is a basis of the twisted derived Hall algebra. By a fundamental result of Happel \cite{Happel_articolo_1987}, the dominant monomials of the form $Y_{i,p}$ correspond bijectively to the indecomposable objects $V(i,p)$ of the derived category $D^b(Q)$. Then, the isomorphism $\Phi$ has the following properties 
\begin{itemize}
\item[$(i)$] for any dominant monomial $Y_{i,p}$, the $(q,t)$-character of the simple module $L(Y_{i,p})$ is mapped by $\Phi$ to a scalar multiple of $Z_{V(i,p)}$;
\item[$(ii)$] the basis of $(q,t)$-characters of standard modules is mapped by $\Phi$ to a rescaling of the tautological basis of $\mathcal{DH}_{tw}(Q)$ given by the isoclasses of all objects of $D^b(Q)$.
\end{itemize}

The Hall algebra interpretation of the whole quantum group $U_v(\mathfrak{g})$ was eventually achieved by Bridgeland \cite{Bridgeland2013}. Let 
$C_{\mathbb{Z}/2}(\mathcal{P})$ be the category of $2$-periodic complexes of projective $\Bbbk Q$-modules

\[
\begin{tikzcd}
P_0 \arrow[r, "d_0", shift left=2]  & P_1 \arrow[l, swap, "d_1"', shift left=2]
\end{tikzcd}, \ \ \ d_0\circ d_1 = 0, d_1\circ d_0 = 0.
\]

For any indecomposable object $M$ of $\mathrm{mod}(\Bbbk Q)$, fix a minimal projective resolution
\[ 0 \rightarrow P_M \xrightarrow{f_M} Q_M \rightarrow M.\]
The indecomposable objects of the category $C_{\mathbb{Z}/2}(\mathcal{P})$ are of the following forms
\begin{align}
C_M: \begin{tikzcd}[ampersand replacement=\&]
P_M \arrow[r, "f_M", shift left=2]  \& Q_M \arrow[l, swap, "0"', shift left=2],
\end{tikzcd}
 & C_M^*: \begin{tikzcd}[ampersand replacement=\&]
Q_M \arrow[r, "0", shift left=2]  \& P_M \arrow[l, swap, "f_M"', shift left=2]\end{tikzcd},\\
K_P: \begin{tikzcd}[ampersand replacement=\&]
P \arrow[r, "\mathrm{Id}", shift left=2]  \& P \arrow[l, swap, "0"', shift left=2],
\end{tikzcd} & K_P^*: \begin{tikzcd}[ampersand replacement=\&]
P \arrow[r, "0", shift left=2]  \& P \arrow[l, swap, "\mathrm{Id}"', shift left=2], \end{tikzcd}
\end{align}
where $M\in \mathrm{mod}(\Bbbk Q)$ and $P\in \mathcal{P}$ are indecomposable. 
Bridgeland \cite{Bridgeland2013}
considered the localization of the (twisted) Hall algebra $\mathcal{H}_{tw}(C_{\mathbb{Z}/2}(\mathcal{P}))$ at the classes of contractible complexes and took its quotient by the 2-sided ideal generated by the differences \[ [K_P]-[K_P^*],\]
where $P$ is indecomposable projective. Denote the resulting algebra by $\overline{\mathcal{SDH}}_{tw}(C_{\mathbb{Z}/2}(\mathcal{P}))$.
He proved \cite{Bridgeland2013} that there is an isomorphism 

\[ U_v(\mathfrak{g}) \xrightarrow{\sim} \overline{\mathcal{SDH}}_{tw}(C_{\mathbb{Z}/2}(\mathcal{P})),\]
given, up to scalar factors, by the assignments
\[ E_i \mapsto [C_{S_i}][K_{P_{S_i}}]^{-1} ;\ \ F_i \mapsto [C^*_{S_i}][K^*_{P_{S_i}}]^{-1} ; \ \ K_i \mapsto [K_{Q_{S_i}}][K^*_{P_{S_i}}]^{-1}.\]

Inspired by Bridgeland's work, Gorsky \cite{Gorsky2013semiderived, Gorsky2018} defined the semi-derived Hall algebra $\mathcal{SDH(\mathcal{F}})$ of a Frobenius\footnote{A \emph{Frobenius category} is an exact category with enough projectives and enough injectives and where the classes of projectives and of injectives coincide.} category $\mathcal{F}$ as the localization of the Hall algebra $\mathcal{H}(\mathcal{F})$ at the classes of the projective-injective objects. Let $\underline{\mathcal{F}}$ be the stable category of $\mathcal{F}$. Gorsky \cite[Thm.~4.2]{Gorsky2018} realised the derived Hall algebra of $\underline{\mathcal{E}}$ as a quotient of the semi-derived Hall algebra $\mathcal{SDH(\mathcal{F}})$ (more precisely, of a twisted version of it), via an isomorphism
\[\mathcal{SDH(F)}/I(\mathcal{P(F)})\cong \mathcal{DH(\underline{F})},\]
where $I(\mathcal{P(F)})$ is the two-sided ideal of $\mathcal{SDH(F)}$ generated by the differences $[P]-1$, where $P$ is projective in $\mathcal{F}$.

As an example of Gorsky's theorem, consider the category $C^b(\mathcal{P})$ of bounded complexes of projective $\Bbbk Q$-modules. It is a Frobenius category whose projective-injective objects are exactly the contractible complexes. The associated stable category is the homotopy category of bounded complexes of projectives $K^b(\mathcal{P})$, canonically equivalent to the bounded derived category $D^b(Q)$. Following \cite{Gorsky2013semiderived, Zhang2022}, a system of generators of the semiderived Hall algebra $\mathcal{SDH}(C^b(\mathcal{P}))$ is given by certain elements $E_{M,i} \text{ and }\ K_{\alpha,i}$, for $i\in \mathbb{Z}, M \in \mathrm{mod}(\Bbbk Q), \alpha\in \mathcal{K}(\mathrm{mod}(\Bbbk Q))$. These elements are analogues of those defined by Bridgeland in the 2-periodic case. In particular, the elements $K_{\alpha,i}$ are Laurent monomials in the representatives of certain contractible complexes. Therefore, by Gorsky's theorem, we have a surjective map \footnote{it is a morphism of modules over the subalgebra generated iver $Q$ by the isomorphism classes of contractibles.}
\begin{equation}
\label{intro_eq_surjetive_misha_C^b(P)}
\mathcal{SDH}(C^b(\mathcal{P})) \twoheadrightarrow \mathcal{DH}(Q),\  
E_{M,i} \mapsto Z_{M,i},\
K_{\alpha,i} \mapsto 1.
\end{equation}
Let us recall that we have the isomorphism $
\Phi:\mathcal{K}_v(\mathscr{C}_\mathfrak{g}^{\mathbb{Z}})\xrightarrow{\sim} \mathcal{DH}_{tw}(Q)$ of  (\ref{intro_eq_HL_iso}). Moreover, thanks to the quantized version proved by Fujita--Hernandez--Oya--Oh \cite{HFOO_iso_quant_groth_ring_clust_alg} of a result of Kashiwara--Kim--Oh--Park \cite{KKOP_mon_cat_quant_aff_II}, the quantum Grothendieck ring $\mathcal{K}_v(\mathscr{C}_\mathfrak{g}^{\mathbb{Z}})$ carries a quantum cluster algebra structure, which yields an isomorphic structure on the twisted derived Hall algebra. It is now  a natural question to ask whether and how we can lift this quantum cluster algebra structure from the derived to the semi-derived twisted Hall algebra along the map (\ref{intro_eq_surjetive_misha_C^b(P)}). This article is dedicated to providing such a lift. 

\subsection{A quantum cluster algebra structure on the semi-derived Hall algebra}
To start with, in section \ref{sec_semiderived_Hall_complexes}, we apply a twist to the product of the semi-derived Hall algebra $\mathcal{SDH}(C^b(\mathcal{P}))$, in such a way that, using Gorsky's theorem we obtain a surjective algebra morphism 
\begin{equation}
\label{intro_eq_surjetive_misha_C^b(P)_twisted}
\mathcal{SDH}_{tw}(C^b(\mathcal{P})) \twoheadrightarrow \mathcal{DH}_{tw}(Q),\  
E_{M,i} \mapsto Z_{M,i},\
K_{\alpha,i} \mapsto 1.
\end{equation}
Next, we want to lift the quantum cluster variables of the quantum Grothendieck ring, which are $(q,t)$-characters of simple modules, to the semi-derived Hall algebra. Via Happel's theorem, we associate an object $V(m)$ of $D^b(Q)$ to each dominant monomial $m$. Then, the image of the $(q,t)$-character of the simple module $L(m)$ under the isomorphism (\ref{intro_eq_HL_iso}) is of the form 
\begin{equation}
\label{intro_formula_image_qt_simple}
\Phi (L_t(m))= a(m) Z_{V(m)} +\sum_{m'<m} a(m,m') Z_{V(m')},
\end{equation}
where $a(m)$ and $a(m,m')$ are certain coefficients in $\mathbb{Q}(v^{1/2})$.
The twisted derived Hall algebra is graded by the Grothendieck group of the derived category $D^b(Q)$, while the twisted semiderived Hall algebra is graded by the Grothendieck group of the category of bounded complexes of projectives $C^b(\mathcal{P})$. We show (cf. Remark \ref{order_comparison}) that the elements $\Phi (L_t(m))$ are homogeneous with respect to the grading by $\mathcal{K}(D^b(Q))$. Therefore, it is natural to try to lift these elements to homogeneous elements of the twisted semi-derived Hall algebra. To start with, for any object $V$ of $D^b(Q)$, similarly to the definition of $Z_V$ above, we define elements $E_{V}$ and $K_V$ of the semi-derived Hall algebra in such a way that $\pi^\mathcal{H}(E_{V})=Z_{V}$ and $K_V$ is in the subgroup of the invertible elements of $\mathcal{SDH}_{tw}(C^b(\mathcal{P}))$ generated by the representatives of the contractible complexes, denoted by $\mathcal{SDH}_{tw}(C^b(\mathcal{P}))^\times_K$. 

Then we define the \emph{semi-derived $(q,t)$-character} of the simple module associated to the dominant monomial $m$ as

\begin{equation}
\label{intro_eq_semiderived_Lv(m)}
    \mathcal{L}_v(m)= a(m) E_{V(m)} +\sum_{m'<m,\ a(m,m')\neq 0} a(m,m') E_{V(m')}K(m,m'),
\end{equation}
where $a(m)$ and $a(m,m')$ are the coefficients defined in  (\ref{intro_formula_image_qt_simple}) and, for any $m\leq m'$, $K(m,m')$ is the only element of $\mathcal{SDH}_{tw}(C^b(\mathcal{P}))_K^\times$ such that $\mathrm{deg}(E_{V(m)})=\mathrm{deg}(E_{V(m')}K(m,m'))$. 

As a next step, we lift the exchange relations of the quantum Grothendieck ring to the semi-derived Hall algebra. These exchange relations are all of the form: 

\begin{equation}
\label{intro_eq_exchange_relation_qt_characters}
L_t(m_k)L_t(m_k') = a(t)\overrightarrow{\prod_{i\rightarrow k}} L_t(m_i) +   b(t)\overrightarrow{\prod_{j\leftarrow k}} L_t(m_j), \end{equation} 
where the coefficients $a(t),b(t)$ belong to $\mathbb{Z}[t^{\pm 1/2}]$, $( L_t(m_k),L_t(m_k'))$ is the exchange pair and, for each arrow $i\rightarrow k$ (resp. $j \leftarrow k$), $L_t(m_i)$ (resp. $L_t(m_j)$) is the cluster variable associated to the vertex $i$ (resp. j). Moreover, one of the following holds:
\[ \begin{cases}
    m_km_k'= \prod_{i\rightarrow k} m_i; \\
    mm'\geq \prod_{j\leftarrow k} m_j;
    \end{cases}
    \text{  or  }\ \begin{cases} m_km_k'= \prod_{j\leftarrow k} m_j;\\
    m_km_k'\geq \prod_{i\rightarrow k} m_i.
    \end{cases}\]
We prove in Proposition \ref{prop_semiderived_exchange_rel} that in the semi-derived Hall algebra, we have the following possible lifts of (\ref{intro_eq_exchange_relation_qt_characters}) :
\begin{itemize}
    \item   If $m_km_k'= \prod_{i\rightarrow k} m_i$, then we have 

\begin{equation}
\label{intro_eq_semiderived_exchange_relation}
    \mathcal{L}_v(m_k)\mathcal{L}_v(m_k') = a(v)\overrightarrow{\prod_{i\rightarrow k}} \mathcal{L}_v(m_i) +   b(v)K(m_km_k', \prod_{j\leftarrow k} m_j)\overrightarrow{\prod_{j\leftarrow k}} \mathcal{L}_v(m_j). 
\end{equation} 

\item If $m_km_k'= \prod_{k\rightarrow i} m_i$, then we have

\begin{equation}
\label{intro_eq_semiderived_exchange_relation2}
\mathcal{L}_v(m)\mathcal{L}_v(m') = a(v)K(mm', \prod_{i\rightarrow k} m_i)\overrightarrow{\prod_{i\rightarrow k}} \mathcal{L}_v(m_i)+   b(v)\overrightarrow{\prod_{j\leftarrow k}} \mathcal{L}_v(m_j). 
\end{equation} 
\end{itemize}
In order to study the cluster algebra structure on the Grothendieck rings of certain subcategories $\mathscr{C}_\mathfrak{g}^{[a,b],\mathfrak{s}}$ of the category $\mathscr{C}_\mathfrak{g}^\mathbb{Z}$, where $[a,b]$ is an integer interval, Kashiwara--Kim--Oh--Park introduced the combinatorics of \emph{chains of $i$-boxes}. For each chain of $i$-boxes $\mathfrak{C}=(c)_k$ associated to the interval $[a,b]$, they proved the existence of a matrix $B(\mathfrak{C})$ and of an isomoprhism $\varphi(\mathfrak{C}):\mathcal{A}(Q(\mathfrak{C}))\xrightarrow{\sim} \mathcal{K}(\mathscr{C}_\mathfrak{g}^{[a,b],\mathfrak{s}})$, where $\mathcal{A}(Q(\mathfrak{C}))$ is the cluster algebra associated to the matrix $B(\mathfrak{C})$. We have that the category $\mathscr{C}_\mathfrak{g}^{[-\infty,\infty],\mathfrak{s}}$ coincides with $\mathscr{C}_\mathfrak{g}^\mathbb{Z}$.
Therefore, for each chain of $i$-boxes $\mathfrak{C}$ of range $[-\infty,\infty]$, we are searching for a quiver $\widetilde{Q}(\mathfrak{C})$ and a $\Lambda$-matrix $\widetilde{\Lambda}(\mathfrak{C})$ such that the associated quantum cluster algebra is isomorphic to the twisted semi-derived Hall algebra and such that, specializing the $K_{\alpha,i}$ to 1, we obtain the quantum cluster algebra structure of the quantum Grothendieck ring.
Keeping this in mind, equations (\ref{intro_eq_semiderived_exchange_relation}) and (\ref{intro_eq_semiderived_exchange_relation2}) inspire us to define $\widetilde{Q}(\mathfrak{C})$ by adding certain frozen vertices to the quiver $Q(\mathfrak{C})$, whose associated frozen quantum cluster variables are meant to correspond to certain $K_{\alpha,i}$. 
As a first step, we assume that the interval $[a,b]$ is bounded on the right. In this case, Kashiwara--Kim--Oh--Park define explicitly the quiver associated to the chain of $i$-boxes $\mathfrak{C}_-^{[a,b]}$. For any other chain of $i$-boxes $\mathfrak{C}'$ on the interval $[a,b]$, the quiver $Q(\mathfrak{C}')$ can be obtained from the quiver $Q(\mathfrak{C}_-^{[a,b]})$ through a finite sequence of mutations. In analogy to \cite{HFOO_iso_quant_groth_ring_clust_alg}, we define a $\Lambda$-matrix $\Lambda^{[a,b]}$ such that the associated quantum cluster $\mathbb{Q}(v^{1/2})$-algebra $A_v(Q(\mathfrak{C}_-^{[a,b]}),\Lambda^{[a,b]})$ is isomorphic to the specialized quantum Grothendieck ring $\mathcal{K}_v(\mathscr{C}_\mathfrak{g}^{[a,b],\mathfrak{s}})$ (cf.~Theorem~\ref{teo_quantum_analogue_monoidal_cat}). We define a quiver $\widetilde{Q}^{[a,b],\mathfrak{s}}$ (and a companion $\Lambda$-matrix) such that the associated quantum cluster algebra $\mathcal{A}_v(\widetilde{Q}^{[a,b],\mathfrak{s}})$ fits into a commutative square of the form

\begin{equation}
\label{intro_commutative_square_[a,b]}
\begin{tikzcd} \mathcal{A}_v(\widetilde{Q}^{[a,b],\mathfrak{s}}) \arrow[d, "\pi", twoheadrightarrow] \arrow[rr, dashed, "\Tilde{\theta}"] & &\mathcal{SDH}_{tw}(C^b(\mathcal{P}))^{[a,b],\mathfrak{s}} \arrow[d, "\pi^{\mathcal{H}}", twoheadrightarrow] \\
A_v(Q(\mathfrak{C}_-^{[a,b]}),\Lambda^{[a,b]}) \arrow[r, "\sim", "\theta"' ] & \mathcal{K}_v(\mathscr{C}_\mathfrak{g}^{[a,b],\mathcal{D}_\mathcal{Q},\underline{w}_0}) \arrow[r, "\Phi"', "\sim"] &\mathcal{DH}_{tw}(Q)^{[a,b],\mathfrak{s}}.
\end{tikzcd}
\end{equation}

In more detail, the quiver $\widetilde{Q}^{[a,b],\mathfrak{s}}$ is defined
by adding to the quiver $Q(\mathfrak{C}^{[a,b]}_-)$ certain frozen vertices $\widetilde{S}_{j,z}$ $(j\in I_\mathfrak{g}),\ z\in\mathbb{Z}$,  in such a way that, up to replacing each frozen cluster variable $X_{\widetilde{S}_{j,z}} $ with $K_{\overline{S}_j,z}$ and each initial non-frozen quantum cluster variable $X$ of $\mathcal{A}_v(\widetilde{Q}^{[a,b],\mathfrak{s}})$ with $\mathcal{L}_v(m)$, where  $m\in \mathcal{M}^+$ is the dominant monomial such that $\theta(\pi(X))=L_t(m)$, each exchange relation associated to the mutation of an initial cluster variable is of the form (\ref{intro_eq_semiderived_exchange_relation}) and (\ref{intro_eq_semiderived_exchange_relation2}). The $\Lambda$-matrix associated to the quiver$\widetilde{Q}^{[a,b],\mathfrak{s}}$ is defined by extending the $\Lambda$-matrix associated to the quiver $Q(\mathfrak{C}^{[a,b]}_-)$  by entries equal to zero. This construction allows us to endow the resulting quantum cluster algebra $\mathcal{A}_v(\widetilde{Q}^{[a,b],\mathfrak{s}})$ with a $\mathcal{K}(C^b(\mathcal{P})$)-grading, compatible with that of the semi-derived Hall algebra. In this setting, we prove the following:

\begin{teo}[Theorem \ref{teo_quantum_clust_semiderived_[a,b]}]
\label{intro_teo_quantum_clust_semiderived_[a,b]}
    There is an isomorphism of $\mathcal{K}(C^b(\mathcal{P}))$-graded $\mathbb{Q}(v^{1/2})$-algebras 
    \[\widetilde{\theta}: \mathcal{A}_v(\widetilde{Q}^{[a,b],\mathfrak{s}}) \xrightarrow{\sim} \mathcal{SDH}_{tw}(C^b(\mathcal{P}))^{[a,b],\mathfrak{s}} \]
which makes the diagram (\ref{intro_commutative_square_[a,b]}) commute.
\end{teo}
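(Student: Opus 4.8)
The plan is to build the dashed arrow $\widetilde{\theta}$ of (\ref{intro_commutative_square_[a,b]}) directly on the initial quantum seed of $\mathcal{A}_v(\widetilde{Q}^{[a,b],\mathfrak{s}})$, show it extends to a $\mathcal{K}(C^b(\mathcal{P}))$-graded algebra homomorphism, and then read off bijectivity from the square together with the known isomorphisms $\theta$ and $\Phi$.

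\textbf{Step 1 (definition and reduction to a quantum torus).} I would set $\widetilde{\theta}(X_{\widetilde{S}_{j,z}})=K_{\overline{S}_j,z}$ on the frozen variables and $\widetilde{\theta}(X)=\mathcal{L}_v(m)$ on each initial non-frozen variable $X$, where $m$ is the dominant monomial with $\theta(\pi(X))=L_t(m)$; note that $\mathcal{L}_v(m)$ is a genuine element of $\mathcal{SDH}_{tw}(C^b(\mathcal{P}))$ for every dominant monomial $m$ by formula (\ref{intro_eq_semiderived_Lv(m)}). Since a quantum cluster algebra satisfies the quantum Laurent phenomenon, $\mathcal{A}_v(\widetilde{Q}^{[a,b],\mathfrak{s}})$ embeds into the quantum torus $\mathcal{T}_0$ of this seed, so it suffices to extend the assignment to a morphism $\widetilde{\theta}_0$ from $\mathcal{T}_0$ into an Ore localization of $\mathcal{SDH}_{tw}(C^b(\mathcal{P}))^{[a,b],\mathfrak{s}}$ at the initial $\mathcal{L}_v(m)$ and at $\mathcal{SDH}_{tw}(C^b(\mathcal{P}))^\times_K$. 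This is possible exactly when the chosen images quasi-commute according to $\widetilde{\Lambda}(\mathfrak{C})$: for pairs involving a frozen variable this is the vanishing of the frozen block of $\widetilde{\Lambda}(\mathfrak{C})$, i.e. the (twisted) centrality of the $K_{\overline{S}_j,z}$; for two initial non-frozen variables it reduces, after applying $\pi^{\mathcal{H}}$ and accounting for the homogeneous $K$-factors of (\ref{intro_eq_semiderived_Lv(m)}), to the quasi-commutation of the $L_t(m)$ governed by $\Lambda^{[a,b]}$, which holds by Theorem \ref{teo_quantum_analogue_monoidal_cat}. Concretely this is a computation of twisted Euler pairings of the $E_{V(m)}$ and the $K_{\overline{S}_j,z}$, and of checking that the twist on $\mathcal{SDH}_{tw}(C^b(\mathcal{P}))$ has been normalized so that the outcome is precisely $\widetilde{\Lambda}(\mathfrak{C})$.

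\textbf{Step 2 (cluster variables land in the semi-derived Hall algebra).} Since $\mathcal{A}_v(\widetilde{Q}^{[a,b],\mathfrak{s}})$ is generated as an algebra by its cluster variables, it is enough to prove, by induction on mutation distance from the initial seed, that $\widetilde{\theta}_0$ sends every cluster variable $X$ to $\mathcal{L}_v(m)\in\mathcal{SDH}_{tw}(C^b(\mathcal{P}))^{[a,b],\mathfrak{s}}$, for the dominant monomial $m$ with $\theta(\pi(X))=L_t(m)$. All exchange relations of the cluster structure on $\mathcal{K}_v(\mathscr{C}_\mathfrak{g}^{[a,b],\mathfrak{s}})$ are of the form (\ref{intro_eq_exchange_relation_qt_characters}); applying $\widetilde{\theta}_0$ to the relation mutating $X_k$, the right-hand side lies in $\mathcal{SDH}_{tw}(C^b(\mathcal{P}))^{[a,b],\mathfrak{s}}$ by the inductive hypothesis, and Proposition \ref{prop_semiderived_exchange_rel} (equations (\ref{intro_eq_semiderived_exchange_relation})--(\ref{intro_eq_semiderived_exchange_relation2})) identifies it with $\mathcal{L}_v(m_k)\mathcal{L}_v(m_k')$, so cancelling the known factor $\widetilde{\theta}_0(X_k)=\mathcal{L}_v(m_k)$ gives $\widetilde{\theta}_0(X_k')=\mathcal{L}_v(m_k')$. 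The delicate point is that the frozen monomial produced by mutating $\widetilde{Q}^{[a,b],\mathfrak{s}}$ at $k$ is exactly the factor $K(\,\cdot\,,\,\cdot\,)$ occurring in (\ref{intro_eq_semiderived_exchange_relation})--(\ref{intro_eq_semiderived_exchange_relation2}): this is built into the construction of $\widetilde{Q}^{[a,b],\mathfrak{s}}$ for mutations of initial variables, and for the others it propagates because $\widetilde{\theta}_0$ preserves the $\mathcal{K}(C^b(\mathcal{P}))$-grading and, for any $m\le m'$, the element $K(m,m')$ is the unique element of $\mathcal{SDH}_{tw}(C^b(\mathcal{P}))^\times_K$ making the two degrees agree (cf.\ Remark \ref{order_comparison} for the homogeneity of the $\mathcal{L}_v(m)$). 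This yields the $\mathcal{K}(C^b(\mathcal{P}))$-graded algebra morphism $\widetilde{\theta}:=\widetilde{\theta}_0|_{\mathcal{A}_v(\widetilde{Q}^{[a,b],\mathfrak{s}})}$.

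\textbf{Step 3 (the square commutes and $\widetilde{\theta}$ is bijective).} On initial cluster variables $\pi^{\mathcal{H}}(\widetilde{\theta}(X))=\pi^{\mathcal{H}}(\mathcal{L}_v(m))=\Phi(L_t(m))$ by (\ref{intro_formula_image_qt_simple}) together with $\pi^{\mathcal{H}}(E_V)=Z_V$ and $\pi^{\mathcal{H}}(K)=1$, while $\pi^{\mathcal{H}}(\widetilde{\theta}(X_{\widetilde{S}_{j,z}}))=1=\Phi(\theta(\pi(X_{\widetilde{S}_{j,z}})))$; as $\pi^{\mathcal{H}}\circ\widetilde{\theta}$ and $\Phi\circ\theta\circ\pi$ are algebra morphisms agreeing on generators, (\ref{intro_commutative_square_[a,b]}) commutes. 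For injectivity, $\widetilde{\theta}_0$ is an isomorphism of $\mathcal{T}_0$ onto the quantum subtorus of the localization generated by the initial $\mathcal{L}_v(m)$ and the $K_{\overline{S}_j,z}$ — these being multiplicatively independent, which one sees by applying $\pi^{\mathcal{H}}$ and recording $\mathcal{K}(C^b(\mathcal{P}))$-degrees — hence $\widetilde{\theta}_0$, and a fortiori $\widetilde{\theta}$, is injective. For surjectivity, property $(i)$ of $\Phi$ gives $\mathcal{L}_v(Y_{i,p})=a(Y_{i,p})\,E_{V(i,p)}$, so every $E_{M,i}$ with $M$ indecomposable lies in $\mathrm{Im}(\widetilde{\theta})$, hence so does every $E_{M,i}$; together with the $K_{\overline{S}_j,z}$, which generate $\mathcal{SDH}_{tw}(C^b(\mathcal{P}))^\times_K$, these generate $\mathcal{SDH}_{tw}(C^b(\mathcal{P}))^{[a,b],\mathfrak{s}}$, so $\widetilde{\theta}$ is onto.

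The main obstacle I anticipate is Step 1 together with the grading-propagation point of Step 2: normalizing the twist on $\mathcal{SDH}_{tw}(C^b(\mathcal{P}))$ so that the twisted Euler pairings among the $E_{V(m)}$ and the $K_{\overline{S}_j,z}$ reproduce $\widetilde{\Lambda}(\mathfrak{C})$ exactly — including the vanishing of its frozen block — and verifying that the mutation combinatorics of the enlarged quiver $\widetilde{Q}^{[a,b],\mathfrak{s}}$ produce precisely the degree-correcting monomials $K(\,\cdot\,,\,\cdot\,)$. Granting these, the remainder is bookkeeping against the square (\ref{intro_commutative_square_[a,b]}) and the isomorphisms $\theta$ and $\Phi$.
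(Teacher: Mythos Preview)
Your approach via the initial quantum torus is organisationally different from the paper's, which never passes to the torus or to any Ore localisation: the paper defines $\widetilde{\theta}$ directly on \emph{all} cluster variables by $X\mapsto\mathcal{L}_v(m)$, then proves this assignment respects relations by expanding in the basis of Proposition~\ref{prop_basis_semiderived} and using $\pi^{\mathcal{H}}$ together with the uniqueness of $K$-factors in a fixed degree. Both routes rely on the same homogeneity-plus-projection mechanism, but yours hides a genuine gap in Step~2.

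The issue is your claim that the matching of the frozen monomial with $K(\,\cdot\,,\,\cdot\,)$ ``propagates because $\widetilde{\theta}_0$ preserves the grading''. After a mutation at $k$ in an arbitrary seed, the image of the exchange relation reads
\[
\mathcal{L}_v(m_k)\,\widetilde{\theta}_0(X_k')=a(v)\,K_1\!\!\prod_{i\to k}\mathcal{L}_v(m_i)+b(v)\,K_2\!\!\prod_{k\to j}\mathcal{L}_v(m_j),
\]
where $K_1,K_2$ are the images of the frozen monomials on the two sides. Homogeneity only gives the relation $K_2=K_1\cdot K$ (with $K$ the factor of Proposition~\ref{prop_semiderived_exchange_rel}), so you obtain $\widetilde{\theta}_0(X_k')=K_1\,\mathcal{L}_v(m_k')$ but not $K_1=1$; your induction hypothesis $\widetilde{\theta}_0(X)=\mathcal{L}_v(m)$ then breaks at the next step. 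What is missing is the guarantee that at every seed the arrows between $k$ and the frozen vertices $\widetilde{S}_{j,z}$ all point the \emph{same} way, so that one of $K_1,K_2$ is always trivial. The paper secures this by the observation (Remark~\ref{rem_subquiver_framed_quiver}) that $\widetilde{Q}^{[a,b],\mathfrak{s}}$ is a subquiver of the principally framed quiver of $\widehat{Q}^{[a,b],\mathfrak{s}}$, so the green/red dichotomy applies to the $\widetilde{S}_{j,z}$; combined with Lemma~\ref{lem_mutation_m_greenred}, which identifies green/red with the alternative $m_km_k'=\prod_{k\to i}m_i$ versus $m_km_k'=\prod_{i\to k}m_i$, this places the unique nontrivial frozen monomial on the correct side, after which the degree argument does determine it. You should either import this green/red input, or weaken your induction to $\widetilde{\theta}_0(X)=K_X\,\mathcal{L}_v(m)$ for some $K_X\in\mathcal{SDH}_{tw}(C^b(\mathcal{P}))^\times_K$, which still suffices for the statement as phrased but no longer gives the explicit description of $\widetilde{\theta}$ on cluster variables. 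A secondary point: your Step~1 needs the initial $\mathcal{L}_v(m)$ to be regular so that the Ore localisation is sensible; the paper never faces this issue.
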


One central step in the proof of Theorem \ref{intro_teo_quantum_clust_semiderived_[a,b]} is showing that, if $X$ is a quantum cluster variable such that $\Tilde{\theta}(X)=\mathcal{L}_v(m)$, then we have the equality
\begin{equation}
\label{intro_eq_degree_cluster_Ltm}
\mathrm{deg}(X)=\mathrm{deg}(\mathcal{L}_v(m))
\end{equation}
in the Grothendieck group of $C^b(\mathcal{P})$.
To prove this, we rely on the observation that the quiver $\widetilde{Q}^{[a,b],\mathfrak{s}}$ is a subquiver of the principally framed quiver (cf. section \ref{subsect_green_red_vertices}) of $Q(\mathfrak{C}^{[a,b]}_-)$, which, ultimately, enables us to prove that, at each mutation, the arrows incident to the frozen vertices are placed in such a way that, under $\widetilde{\theta}$, the exchange relation is of the form (\ref{intro_eq_semiderived_exchange_relation}) or (\ref{intro_eq_semiderived_exchange_relation2}).

Let $\mathfrak{C}=(\mathfrak{c}_k)_{k\geq 1}$ be a chain of $i$-boxes of range $[-\infty,\infty]$ and consider the sequence $(\mathfrak{C}_s)_{s\geq 1}$ of chains 
of $i$-boxes defined by $\mathfrak{C}_s=(\mathfrak{c}_k)_{1\leq k\leq s}$.
For any $s\geq 1$, let $[a_s,b_s]$ be the range of $\mathfrak{C}_s$. Let $\widetilde{Q}(\mathfrak{C}_s)$ be the quiver obtained from $\widetilde{Q}(\mathfrak{C}_-^{[a_s,b_s]})\cong \widetilde{Q}^{[a_s,b_s],\mathfrak{s}}$ using a mutation sequence which transforms the full subquiver $Q(\mathfrak{C}_-^{[a_s,b_s]})$ into the full subquiver $Q(\mathfrak{C}_s)$ Then, as a consequence of Theorem \ref{intro_teo_quantum_clust_semiderived_[a,b]}, we have commutative diagrams

\[
\begin{tikzcd}
    \mathcal{A}_v(\widetilde{Q}(\mathfrak{C})_s) \arrow[d, twoheadrightarrow] \arrow[r, "\sim"]  &\mathcal{SDH}_{tw}(C^b(\mathcal{P}))^{[a_s,b_s],\mathfrak{s}}\arrow[d, twoheadrightarrow] \\
    \mathcal{A}_v(Q(\mathfrak{C}_s)) \arrow[r, "\sim"] &\mathcal{DH}_{tw}(Q)^{[a_s,b_s],\mathfrak{s}}.
\end{tikzcd}
\]
By defining the quiver $\widetilde{Q}(\mathfrak{C})$ as the colimit of the quivers $\widetilde{Q}(\mathfrak{C}_s)$ and considering the colimit of these diagrams, we obtain the main result of section \ref{sec_quantu_clust_semider}:

\begin{teo}[Corollary \ref{cor_clust_semiderived}]   
    There is a unique isomorphism of $\mathcal{K}(C^b(\mathcal{P}))$-graded $\mathbb{Q}(v^{1/2})$-algebras 
    \[\widetilde{\theta}: \mathcal{A}_v(\widetilde{Q}(\mathfrak{C})) \xrightarrow{\sim} \mathcal{SDH}_{tw}(C^b(\mathcal{P})) \]
such that, for any frozen vertex $\widetilde{S}_{j,z}$ and $i \in K(\mathfrak{C})$, the image of a frozen quantum cluster variable $X_{S_{j,z}}$ is $K_{\overline{S}_j,z}$ and that of a quantum cluster variable $X$ is $\mathcal{L}_v(m)$,
where $m$ is the dominant monomial such that $\varphi_v(\mathfrak{C})(\pi(X))=L_t(m)$.
Moreover, the isomorphism $\widetilde{\theta}$ makes the following diagram commute:

\[
\begin{tikzcd}
    \mathcal{A}_v(\widetilde{Q}(\mathfrak{C})) \arrow[d, "\pi", twoheadrightarrow] \arrow[rr, dashed, "\Tilde{\theta}"] & &\mathcal{SDH}_{tw}(C^b(\mathcal{P}))\arrow[d, "\pi^{\mathcal{H}}", twoheadrightarrow] \\
    \mathcal{A}_v(Q(\mathfrak{C})) \arrow[r, "\sim", "\varphi_v(\mathfrak{C})"'] & \mathcal{K}_v(\mathscr{C}_\mathfrak{g}^{\mathbb{Z}}) \arrow[r, "\Phi"', "\sim"] &\mathcal{DH}_{tw}(Q).
\end{tikzcd}
\]
\end{teo}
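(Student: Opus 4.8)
The plan is to obtain the statement as a colimit of the already-established Theorem \ref{intro_teo_quantum_clust_semiderived_[a,b]}, exactly along the lines sketched just above the statement. First I would set up the directed system: for the fixed chain of $i$-boxes $\mathfrak{C}=(\mathfrak{c}_k)_{k\geq 1}$ of range $[-\infty,\infty]$, and its truncations $\mathfrak{C}_s=(\mathfrak{c}_k)_{1\leq k\leq s}$ with ranges $[a_s,b_s]$, note that $[a_s,b_s]\subseteq[a_{s+1},b_{s+1}]$ and $\bigcup_s[a_s,b_s]=[-\infty,\infty]$. For each $s$ one has, by Theorem \ref{intro_teo_quantum_clust_semiderived_[a,b]} applied to $[a_s,b_s]$ together with the mutation sequence carrying $Q(\mathfrak{C}_-^{[a_s,b_s]})$ to $Q(\mathfrak{C}_s)$ (lifted to a mutation sequence on the framed quivers $\widetilde Q$, which is legitimate since no frozen vertex is mutated), an isomorphism $\widetilde\theta_s:\mathcal{A}_v(\widetilde Q(\mathfrak{C})_s)\xrightarrow{\sim}\mathcal{SDH}_{tw}(C^b(\mathcal{P}))^{[a_s,b_s],\mathfrak{s}}$ making the square over $\mathcal{DH}_{tw}(Q)^{[a_s,b_s],\mathfrak{s}}$ commute, and sending frozen variables $X_{\widetilde S_{j,z}}$ to $K_{\overline S_j,z}$ and non-frozen cluster variables $X$ to $\mathcal{L}_v(m)$.

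Next I would check the compatibility of these data with the transition maps, so that a colimit exists. On the cluster-algebra side, define $\widetilde Q(\mathfrak{C})=\varinjlim_s \widetilde Q(\mathfrak{C})_s$ as in the excerpt, where the bonding maps are the inclusions of full subquivers adding the new vertices $\widetilde S_{j,z}$ coming from the enlarged interval; this induces $\mathcal{A}_v(\widetilde Q(\mathfrak{C}))=\varinjlim_s\mathcal{A}_v(\widetilde Q(\mathfrak{C})_s)$, the colimit being taken along the algebra embeddings $\mathcal{A}_v(\widetilde Q(\mathfrak{C})_s)\hookrightarrow\mathcal{A}_v(\widetilde Q(\mathfrak{C})_{s+1})$ that identify the common initial seed. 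On the Hall-algebra side, $\mathcal{SDH}_{tw}(C^b(\mathcal{P}))=\varinjlim_s\mathcal{SDH}_{tw}(C^b(\mathcal{P}))^{[a_s,b_s],\mathfrak{s}}$ via the corresponding inclusions of subcategories $C^b(\mathcal{P})^{[a_s,b_s],\mathfrak{s}}$, and likewise $\mathcal{DH}_{tw}(Q)=\varinjlim_s\mathcal{DH}_{tw}(Q)^{[a_s,b_s],\mathfrak{s}}$ and $\mathcal{K}_v(\mathscr{C}_\mathfrak{g}^{\mathbb{Z}})=\varinjlim_s\mathcal{K}_v(\mathscr{C}_\mathfrak{g}^{[a_s,b_s],\mathfrak{s}})$. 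The point to verify is that each $\widetilde\theta_{s+1}$ restricts to $\widetilde\theta_s$ on $\mathcal{A}_v(\widetilde Q(\mathfrak{C})_s)$: this follows because both send the shared initial frozen variables to the same $K_{\overline S_j,z}$ and the shared initial non-frozen variables to the same $\mathcal{L}_v(m)$, and because a cluster-algebra homomorphism is determined by its values on any one seed; the mutation sequences are chosen compatibly (the sequence for $\mathfrak{C}_{s+1}$ extends that for $\mathfrak{C}_s$ on the overlapping vertices). Passing to the colimit yields a well-defined algebra homomorphism $\widetilde\theta:\mathcal{A}_v(\widetilde Q(\mathfrak{C}))\to\mathcal{SDH}_{tw}(C^b(\mathcal{P}))$, which is an isomorphism because a filtered colimit of isomorphisms is an isomorphism; it is $\mathcal{K}(C^b(\mathcal{P}))$-graded because each $\widetilde\theta_s$ is graded and the gradings are compatible with the transition maps (the grading group is itself the colimit $\mathcal{K}(C^b(\mathcal{P}))=\varinjlim_s \mathcal{K}(C^b(\mathcal{P})^{[a_s,b_s],\mathfrak{s}})$, using Remark \ref{order_comparison} and the degree identity (\ref{intro_eq_degree_cluster_Ltm})). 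Commutativity of the big diagram is obtained by taking the colimit of the commutative squares (\ref{intro_commutative_square_[a,b]}) for the intervals $[a_s,b_s]$, using that $\pi$, $\pi^{\mathcal{H}}$, $\Phi$ and $\varphi_v(\mathfrak{C})$ are themselves the colimits of their truncated versions; uniqueness follows since any graded isomorphism with the stated values on the frozen and non-frozen cluster variables of the initial seed agrees with $\widetilde\theta$ on that seed, hence everywhere.

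The main obstacle, and the step deserving the most care, is the verification that the family $(\widetilde\theta_s)_s$ is genuinely compatible with the transition maps — i.e.\ that enlarging the interval from $[a_s,b_s]$ to $[a_{s+1},b_{s+1}]$ does not alter the identifications already made. Concretely, one must check that the quiver $\widetilde Q(\mathfrak{C})_s$ sits inside $\widetilde Q(\mathfrak{C})_{s+1}$ as a full subquiver in a way compatible with both the principally-framed-quiver description (used to control the arrows to frozen vertices and hence to guarantee that the exchange relations have the form (\ref{intro_eq_semiderived_exchange_relation})/(\ref{intro_eq_semiderived_exchange_relation2})) and the mutation sequences; and that the frozen vertices $\widetilde S_{j,z}$ and their images $K_{\overline S_j,z}$ are indexed coherently across $s$, so that $X_{\widetilde S_{j,z}}$ and $K_{\overline S_j,z}$ denote the same elements before and after enlargement. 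Once this compatibility is in place — which reduces to the explicit combinatorics of the chains $\mathfrak{C}_s$, of Kashiwara--Kim--Oh--Park's quivers $Q(\mathfrak{C}_-^{[a,b]})$, and of the degree formula (\ref{intro_eq_degree_cluster_Ltm}) — the colimit argument is formal.
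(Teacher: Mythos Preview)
Your proposal is correct and follows essentially the same approach as the paper: express all four corners of the diagram as filtered colimits over the truncations $\mathfrak{C}_s$, invoke Theorem~\ref{teo_quantum_clust_semiderived_[a,b]} at each stage, and pass to the colimit. The ``main obstacle'' you isolate---that $\widetilde{Q}(\mathfrak{C}_s)$ sits inside $\widetilde{Q}(\mathfrak{C}_{s+1})$ compatibly, so that the directed system of quantum cluster algebras is well-defined---is exactly what the paper handles in Lemma~\ref{lem_B_tilde_stabilizes}, which shows that the extended exchange matrices $\widetilde{B}(\mathfrak{C}_{s'})$ restrict to $\widetilde{B}(\mathfrak{C}_s)$ on the smaller index set (and vanish on the complementary rows), using the form of the exchange relations established in Remark~\ref{rem_exchange_relation_semiderived_proof} together with Lemma~\ref{lem_invertible_frozen_to_Groth_projectives}.
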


\subsection{Braid group action on the semi-derived Hall algebra}
Since its inception, quantum group theory has been awash with braid group actions. Among these, a prime example is Lusztig's braid group action on quantum groups and their representations \cite{Lusztig_book}. In \cite[Thm.~2.3]{KKOP_braid_2020} Kashiwara--Kim--Oh--Park announced formulas for a braid group action on the quantum Grothendieck ring of $\mathscr{C}_\mathfrak{g}$, which were later proved by Jang--Lee--Oh in \cite[Thm.~8.1]{Jang-Lee_Oh_braid_virtual_2023}. In section~\ref{sec_braid} we lift their braid group action to the twisted semi-derived Hall algebra.

For any $i$ in $I_\mathfrak{g}$ and $m$ in $\mathbb{Z}$, we write $E_{i,m}$ for a certain rescaling of $E_{S_i,m}$ and $K_{i,m}$ for $K_{\overline{S}_i,m}$, where $S_i$ and $\overline{S}_i$ are, respectively, the simple $\Bbbk Q$-module associated to the vertex $i$ and its representative in the Grothendieck group of $\mathrm{mod}(\Bbbk Q)$. The $(E_{i,m})$ and the $(K_{i,m})$ generate the twisted semi-derived Hall algebra (cf.~Proposition ~\ref{prop_presenentation_semiderived_E_i}).

\begin{teo}[Theorem \ref{teo_braid}]
\label{intro_teo_braid}
The braid group $B_\mathfrak{g}$ acts on the twisted semi-derived Hall algebra $\mathcal{SDH}_{tw}(Q)$ by the following formulas
\[\sigma_i(E_{j,m})=\begin{cases}
E_{i,m+1}K_{i,m}^{-1}, & (\alpha_i,\alpha_j)=2,\\[0.4cm]
\frac{v^{1/2}E_{i,m}E_{j,m}-v^{-1/2}E_{j,m}E_{i,m}}{v-v^{-1}}, &(\alpha_i,\alpha_j)=-1,\\[0.4cm]
E_{j,m}, &\text{otherwise}.
\end{cases}\]

\[\sigma_i(K_{j,m})=\begin{cases}
K_{i,m}^{-1}, & (\alpha_i,\alpha_j)=2,\\
K_{i,m}K_{j,m}, &(\alpha_i,\alpha_j)=-1,\\
K_{j,m}, &\text{otherwise}.
\end{cases}\]
In particular, when we quotient the semi-derived Hall algebra by the ideal $I((K_{i,m}-1)_{i,m})$, then, via the isomorphism of Theorem \ref{teo_isoHL}, the action specializes to Kashiwara-Kim-Oh-Park's action  on the quantum Grothendieck ring. 
\end{teo}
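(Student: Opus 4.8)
The plan is to verify directly that the assignments $\sigma_i(E_{j,m})$ and $\sigma_i(K_{j,m})$ extend to well-defined algebra automorphisms of $\mathcal{SDH}_{tw}(Q)$ satisfying the braid relations, and then to check compatibility with the quotient map to the quantum Grothendieck ring. First I would use Proposition \ref{prop_presenentation_semiderived_E_i}, which furnishes an explicit presentation of $\mathcal{SDH}_{tw}(Q)$ by the generators $(E_{i,m})$ and $(K_{i,m})$ together with their relations (quantum Serre-type relations among the $E$'s, the Laurent/monomial relations among the $K$'s, and the commutation relations between $E$'s and $K$'s governed by the $\Lambda$-matrix and the Euler form). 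For each $i$, one defines $\sigma_i$ on generators by the stated formulas and checks that every defining relation is preserved; this is the bulk of the work but is a finite, type-by-type calculation, reduced to rank-$2$ subdiagrams by the locality of the formulas (only $j$ with $(\alpha_i,\alpha_j)\in\{2,-1\}$ are moved nontrivially). To see that $\sigma_i$ is invertible, I would exhibit the inverse explicitly — it is governed by the opposite braid generator $\sigma_i^{-1}$, whose formulas are obtained from those above by the symmetry $v\leftrightarrow v^{-1}$ and $m\mapsto m-1$ in the $(\alpha_i,\alpha_j)=2$ case — and check $\sigma_i\sigma_i^{-1}=\mathrm{id}$ on generators.

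Next I would establish the braid relations $\sigma_i\sigma_j=\sigma_j\sigma_i$ when $(\alpha_i,\alpha_j)=2$ and $\sigma_i\sigma_j\sigma_i=\sigma_j\sigma_i\sigma_j$ when $(\alpha_i,\alpha_j)=-1$. Rather than grinding these out from scratch, the efficient route is to transport them from the known action. By Theorem \ref{teo_isoHL} and the commutative square in Corollary \ref{cor_clust_semiderived}, the quotient $\mathcal{SDH}_{tw}(Q)\twoheadrightarrow \mathcal{DH}_{tw}(Q)\cong \mathcal{K}_v(\mathscr{C}_\mathfrak{g}^{\mathbb{Z}})$ sends $E_{i,m}\mapsto Z_{S_i,m}$ (up to the rescaling) and $K_{i,m}\mapsto 1$, and one checks that under this quotient the formulas above become precisely Kashiwara--Kim--Oh--Park's formulas of \cite[Thm.~2.3]{KKOP_braid_2020}, proved in \cite[Thm.~8.1]{Jang-Lee_Oh_braid_virtual_2023}. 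So the relations hold modulo the ideal $I((K_{i,m}-1))$; to upgrade this to an identity in $\mathcal{SDH}_{tw}(Q)$ one uses that the $K_{i,m}$ span a (Laurent-)polynomial subalgebra on which $\sigma_i$ acts through the linear Weyl-group-type formulas of the second display, and that both sides of each braid relation, evaluated on a generator $E_{j,m}$ or $K_{j,m}$, are $\mathcal{SDH}_{tw}(C^b(\mathcal{P}))^\times_K$-multiples of the same element of $\mathcal{DH}_{tw}(Q)$; the scalar $K$-factors are then matched by a direct computation of degrees in $\mathcal{K}(C^b(\mathcal{P}))$, exactly as in the degree-matching argument of equation (\ref{intro_eq_degree_cluster_Ltm}) and Theorem \ref{intro_teo_quantum_clust_semiderived_[a,b]}.

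Finally, the compatibility statement in the theorem is essentially built in: since $\sigma_i$ maps the subgroup generated by the $K_{i,m}$ to itself, it descends to the quotient by $I((K_{i,m}-1))$, and the induced map on $\mathcal{K}_v(\mathscr{C}_\mathfrak{g}^{\mathbb{Z}})$ agrees with KKOP's $\sigma_i$ on the algebra generators $Z_{S_i,m}$, hence everywhere. I expect the main obstacle to be the $(\alpha_i,\alpha_j)=-1$ braid relation $\sigma_i\sigma_j\sigma_i=\sigma_j\sigma_i\sigma_j$ applied to an $E_{k,m}$ with $k$ adjacent to both $i$ and $j$: here the $v$-commutator formula is applied three times, producing a genuinely noncommutative rank-$3$ ($A_3$) computation in which keeping track of the $K$-twists and the quantum-commutator coefficients is delicate. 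The guiding principle throughout is that the $K$-factors are uniquely pinned down by homogeneity with respect to the $\mathcal{K}(C^b(\mathcal{P}))$-grading, so any relation that holds in $\mathcal{DH}_{tw}(Q)$ and is homogeneous lifts uniquely; making this precise — i.e.\ proving a lemma that a homogeneous element of $\mathcal{SDH}_{tw}(C^b(\mathcal{P}))$ in the kernel of $\pi^{\mathcal{H}}$ of a given degree is determined by, and in fact equal to, the obvious $K$-monomial correction — reduces all braid verifications to the already-established identities in the quantum Grothendieck ring.
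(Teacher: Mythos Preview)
Your overall strategy is sound, and the first paragraph (direct verification that each $\sigma_i$ preserves the relations of Proposition~\ref{prop_presenentation_semiderived_E_i}, plus an explicit inverse) is exactly what the paper does.  Where you diverge is in the treatment of the braid relations: the paper does \emph{not} transport them from the quantum Grothendieck ring, but simply grinds them out directly on each generator, case by case, just as it does for the well-definedness check.  So your proposal is genuinely different on this point.

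Your shortcut is essentially correct, but the write-up contains a real gap and one misstatement.  The misstatement is the claim that both sides of a braid relation, applied to $E_{k,m}$, are ``$\mathcal{SDH}_{tw}(C^b(\mathcal{P}))^\times_K$-multiples of the same element of $\mathcal{DH}_{tw}(Q)$''.  They are not $K$-multiples of anything in general; they are complicated homogeneous elements.  The correct lemma (which is implicit in the proofs of Proposition~\ref{prop_semiderived_exchange_rel} and Theorem~\ref{teo_quantum_clust_semiderived_[a,b]}) is that $\pi^{\mathcal{H}}$ is \emph{injective on each homogeneous component}: if $x\in\mathcal{SDH}_{tw}(C^b(\mathcal{P}))$ is homogeneous and $\pi^{\mathcal{H}}(x)=0$, then $x=0$.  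The gap is that, to apply this lemma to $\sigma_i\sigma_j\sigma_i(E_{k,m})-\sigma_j\sigma_i\sigma_j(E_{k,m})$, you must first show this difference is homogeneous, i.e.\ that both sides have the \emph{same} $\mathcal{K}(C^b(\mathcal{P}))$-degree.  You gesture at ``a direct computation of degrees'' but never carry it out, and it is the crux of the argument.  Concretely: one computes $\deg(E_{j,m})=\overline{C_{S_j}[m]}-\overline{K_{P_{S_j}}[m]}$, which under the identification $\mathcal{K}(C^b(\mathcal{P}))\cong\bigoplus_{z}\mathcal{K}(\mathrm{mod}\,\Bbbk Q)$ is just $\alpha_j$ sitting in the $m$-th copy of the root lattice.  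A short check on the three cases of the $\sigma_i$-formulas then shows that the induced map $\bar\sigma_i$ on $\mathcal{K}(C^b(\mathcal{P}))$ is $\bigoplus_z s_i$, the simple reflection applied degreewise.  Since the $s_i$ satisfy the braid relations in the Weyl group, so do the $\bar\sigma_i$, and the degree-matching follows.  Once you insert this, your argument goes through and is considerably shorter than the paper's; indeed the same injectivity-plus-degree argument also shortcuts the well-definedness check itself, reducing everything to \cite{Jang-Lee_Oh_braid_virtual_2023} plus the grading computation.

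Two minor corrections: the presentation in Proposition~\ref{prop_presenentation_semiderived_E_i} has the $K_{i,m}$ \emph{central} (relation~(\ref{sd1})); there is no $\Lambda$-matrix governing $E$--$K$ commutation here.  And the commuting braid relation is $\sigma_i\sigma_j=\sigma_j\sigma_i$ when $(\alpha_i,\alpha_j)=0$, not $2$.
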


The proof of Theorem \ref{intro_teo_braid} consists of the (rather long) direct check that the $\sigma_i$ respect the defining relations of the twisted semi-derived Hall algebra.
Notice that the action of the $\sigma_i$ on the $K_{j,m}$ is inspired by Lusztig's action on the quantum group $U_q(\mathfrak{g})$.

\section{Quantum cluster algebra strcutures on quantum Grothendieck rings}
\label{sect_module_categories}
In this section, we recall the definition of the subcategories $\mathscr{C}_{\mathfrak{g}}^{[a,b],\mathfrak{s}}$ of the module category of the quantum loop algebra. Moreover, we show that their quantum Grothendieck rings carry a quantum cluster algebra structure.

Let $\mathfrak{g}$ be a finite-dimensional simple complex Lie algebra of simply-laced type. We write $I_\mathfrak{g}=\{1, \ldots, n\}$ for the set of vertices of its Dynkin diagram and we denote the simple roots of $\mathfrak{g}$ by $(\alpha_i)_{i\in I_\mathfrak{g}}$. Let $q$ be an indeterminate. We write $\mathbb{K}$ for the algebraic closure of the field $\mathbb{Q}(q)$. 

We write $U_q(L\mathfrak{g})$ for the quantum loop algebra associated to $\mathfrak{g}$. It is the $\mathbb{K}$-algebra defined via the generators 
\[ \{K_i^{\pm}| i\in I_{\mathfrak{g}}\}\cup \{X_{i,s}, Y_{i,s}| i\in I_{\mathfrak{g}}, s\in \mathbb{Z} \}\cup \{H_{i,r}|i\in I_{\mathfrak{g}}, r \in \mathbb{Z}-\{0\}\}
\] 
subject to the relations given in \cite[Definition 2.1]{HFOO_propagation_positivity}.

\subsection{The categories $\mathscr{C}_{\mathfrak{g}}^{[a,b],\mathfrak{s}}$}\label{subsec_categoryC0}
Recall that a finite-dimensional $U_q(L\mathfrak{g})$-modules is \emph{of type 1} if the generators $K_1, \dots, K_n$ act semisimply, 
with eigenvalues which are integer powers of $q$. We write $\mathscr{C}_\mathfrak{g}$ for the category of finite-dimensional $U_q(L\mathfrak{g})$-modules of type 1. Thanks to the Hopf algebra structure of $U_q(L\mathfrak{g})$ (cf.~{\cite[Def.-Prop.~9.1.1]{Chari_Pressley_guide}), the category $\mathscr{C}_\mathfrak{g}$ is monoidal. 
Two simple modules $M$ and $N$ of $\mathscr{C}_\mathfrak{g}$ \emph{commute} if there is an isomorphism $M\otimes N \cong N\otimes M$.

Consider the Laurent polynomial ring $\mathbb{Z}[Y_{i,a}^\pm]_{i\in I,a\in \mathbb{K}^\times}$. We call a monomial in this ring \emph{dominant} if it is monic and its reduced expression does not contain negative powers of the $Y_{i,a}$. 
The theory of $q$-characters (see \cite{FrenkelReshetikhin98}) provides us with an injective ring homomorphism $\chi_q: \mathcal{K}(\mathscr{C}_\mathfrak{g}) \rightarrow \mathbb{Z}[Y_{i,a}^\pm]_{i\in I,a\in \mathbb{K}^\times}$ and a parametrisation
$m \mapsto L(m)$ of the simple objects in $\mathscr{C}_\mathfrak{g}$ by the dominant monomials $m$. Notice that the existence of the monomorphism $\chi_q$ implies that Grothendieck ring $\mathcal{K}(\mathscr{C}_\mathfrak{g})$ is commutative. 

Let $\tilde{\varepsilon}$ be a \emph{parity function} on the Dynkin diagram of $\mathfrak{g}$, that is, a function \[\tilde{\varepsilon}:I_\mathfrak{g}\rightarrow \{0,1\}, i\mapsto \tilde{\varepsilon}_i\] such that $\Tilde{\varepsilon}_i\equiv\Tilde{\varepsilon}_j \text{ mod 2}$ whenever the vertices $i$ and
$j$ are linked by an edge. We write $\widehat{I}_\mathfrak{g}$ for the set
\[\widehat{I}_\mathfrak{g}=\{(i,p)\in I_\mathfrak{g}\times\mathbb{Z}\ |\ p-\Tilde{\varepsilon}_i\in 2\mathbb{Z}\}.\]
In the following, we will use the 
simplified notation $Y_{i,p}=Y_{i,q^p}$.
Write $\mathcal{M}$\label{symb_mathcal_M} for the set of  monomials in the variables $(Y_{i,p}^{\pm1})_{(i,p)\in \widehat{I}_{\mathfrak{g}}}$ and let $\mathcal{M}^+$\label{symb_mathcal_M_+} be the subset of dominant monomials contained in $\mathcal{M}$.

We write $\mathscr{C}_\mathfrak{g}^\mathbb{Z}$ for the monoidal Serre subcategory of $\mathscr{C}_\mathfrak{g}$ whose simple objects are the
$L(m)$, where $m$ is a monomial in $\mathcal{M}^+$. 
For $(i,p-1)\in\widehat{I}_{\mathfrak{g}}$, set
\[ A_{i,p} = Y_{i,p-1}Y_{i,p+1}\prod_{(j,s)\in \widehat{I}_{\mathfrak{g}},\ i\sim j} Y_{j,p}^{-1}. \]\label{symb_A_ip}
Nakajima defined a partial order on the set $\mathcal{M}$:
\[m' \leq m \text{ if } m(m')^{-1} \text{ is a monomial in } A_{i,p+1}, (i,p)\in \widehat{I}_{\mathfrak{g}}.\]\label{symb_order_monomials}
In the following, we will denote the induced partial order on $\mathcal{M}^+$ by the same symbol.

For  $(i,p)\in \widehat{I}$ and an integer $k\geq 1$, we define the 
\emph{Kirillov--Reshetikhin module $W_{k,p}^i\in \mathscr{C}_\mathfrak{g}^0$} as
\[W_{k,p}^i=L(Y_{i,p}Y_{i,p+2}\dots Y_{i,p+2(k-1)}).\]
For $k=0$, the module $W_{k,p}^i$ is defined to be the trivial module.

Let $w_0$ be the longest element of the Weyl group of $\mathfrak{g}$. Define the involution $(-)^*: I_\mathsf{g}\rightarrow I_{\mathsf{g}}$
by $w_0(\alpha_{i})=-\alpha_{i^*}$.
Let $\varepsilon$ be an \emph{height function} on the Dynkin diagram of $\mathfrak{g}$, that is, a function $\varepsilon: I_\mathfrak{g}\rightarrow \mathbb{Z},\ i\mapsto\varepsilon_i$ such that $|\varepsilon_i-\varepsilon_j|=1$ if $i\sim j$. The height function $\varepsilon$ induces an orientation of the edges of the Dynkin diagram of $\mathfrak{g}$, such that, if $i\sim j$ and $\varepsilon_i>\varepsilon_j$, then there is an arrow $i\rightarrow j$.
We write $Q_\varepsilon$ for the corresponding quiver. A vertex $i$ in $I_\mathfrak{g}$ is a \emph{sink} of $Q_\varepsilon$ if $\varepsilon_i< \varepsilon_j$ for any $j\sim i$. If $i$ is a sink of $Q_\varepsilon$, we denote by $s_i\varepsilon$ the height function defined by $(s_i\varepsilon)_j=\varepsilon_j+2\delta_{ij}$. A reduced expression $\underline{w}=s_{i_1}\dots s_{i_r}$ of an element of $W_{\mathfrak{g}}$ is \textit{adapted} to $\varepsilon$ if, for any $1\leq k\leq r$, the vertex $i_k$ is a sink of  the quiver associated to the height function $s_{i_{k-1}}s_{i_{k-2}}\dots s_{i_1}\varepsilon$. In the following, we assume that the height function $\varepsilon$ satisfies $\varepsilon_i=\widetilde{\varepsilon}_i\ \mathrm{mod}\ 2$, for any $i$ in $I_\mathfrak{g}$.

Fix a reduced expression $\underline{w}_0=s_{i_1}\ \dots\ s_{i_l}$ of $w_0$ adapted to $\varepsilon$ and extend the sequence
$i_1, \ldots, i_l$ to an infinite sequence $\widehat{\underline{w}}_0=(i_k)_{k\in\mathbb{Z}}$ in such a way that
\[
i_{k+l}=(i_k)^*
\]
for all $k\in \mathbb{Z}$. 
The \emph{admissible sequence} $\mathfrak{s}$ associated to the pair $(\varepsilon,\underline{w}_0)$ is the sequence of pairs $(i_k,p_k)$, $k\in\mathbb{Z}$, where
$i_k\in I_{\mathfrak{g}}$ and $p_k\in \mathbb{Z}$, defined by
    \[
i_k=(\widehat{\underline{w}}_0)_k \text{  and  } p_k= 
\begin{cases}
    (s_{i_{k-1}}\dots s_{i_1}\varepsilon)_{i_k}, & k\geq 1,\\
    (s_{i_{k-1}}^{-1}\dots s_{i_1}^{-1}\varepsilon)_{i_k}, & k\leq 0.
\end{cases}
\]

For $a\leq b\in\mathbb{Z}\sqcup\{\pm\infty\}$, following \cite{KKOP_mon_cat_quant_aff_II}, we define the integer interval $[a,b]=\{k\in\mathbb{Z} \;|\; a\leq k\leq b\}$.
When $a$ and $b$ are integers, the \textit{length} of  the interval $[a,b]$ is $l=b-a+1$. Otherwise, the interval
has \emph{infinite length} and, by abuse of terminology, we call $l=\infty$ the \textit{length of the chain}.

Following \cite[Def.~6.16]{KKOP_mon_cat_quant_aff_II}, the category 
$\mathscr{C}_{\mathfrak{g}}^{[a,b],\mathfrak{s}}$\label{symb_C_[a,b]_D_w0} is defined as
the monoidal Serre subcategory of $\mathscr{C}_\mathfrak{g}$ generated by the fundamental modules 
$L(Y_{i_k,p_k})$, for $k\in [a,b]$.

\begin{rem}
    The category $\mathscr{C}_{\mathfrak{g}}^{[a,b],\mathfrak{s}}$ is a particular case of the categories $\mathscr{C}_{\mathfrak{g}}^{[a,b],\mathcal{D}, \underline{w}_0}$ introduced in \cite[$\S$ 6.3]{kashiwara2021pbw} and used in \cite{KKOP_mon_cat_quant_aff_II}. More precisely, we have
    \[  \mathscr{C}_{\mathfrak{g}}^{[a,b],\mathfrak{s}} =  \mathscr{C}_{\mathfrak{g}}^{[a,b],\mathcal{D}_\mathcal{Q}, \underline{w}_0}, \] where $\mathcal{D}_\mathcal{Q}$ is the complete duality datum associated to the $\mathcal{Q}$-datum induced by $\varepsilon$. Notice that in the general definition of these categories the reduced expression $\underline{w}_0$ does not need to be adapted to $\varepsilon$.
\end{rem}

\subsection{Quantum Grothendieck rings}
\label{sec_def_Groth_rings} 
The \emph{quantum Cartan matrix} $A(q)$ of $\mathfrak{g}$ is the $n\times n$-matrix $A(q)=(q+q^{-1})I_n-J$, where $J$ is the adjacency matrix of the Dynkin graph of $\mathfrak{g}$. The matrix $A(q)$ is invertible and we can write the entries $\tilde{A}_{ij}(q)$ of its inverse $\tilde{A}(q)$ as
\[\tilde{A}(q)_{ij}=\sum_{m\geq 1}a_{ij}(m)q^m\in \mathbb{Z}((q)),\]
for some coefficients $a_{ij}(m)$ in $\mathbb{Z}$ (cf.~\cite[\S~2.4]{HL_quantum_Groth_rings_derived_Hall}).
Following \cite[\S~3]{HL_quantum_Groth_rings_derived_Hall}, for any $m\leq 0$, set $a_{ij}(m)=0$. For $i,j\in I_\mathfrak{g},\ p,s \in \mathbb{Z}$, define
$$\mathcal{N}(i,p;j,s)=
\begin{cases}
a_{ij}(s-p+1)-a_{ij}(s-p-1)&  \text{for } p<s,\\
0& \text{for } p=0,\\
a_{ij}(p-s-1)-a_{ij}(p-s+1)&\text{for } p>s.
\end{cases}$$
Notice that, Since $\tilde{A}(q)$ is symmetric, we have $N(i,p;j,s)=-N(j,s;i,p)$.

\begin{de}\cite[\S~3]{HL_quantum_Groth_rings_derived_Hall}
 The quantum torus $\mathcal{Y}_t$ is the $\mathbb{Z}[t^{\pm 1/2}]$-associative\label{symb_t} algebra presented by the variables $(\leftidx{^t}{Y}_{i,p}^{\pm 1})_{(i,p)\in \widehat{I}_{\mathfrak{g}}}$ and the relations:
\[\leftidx{^t}{Y}_{i,p}\leftidx{^t}{Y}_{i,p}^{-1}=1=\leftidx{^t}{Y}_{i,p}^{-1}\ \leftidx{^t}{Y}_{i,p},\]
\[\leftidx{^t}{Y}_{i,p}\leftidx{^t}{Y}_{j,s}=t^{\mathcal{N}(i,p;j,s)}\ \leftidx{^t}{Y}_{j,s}\leftidx{^t}{Y}_{i,p}.
\]
\end{de}
Following \cite[\S~3]{HL_quantum_Groth_rings_derived_Hall}, we collect the following terminology and notation:
\begin{itemize}
\setlength\itemsep{1.4em}
    \item Let $\mathrm{ev}_{t=1}: \mathcal{Y}_t\rightarrow \mathcal{Y}$ be the surjective $\mathbb{Z}$-algebra homomorphism defined by
    \[ t^{1/2}\mapsto 1 \text{ and } \leftidx{^t}{Y}_{i,p}\mapsto Y_{i,p};\]
    \item a \emph{monomial} $m\in \mathcal{Y}_t$ is a product of the generators $\leftidx{^t}{Y}_{i,p}^{\pm}$ and $t^{\pm 1/2}$;
    \item a monomial $m\in \mathcal{Y}_t$ is \emph{dominant} if $\mathrm{ev}_{t=1}(m)$ is dominant;
    \item the \emph{bar involution} $\overline{(-)}$ is the $\mathbb{Z}$-algebra anti-involution on $\mathcal{Y}_t$ defined by
    \[ \overline{t^{1/2}}=t^{-1/2},\ \ \overline{\leftidx{^t}{Y}_{i,p}}=\leftidx{^t}{Y}_{i,p}.     \]

\item a \emph{commutative monomial} in $\mathcal{Y}_t$ is a monomial of the form 

\[t^{\frac{1}{2}\sum_{(i,p)<(j,s)}u_{i,p}u_{j,s}\mathcal{N}(j,s;i,p)}\underset{(i,p)\in \widehat{I}_{\mathfrak{g}}}{\overrightarrow{\cdot}}\leftidx{^t}{Y}_{i,p}^{u_{i,p}}\]
where $(u_{i,p})_{(i,p)\in \widehat{I}_{\mathfrak{g}}}$ is a family of integers with finitely many nonzero components and $<$ is an arbitrary total order of $\widehat{I}_{\mathfrak{g}}$ such that $(i,p)<(j,s)$ if and only if $p<s$. Since the resulting expression does not depend on the choice of the total order we denote it as $\prod_{(i,p)\in\widehat{I}_{\mathfrak{g}}}\leftidx{^t}{Y}_{i,p}^{u_{i,p}}$;

\item for a monomial $m\in \mathcal{Y}$, we denote by $\leftidx{^t}{m}$ the commutative monomial in $\mathcal{Y}_t$ such that $\mathrm{ev}_{t=1}(\leftidx{^t}{m})=m$.
\end{itemize}

The commutative monomials are bar-invariant and form a basis of $\mathcal{Y}_t$ as a $\mathbb{Z}[t^{\pm 1/2}]$-module.
The non-commutative product of two commutative monomials $\leftidx{^t}{m_1}$ and $\leftidx{^t}{m_2}$ is given by 
\begin{equation}
\label{eq_comum}
\leftidx{^t}{m_1}\leftidx{^t}{m_2}=t^{\frac{1}{2}\mathcal{N}(m_1,m_2)}\leftidx{^t}{(m_1m_2)}=t^{\frac{1}{2}\mathcal{N}(m_1,m_2)}\leftidx{^t}{m_2}\leftidx{^t}{m_1}.
\end{equation}
where 
\begin{equation}
\mathcal{N}(m_1,m_2)=\sum_{(i,p),(j,s)\in \widehat{I}_{\mathfrak{g}}}u_{i,p}(m)u_{j,s}(m_2)N(i,p;j,s)
\end{equation}

\begin{rem}
\label{rem_invariant_Lambda}
Kashiwara–Kim–Oh–Park \cite[Def. 3.6]{KKOP_mon_cat_quant_aff} have associated  an integer denoted by $\Lambda(M,N)$ to each pair of simple modules $M$ and $N$ in $\mathscr{C}_\mathfrak{g}$.
If $m_1$ and $m_2$ are dominant monomials such that the simple modules $L(m_1)$ and $L(m_2)$ commute, then, as shown in \cite[Cor.~5.15]{Fuj_Oh_2021}, we have the equality 
\[ \Lambda(L(m_1),L(m_2)) = \mathcal{N}(m_1,m_2).\]
\end{rem}

For any $i$ in $I$, we define $\mathcal{K}_{i,t}$ as the $\mathbb{Z}[t^{\pm1/2}]$-subalgebra of $\mathcal{Y}_t$ generated by the commutative polynomials
\[\leftidx{^t}{Y}_{i,p}(1+\leftidx{^t}{A}_{i,p+1}^{-1}),\ \leftidx{^t}{Y}_{j,s}^{\pm 1},\quad \big((i,p),(j,s)\in \widehat{I}_{\mathfrak{g}},\ i\neq j\big).\] The \emph{quantum Grothendieck ring} of $\mathscr{C}_\mathfrak{g}^{\mathbb{Z}}$ is defined (cf.~\cite{Hernandez2004_alg_approach,HL_quantum_Groth_rings_derived_Hall}) as the intersection
\[\mathcal{K}_t(\mathscr{C}_\mathfrak{g}^{\mathbb{Z}})=\bigcap_{i\in I}\mathcal{K}_{i,t}.\label{symb_K_t(C_g^z)}\]

As the following result shows, we can think of $\mathcal{K}_t(\mathscr{C}_\mathfrak{g}^{\mathbb{Z}})$ as a deformation of the Grothendieck ring of $\mathscr{C}_\mathfrak{g}^{\mathbb{Z}}$.

\begin{prop}[{\cite[Thm.~6.2]{Hernandez2004_alg_approach}}]
The image of the quantum Grothendieck ring $\mathcal{K}_t(\mathscr{C}_\mathfrak{g}^{\mathbb{Z}})$ under the evaluation map $\mathrm{ev}_{t=1}$ coincides with the image of the Grothendieck ring $\mathcal{K}(\mathscr{C}_\mathfrak{g}^{\mathbb{Z}})$ under the $q$-character map:
\[  \mathrm{ev}_{t=1}(\mathcal{K}_t(\mathscr{C}_\mathfrak{g}^{\mathbb{Z}}) ) = \chi_q ( \mathcal{K}(\mathscr{C}_\mathfrak{g}^{\mathbb{Z}}) ).\]
\end{prop}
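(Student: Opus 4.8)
The plan is to reduce the statement to the ``screening-kernel'' descriptions of both rings. On the classical side, the theorem of Frenkel--Mukhin identifies $\chi_q(\mathcal{K}(\mathscr{C}_\mathfrak{g}^{\mathbb{Z}}))$ with the intersection $\bigcap_{i\in I}\mathcal{K}_i$, where $\mathcal{K}_i\subseteq\mathcal{Y}$ is the commutative subalgebra generated by the elements $Y_{i,p}(1+A_{i,p+1}^{-1})$ and $Y_{j,s}^{\pm 1}$ with $j\neq i$; this is precisely the specialization of the defining algebra $\mathcal{K}_{i,t}$ at $t^{1/2}=1$. Since $\mathcal{K}_t(\mathscr{C}_\mathfrak{g}^{\mathbb{Z}})=\bigcap_{i\in I}\mathcal{K}_{i,t}$ by definition, the claim becomes the equality $\mathrm{ev}_{t=1}\big(\bigcap_i\mathcal{K}_{i,t}\big)=\bigcap_i\mathcal{K}_i$.

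The inclusion ``$\subseteq$'' is the easy one. The homomorphism $\mathrm{ev}_{t=1}$ sends the generators $\leftidx{^t}{Y}_{i,p}(1+\leftidx{^t}{A}_{i,p+1}^{-1})$ and $\leftidx{^t}{Y}_{j,s}^{\pm 1}$ of $\mathcal{K}_{i,t}$ onto the generators $Y_{i,p}(1+A_{i,p+1}^{-1})$ and $Y_{j,s}^{\pm 1}$ of $\mathcal{K}_i$; hence $\mathrm{ev}_{t=1}(\mathcal{K}_{i,t})=\mathcal{K}_i$ for each $i$, and an element of $\bigcap_i\mathcal{K}_{i,t}$ is carried into $\bigcap_i\mathcal{K}_i$. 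Thus $\mathrm{ev}_{t=1}(\mathcal{K}_t(\mathscr{C}_\mathfrak{g}^{\mathbb{Z}}))\subseteq\chi_q(\mathcal{K}(\mathscr{C}_\mathfrak{g}^{\mathbb{Z}}))$.

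For ``$\supseteq$'' I would use the basis of $(q,t)$-characters of standard modules. Recall (as stated in the introduction) that the $M_t(m)$, $m\in\mathcal{M}^+$, form a $\mathbb{Z}[t^{\pm 1/2}]$-basis of $\mathcal{K}_t(\mathscr{C}_\mathfrak{g}^{\mathbb{Z}})$, and that each $M_t(m)$ is, up to a normalizing power of $t^{1/2}$, an ordered product $\overrightarrow{\prod}_{(i,p)} F_{i,p}^{\,u_{i,p}}$ of the $(q,t)$-characters $F_{i,p}=M_t(Y_{i,p})$ of the fundamental modules, where $m=\prod Y_{i,p}^{u_{i,p}}$. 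Applying $\mathrm{ev}_{t=1}$ and using that the target $\mathcal{Y}$ is commutative --- so that the ordering of the factors and the power of $t^{1/2}$ are irrelevant after evaluation --- one gets $\mathrm{ev}_{t=1}(M_t(m))=\prod_{(i,p)}\chi_q(L(Y_{i,p}))^{u_{i,p}}=\chi_q(M(m))$, since $\mathrm{ev}_{t=1}(F_{i,p})=\chi_q(L(Y_{i,p}))$ (the fundamental module is its own standard module, and the $t$-deformed Frenkel--Mukhin construction of $F_{i,p}$ specializes at $t^{1/2}=1$ to the classical one). As the classes $[M(m)]$, $m\in\mathcal{M}^+$, form a $\mathbb{Z}$-basis of $\mathcal{K}(\mathscr{C}_\mathfrak{g}^{\mathbb{Z}})$ --- the latter being the polynomial ring on the classes of fundamental modules --- this yields
\[
\mathrm{ev}_{t=1}\big(\mathcal{K}_t(\mathscr{C}_\mathfrak{g}^{\mathbb{Z}})\big)=\mathrm{ev}_{t=1}\Big(\bigoplus_{m\in\mathcal{M}^+}\mathbb{Z}[t^{\pm 1/2}]\,M_t(m)\Big)=\sum_{m\in\mathcal{M}^+}\mathbb{Z}\,\chi_q(M(m))=\chi_q\big(\mathcal{K}(\mathscr{C}_\mathfrak{g}^{\mathbb{Z}})\big),
\]
which is the claimed equality.

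The real content hides in the imported facts: the existence of the fundamental $(q,t)$-characters $F_{i,p}$ inside the full intersection $\bigcap_i\mathcal{K}_{i,t}$ (not merely inside one $\mathcal{K}_{i,t}$), the fact that the resulting $M_t(m)$ form a $\mathbb{Z}[t^{\pm 1/2}]$-basis, and the specialization $\mathrm{ev}_{t=1}(F_{i,p})=\chi_q(L(Y_{i,p}))$ --- all of which come from running the $t$-deformed Frenkel--Mukhin algorithm and controlling its termination and triangularity, and which I would invoke rather than reprove. Granting them, the evaluation identity is the short lattice computation above. If one wished to bypass the explicit basis theorem, an alternative is available: by the easy inclusion, $\mathrm{ev}_{t=1}(M_t(m))$ already lies in $\bigcap_i\mathcal{K}_i$ and has $m$ as its maximal monomial for Nakajima's order, so a descending induction on that order --- using that only finitely many monomials $m'\leq m$ occur in the supports involved --- recovers each $\chi_q(M(m))$, hence all of $\bigcap_i\mathcal{K}_i$, inside $\mathrm{ev}_{t=1}(\mathcal{K}_t(\mathscr{C}_\mathfrak{g}^{\mathbb{Z}}))$.
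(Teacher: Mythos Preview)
The paper does not give its own proof of this proposition; it is simply stated with a citation to \cite[Thm.~6.2]{Hernandez2004_alg_approach}. So there is nothing in the present paper to compare your argument against.

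That said, your sketch is a faithful outline of how Hernandez's result is actually obtained. The inclusion $\subseteq$ via $\mathrm{ev}_{t=1}(\mathcal{K}_{i,t})\subseteq\mathcal{K}_i$ together with the Frenkel--Mukhin description of $\chi_q(\mathcal{K}(\mathscr{C}_\mathfrak{g}^{\mathbb{Z}}))$ is exactly right, and the reverse inclusion via the standard-module basis is the heart of the cited theorem. You are also honest about where the real work lies: the existence of the $F_t(Y_{i,p})$ in the full intersection, the fact that the $M_t(m)$ span $\mathcal{K}_t(\mathscr{C}_\mathfrak{g}^{\mathbb{Z}})$, and the specialization $\mathrm{ev}_{t=1}(M_t(m))=\chi_q(M(m))$ are precisely the nontrivial outputs of \cite{Hernandez2004_alg_approach}, and the present paper records them separately as the proposition on $F_t(m)$ and the proposition on $M_t(m)$ immediately following the one you are proving. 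One small circularity to watch: in this paper the basis statement for the $M_t(m)$ is itself attributed to \cite[Thm.~6.2]{Hernandez2004_alg_approach}, the same theorem you are proving, so if you want a self-contained argument you should cite the construction of $F_t(m)$ via the deformed algorithm \cite[Thm.~5.11]{Hernandez2004_alg_approach} directly rather than the basis theorem, as your final paragraph's alternative route correctly suggests.
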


For any dominant monomial $m\in\mathcal{M}^+$, we recall some notable elements of the quantum Grothendieck ring $\mathcal{K}_t(\mathscr{C}_\mathfrak{g}^{\mathbb{Z}})$, namely $F_t(m), L_t(m)$ and $M_t(m)$.\\

$F_t(m)$\label{symb_F_t(m)} is the unique element of $\mathcal{K}_t$ such that $\leftidx{^t}{m}$ is the unique dominant monomial in $F_t(m)$ (see \cite[Thm.~5.11]{Hernandez2004_alg_approach} for its existence and uniqueness).

\begin{prop}[{\cite[Thm.~5.11]{Hernandez2004_alg_approach}}]\mbox{}
\begin{itemize}
\setlength\itemsep{1.2em}
    \item For any dominant monomial $m$, the element $F_t(m)$ is invariant under the bar involution.
    \item The elements $(F_t(Y_{i,p}))_{(i,p)\in \widehat{I}_{\mathfrak{g}}}$ generate $\mathcal{K}(\mathscr{C}_\mathfrak{g}^{\mathbb{Z}})$ as a $\mathbb{Z}[t^{\pm 1/2}]$-algebra.
\end{itemize}
\end{prop}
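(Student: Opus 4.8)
The plan is to treat the two assertions separately: bar-invariance follows from a short uniqueness argument, and the generation statement from a triangularity induction over Nakajima's order.

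For bar-invariance, I would first check that the bar involution preserves the ring $\mathcal{K}_t(\mathscr{C}_\mathfrak{g}^{\mathbb{Z}})=\bigcap_{i\in I}\mathcal{K}_{i,t}$. Since $\overline{(-)}$ commutes with intersections, it suffices to see $\overline{\mathcal{K}_{i,t}}=\mathcal{K}_{i,t}$ for each $i$. The generators of $\mathcal{K}_{i,t}$ over $\mathbb{Z}[t^{\pm 1/2}]$ — the commutative polynomials $\leftidx{^t}{Y}_{i,p}(1+\leftidx{^t}{A}_{i,p+1}^{-1})$ and the $\leftidx{^t}{Y}_{j,s}^{\pm 1}$ with $j\neq i$ — are sums of commutative monomials, hence bar-invariant; because $\overline{(-)}$ is a $\mathbb{Z}$-algebra anti-involution with $\overline{t^{1/2}}=t^{-1/2}$, it carries a product of such generators with coefficient $c(t)$ to the reversed product with coefficient $c(t^{-1})$, which again lies in $\mathcal{K}_{i,t}$. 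Hence $\overline{\mathcal{K}_{i,t}}\subseteq\mathcal{K}_{i,t}$, and equality follows since $\overline{(-)}$ is an involution. Now expand $F_t(m)=\sum_{m'}c_{m'}(t)\,\leftidx{^t}{m'}$ in the basis of commutative monomials, normalized so that $c_m(t)=1$. Commutative monomials being bar-fixed, $\overline{F_t(m)}=\sum_{m'}c_{m'}(t^{-1})\,\leftidx{^t}{m'}$ lies in $\mathcal{K}_t(\mathscr{C}_\mathfrak{g}^{\mathbb{Z}})$ and has the same set of dominant monomials as $F_t(m)$, namely $\{\leftidx{^t}{m}\}$, now occurring with coefficient $c_m(t^{-1})=1$. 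By the uniqueness clause in the definition of $F_t(m)$, we conclude $\overline{F_t(m)}=F_t(m)$.

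For the generation statement I would first record that $\{F_t(m)\}_{m\in\mathcal{M}^+}$ spans $\mathcal{K}_t(\mathscr{C}_\mathfrak{g}^{\mathbb{Z}})$ over $\mathbb{Z}[t^{\pm 1/2}]$: any nonzero $x$ in the quantum Grothendieck ring contains a dominant monomial (a standard property of elements of $\mathcal{K}_t$ coming from the theory of $q$-characters), so choosing one, $\leftidx{^t}{m}$, maximal among the dominant monomials of $x$ for Nakajima's order and with coefficient $c(t)$, the element $x-c(t)F_t(m)$ again lies in $\mathcal{K}_t$ and has strictly fewer dominant monomials, since $\leftidx{^t}{m}$ is the only dominant monomial of $F_t(m)$; iterating proves the claim. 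The heart of the matter is then the identity
\[
F_t(m_1)F_t(m_2)=t^{\frac{1}{2}\mathcal{N}(m_1,m_2)}\,F_t(m_1m_2)+\sum_{\substack{m'\in\mathcal{M}^+\\ m'<m_1m_2}}c_{m'}(t)\,F_t(m'),\qquad c_{m'}(t)\in\mathbb{Z}[t^{\pm 1/2}].
\]
To establish it, note $F_t(m_1)F_t(m_2)\in\mathcal{K}_t$, and by $(\ref{eq_comum})$ every commutative monomial occurring in it is, up to a power of $t$, of the form $\leftidx{^t}{(\mu_1\mu_2)}$ with $\leftidx{^t}{\mu_i}$ occurring in $F_t(m_i)$; since every such $\mu_i$ satisfies $\mu_i\leq m_i$ for Nakajima's order, the ratio $(m_1m_2)(\mu_1\mu_2)^{-1}=(m_1\mu_1^{-1})(m_2\mu_2^{-1})$ is a monomial in the $A_{j,s}$ with non-negative exponents, so $\mu_1\mu_2\leq m_1m_2$ with equality iff $\mu_1=m_1$ and $\mu_2=m_2$. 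Hence $m_1m_2$ is the unique maximal dominant monomial occurring, with coefficient exactly $t^{\frac{1}{2}\mathcal{N}(m_1,m_2)}$ (coming from reordering $\leftidx{^t}{m_1}\leftidx{^t}{m_2}$), and subtracting $t^{\frac{1}{2}\mathcal{N}(m_1,m_2)}F_t(m_1m_2)$ leaves an element of $\mathcal{K}_t$ all of whose dominant monomials are $<m_1m_2$; the displayed identity follows from the spanning statement.

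Finally I would conclude by a downward induction on Nakajima's order, legitimate because $\{m'\in\mathcal{M}^+ : m'\leq m\}$ is finite for every fixed $m$. Writing a dominant monomial as $m=\prod_{(i,p)}Y_{i,p}^{u_{i,p}}$ with all $u_{i,p}\geq 0$, iterating the triangularity identity — using that $m''<m'$ implies $m''Y_{j,s}<m'Y_{j,s}$, so all correction terms stay strictly below $m$ — gives, for a fixed order of the factors, $\overrightarrow{\prod}_{(i,p)}F_t(Y_{i,p})^{u_{i,p}}=t^{c}F_t(m)+\sum_{m'<m,\,m'\in\mathcal{M}^+}d_{m'}(t)F_t(m')$ for a suitable $c$, so $F_t(m)$ lies in the $\mathbb{Z}[t^{\pm 1/2}]$-subalgebra generated by the $F_t(Y_{j,s})$ by the induction hypothesis applied to the $m'<m$; the base case $m=1$ is $F_t(1)=1$. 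Since the $F_t(m)$ span $\mathcal{K}_t(\mathscr{C}_\mathfrak{g}^{\mathbb{Z}})$, this proves the second assertion. The main obstacle is the triangularity identity, and specifically the two order-theoretic inputs it rests on — that every monomial of $F_t(m)$ is $\leq m$ for Nakajima's order, and that this order is compatible with multiplication of monomials; the finiteness of $\{m'\leq m\}$ and the existence of a dominant monomial in every nonzero element of $\mathcal{K}_t$ are the remaining ingredients that genuinely use the theory of $(q,t)$-characters rather than formal manipulation.
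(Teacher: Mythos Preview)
The paper does not supply its own proof of this proposition: it is quoted verbatim as a result of Hernandez (\cite[Thm.~5.11]{Hernandez2004_alg_approach}), with no argument given. So there is no ``paper's proof'' to compare against; what you have written is essentially a reconstruction of the standard argument from the original reference.

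Your proof is correct and follows the natural route. The bar-invariance argument via uniqueness is exactly the right idea, and your observation that the bar involution preserves each $\mathcal{K}_{i,t}$ (because its generators are commutative monomials, hence bar-fixed) is the key point. For the generation statement, your triangularity-plus-induction argument is the expected one; you have correctly isolated the genuine inputs from the theory of $(q,t)$-characters --- that every monomial of $F_t(m)$ is $\leq m$ in Nakajima's order, that every nonzero element of $\mathcal{K}_t$ contains a dominant monomial, and that $\{m'\in\mathcal{M}^+:m'\leq m\}$ is finite --- and distinguished them from the purely formal manipulations. One small remark: the spanning argument (``strictly fewer dominant monomials'') terminates because the set of dominant monomials appearing in a fixed element is finite, which is clear, but the induction for generation really uses the finiteness of lower intervals in Nakajima's order, which you correctly flag.
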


Next, we define $M_t(m)$, the \emph{$(q,t)$-character of the standard module} $M(m)$, as

\[M_t(m)=\leftidx{^t}{m}(\underset{p\in\mathbb{Z}}{\overrightarrow{\cdot}}\prod_{i\in I,  (i,p)\in\widehat{I}_{\mathfrak{g}} }\leftidx{^t}{Y}_{i,p}^{u_{i,p}(m)})^{-1}(\underset{p\in\mathbb{Z}}{\overrightarrow{\cdot}}\prod_{i\in I, (i,p)\in\widehat{I}_{\mathfrak{g}} }F_t(Y_{i,p})^{u_{i,p}(m)}),\label{symb_M_t(m)}
\]

\begin{prop}
\mbox{}
\begin{itemize}
\setlength\itemsep{1.2em}
\item For any dominant monomial $m$, the evalutaion of $M_t(m)$ at $t=1$ coincides with the $q$-character $\chi_q(M(m))$ (\cite[Thm.~6.3]{Hernandez2004_alg_approach})
\item For any dominant monomial $m$, 
\[M_t(m)=\leftidx{^t}{m}+\sum_{m'<m}a_{m'}\leftidx{^t}{m'},\]
where the coefficients $a_{m'}$ are in $\mathbb{N}[t^{\pm 1}]$ (\cite[\S~8]{Nakajima2004}).
\item The elements $(M_t(m))_{m\in \mathcal{M}^+}$ form a $\mathbb{Z}[t^{\pm 1/2}]$-basis of $\mathcal{K}(\mathscr{C}_\mathfrak{g}^{\mathbb{Z}})$ (\cite[Thm.~6.2]{Hernandez2004_alg_approach})
\end{itemize}
\end{prop}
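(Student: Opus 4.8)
The plan is to derive the three assertions from the defining formula for $M_t(m)$ together with the structural facts about the $F_t(Y_{i,p})$ already recorded. The starting remark, used for all three items, is that the prefactor $\leftidx{^t}{m}\,\bigl(\overrightarrow{\prod_{p}}\prod_{i}\leftidx{^t}{Y}_{i,p}^{u_{i,p}(m)}\bigr)^{-1}$ is a power of $t^{1/2}$: its two factors are Laurent monomials in the $\leftidx{^t}{Y}_{i,p}$ whose image under $\mathrm{ev}_{t=1}$ is $m=\prod Y_{i,p}^{u_{i,p}(m)}$, and by the $q$-commutation relations any two such monomials differ only by a power of $t^{1/2}$. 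Hence $M_t(m)=t^{c}\,\overrightarrow{\prod_{p}}\prod_{i}F_t(Y_{i,p})^{u_{i,p}(m)}$ for some $c\in\frac{1}{2}\mathbb{Z}$; in particular $M_t(m)$ is a product of generators of $\mathcal{K}_t(\mathscr{C}_\mathfrak{g}^{\mathbb{Z}})$, so it lies in this ring.

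For the first item, apply the ring homomorphism $\mathrm{ev}_{t=1}$, which kills powers of $t$ and lands in the commutative ring $\mathcal{Y}$: thus $\mathrm{ev}_{t=1}(M_t(m))=\prod_{i,p}\mathrm{ev}_{t=1}(F_t(Y_{i,p}))^{u_{i,p}(m)}$. One then checks $\mathrm{ev}_{t=1}(F_t(Y_{i,p}))=\chi_q(L(Y_{i,p}))$: the left-hand side lies in $\mathrm{ev}_{t=1}(\mathcal{K}_t(\mathscr{C}_\mathfrak{g}^{\mathbb{Z}}))=\chi_q(\mathcal{K}(\mathscr{C}_\mathfrak{g}^{\mathbb{Z}}))$, and since $\leftidx{^t}{Y}_{i,p}$ is the unique dominant monomial of $F_t(Y_{i,p})$ and $\mathrm{ev}_{t=1}$ does not change exponents of monomials, this specialization has $Y_{i,p}$ as its unique dominant monomial, of multiplicity one --- and such an element of $\chi_q(\mathcal{K}(\mathscr{C}_\mathfrak{g}^{\mathbb{Z}}))$ is necessarily $\chi_q(L(Y_{i,p}))$. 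Finally, since the standard module $M(m)$ is by definition the ordered tensor product $\bigotimes_{i,p}L(Y_{i,p})^{\otimes u_{i,p}(m)}$ and $\chi_q$ is a ring homomorphism, $\chi_q(M(m))=\prod_{i,p}\chi_q(L(Y_{i,p}))^{u_{i,p}(m)}$, which matches the expression just obtained.

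For the second item, I would use that $F_t(Y_{i,p})=\leftidx{^t}{Y}_{i,p}+\sum_{\mu<Y_{i,p}}c_\mu\,\leftidx{^t}{\mu}$ with $c_\mu\in\mathbb{N}[t^{\pm 1}]$, that is, that $F_t(Y_{i,p})$ coincides with the $(q,t)$-character of the fundamental module $L(Y_{i,p})$: its only monomial not strictly below $Y_{i,p}$ in Nakajima's order is the leading one, and its coefficients are non-negative. Expanding $\overrightarrow{\prod_{p}}\prod_{i}F_t(Y_{i,p})^{u_{i,p}(m)}$, a generic term is a product $\leftidx{^t}{\mu_{1}}\cdots\leftidx{^t}{\mu_{r}}$ with $\mu_a\leq Y_{i_a,p_a}$ for each $a$; rewriting it as $t^{(\ast)}\,\leftidx{^t}{(\mu_1\cdots\mu_r)}$ and using that Nakajima's order is compatible with products --- so $\mu_1\cdots\mu_r\leq m$, with equality iff every $\mu_a$ equals the corresponding $Y_{i_a,p_a}$ --- gives $M_t(m)=\leftidx{^t}{m}+\sum_{m'<m}a_{m'}\,\leftidx{^t}{m'}$. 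The coefficient of $\leftidx{^t}{m}$ is exactly $1$ once the prefactor $t^{c}$ is absorbed, which is the point of the normalization built into the definition of $M_t(m)$; and $a_{m'}\in\mathbb{N}[t^{\pm 1}]$ because the only $t$'s entering beyond the $c_\mu$ are the monomial twists $t^{(\ast)}$ coming from the reorderings, which cannot spoil positivity. The genuinely deep input here is the $\mathbb{N}[t^{\pm 1}]$-positivity of the $F_t(Y_{i,p})$, resting on Nakajima's quiver-variety description, which I would cite rather than reprove.

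For the third item, linear independence of $(M_t(m))_{m\in\mathcal{M}^+}$ over $\mathbb{Z}[t^{\pm 1/2}]$ is immediate from the second item: written in the $\mathbb{Z}[t^{\pm 1/2}]$-basis $(\leftidx{^t}{m})_{m\in\mathcal{M}}$ of $\mathcal{Y}_t$, this family is unitriangular with respect to a linear extension of Nakajima's order. For spanning, I would invoke the $(q,t)$-analogue of the Frenkel--Mukhin property --- any monomial of an element of $\mathcal{K}_t(\mathscr{C}_\mathfrak{g}^{\mathbb{Z}})$ that is maximal for Nakajima's order is dominant, which follows from $\mathcal{K}_t(\mathscr{C}_\mathfrak{g}^{\mathbb{Z}})=\bigcap_i\mathcal{K}_{i,t}$ --- together with the finiteness of $\{\nu\in\mathcal{M}^+ : \nu\leq m\}$ for each $m$. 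Given $x\in\mathcal{K}_t(\mathscr{C}_\mathfrak{g}^{\mathbb{Z}})$, one subtracts $\lambda_j M_t(\mu_j)$ for each monomial $\mu_j$ of $x$ maximal for Nakajima's order (necessarily dominant, with coefficient $\lambda_j$); by the second item this removes each $\mu_j$ and introduces only monomials strictly below some $\mu_j$, so the finite set of dominant monomials lying in, or below a monomial of, $x$ strictly shrinks. Iterating terminates, with $x=0$ once no dominant monomial remains, thereby expressing $x$ as a $\mathbb{Z}[t^{\pm 1/2}]$-combination of the $M_t(m)$. Independence and spanning give the basis statement; as in the second item, the real obstacle is the structural control of $\mathcal{K}_t(\mathscr{C}_\mathfrak{g}^{\mathbb{Z}})$ underpinning both the triangularity of the $F_t(Y_{i,p})$ and the maximal-monomial lemma (Hernandez's algebraic approach), everything else being bookkeeping with $t$-powers and Nakajima's order.
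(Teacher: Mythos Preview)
The paper does not supply a proof of this proposition: it is stated as a compilation of known results, with each item carrying only a citation (to \cite{Hernandez2004_alg_approach} and \cite{Nakajima2004}). So there is no ``paper's own proof'' to compare against; your sketch is effectively a reconstruction of the arguments in those references.

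That said, your outline is sound and follows the standard route. One point deserves sharpening: in the second item you argue that the reordering twists $t^{(\ast)}$ ``cannot spoil positivity'', which is true, but the claim is stronger --- the coefficients lie in $\mathbb{N}[t^{\pm 1}]$, not merely $\mathbb{N}[t^{\pm 1/2}]$. Your argument, as written, only yields half-integer powers of $t$ a priori, since each reordering introduces a factor $t^{\frac{1}{2}\mathcal{N}(-,-)}$. Showing that the net exponent is always an integer requires a parity argument on $\mathcal{N}$ (or, in Nakajima's geometric picture, comes from the fact that the relevant Poincar\'e polynomials of quiver varieties are polynomials in $t$ rather than $t^{1/2}$). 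This is exactly why the paper defers to \cite{Nakajima2004} for this item. Everything else --- the evaluation argument via $\mathrm{ev}_{t=1}(F_t(Y_{i,p}))=\chi_q(L(Y_{i,p}))$, the unitriangularity, and the maximal-monomial-is-dominant spanning argument --- is correct and is indeed how Hernandez proceeds.
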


Finally, for a dominant monomial $m\in\mathcal{M}^+$, we recall $L_t(m)$\label{symb_L_t(m)}, the \emph{$(q,t)$-character of the simple module} $L(m)$.
It is the unique bar-invariant element $L_t(m)$ of $\mathcal{K}(\mathscr{C}_\mathfrak{g}^{\mathbb{Z}})$ such that it can be expressed as a linear combination of the form
\[
L_t(m)=M_t(m)+\sum_{m<m'}a_{m,m'}M_t(m'),
\]
where the coefficients $a_{m,m'}$ are in $t\mathbb{Z}[t].$ (see \cite[Thm.~8.1]{Nakajima2004} for its existence and uniqueness).

\begin{prop}
\mbox{}
\label{prop_properties_qtcharact_simple}
\begin{itemize}
\setlength\itemsep{1.2em}
\item If $L(m)$ is a $KR$ module, then $F_t(m)=L_t(m)$ $\mathrm{(}$\cite[Cor.~6.11]{HFOO_iso_quant_groth_ring_clust_alg}$\mathrm{)}$
\item For any dominant monomial $m$, 
\[L_t(m)=\leftidx{^t}{m}+\sum_{m'<m}b_{m'}\leftidx{^t}{m'},\]
where the coefficients $b_{m'}$ are in $\mathbb{N}[t^{\pm 1}]$ $\mathrm{(}$\cite[Cor.~6.12]{HFOO_iso_quant_groth_ring_clust_alg}$\mathrm{)}$.
\item The elements $(L_t(m))_{m\in \mathcal{M}^+}$ form a $\mathbb{Z}[t^{\pm 1/2}]$-basis of $\mathcal{K}(\mathscr{C}_\mathfrak{g}^{\mathbb{Z}})$ $\mathrm{(}$\cite[Thm.~6.2]{Hernandez2004_alg_approach}$\mathrm{)}$
\item Let $m_1$ and $m_2$ be dominant monomials in $\mathcal{M}^+$ and consider the expansion of the product of $L_t(m_1)$ by $L_t(m_2)$ along the basis of the $(L_t(m))_{m\in \mathcal{M}^+}$: 
\[L_t(m_1)*L_t(m_2)=\sum_mc_{m_{1},m_{2}}^m(t^{1/2})L_t(m).\]
Then the coefficients $c_{m_{1},m_{2}}^m(t^{1/2})$ are in $\mathbb{N}[t^{\pm1/2}]$ $\mathrm{(}$\cite[Cor.~10.7]{HFOO_propagation_positivity}$\mathrm{)}$
\end{itemize}
\end{prop}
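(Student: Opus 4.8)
The four statements are each recorded in the reference cited next to the corresponding bullet, so a proof amounts to recalling how each one is obtained and, for the last, isolating where the real work lies. I would treat them in the stated order, noting that the third bullet is independent of the first two while the second and fourth rest on the same positivity input.

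For the first bullet I would use that a Kirillov--Reshetikhin module $L(m)$, with $m=Y_{i,p}Y_{i,p+2}\cdots Y_{i,p+2(k-1)}$, is \emph{special}: $m$ is the only dominant monomial occurring in $\chi_q(L(m))$ (Nakajima; Hernandez). For such modules the Frenkel--Mukhin recursion computes $\chi_q(L(m))$, and its $t$-deformation, which is exactly the recursion characterizing $F_t(m)$ as the unique element of $\mathcal{K}_t$ whose only dominant monomial is $\leftidx{^t}{m}$, then computes $L_t(m)$; hence $L_t(m)=F_t(m)$. This is \cite[Cor.~6.11]{HFOO_iso_quant_groth_ring_clust_alg}.

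For the third bullet I would argue by unitriangularity. By the preceding proposition the $(M_t(m))_{m\in\mathcal{M}^+}$ form a $\mathbb{Z}[t^{\pm1/2}]$-basis of $\mathcal{K}(\mathscr{C}_\mathfrak{g}^{\mathbb{Z}})$. Since a $(q,t)$-character of a finite-dimensional module is a finite combination of commutative monomials, the defining expansion $L_t(m)=M_t(m)+\sum_{m<m'}a_{m,m'}M_t(m')$, with $a_{m,m'}\in t\mathbb{Z}[t]$, is a finite sum, and Nakajima's order makes the resulting change-of-basis matrix column- and row-finite and unitriangular; a unitriangular matrix over $\mathbb{Z}[t^{\pm1/2}]$ is invertible, so the $(L_t(m))_{m\in\mathcal{M}^+}$ form a basis as well (\cite[Thm.~6.2]{Hernandez2004_alg_approach}).

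The second and fourth bullets are positivity statements and both ultimately rest on the geometric realization of the $(q,t)$-characters of simple modules available in the simply-laced case treated here, in which $L_t(m)$ is a generating series of Betti numbers of intersection cohomology stalks on Nakajima's graded quiver varieties. For the second bullet this directly exhibits each $b_{m'}$ as a Poincar\'e polynomial, hence in $\mathbb{N}[t^{\pm1}]$ (\cite[Cor.~6.12]{HFOO_iso_quant_groth_ring_clust_alg}). For the fourth, one expands $L_t(m_1)*L_t(m_2)$ along the basis $(L_t(m))_m$ and interprets the coefficients $c_{m_1,m_2}^{m}(t^{1/2})$ as graded multiplicities of IC complexes in a pushforward, which are non-negative; this is \cite[Cor.~10.7]{HFOO_propagation_positivity}, whose argument also covers the non-simply-laced types by propagating positivity from type ADE through the comparison isomorphisms between quantum Grothendieck rings and the quantum affine Schur--Weyl duality of Kang--Kashiwara--Kim--Oh. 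I expect the fourth bullet to be the main obstacle: positivity of the multiplicative structure constants is by far the deepest of the four inputs, and I see no route to it that bypasses either the geometry of graded quiver varieties or the monoidal categorification by quiver Hecke algebras.
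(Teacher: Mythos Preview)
The paper does not give its own proof of this proposition: each bullet is stated with an inline citation and no \texttt{proof} environment follows. The proposition functions purely as a collection of known results from the literature, so there is nothing to compare your argument against beyond checking that your sketches match the cited sources.

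Your sketches are accurate and go well beyond what the paper does. The specialness argument for KR modules (first bullet), the unitriangularity argument for the basis claim (third bullet), and the geometric/IC-sheaf origin of the two positivity statements (second and fourth bullets) are all the standard routes taken in the cited references. Your assessment that the fourth bullet is by far the deepest input, requiring either the geometry of graded quiver varieties in type ADE or the propagation machinery of \cite{HFOO_propagation_positivity}, is correct. In short: your proposal is correct and more detailed than the paper's own treatment, which is simply to cite.
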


For any $(i,p)\in \widehat{I}_\mathfrak{g}$ and $k\in\mathbb{Z}$, let $\leftidx{^t}{W}^i_{k,p}$ be the $(q,t)$-character of the simple module $W^i_{k,p}$.
The following results highligths the central role of the Kirillov--Reshetikhin modules in the connection between cluster theory and quantum group theory. In fact, on one side their representatives in the Grothendieck ring $\mathcal{K}(\mathscr{C}_\mathfrak{g}^{\mathbb{Z}})$ satisfies a recursive system of equations called the \emph{$T$-system}, on the other side their $(q,t)$-characters satisfy a $t$-deformation of it, the \emph{quantum $T$-system}.

\begin{teo}[{\cite[Thm.~3.4]{Hernandez2006_solutionsTsystems},\cite[Thm.~6.8]{Aihara_Iyama_silt_mut_tring_cat}}]
\label{teo_quantum_T_system}
Let $k$ be in $\mathbb{Z}_{>0}$.
The following equality holds in $\mathcal{K}(\mathscr{C}_\mathfrak{g}^\mathbb{Z})$:
\[ 
[W^{i}_{k,p}][W^{i}_{k,p+2}]=[W^{i}_{k+1,p}][W^{i}_{k-1,p+2}]+\prod_{\jmath\sim i}[W^{j}_{k-1,p+1}].\]
Moreover, if we fix a total ordering of the set $\{\jmath\ |\ \jmath\sim i \}$, then there exist $a,b\in \frac{1}{2}\mathbb{Z}$ such that the following equality holds in $\mathcal{K}_t(\mathscr{C}_\mathfrak{g}^\mathbb{Z})$:
\[ 
\leftidx{^t}{W}^{i}_{k,p}\leftidx{^t}{W}^{i}_{k,p+2}=t^a\ \leftidx{^t}{W}^{i}_{k+1,p}\leftidx{^t}{W}^{i}_{k-1,p+2}+t^b\prod_{\jmath\sim i}\leftidx{^t}{W}^{j}_{k-1,p+1}.\]
\end{teo}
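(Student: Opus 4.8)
The plan is to prove the two assertions of Theorem~\ref{teo_quantum_T_system} separately, treating the classical $T$-system as the shadow of the quantum one under $\mathrm{ev}_{t=1}$. For the classical identity in $\mathcal{K}(\mathscr{C}_\mathfrak{g}^\mathbb{Z})$, the cleanest route is to recall that the Kirillov--Reshetikhin modules $W^i_{k,p}=L(Y_{i,p}Y_{i,p+2}\cdots Y_{i,p+2(k-1)})$ are precisely the simple modules whose $q$-characters are known (by Nakajima, Frenkel--Mukhin, Hernandez) to satisfy the $T$-system; this is exactly the content cited as \cite[Thm.~3.4]{Hernandez2006_solutionsTsystems}. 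So for the first claim I would simply invoke that reference, noting that the indecomposability/irreducibility of the $W^i_{k,p}$ ensures both sides are genuine classes (the right-hand side is a sum of classes of tensor products) and that the equality of $q$-characters transfers to an equality of classes because $\chi_q$ is injective.

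For the quantum $T$-system, the approach is to lift the classical identity through the evaluation map $\mathrm{ev}_{t=1}:\mathcal{Y}_t\to\mathcal{Y}$. First I would observe that each term appearing in the statement — $\leftidx{^t}{W}^i_{k,p}\leftidx{^t}{W}^i_{k,p+2}$, $\leftidx{^t}{W}^i_{k+1,p}\leftidx{^t}{W}^i_{k-1,p+2}$, and $\prod_{\jmath\sim i}\leftidx{^t}{W}^j_{k-1,p+1}$ — is a product of $(q,t)$-characters of simple modules that pairwise commute (the relevant KR modules are real simple modules lying in a commuting family, so $L(m_1)\otimes L(m_2)\cong L(m_2)\otimes L(m_1)$, whence by Remark~\ref{rem_invariant_Lambda} the commutation exponent is $\mathcal{N}(m_1,m_2)$). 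Consequently each product equals $t^{c}$ times a single element of the form $L_t$ of a dominant monomial for a suitable half-integer $c$, and in particular each product is bar-invariant up to the correcting power of $t$. Applying $\mathrm{ev}_{t=1}$ to the whole expression and using that $\mathrm{ev}_{t=1}(\leftidx{^t}{W}^i_{k,p})=\chi_q(W^i_{k,p})$ recovers the classical $T$-system, which forces the three commutative-monomial-indexed terms to match; the only freedom left is the powers of $t$, which are then pinned down by the positivity/bar-invariance constraints. The precise determination of $a$ and $b$ is carried out via the formula \eqref{eq_comum} for the product of commutative monomials, i.e. $\leftidx{^t}{m_1}\leftidx{^t}{m_2}=t^{\frac12\mathcal{N}(m_1,m_2)}\leftidx{^t}{(m_1m_2)}$, applied to the monomials $Y_{i,p}\cdots$ defining the KR modules; this reduces the claim to a finite computation with the entries $\mathcal{N}(i,p;j,s)$, which are explicit in terms of the inverse quantum Cartan matrix.

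The main obstacle is establishing that the three products on the right and left genuinely coincide as elements of $\mathcal{K}_t(\mathscr{C}_\mathfrak{g}^\mathbb{Z})$ and not merely after specialization — that is, ruling out that the quantum $T$-system could fail even though its $t=1$ shadow holds. The key input that removes this difficulty is the multiplicative structure of the $(q,t)$-characters: since all modules involved are real and pairwise commuting, the three terms are, up to invertible scalars $t^{\ast}$, the $(q,t)$-characters $L_t$ of the three dominant monomials $m_{\mathrm{lhs}}=\prod Y_{i,p+2j}\cdot\prod Y_{i,(p+2)+2j}$, $m_1=\prod Y_{i,p+2j}$ (length $k+1$) times $\prod Y_{i,p+2+2j}$ (length $k-1$), and $m_2=\prod_{\jmath\sim i}(\cdots)$; a direct check of the defining monomials shows $m_{\mathrm{lhs}}=m_1$ and that $m_2<m_{\mathrm{lhs}}$ in Nakajima's order, so on the level of $\mathcal{K}_t$ the identity is forced to be of the shape $t^{a'}L_t(m_1)=t^{a}L_t(m_1)+t^{b}L_t(m_2)$ — but this is impossible unless we correctly interpret the left side as a product rather than a single $L_t$; resolving this bookkeeping is exactly where \cite[Thm.~6.8]{Aihara_Iyama_silt_mut_tring_cat} enters, providing the quantum $T$-system directly in the cluster-theoretic framework. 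In writing the proof I would lean on that citation for the existence of $a,b$ and use the argument above only to fix them and to confirm compatibility with the classical system.
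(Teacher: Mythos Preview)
The paper does not prove this theorem: it is stated with attributions to \cite{Hernandez2006_solutionsTsystems} and \cite{Aihara_Iyama_silt_mut_tring_cat} and used as a black box, with no proof given. So there is no ``paper's own proof'' to compare against.

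That said, your sketch contains a genuine error worth flagging. You assert that each of the three products in the quantum identity is a product of $(q,t)$-characters of simple modules that ``pairwise commute,'' and hence each equals $t^c L_t(\cdot)$ for a single dominant monomial. This is false for the left-hand side: the modules $W^i_{k,p}$ and $W^i_{k,p+2}$ do \emph{not} commute---their tensor product is not simple, and indeed the entire content of the $T$-system is the short exact sequence exhibiting its two composition factors. So the left-hand side is not of the form $t^{a'}L_t(m_{\mathrm{lhs}})$, and your subsequent paragraph (where you arrive at the contradictory-looking $t^{a'}L_t(m_1)=t^a L_t(m_1)+t^b L_t(m_2)$ and then blame ``bookkeeping'') is a symptom of this mistake rather than a resolution of it. The actual argument for the quantum $T$-system goes by expanding $\leftidx{^t}{W}^i_{k,p}\leftidx{^t}{W}^i_{k,p+2}$ in the basis $(L_t(m))_m$, using positivity (Proposition~\ref{prop_properties_qtcharact_simple}) together with the classical $T$-system at $t=1$ to identify the two summands, and then using bar-invariance to fix the half-integer powers $a,b$; the two terms on the right \emph{are} each $t^{\ast}L_t(\cdot)$ because those pairs of KR modules genuinely commute, but the left side must be treated as a length-two object. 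Your proposal gestures at these ingredients but applies them in the wrong place.
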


For any interval $[a,b]\subset \mathbb{Z}$, we define $\mathcal{K}_t(\mathscr{C}_\mathfrak{g}^{[a,b],\mathfrak{s}})$ as the subalgebra of $\mathcal{K}_t(\mathscr{C}_\mathfrak{g}^{\mathbb{Z}})$ generated by the $L_t(Y_{i_k,p_k})$, for $k\in [a,b]$. If follows from the characterization of the $(q,t)$-characters of the simple modules given in \cite[Thm.~4.3]{HFOO_iso_quant_groth_ring_clust_alg} that $L_t(m)$ belongs to $\mathcal{K}_t(\mathscr{C}_\mathfrak{g}^{[a,b],\mathfrak{s}})$ if and only if $m$ is a monomial in the variables $(Y_{i_k,p_k})_{k\in [a,b]}$.

\subsection{Combinatorics of $i$-boxes and quantum seeds}
In this subsection, following \cite{KKOP_mon_cat_quant_aff_II}, we recall the definition and the properties of $i$-boxes, a combinatorial tool which will serve to parametrize the Kirillov--Reshetikhin modules.

Recall that we have fixed a sequence $\widehat{\underline{w}}_0=(i_k)_{k\in\mathbb{Z}}$.
For any $s\in\mathbb{Z}$, we will use the following notations: 

\begin{align}
s^+& =\text{min}\{t\;|\;s<t,\ i_t=i_s\}, \quad\quad   s^-=\text{max}\{t\;|\;s>t,\ i_t=i_s\},\\
s(\jmath)^+&=\text{min}\{t\;|\;s\leq t,\ i_t=\jmath\}, \quad\quad  s(\jmath)^-=\text{max}\{t\;|\;s\geq t,\ i_t=\jmath\}.
\end{align}

\begin{de}[{\cite[Section 4]{KKOP_mon_cat_quant_aff_II}}] \mbox{}
\begin{itemize} 
    \item An {\em $i$-box} is a finite integer interval $[a,b]$ such that $i_a=i_b$. 
    \item The {\em index} of an $i$-box $[a,b]$ is $\jmath=i_a=i_b$.
    \item The {\em $i$-cardinality} of an $i$-box $[a,b]_\jmath$ is the number of times that the index $\jmath$ appears in the sub-interval of $\widehat{\underline{w}}_0$ corresponding to $[a,b]$.
\end{itemize}
\end{de}
For a finite interval $[a,b]$, we write $[a,b\}$ and $\{a,b]$ for the largest $i$-boxes contained in $[a,b]$ with index $i_a$ and $i_b$ respectively:
\[ [a,b\}=[a,b(i_a)^-] \text{  and  } \{a,b]=[a(i_b)^+,b]. \]
When we want to emphasize that an $i$-box $[a,b]$ is of index $\jmath$, we will use the notation $[a,b]_\jmath$.

\begin{de}[{\cite[Def. 5.1]{KKOP_mon_cat_quant_aff_II}}]
Let $[a,b]$ be a possibly infinite interval of length $l$.
A \textit{chain} of $i$-boxes of \textit{range} $[a,b]$ is a sequence of $i$-boxes $\mathfrak{C}=(\mathfrak{c}_k)_{1\leq k< l+1}$ contained in $[a,b]$ and satisfying the following conditions for any $1\leq s< l+1$:
\begin{itemize}
    \item[(i)] The union $\bigcup_{1\leq k\leq s} \mathfrak{c}_k $ is an interval of length $s$;
    \item [(ii)] The $i$-box $\mathfrak{c}_s$ is the largest $i$-box of its own index contained in the interval $\bigcup_{1\leq k\leq s} \mathfrak{c}_k $.
\end{itemize}

For any  $1\leq s < l+1$, the sequence $(\mathfrak{c}_k)_{1\leq k\leq s}$ is a chain of $i$-boxes. We refer to it as a \textit{subchain} of $\mathfrak{C}$ of length $s$.
\end{de}

Notice that datum of a chain of $i$-boxes $\mathfrak{C}=(\mathfrak{c}_k)_{1\leq k< l+1}$ on an interval $[a,b]$ is equivalent to the data of the singleton 
$\{c\}=\mathfrak{c}_1$ and a sequence $E\in \{L,R\}^{l-1}$, such that, for $1\leq k<l$, $E_k$ is equal to $L$ respectively $R$ if adding $\mathfrak{c}_{k+1}$ increases the range of the interval $\bigcup_{1\leq k\leq s} \mathfrak{c}_k $ by one unit on the left respectively on the right. We call the sequence $E$ together with
the $i$-box $[c]$ the \emph{rooted sequence
of expansion operators} associated with the chain of $i$-boxes $\mathfrak{C}$.

For $a\in \mathbb{Z}\cup \{-\infty\}$ and  $b\in\mathbb{Z},\ b\geq a$,  
we write $\mathfrak{C}_{-}^{[a,b]}$ for the chain of $i$-boxes 
\[
\mathfrak{C}_{-}^{[a,b]}=(\mathfrak{c}^-_k)_{1\leq k \leq b-a+1}=([s,b\})_{b\geq s\geq a } \ .
\] 
Notice that the associated pair is $(b;(E_k)_{1\leq k\leq b-a})$, where $E_k=L$ for any $k$.

Let $\mathfrak{c}=[c,d]_i$ be an $i$-box of $i$-cardinality $m$.
Following \cite[Rem.~6.15] {KKOP_mon_cat_quant_aff_II}, we associate to the $i$-box $\mathfrak{c}$ the Kirillov-Reshetikhin module 
\[M(\mathfrak{c})=W^i_{m,p_c}.\]

The \emph{repetition quiver} $\widehat{Q}$ is the quiver with set of vertices $\widehat{I}_{\mathfrak{g}}$ and arrows:

\begin{itemize}
    \item $(i,p) \rightarrow (i,p-2)$ for any $(i,p) \in \widehat{I}_{\mathfrak{g}}$.
    \item $(i,p) \rightarrow (j,p+1)$ for any $(i,p) \in \widehat{I}_{\mathfrak{g}}$ and $j\sim i$.
\end{itemize}
For any interval $[a,b]$, we denote by $\widehat{Q}^{[a,b],\mathfrak{s}}$ the full subquiver of $\widehat{Q}$ on the set of vertices $(i_k,p_k)_{k\in \mathbb{Z}}$. 
For a chain of $i$-boxes $\mathfrak{C}=(\mathfrak{c}_k)$ of range $[a,b]$ and length $l$, set
\begin{align*}
K(\mathfrak{C})&=
    \{k\ |\ 1\leq k <l+1\}\\
K(\mathfrak{C})^{\text{fr}}&=\begin{cases}
\{s \in K(\mathfrak{C})\ |\ \mathfrak{c}_s=[a(\imath)^+,b(\imath)^-] \text{ for some } \imath \in I_{\check{\mathfrak{g}}} \}  &\text{ if } l<\infty;\\
    \emptyset, &\text{ if } l=\infty;
\end{cases};\\
K(\mathfrak{C})^\mu&=K(\mathfrak{C})\backslash K(\mathfrak{C})^{\text{fr}}.  
\end{align*}

We refer to Appendix \ref{appendix_clust_alg} for the terminology and the properties of (quantum) cluster algebras.

\begin{teo}[{\cite[\S~8]{KKOP_mon_cat_quant_aff_II}}]
\label{teo_KKOP_monoidal_cat}
Let $[a,b]$ be an interval of length $l$ and let $\mathfrak{C}=(\mathfrak{c}_k)_{1\leq k<l+1}$ be a chain of $i$-boxes of range $[a,b]$.
\begin{itemize}
    \item[$\mathrm{(i)}$] There exists an ice quiver $Q(\mathfrak{C})$ and an isomorphism $\varphi(\mathfrak{C})$ between the cluster algebra $\mathcal{A}(Q(\mathfrak{C}))$ and the Grothendieck ring $\mathcal{K}(\mathscr{C}_\mathfrak{g}^{[a,b],\mathfrak{s}})$ (cf. \cite[Thm.~8.1]{KKOP_mon_cat_quant_aff_II}). The set of (resp. frozen)  vertices of $Q(\mathfrak{C})$ is identified with $K(\mathfrak{C})$ (resp. $K^{\mathrm{fr}}(\mathfrak{C})$).
    \item[$\mathrm{(ii)}$] Under the isomorphism $\varphi(\mathfrak{C})$, for any $1\leq k\leq l$, the initial cluster variable $x_k$ is sent to the Kirillov-Reshetikhin module $M(\mathfrak{c}_k)$ and each cluster monomial is sent to a simple module (cf. \cite[Thm.~8.1]{KKOP_mon_cat_quant_aff_II}).
    \item[$\mathrm{(iii)}$] the quiver $Q(\mathfrak{C}_-^{[a,b]})$ is isomorphic to $\widehat{Q}^{[a,b],\mathfrak{s}}$ (cf. \cite{HL_Clust_alg_approach_q_char, KKOP_mon_cat_quant_aff_II}).
    \item[$\mathrm{(iv)}$] If $b$ is an integer, the quiver $Q(\mathfrak{C})$ can be obtained from the quiver $\widehat{Q}^{[a,b],\mathfrak{s}}$ by a finite sequence of mutations (cf. \cite[\S~8]{KKOP_mon_cat_quant_aff_II}).
    \item[$\mathrm{(v)}$] If $[a,b]=[-\infty,\infty]$ and $(\mathfrak{C}_s)_{s\geq 1}$ is the sequence of chains 
     of $i$-boxes defined by $\mathfrak{C}_s=(\mathfrak{c}_k)_{1\leq k\leq s}$, then the quiver $Q(\mathfrak{C})$ is the colimit of the sequence $(Q(\mathfrak{C}_s))_{s\geq 1}$ (cf. \cite[\S~8]{KKOP_mon_cat_quant_aff_II}).
\end{itemize}
\end{teo}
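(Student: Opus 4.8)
The plan is to prove the five assertions in the order (iii), then (iv) and (v), then (i) and (ii), since the last two rest on having an explicit initial seed. For (iii) I would argue by direct combinatorial identification: recall $\mathfrak{C}_-^{[a,b]}=([s,b\})_{a\le s\le b}$, so its $s$-th box is $\mathfrak{c}_s^-=[s,b(i_s)^-]$, of index $i_s$, and under the bijection $s\leftrightarrow(i_s,p_s)\in\widehat I_\mathfrak{g}$ one checks that the arrows prescribed by the definition of $Q(\mathfrak{C})$ between two boxes $\mathfrak{c}^-_s$ and $\mathfrak{c}^-_{s'}$ are exactly the arrows of the repetition quiver $\widehat Q$ between $(i_s,p_s)$ and $(i_{s'},p_{s'})$ --- the horizontal arrows $(i,p)\to(i,p-2)$ and the slanted arrows $(i,p)\to(j,p+1)$ for $i\sim j$, restricted to the vertex set $(i_k,p_k)_{k\in\mathbb{Z}}$. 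This is the bookkeeping already present in Hernandez--Leclerc's cluster approach to $q$-characters, so I would follow \cite{HL_Clust_alg_approach_q_char}.

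For (iv) and (v) the engine is a single lemma: two chains of $i$-boxes of the same range that differ by one \emph{elementary move} --- swapping a consecutive pair of expansion operators from $LR$ to $RL$, which replaces exactly one box $\mathfrak{c}_s$ by the opposite maximal $i$-box inside the same sub-interval --- have quivers related by the mutation $\mu_s$, the exchange relation being an instance of the $T$-system of Theorem~\ref{teo_quantum_T_system}. Granting this, (iv) follows by induction on the number of elementary moves needed to pass from $\mathfrak{C}_-^{[a,b]}$, whose quiver is $\widehat Q^{[a,b],\mathfrak{s}}$ by (iii), to an arbitrary $\mathfrak{C}$; that this number is finite for a bounded interval is itself elementary. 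For (v), with $[a,b]=[-\infty,\infty]$ and $\mathfrak{C}_s=(\mathfrak{c}_k)_{1\le k\le s}$, adding one box to a chain adds one vertex and finitely many arrows incident to already-present vertices and removes none, so the $Q(\mathfrak{C}_s)$ form a directed system of full-subquiver inclusions; their union is $Q(\mathfrak{C})$ by construction and satisfies the universal property of the colimit because every finite subquiver of $Q(\mathfrak{C})$ already lies in some $Q(\mathfrak{C}_s)$.

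For (i) and (ii) I would first establish the initial seed for $\mathfrak{C}_-^{[a,b]}$ with $b$ finite: (a) $\mathcal{K}(\mathscr{C}_\mathfrak{g}^{[a,b],\mathfrak{s}})$ is a commutative polynomial ring, by injectivity of $\chi_q$ and the parametrization of simples by dominant monomials; (b) the classes $[M(\mathfrak{c}^-_k)]$, $k\in K(\mathfrak{C}_-^{[a,b]})$, are algebraically independent and generate $\mathcal{K}(\mathscr{C}_\mathfrak{g}^{[a,b],\mathfrak{s}})$ --- a triangular change of variables relative to Nakajima's order, using that the $i$-box Kirillov--Reshetikhin classes refine the fundamental classes $[L(Y_{i_k,p_k})]$, $k\in[a,b]$, which generate the ring; (c) the $T$-system identities of Theorem~\ref{teo_quantum_T_system} coincide with the exchange relations of $\widehat Q^{[a,b],\mathfrak{s}}$ at a mutable vertex, checked by comparing the products over the incoming and outgoing arrows at a vertex with the two terms of the $T$-system. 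This makes $\big((M(\mathfrak{c}^-_k))_k,\widehat Q^{[a,b],\mathfrak{s}}\big)$ an initial seed. It then remains to upgrade this to $\mathcal{A}(Q(\mathfrak{C}))\xrightarrow{\sim}\mathcal{K}(\mathscr{C}_\mathfrak{g}^{[a,b],\mathfrak{s}})$ with every cluster monomial a class of a simple module: one shows $\mathscr{C}_\mathfrak{g}^{[a,b],\mathfrak{s}}$ is a monoidal categorification in the sense of Hernandez--Leclerc --- each cluster variable is some $L(m)$ and each exchange relation lifts to a short exact sequence of modules --- and then invokes that in a monoidal categorification all cluster monomials are classes of simples, whose linear independence forces the already generating seed to exhaust $\mathcal{K}$. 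Well-definedness and compatibility of $\varphi(\mathfrak{C})$ for a general chain follow from (iv) together with the fact that mutation-equivalent seeds give the same cluster algebra.

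The main obstacle is the last part of (i)--(ii): that the $i$-box Kirillov--Reshetikhin data is genuinely a cluster seed and that cluster monomials are simple modules. Unlike the $T$-system $=$ mutation bookkeeping, this needs the full representation theory of quantum affine algebras --- normalized $R$-matrices and the invariants $\Lambda$ and $\mathfrak{d}$, normal and commuting sequences of modules, and the positivity and categorification theorems of Nakajima, Hernandez and Qin --- which is exactly why the statement is here simply cited from \cite{KKOP_mon_cat_quant_aff_II}. Everything else reduces to combinatorics on the repetition quiver and on chains of $i$-boxes.
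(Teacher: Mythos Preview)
The paper does not prove this theorem at all: it is stated as a summary of results imported from \cite{KKOP_mon_cat_quant_aff_II} (and \cite{HL_Clust_alg_approach_q_char} for part~(iii)), with a separate citation attached to each of the five items, and the text proceeds immediately to Remark~\ref{rem_sequence_canonical_chain_transform} without any argument. So there is no ``paper's own proof'' to compare your proposal against.

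That said, your outline is a faithful sketch of how the cited literature actually proceeds, and you correctly isolate the one genuinely hard ingredient --- the monoidal categorification statement underlying (i)--(ii), which requires the $R$-matrix machinery and the invariants $\Lambda,\mathfrak{d}$ of \cite{KKOP_mon_cat_quant_aff} --- as the reason the paper imports the result rather than reproving it. Your treatment of (iii)--(v) as combinatorics on the repetition quiver and on elementary chain moves, with the $T$-system supplying the exchange relations, matches the strategy of \cite{KKOP_mon_cat_quant_aff_II} and the explicit mutation sequences later described in \cite{Contu1} (cf.\ Remark~\ref{rem_sequence_canonical_chain_transform}). One small caution on (v): the inclusions $Q(\mathfrak{C}_s)\hookrightarrow Q(\mathfrak{C}_{s+1})$ are not quite full-subquiver inclusions in the na\"ive sense, since adding a box can change which vertices are frozen and hence which arrows between existing vertices are retained; the correct statement is that the restrictions of the exchange matrices to $K(\mathfrak{C}_s)\times K^\mu(\mathfrak{C}_s)$ stabilise, which is what the paper uses later (see the discussion preceding Lemma~\ref{lem_B_tilde_stabilizes}).
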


\begin{rem}
\label{rem_sequence_canonical_chain_transform}Let $[a,b]$ be an interval of length $l$ and let $\mathfrak{C}=(\mathfrak{c}_k)_{1\leq k<l+1}$ be a chain of $i$-boxes of range $[a,b]$. We denote by $B(\mathfrak{C})$ the exchange matrix associated to the quiver $Q(\mathfrak{C})$.
\begin{itemize}
    \item The statement of \cite[Thm.~8.1]{KKOP_mon_cat_quant_aff_II} actually yields a stronger result than points (i) and (ii) of Theorem \ref{teo_KKOP_monoidal_cat}. In fact, Kashiwara--Kim--Oh--Park  prove that the category $\mathscr{C}_\mathfrak{g}^{[a,b],\mathfrak{s}}$ is a $\Lambda$-\emph{monoidal categorification} of a cluster algebra structure on its Grothendieck ring (see \cite[\S~7]{KKOP_mon_cat_quant_aff_II} for more details). 
\item At the end of \cite[\S~8]{KKOP_mon_cat_quant_aff_II}, Kashiwara--Kim--Oh--Park state the problem of finding an explicit expression of the matrices $B(\mathfrak{C})$. A first solution to this problem has been proposed by the author in \cite{Contu1} by using tools from \emph{additive categorification}. Recently, relying on the combinatorics of $i$-boxes, an alternative solution has been give by Kashiwara--Kim \cite{KK_2024_exchange_matrices_iboxes}.
\item In \cite[\S~4]{Contu1}, the author has described an explicit sequence of mutations from the quiver $\widehat{Q}^{[a,b],\mathfrak{s}}$ 
 to the quiver $Q(\mathfrak{C})$. We refer to it as the sequence of mutations associated to the \emph{canonical chain transformation} $\mathfrak{C}_{-}^{[a,b]}\mapsto \mathfrak{C}$.
\end{itemize} 
\end{rem}

For any chain of $i$-boxes $\mathfrak{C}$ of range $[a,b]$, let $\Lambda(\mathfrak{C})$ be the skew symmetric $K(\mathfrak{C})$-matrix defined by
\[ \Lambda(\mathfrak{C})_{ks} = \Lambda(M(\mathfrak{c}_k),M(\mathfrak{c}_s)).\]
The pair $(B(\mathfrak{C}),\Lambda(\mathfrak{C}))$ is compatible (\cite[Thm.~8.1]{KKOP_mon_cat_quant_aff_II}). Moreover, under the sequence of mutations associated to the \emph{canonical chain transformation} $\mathfrak{C}_{-}^{[a,b]}\mapsto \mathfrak{C}$, the pair $(B(\mathfrak{C}_{-}^{[a,b]}),\Lambda(\mathfrak{C}_{-}^{[a,b]}))$ is sent to the pair $(B(\mathfrak{C}),\Lambda(\mathfrak{C}))$ (cf. {\cite[Prop.~6.4]{KKOP_mon_cat_quant_aff}}).

For any $i$-box $\mathfrak{c}$, we write $L_t(\mathfrak{c})$ for the $(q,t)$-character of the simple module $M(\mathfrak{c})$. 

\begin{teo}
\label{teo_quantum_analogue_monoidal_cat}
Let $[a,b]$ be an interval, where $a\in \mathbb{Z}\cup \{-\infty\}$ and $b\in\mathbb{Z}$, and let $\mathfrak{C}$ be a chain of $i$-boxes with range $[a,b]$. There is a $\mathbb{Z}[t^{\pm 1/2}]$-algebra isomorphism $\mathcal{K}_t(\mathscr{C}_\mathfrak{g}^{[a,b],\mathfrak{s}})$ : 
\[\varphi_t(\mathfrak{C}): \mathcal{A}(Q(\mathfrak{C}),\Lambda(\mathfrak{C})) \xrightarrow{\sim} \mathcal{K}_t(\mathscr{C}_\mathfrak{g}^{[a,b],\mathfrak{s}})\label{symb_varphi_t(C)},
\]
such that the following diagram of algebra homomorphisms commutes:

\[
\begin{tikzcd}
   \mathcal{A}(Q(\mathfrak{C}),\Lambda(\mathfrak{C})) \arrow[d, "\mathrm{ev}_{t=1}"]\arrow[r, "\varphi_t(\mathfrak{C})", "\sim"'] & \mathcal{K}_t(\mathscr{C}_\mathfrak{g}^{[a,b],\mathfrak{s}})\arrow[d, "\mathrm{ev}_{t=1}"]  \\
\mathcal{A}(Q(\mathfrak{C})) \arrow[r, "\sim", "\chi_q\circ\varphi(\mathfrak{C})"'] &\chi_q(\mathcal{K}(\mathscr{C}_\mathfrak{g}^{[a,b],\mathfrak{s}})) .
\end{tikzcd}
\]

In particular, any initial quantum cluster variable $X_s, s\in K(\mathfrak{C})$, satisfies \[\varphi_t(\mathfrak{C})(X_s) = L_t(\mathfrak{c}_{s})\] and, for any quantum cluster monomial $X\in \mathcal{A}(Q(\mathfrak{C}),\Lambda(\mathfrak{C}))$, we have 
\[\varphi_t(\mathfrak{C})(X) = L_t(m),\]
where $m\in \mathcal{M}^+$ is the unique dominant monomial such that $\varphi(\mathfrak{C})(\mathrm{ev}_{t=1}(X))=[L(m)]$.
\end{teo}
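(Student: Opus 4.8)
The plan is to build $\varphi_t(\mathfrak{C})$ first for the canonical chain $\mathfrak{C}_-^{[a,b]}$ and then transport it along the mutation sequence realizing an arbitrary chain transformation $\mathfrak{C}_-^{[a,b]}\mapsto\mathfrak{C}$. For the canonical chain, by Theorem~\ref{teo_KKOP_monoidal_cat}(iii) the quiver $Q(\mathfrak{C}_-^{[a,b]})$ is $\widehat{Q}^{[a,b],\mathfrak{s}}$, whose vertices are the pairs $(i_k,p_k)$, $k\in[a,b]$. I would \emph{define} $\varphi_t(\mathfrak{C}_-^{[a,b]})$ on initial quantum cluster variables by $X_s\mapsto L_t(\mathfrak{c}^-_s)=L_t(Y_{i_s,p_s})$ (the last equality because $\mathfrak{c}_s^-$ has $i$-cardinality $1$, so $M(\mathfrak{c}_s^-)=W^{i_s}_{1,p_s}=L(Y_{i_s,p_s})$, a fundamental module). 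To see this extends to a well-defined algebra homomorphism, I must check that the $L_t(Y_{i_s,p_s})$ satisfy the quantum-torus commutation relations dictated by $\Lambda(\mathfrak{C}_-^{[a,b]})$; this follows from Remark~\ref{rem_invariant_Lambda}, since $\Lambda(\mathfrak{C}_-^{[a,b]})_{ks}=\Lambda(M(\mathfrak{c}^-_k),M(\mathfrak{c}^-_s))=\mathcal{N}(Y_{i_k,p_k},Y_{i_s,p_s})$ whenever the corresponding simples commute (they do, being part of a seed), and the latter is exactly the $t$-exponent governing $\leftidx{^t}{Y}_{i_k,p_k}\leftidx{^t}{Y}_{i_s,p_s}$. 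That $\varphi_t$ is an isomorphism, and that it intertwines $\mathrm{ev}_{t=1}$ with $\chi_q\circ\varphi(\mathfrak{C}_-^{[a,b]})$, follows from the non-quantum statement Theorem~\ref{teo_KKOP_monoidal_cat}(i)--(ii) applied at $t=1$ together with the fact that the commutative monomials form a $\mathbb{Z}[t^{\pm1/2}]$-basis of $\mathcal{Y}_t$ and the $L_t(m)$ are characterized by bar-invariance plus their leading monomial.

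Next I would transport along mutations. By Remark~\ref{rem_sequence_canonical_chain_transform} there is an explicit mutation sequence $\Sigma$ carrying $(B(\mathfrak{C}_-^{[a,b]}),\Lambda(\mathfrak{C}_-^{[a,b]}))$ to $(B(\mathfrak{C}),\Lambda(\mathfrak{C}))$; since compatibility of $(B,\Lambda)$-pairs is preserved under mutation, $\mathcal{A}(Q(\mathfrak{C}),\Lambda(\mathfrak{C}))$ is canonically identified with $\mathcal{A}(Q(\mathfrak{C}_-^{[a,b]}),\Lambda(\mathfrak{C}_-^{[a,b]}))$ as algebras, the initial seed of one becoming the seed at $\Sigma$ of the other. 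Composing with $\varphi_t(\mathfrak{C}_-^{[a,b]})$ gives $\varphi_t(\mathfrak{C})$; the commuting square and isomorphism property are inherited. The remaining content is the identification of the images of the \emph{new} initial quantum cluster variables $X_s$, $s\in K(\mathfrak{C})$, with the $(q,t)$-characters $L_t(\mathfrak{c}_s)$. On the classical side this is Theorem~\ref{teo_KKOP_monoidal_cat}(ii): $\varphi(\mathfrak{C})(\mathrm{ev}_{t=1}X_s)=[M(\mathfrak{c}_s)]$, i.e. $\mathrm{ev}_{t=1}(\varphi_t(\mathfrak{C})(X_s))=\chi_q(L(m_s))$ where $M(\mathfrak{c}_s)=W^{i}_{m,p}=L(m_s)$ is a KR module. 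So $\varphi_t(\mathfrak{C})(X_s)$ and $L_t(\mathfrak{c}_s)$ agree at $t=1$; to upgrade to equality at generic $t$, I use that $X_s$, being a quantum cluster variable, is bar-invariant (bar-invariance of quantum cluster variables, Appendix~\ref{appendix_clust_alg}) and hence so is its image under $\varphi_t$, while $L_t(\mathfrak{c}_s)$ is the \emph{unique} bar-invariant preimage of $\chi_q(L(m_s))$ with the correct leading dominant monomial (Proposition~\ref{prop_properties_qtcharact_simple}, second bullet, plus the characterization of $L_t(m)$). This forces $\varphi_t(\mathfrak{C})(X_s)=L_t(\mathfrak{c}_s)=L_t(m_s)$, and more generally sends any quantum cluster monomial to $L_t(m)$ for the appropriate $m$, since cluster monomials are products of bar-invariant variables in a quantum torus and their $t=1$ images are the simples $L(m)$ by Theorem~\ref{teo_KKOP_monoidal_cat}(ii).

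The main obstacle I anticipate is controlling how $\Lambda$ behaves under the mutation sequence: one must know that after each mutation the new quantum cluster variable's commutation exponents with the rest of the seed are again given by $\Lambda(-,-)$ of the corresponding pair of \emph{commuting} simple modules, so that the quantum exchange relation matches the quantum $T$-system / the multiplication of $(q,t)$-characters on the nose (including the half-integer powers of $t$). For the KR-type mutations this is Theorem~\ref{teo_quantum_T_system}, but for a general mutation in the sequence $\Sigma$ one needs the full strength of the $\Lambda$-monoidal categorification framework of \cite{KKOP_mon_cat_quant_aff_II} — in particular that at every seed along $\Sigma$ the cluster variables are $(q,t)$-characters of pairwise commuting simples with $\Lambda$-invariants equal to their $\mathcal{N}$-pairings (Remark~\ref{rem_invariant_Lambda}). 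Once that compatibility is in hand at every step, uniqueness of bar-invariant lifts does the rest, and the commutativity of the square is automatic from the classical case.
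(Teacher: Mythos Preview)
Your overall strategy---build $\varphi_t$ for the canonical chain $\mathfrak{C}_-^{[a,b]}$, then transport along the mutation sequence to an arbitrary $\mathfrak{C}$---matches the paper's, and the final transport step is essentially identical. But there is a factual slip and a structural difference worth noting.

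The slip: the $i$-boxes $\mathfrak{c}_s^-=[b-s+1,b\}$ of $\mathfrak{C}_-^{[a,b]}$ do \emph{not} have $i$-cardinality $1$ in general; $M(\mathfrak{c}_s^-)$ is a Kirillov--Reshetikhin module $W^{i}_{m,p}$ with $m$ equal to the number of occurrences of $i_{b-s+1}$ in $[b-s+1,b]$, not a fundamental module. This does not wreck the argument---the quantum-torus relations for the $L_t(\mathfrak{c}_s^-)$ still follow from Remark~\ref{rem_invariant_Lambda}, since the KR modules in a fixed seed commute pairwise---but the parenthetical reasoning is wrong and you should correct it.

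The structural difference: your justification that $\varphi_t(\mathfrak{C}_-^{[a,b]})$ is an \emph{isomorphism} (``follows from the non-quantum statement at $t=1$ together with bar-invariance'') is too thin as written; knowing the map is an isomorphism at $t=1$ does not by itself give surjectivity onto $\mathcal{K}_t(\mathscr{C}_\mathfrak{g}^{[a,b],\mathfrak{s}})$ or $\mathbb{Z}[t^{\pm1/2}]$-injectivity. You are right that the missing ingredient is the $\Lambda$-monoidal categorification machinery, and your bar-invariance/leading-monomial argument for identifying images of cluster variables is a legitimate route through it. The paper, however, sidesteps all of this. It first observes that the case $a=-\infty$, $\mathfrak{C}=\mathfrak{C}_-^{[-\infty,b]}$ is literally the setting of \cite[Thm.~5.16, Thm.~6.6]{HFOO_iso_quant_groth_ring_clust_alg} (after identifying $\mathscr{C}_\mathfrak{g}^{[-\infty,b],\mathfrak{s}}$ with a category $\mathscr{C}_{\leq\overline{\varepsilon}}$ for a suitable generalized height function $\overline{\varepsilon}$), and imports the isomorphism wholesale from there. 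For finite $a$ it then uses a subalgebra trick: the ice quiver $Q(\mathfrak{C}_-^{[a,b]})$ sits inside $Q(\mathfrak{C}_-^{[-\infty,b]})$ with no arrows between its mutable vertices and the vertices outside, so $\mathcal{A}(Q(\mathfrak{C}_-^{[a,b]}),\Lambda(\mathfrak{C}_-^{[a,b]}))$ embeds as a $\mathbb{Z}[t^{\pm1/2}]$-subalgebra, and one checks by hand that the restricted map has image exactly $\mathcal{K}_t(\mathscr{C}_\mathfrak{g}^{[a,b],\mathfrak{s}})$ (the $L_t(Y_{i_k,p_k})$ for $k\in[a,b]$ are reached via box moves inside $[a,b]$, and conversely every cluster variable maps to some $L_t(m)$ with $m$ supported on $[a,b]$). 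This avoids the mutation-by-mutation $\Lambda$-versus-$\mathcal{N}$ bookkeeping you anticipate as the main obstacle.
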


\begin{proof}
Consider the function $\overline{\varepsilon}:\Delta_0 \rightarrow \mathbb{Z}$ defined by
\[ \varepsilon(i) = p_{k(i)},\ \text{ where } k(i)=\mathrm{max}\{k\in [a,b]\ |\ i_k = i\}.\]
In general it is not an height function on $(\Delta,\sigma)$, but it is one in the sense of \cite[\S~4] {HFOO_iso_quant_groth_ring_clust_alg}. In particular, we have that the category $\mathscr{C}_{\mathfrak{g}}^{[-\infty,b],\mathfrak{s}}$ coincides with the category $\mathscr{C}_{\leq \overline{\varepsilon}}$ defined in \cite[\S~4] {HFOO_iso_quant_groth_ring_clust_alg}. Therefore, when $a=-\infty$ and $\mathfrak{C}=\mathfrak{C}_-^{[-\infty,b]}$ the theorem is proved in \cite[Thm~5.16, Thm.~6.6]{HFOO_iso_quant_groth_ring_clust_alg}.
Assume now that $a$ is in $\mathbb{Z}$. By definition, the ice quiver $Q(\mathfrak{C}_-^{[a,b]})$ is a subquiver of $Q(\mathfrak{C}_-^{[-\infty,b]})$. In particular, there are no arrows between non-frozen vertices of $Q(\mathfrak{C}_-^{[a,b]})$ and the vertices of $Q(\mathfrak{C}_-^{[-\infty,b]})$ not belonging to $Q(\mathfrak{C}_-^{[a,b]})$. This ensures that the quantum cluster algebra $\mathcal{A}(Q(\mathfrak{C}_-^{[a,b]}),\Lambda(\mathfrak{C}_-^{[a,b]}))$ is a $\mathbb{Z}[t^{\pm 1/2}]$-subalgebra of $\mathcal{A}(Q(\mathfrak{C}_-^{[-\infty,b]}),\Lambda(\mathfrak{C}_-^{[-\infty,b]}))$ (this argument is used sistematically in \cite{Qin_conjectures}).
Denote by $\varphi_t(\mathfrak{C}_-^{[a,b]})$ the composition
\[  \varphi_t(\mathfrak{C}_-^{[a,b]}): \mathcal{A}(Q(\mathfrak{C}_-^{[a,b]}, \Lambda(\mathfrak{C}_-^{[a,b]})) \hookrightarrow{} \mathcal{A}(Q(\mathfrak{C}_-^{[-\infty,b]}, ),\Lambda(\mathfrak{C}_-^{[-\infty,b]})) \overset{\varphi_t(\mathfrak{C}_-^{[-\infty,b]})}{\xrightarrow{\sim}} \mathcal{K}_t(\mathscr{C}_\mathfrak{g}^{[-\infty,b],\mathfrak{s}}).\]
The image of $\varphi_t(\mathfrak{C}_-^{[a,b]})$ coincides with  $\mathcal{K}_t(\mathscr{C}_\mathfrak{g}^{[a,b],\mathfrak{s}})$.
To prove this, we first consider that $\mathcal{K}_t(\mathscr{C}_\mathfrak{g}^{[a,b],\mathfrak{s}})$ is contained in the image of $\varphi_t(\mathfrak{C}_-^{[a,b]})$. This follows from the fact that, for $k\in [a,b]$, we can write $L_t(Y_{i_k,p_k})$ as $L_t([k])$, where $[k]$ is an $i$-box contained in $[a,b]$ that can be obtained through box moves associated to the non-frozen vertices of $Q(\mathfrak{C}_-^{[a,b]})$.
For the reverse inclusion, it suffices to consider by recursion on the exchange relations that, if $X$ is a cluster variable of $\mathcal{A}(Q(\mathfrak{C}_-^{[a,b]}), \Lambda(\mathfrak{C}_-^{[a,b]}))$, then its image through $\varphi_t(\mathfrak{C}_-^{[a,b]})$ is of the form $L_t(m)$, with $m$ a monomial in the variables $(Y_{i_k,p_k})_{k\in [a,b]}$.
It remains to prove the theorem in the case that $\mathfrak{C}$ is an arbitrary chain of $i$-boxes on $[a,b]$. This follows from the fact that the pair $(B(\mathfrak{C}),\Lambda(\mathfrak{C}))$ is obtained from $(B(\mathfrak{C}_-^{[a,b]}), \Lambda(\mathfrak{C}_-^{[a,b]}))$ by iterated mutations.
\end{proof}

\begin{rem}
The statement of Theorem is still true for the more general categories $\mathscr{C}_\mathfrak{g}^{[a,b],\mathcal{D}_\mathcal{Q},\underline{w}_0}$, where the reduced expression $\underline{w}_0$ is not necessarily adapted to $\mathcal{Q}$. The proof is the same.
\end{rem}

\begin{cor}[{\cite[Cor. 6.11]{HFOO_iso_quant_groth_ring_clust_alg}}]
    Let $[a,b]$ be an interval, where $a\in \mathbb{Z}\cup \{-\infty\}$ and $b\in\mathbb{Z}$, and let $\mathfrak{C}$ be a chain of $i$-boxes with range $[a,b]$. Let $m\in \mathcal{M}^+$ be a dominant monomial such that $[L(m)]$ is the image of a cluster monomial of $\mathcal{A}(Q(\mathfrak{C}))$ under $\varphi(\mathfrak{C})$. Then 
    \[ \mathrm{ev}_{t=1}(L_t(m)) = \chi_q(L(m)).\]
\end{cor}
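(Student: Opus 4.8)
The plan is to obtain the corollary as the $t=1$ shadow of Theorem~\ref{teo_quantum_analogue_monoidal_cat}, so essentially no new work is required. By hypothesis, $[L(m)]$ is the image under $\varphi(\mathfrak{C})$ of some cluster monomial $\zeta$ of $\mathcal{A}(Q(\mathfrak{C}))$. The first step is to lift $\zeta$ to the quantum side: every cluster monomial of $\mathcal{A}(Q(\mathfrak{C}))$ is of the form $\mathrm{ev}_{t=1}(X)$ for a (uniquely determined) quantum cluster monomial $X$ of $\mathcal{A}(Q(\mathfrak{C}),\Lambda(\mathfrak{C}))$ --- namely the monomial sitting in the same quantum cluster and with the same exponent vector as $\zeta$ --- because the quantum exchange relations recalled in Appendix~\ref{appendix_clust_alg} reduce to the classical Fomin--Zelevinsky exchange relations under $t^{1/2}\mapsto 1$. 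Hence $[L(m)]=\varphi(\mathfrak{C})(\mathrm{ev}_{t=1}(X))$.

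Next I would invoke the last assertion of Theorem~\ref{teo_quantum_analogue_monoidal_cat} applied to this $X$: it gives $\varphi_t(\mathfrak{C})(X)=L_t(m')$, where $m'$ is the unique dominant monomial with $\varphi(\mathfrak{C})(\mathrm{ev}_{t=1}(X))=[L(m')]$; by the previous step $m'=m$, so $\varphi_t(\mathfrak{C})(X)=L_t(m)$. Finally I would apply $\mathrm{ev}_{t=1}$ and chase the commutative square of Theorem~\ref{teo_quantum_analogue_monoidal_cat}, obtaining
\[
\mathrm{ev}_{t=1}\bigl(L_t(m)\bigr)=\mathrm{ev}_{t=1}\bigl(\varphi_t(\mathfrak{C})(X)\bigr)=(\chi_q\circ\varphi(\mathfrak{C}))\bigl(\mathrm{ev}_{t=1}(X)\bigr)=\chi_q([L(m)])=\chi_q(L(m)),
\]
which is exactly the assertion.

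The only genuinely load-bearing point is the compatibility of $\mathrm{ev}_{t=1}$ with quantum seed mutation used to produce the lift $X$; this is part of the standard formalism of quantum cluster algebras and is not a real obstacle. In fact, since Theorem~\ref{teo_quantum_analogue_monoidal_cat} is already phrased in terms of arbitrary quantum cluster monomials and of the classical monomials they specialize to, the corollary is little more than a reformulation of its final sentence together with the observation that the dominant monomial attached to $X$ is precisely $m$.
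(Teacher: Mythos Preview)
Your proposal is correct and is precisely the intended argument: the paper states this result as a corollary of Theorem~\ref{teo_quantum_analogue_monoidal_cat} without spelling out a proof, and the derivation you give---lifting the cluster monomial to a quantum cluster monomial $X$, applying the last assertion of the theorem to identify $\varphi_t(\mathfrak{C})(X)=L_t(m)$, and then chasing the commutative square---is exactly how the corollary follows from that theorem.
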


We conclude this section by stating the quantum monoidal categorification theorem for $\mathscr{C}_\mathfrak{g}^{\mathbb{Z}}$.

\begin{teo}[{\cite[Thm.~6.15]{HFOO_iso_quant_groth_ring_clust_alg}}]
\label{teo_quant_clust_Groth_ring}
Let $\mathfrak{C}$ be a chain of $i$-boxes with range $[-\infty,\infty]$. There is a $\mathbb{Z}[t^{\pm 1/2}]$-algebra isomorphism $\varphi_t(\mathfrak{C})$ from the quantum cluster algebra $\mathcal{A}(Q(\mathfrak{C}),\Lambda(\mathfrak{C}))$ to the quantum Grothendieck ring $\mathcal{K}_t(\mathscr{C}_\mathfrak{g}^{\mathbb{Z}})$ : 
\[\varphi_t(\mathfrak{C}): \mathcal{A}(Q(\mathfrak{C}),\Lambda(\mathfrak{C})) \xrightarrow{\sim} \mathcal{K}_t(\mathscr{C}_\mathfrak{g}^{[-\infty,\infty],\mathfrak{s}}),
\]
It satisfies $\varphi_t(X_s) = L_t(\mathfrak{c}_{s})$.
Moreover, for any quantum cluster monomial $X\in \mathcal{A}(Q(\mathfrak{C}),\Lambda(\mathfrak{C}))$, we have 
\[\varphi_t(\mathfrak{C})(X) = L_t(m),\]
where $m\in \mathcal{M}^+$ is the unique dominant monomial such that $\varphi(\mathfrak{C})(\mathrm{ev}_{t=1}(X))=[L(m)]$.
\end{teo}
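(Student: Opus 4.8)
The plan is to deduce the statement from Theorem~\ref{teo_quantum_analogue_monoidal_cat} by passing to a colimit over the truncations of $\mathfrak{C}$; this reproves, along the lines of \cite[Thm.~6.15]{HFOO_iso_quant_groth_ring_clust_alg}, the quantum monoidal categorification theorem for $\mathscr{C}_\mathfrak{g}^{\mathbb{Z}}$. Write $\mathfrak{C}=(\mathfrak{c}_k)_{k\ge 1}$ and, for $s\ge 1$, set $\mathfrak{C}_s=(\mathfrak{c}_k)_{1\le k\le s}$, a chain of $i$-boxes of \emph{finite} range $[a_s,b_s]$ with $a_s,b_s\in\mathbb{Z}$. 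Since $\mathfrak{C}$ has range $[-\infty,\infty]$, its rooted sequence of expansion operators contains infinitely many $L$'s and infinitely many $R$'s, so $a_s\to-\infty$, $b_s\to+\infty$ and $\bigcup_s[a_s,b_s]=\mathbb{Z}$. First I would record that $\mathcal{K}_t(\mathscr{C}_\mathfrak{g}^{\mathbb{Z}})=\varinjlim_s\mathcal{K}_t(\mathscr{C}_\mathfrak{g}^{[a_s,b_s],\mathfrak{s}})$ along the inclusions: the $(L_t(m))_{m\in\mathcal{M}^+}$ form a basis of $\mathcal{K}_t(\mathscr{C}_\mathfrak{g}^{\mathbb{Z}})$, each $m$ involves only finitely many of the variables $(Y_{i_k,p_k})_{k\in\mathbb{Z}}$ (which exhaust the $(Y_{i,p})_{(i,p)\in\widehat{I}_\mathfrak{g}}$), and by the characterisation recalled after Theorem~\ref{teo_quantum_T_system} such an $L_t(m)$ lies in $\mathcal{K}_t(\mathscr{C}_\mathfrak{g}^{[a_s,b_s],\mathfrak{s}})$ as soon as $[a_s,b_s]$ contains all the corresponding indices.

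On the cluster side, by Theorem~\ref{teo_KKOP_monoidal_cat}(v) the quiver $Q(\mathfrak{C})$ is the colimit of the $Q(\mathfrak{C}_s)$; I would check, using the combinatorics of chains of $i$-boxes, that passing from $\mathfrak{C}_s$ to $\mathfrak{C}_{s+1}$ realises $Q(\mathfrak{C}_s)$ as a full subquiver of $Q(\mathfrak{C}_{s+1})$, adjoins the single new vertex $s+1$ (frozen in $\mathfrak{C}_{s+1}$), turns exactly one previously frozen vertex into a non-frozen one, and creates only new arrows joining $s+1$ to vertices that were frozen in $\mathfrak{C}_s$. Since $\Lambda(\mathfrak{C}_s)_{jk}=\Lambda(M(\mathfrak{c}_j),M(\mathfrak{c}_k))$ does not depend on $s$, the compatible pairs $(B(\mathfrak{C}_s),\Lambda(\mathfrak{C}_s))$ then form a direct system of \emph{freezing} embeddings of quantum seeds, and $\mathcal{A}(Q(\mathfrak{C}),\Lambda(\mathfrak{C}))=\varinjlim_s\mathcal{A}(Q(\mathfrak{C}_s),\Lambda(\mathfrak{C}_s))$, the transition maps sending each initial quantum cluster variable $X_k$ ($k\le s$) to the one of the same index.

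Next I would invoke Theorem~\ref{teo_quantum_analogue_monoidal_cat} to obtain, for every $s$, an isomorphism $\varphi_t(\mathfrak{C}_s)\colon\mathcal{A}(Q(\mathfrak{C}_s),\Lambda(\mathfrak{C}_s))\xrightarrow{\sim}\mathcal{K}_t(\mathscr{C}_\mathfrak{g}^{[a_s,b_s],\mathfrak{s}})$ with $\varphi_t(\mathfrak{C}_s)(X_k)=L_t(\mathfrak{c}_k)$ for $k\in\{1,\dots,s\}$, and argue that these commute with the two direct systems. For this it is enough that $\varphi_t(\mathfrak{C}_{s+1})$, restricted along $\mathcal{A}(Q(\mathfrak{C}_s),\Lambda(\mathfrak{C}_s))\hookrightarrow\mathcal{A}(Q(\mathfrak{C}_{s+1}),\Lambda(\mathfrak{C}_{s+1}))$, agrees with $\varphi_t(\mathfrak{C}_s)$ on the initial cluster variables $X_1,\dots,X_s$, together with the observation that two $\mathbb{Z}[t^{\pm 1/2}]$-algebra homomorphisms from a quantum cluster algebra into the domain $\mathcal{Y}_t$ that agree on the initial cluster variables agree on every cluster variable (induction on the number of mutations, using the quantum exchange relations and that the $L_t(\mathfrak{c}_k)$ are nonzero), hence everywhere. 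Taking the colimit of the $\varphi_t(\mathfrak{C}_s)$ yields the desired isomorphism $\varphi_t(\mathfrak{C})\colon\mathcal{A}(Q(\mathfrak{C}),\Lambda(\mathfrak{C}))\xrightarrow{\sim}\mathcal{K}_t(\mathscr{C}_\mathfrak{g}^{[-\infty,\infty],\mathfrak{s}})=\mathcal{K}_t(\mathscr{C}_\mathfrak{g}^{\mathbb{Z}})$, with $\varphi_t(\mathfrak{C})(X_k)=L_t(\mathfrak{c}_k)$ immediate from the finite levels. For the statement on quantum cluster monomials, I would note that since $Q(\mathfrak{C})$ has no frozen vertices and is locally finite, any quantum cluster monomial $X$ is a product of cluster variables of a single cluster obtained from the initial one by finitely many mutations, all of which can already be performed inside $\mathcal{A}(Q(\mathfrak{C}_s),\Lambda(\mathfrak{C}_s))$ once $s$ is large enough that every vertex and arrow involved appears in $Q(\mathfrak{C}_s)$ with its eventual incidences and is non-frozen there; the claim then follows from the corresponding part of Theorem~\ref{teo_quantum_analogue_monoidal_cat} and from $\varphi(\mathfrak{C})=\varinjlim_s\varphi(\mathfrak{C}_s)$ (Theorem~\ref{teo_KKOP_monoidal_cat}(i),(v)).

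The main obstacle is the structural bookkeeping on the cluster side in the second step: showing that the truncation quivers $Q(\mathfrak{C}_s)$ do embed into one another as full subquivers, that exactly one vertex changes its frozen/non-frozen status at each step, and — crucially — that the newly created arrows never touch the non-frozen part of $Q(\mathfrak{C}_s)$, so that the mutations used to compute cluster variables of $\mathcal{A}(Q(\mathfrak{C}_s),\Lambda(\mathfrak{C}_s))$ are literally the same as those computed in $\mathcal{A}(Q(\mathfrak{C}_{s+1}),\Lambda(\mathfrak{C}_{s+1}))$. This is exactly the point where the combinatorics of chains of $i$-boxes underlying Theorem~\ref{teo_KKOP_monoidal_cat} is needed; once it is in place, the identification of initial cluster variables and the passage to the colimit are formal.
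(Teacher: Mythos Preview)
The paper does not give its own proof of this statement: Theorem~\ref{teo_quant_clust_Groth_ring} is quoted directly from \cite[Thm.~6.15]{HFOO_iso_quant_groth_ring_clust_alg} and is stated without proof, immediately after which Section~\ref{sec_Hall} begins. So there is no in-paper argument to compare against.

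Your approach---deducing the $[-\infty,\infty]$ case from the finite-interval Theorem~\ref{teo_quantum_analogue_monoidal_cat} by realising both sides as colimits over the truncations $\mathfrak{C}_s$---is correct and is exactly the shape of argument the paper itself adopts later, in the proof of Corollary~\ref{cor_clust_semiderived}, for the semi-derived lift. The structural bookkeeping you flag as the main obstacle (full-subquiver embeddings $Q(\mathfrak{C}_s)\hookrightarrow Q(\mathfrak{C}_{s+1})$, stability of arrows incident to the non-frozen part, and coherence of the $\Lambda$-matrices) is precisely what is packaged in Theorem~\ref{teo_KKOP_monoidal_cat}(v) and Remark~\ref{rem_sequence_canonical_chain_transform}; the paper invokes these same ingredients when it needs the analogous stabilisation in Lemma~\ref{lem_B_tilde_stabilizes}. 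One minor point: your description that ``exactly one previously frozen vertex turns non-frozen'' at each step is slightly imprecise in general (depending on whether the new expansion operator is $L$ or $R$ and on the index pattern, the vertex that becomes mutable may or may not be the most recently added one), but this does not affect the argument, since what you actually use is only that $K^\mu(\mathfrak{C}_s)\subset K^\mu(\mathfrak{C}_{s+1})$ and that no new arrows touch $K^\mu(\mathfrak{C}_s)$.
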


\section{Derived and semi-derived Hall algebras}
\label{sec_Hall}
In this chapter, for an exact category $\mathcal{E}$, we denote by $\mathcal{K}(\mathcal{E})$ its Grothendieck group and we denote by $\overline{X}$ the image in $\mathcal{K}(\mathcal{E})$ of an isomorphism class $[X]$ of an object $X\in \mathcal{E}$.

Let $\Bbbk$\label{symb_Bbbk} be a finite field. We denote by $v$ the number of its elements and we set $v=\sqrt{|\Bbbk|}$\label{symb_v}.
 We recall the definition of a semi-derived Hall-algebra following \cite{Gorsky2018}. We start by defining the Hall algebra of an exact category in the sense of Hubery \cite{Hubery2006} (the original definition, for abelian categories, is due to Ringel \cite{Ringel_Hall_algebras_1990}).

\begin{de}
\label{de_Hall_algebra}
Let $\mathcal{E}$ be an exact category linear over $\Bbbk$. Assume that $\mathcal{E}$ is \emph{finitary}: for all $M,N\in \mathcal{E}$, we have
\[|\mathrm{Hom}_\mathcal{E}(M,N)|<\infty \text{ and } |\mathrm{Ext}^1_\mathcal{E}(M,N)|<\infty.\]
The $\textit{Hall\ algebra}$ of the category $\mathcal{E}$, denoted by $\mathcal{H}(\mathcal{E})$, is the $\mathbb{Q}$-vector space freely generated by isomorphism classes $[M]$ of objects of $\mathcal{E}$ with the multiplication given by

\[[A]\diamond [C] = \sum_{B\in \text{Iso}(\mathcal{E})} \frac{|\mathrm{Ext}^1_{\mathcal{E}}(A,C)_B|}{|\mathrm{Hom}_\mathcal{E}(C,A)|}|B|,\]
where $\text{Ext}^1_{\mathcal{E}}(A,C)_B$ is the subset of $\text{Ext}^1_{\mathcal{E}}(A,C)$ formed by the classes of extensions whose middle term is isomorphic to $B$. 
\end{de}

It is a result of Ringel \cite{Ringel_Hall_algebras_1990} that the Hall algebra $\mathcal{H}(\mathcal{E})$ is associative, with unit given by $[0]$, the isomorphism class of the zero object of $\mathcal{E}$.

We now define the semiderived Hall algebra following Gorsky \cite{Gorsky2018}.
Let $\mathcal{F}$ be a $\Bbbk$-linear Frobenius category. Assume that it is essentially small, idempotent complete and Hom-finite. Under these conditions, Gorsky proved that the category $\mathcal{F}$ is also finitary, and, therefore, the Hall algebra of $\mathcal{\mathcal{F}}$ is well defined. We denote by $\mathcal{P}(\mathcal{F})$ the full subcategory of the projective-injective objects in $\mathcal{F}$. 

\begin{de}
\label{de_semiderived_hall}
    The $\emph{semi-derived\ Hall\ algebra}$ of $\mathcal{F}$, denoted by $\mathcal{SDH(F)}$, is the localization of the Hall algebra $\mathcal{H}(\mathcal{F})$ at the classes of the objects in $\mathcal{P(F)}$.
\end{de}

Next, we define the derived Hall algebra. First introduced by  Toën for suitable dg categories \cite{Toen2006}, the definition has later been generalised by Xiao and Xu to suitable  triangulated categories~\cite{Xiao-Xu2008}. 

\begin{de}
\label{de_derived_hall}
    Let $\mathcal{T}$ be an essentially small $\Bbbk$-linear triangulated category. Denote by $\Sigma$ the shift functor and assume the following:
    \begin{enumerate}
        \item $\text{dim(Hom}_\mathcal{T}(A,B))<\infty\ \ \ \forall A,B\in \mathcal{T};$
        \item $\mathcal{T}$ is left locally homologically finite, that is, for any $A,B\in \mathcal{T}$, there exists a positive integer $n$, such that for any $i>n$, we have
        \[\text{Ext}^{-i}(A,B)=0\ \ \ \forall i>n.\]
    \end{enumerate}    
    The \emph{derived Hall algebra} of $\mathcal{T}$, denoted by $\mathcal{DH(T)}$, is the $\mathbb{Q}$-vector space freely generated by the isomorphism classes of objects of $\mathcal{T}$ with the multiplication given by

     \begin{equation}
    \label{product_derived_Hall}
    [A]\diamond [C] = \sum_{B\in \text{Iso}(\mathcal{T})} |\text{Ext}^1_\mathcal{T}(C,A)_B|\prod_{i\geq 0}|\text{Ext}_\mathcal{T}^{-i}(C,A)|^{(-1)^{(i-1)}}[B],
    \end{equation}
where $\text{Ext}^1_\mathcal{T}(C,A)_B$ is the subset of $\text{Ext}^1_\mathcal{T}(C,A)=\text{Hom}_\mathcal{T}(C,\Sigma A)$ consisting of those morphisms whose cone is isomorphic to $\Sigma B$.
\end{de}

The algebra $\mathcal{DH(T)}$ turns out to be associative and with unit given by the isomorphism class of the zero object of $\mathcal{T}$ (\cite{Toen2006}, \cite{Xiao-Xu2008}).

\begin{rem}\mbox{}

\begin{enumerate}

    \item[(a)] Let us point out that the algebra $\mathcal{DH(T)}$ of Definition \ref{de_derived_hall} is the Drinfeld dual (in the sense of \cite{Xiao-Xu2015}) of the derived Hall algebra of To\"{e}n and Xiao--Xu. Denote this latter algebra by $DH(\mathcal{T})$. Up to a twist of the product, the algebra $DH(\mathcal{T})$ is the version that also appears in  \cite{HL_quantum_Groth_rings_derived_Hall}, \cite{Sheng2010} and \cite{Xiao-Xu2008}, while its Drinfeld dual $\mathcal{DH(T)}$ is used in \cite{Gorsky2018} and \cite{Zhang2022}). The Drinfeld dual algebra $\mathcal{DH(T)}$ has been introduced by Xiao--Xu \cite{Xiao-Xu2015} themselves, who proved that it coincides with the finite
  field version of the motivic Hall algebra defined by Kontsevich and Soibelman \cite{kontsevich2008stability}. Moreover, they show \cite[Cor.~3.4]{Xiao-Xu2015} that the algebra $\mathcal{DH}(\mathcal{T})$ is isomorphic to the derived Hall algebra $DH(\mathcal{T})$.
The algebra  $DH(\mathcal{T})$ is the $\mathbb{Q}$-vector space freely generated by the isomorphism classes of objects of $\mathcal{T}$ with multiplication given by

    \begin{equation*}
    [A]'\diamond [C]' = \sum_{B\in \text{Iso}(\mathcal{T})} \frac{|\text{Ext}^1_\mathcal{T}(B,C[-1])_{A[1]}|\prod_{i> 0}|\text{Ext}_\mathcal{T}^{-i}(B,C)|^{(-1)^{i}}}{|\mathrm{Aut}(A)|\prod_{i>0}|\text{Ext}_\mathcal{T}^{-i}(A,A)|^{(-1)^{i}}}[B]'.
    \end{equation*}
The isomorphism between $\mathcal{DH(T)}$ and $DH(\mathcal{T})$ is given by the following rescaling of the generators

    \begin{equation}
    \label{eq_rescaling}
        \mathcal{DH(T)}\rightarrow DH(\mathcal{T}),\ 
     [A] \xrightarrow[]{} |\mathrm{Aut}(A)|\prod_{i>0}|\mathrm{Ext}_\mathcal{T}^{-i}(A,A)| [A]'. 
    \end{equation} 
    
    \item[(b)] The above multiplication formulas for $\mathcal{H}(\mathcal{E})$, $\mathcal{DH}(\mathcal{T})$ and $DH(\mathcal{T})$, possibly up to a renormali\-zation of the basis, yield products which are opposite to those appearing in \cite{HL_quantum_Groth_rings_derived_Hall}, \cite{Sheng2010}, \cite{Toen2006} and \cite{Xiao-Xu2008}. On the other hand, these products agree with the ones appearing in  \cite{Bridgeland2013} , \cite{Gorsky2018}, \cite{Xiao-Xu2015}  and \cite{Zhang2022}.

\end{enumerate}
\end{rem}

In \cite{Gorsky2018} Gorsky provided  a link between the semi-derived Hall algebra of a Frobenius category and the derived Hall algebra of the associated stable category (see also \cite[Corollary 5.6]{Lin_Peng2019} for an analogous result with an alternative twist).

\begin{teo}{\cite[Thm. 4.2]{Gorsky2018}}
\label{teo_gorsky}
Assume that the stable category $\underline{\mathcal{F}}$ is left locally homologically finite. 
Denote by $\mathcal{SDH(F)'}$ the algebra obtained from the the semi-derived Hall algebra of $\mathcal{F}$ by twisting the multiplication in the following way:
\begin{equation}
    \label{Gorsky_twist}
    [A]\diamond'[C]=\frac{|\mathrm{Hom}_\mathcal{F}(C,A)|}{|\mathrm{Hom}_{\underline{\mathcal{F}}}(C,A)|}\cdot \prod_{i>0} |\mathrm{Ext}_{\underline{\mathcal{F}}}^{-i}(C,A)|^{(-1)^{(i-1)}}
    [A]\diamond [C].
\end{equation}
Let $I(\mathcal{P(F)})$ be the two-sided ideal of $\mathcal{SDH(F)'}$ generated by $\{[P]-1| P \in \mathcal{P(F)}\}$. Then, each choice of representatives in $\mathcal{F}$ of the isomorphism
classes of the stable category $\underline{\mathcal{F}}$ gives an $\mathbb{Q}$-algebra isomorphism

\begin{equation}
    \mathcal{SDH(F)'}/I(\mathcal{P(F)})\cong \mathcal{DH(\underline{F})}.
\end{equation}
    
\end{teo}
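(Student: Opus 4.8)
\emph{Proof plan.} The plan is to realise the isomorphism at the level of a fixed set of representatives and to verify multiplicativity by a structure-constant computation. Choose representatives $\{M_\lambda\}_\lambda$ in $\mathcal{F}$ of the isomorphism classes of $\underline{\mathcal{F}}$ and write $\pi\colon\mathcal{F}\to\underline{\mathcal{F}}$ for the canonical functor. I would define the $\mathbb{Q}$-linear map $\Psi\colon\mathcal{SDH(F)'}\to\mathcal{DH(\underline{F})}$ by $[X]\mapsto[\pi X]$ and aim to show it is a surjective algebra homomorphism with kernel $I(\mathcal{P(F)})$. Surjectivity is clear since $\pi$ is essentially surjective, and $\Psi([P])=[0]=1$ for $P$ projective-injective, so once $\Psi$ is known to be multiplicative the inclusion $I(\mathcal{P(F)})\subseteq\ker\Psi$ follows automatically.

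\emph{Reduction.} First I would check that for $P\in\mathcal{P(F)}$ and any object $X$ one has $[P]\diamond'[X]=[X]\diamond'[P]=[P\oplus X]$: since $\mathrm{Ext}^1_{\mathcal{F}}(P,X)=\mathrm{Ext}^1_{\mathcal{F}}(X,P)=0$, the untwisted Hall product already equals $|\mathrm{Hom}_{\mathcal{F}}(X,P)|^{-1}[P\oplus X]$, and, as $\pi P=0$, the twist \eqref{Gorsky_twist} contributes precisely the compensating factor $|\mathrm{Hom}_{\mathcal{F}}(X,P)|$. Thus the $[P]$ are central units of $\mathcal{SDH(F)'}$ and modulo $I(\mathcal{P(F)})$ one has $[X\oplus P]=[X]$. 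Since every $X$ satisfies $X\oplus P\cong M_\lambda\oplus Q$ for a unique $\lambda$ and suitable $P,Q\in\mathcal{P(F)}$, it follows that $\mathcal{SDH(F)'}$ is spanned, as a module over the Laurent subalgebra generated by the $[P]^{\pm1}$, by the classes $\{[M_\lambda]\}_\lambda$. (Only this spanning statement is needed below; that the $[M_\lambda]$ are free generators will follow a posteriori, or from the finiteness of the graded pieces of the $\mathcal{K}(\mathcal{F})$-grading.)

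\emph{Structure constants.} For $A,C\in\mathcal{F}$ I would unwind $[A]\diamond'[C]$: the twist \eqref{Gorsky_twist} times the Hall denominator $|\mathrm{Hom}_{\mathcal{F}}(C,A)|^{-1}$ collapses to $\prod_{i\ge0}|\mathrm{Ext}^{-i}_{\underline{\mathcal{F}}}(C,A)|^{(-1)^{i-1}}$, so that
\[
[A]\diamond'[C]=\Big(\textstyle\prod_{i\ge0}|\mathrm{Ext}^{-i}_{\underline{\mathcal{F}}}(C,A)|^{(-1)^{i-1}}\Big)\sum_{B}\,|\mathrm{Ext}^1_{\mathcal{F}}(A,C)_B|\,[B].
\]
Reducing modulo $I(\mathcal{P(F)})$ and grouping the sum according to the stable isomorphism class of the middle term, the coefficient of $[M_\lambda]$ becomes $\sum_{B\,:\,\pi B\cong M_\lambda}|\mathrm{Ext}^1_{\mathcal{F}}(A,C)_B|$. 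Here I would invoke the standard isomorphism $\mathrm{Ext}^1_{\mathcal{F}}(A,C)\cong\mathrm{Hom}_{\underline{\mathcal{F}}}(A,\Sigma C)$ for a Frobenius category (apply $\mathrm{Hom}_{\mathcal{F}}(-,C)$ to a conflation $0\to A\to I\to\Sigma A\to 0$ with $I$ projective-injective), under which the middle term of a conflation representing a class $\xi$ becomes, up to projective-injective summands and hence up to stable isomorphism, the (de-suspended) cone of the corresponding morphism $A\to\Sigma C$. This identifies the coefficient above with the number of morphisms $\pi A\to\Sigma(\pi C)$ in $\underline{\mathcal{F}}$ whose cone is isomorphic to $\Sigma(\pi M_\lambda)$ — that is, after transporting through the opposite-product convention, with the structure constant of $[\pi A]\diamond[\pi C]$ at $[\pi M_\lambda]$ in $\mathcal{DH(\underline{F})}$. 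Comparing the whole expression with $[\pi A]\diamond[\pi C]$, after reconciling the several conventions in play (the opposite-product and Drinfeld-dual normalisations recalled above, and the sign of the $\mathrm{Ext}^{-i}$-exponents), yields $\Psi([A]\diamond'[C])=\Psi([A])\diamond\Psi([C])$, so $\Psi$ is an algebra homomorphism.

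\emph{Conclusion and main difficulty.} Since $I(\mathcal{P(F)})\subseteq\ker\Psi$, the map $\Psi$ descends to a surjective homomorphism $\overline{\Psi}\colon\mathcal{SDH(F)'}/I(\mathcal{P(F)})\to\mathcal{DH(\underline{F})}$. For injectivity I would write an arbitrary element of the source, by the reduction step, as $\sum_\lambda c_\lambda[M_\lambda]$ modulo $I(\mathcal{P(F)})$; then $\overline{\Psi}$ sends it to $\sum_\lambda c_\lambda[\pi M_\lambda]$, and since the $[\pi M_\lambda]$ form by definition a $\mathbb{Q}$-basis of $\mathcal{DH(\underline{F})}$, all the $c_\lambda$ vanish, so $\overline{\Psi}$ is an isomorphism; a different choice of representatives changes it by the evident rescalings. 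The main obstacle is the structure-constant identity: its real content is the homological comparison of $\mathrm{Ext}^\bullet_{\mathcal{F}}$ with $\mathrm{Ext}^\bullet_{\underline{\mathcal{F}}}$ (including the negative-degree correction factors) together with the count of conflations lying in a prescribed stable class, and — more tediously than conceptually — the careful reconciliation of the various product and normalisation conventions. The reduction step is softer but still needs that localising at the $[P]$ does not collapse the algebra; for this argument it suffices that the $[M_\lambda]$ span over the subalgebra of the $[P]^{\pm1}$, which the central-unit computation provides.
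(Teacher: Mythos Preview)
The paper does not prove this statement: it is quoted verbatim as \cite[Thm.~4.2]{Gorsky2018} and used as a black box, so there is no proof in the paper to compare your proposal against.

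That said, your outline is the natural one and matches the shape of Gorsky's argument: reduce modulo the central units $[P]$ so that classes are indexed by stable isomorphism types, then identify the structure constants via the canonical isomorphism $\mathrm{Ext}^1_{\mathcal{F}}(A,C)\cong\mathrm{Hom}_{\underline{\mathcal{F}}}(A,\Sigma C)$ and the fact that the middle term of a conflation becomes, in $\underline{\mathcal{F}}$, the cocone of the corresponding connecting morphism. One point deserves more care than you give it. You write that the comparison of structure constants is ``more tedious than conceptual'', but in fact the two products, as written in this paper, go in opposite directions: the Hall product $[A]\diamond[C]$ counts conflations $0\to C\to B\to A\to 0$, i.e.\ elements of $\mathrm{Ext}^1_{\mathcal{F}}(A,C)\cong\mathrm{Hom}_{\underline{\mathcal{F}}}(A,\Sigma C)$, whereas the derived Hall product $[A]\diamond[C]$ in \eqref{product_derived_Hall} counts elements of $\mathrm{Hom}_{\underline{\mathcal{F}}}(C,\Sigma A)$, giving triangles $A\to B\to C\to\Sigma A$. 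Your appeal to ``the opposite-product convention'' is therefore doing real work, not just bookkeeping: you must either check that Gorsky's own normalisation absorbs this reversal (it does, which is why the paper's Remark~(b) asserts compatibility with \cite{Gorsky2018}), or else produce an explicit bijection between the two counts. Without making this step precise the multiplicativity of $\Psi$ is not actually established.
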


\begin{rem}\mbox{}

\begin{itemize}
\label{rem_grading}
    \item  The Hall algebra $\mathcal{H}(\mathcal{E})$ of definition \ref{de_Hall_algebra} is graded by the Grothendieck group $\mathcal{K(E)}$ (the generators and relations are homogeneous for the natural grading).
    \item The semi-derived Hall algebra of \ref{de_semiderived_hall} inherits a $\mathcal{K(F)}$-graduation from $\mathcal{H(F)}$ since it is obtained as a localization with respect to homogeneous elements.
    \item The derived Hall algebra $\mathcal{DH}(\mathcal{T})$ of definition \ref{de_derived_hall} is graded by the Grothendieck group $\mathcal{K(T)}$ (the generators and relations are homogeneous for the natural grading).
    \end{itemize}
\end{rem}

\section{Semi-derived Hall algebras of categories of complexes}
\label{sec_semiderived_Hall_complexes}

In this section, we construct the semi-derived Hall algebras of certain categories of complexes of modules over path algebras of quivers.
We use the conventions of co-chain complexes (in particular, the differentials are maps $d^i:M^i\rightarrow M^{i+1}$).
We denote by $[1]$ the shift operation, and by $[i]$ the composition $[1]^i$, for any $i\in \mathbb{Z}$.

Let $Q$ be a finite connected acyclic quiver, $\Bbbk Q$ its path algebra, $\text{mod}(\Bbbk Q)$ be the category of finite dimensional right $kQ$-modules and $D^b(Q)$ the associated bounded derived category.
Let $\mathcal{P}$ be the full subcategory of $\mathrm{mod}(\Bbbk Q)$ whose objects are the projective modules. The category of bounded complexes of projectives $C^b(\mathcal{P})$\label{symb_Cb(P)} is a Frobenius category whose projective-injective objects are exactly the acyclic complexes (a bounded complex of projective objects is acyclic if and only if it is contractible). Recall that the associated stable category is the homotopy category of bounded complexes of projectives $K^b(\mathcal{P})$, which is canonically equivalent to the bounded derived category $D^b(Q)$.\\

Let us recall the description of the indecomposable objects of $C^b(\mathcal{P})$. 
Since $\Bbbk Q$ is the path algebra of a quiver, the category $\mathrm{mod}(\Bbbk Q)$ is hereditary. Moreover, since $\Bbbk Q$ is finite-dimensional, for any object $M$ of $\mathrm{mod}(\Bbbk Q)$, we can fix a minimal projective resolution of the form

\begin{equation}
\label{eq_proj_res}
0\rightarrow P_M\xrightarrow[]{f_M}Q_M\xrightarrow[]{g_M} M\rightarrow 0.
\end{equation}
We write $C_M$\label{symb_complex_C_M} for the complex
\[\dots\rightarrow 0\rightarrow P_M\xrightarrow[]{f_M} Q_M\rightarrow 0 \rightarrow \dots,\]
where $Q_M$ is in degree $0$. 
Moreover, for a projective module $P\in \mathcal{P}$, let $K_P$\label{symb_complex_K_P} be the acyclic complex 
\[\dots\rightarrow 0\rightarrow P\xrightarrow[]{\text{Id}_P} P\rightarrow 0 \rightarrow \dots,\]
where $P$ is in degrees $-1$ and $0$. 
Proceeding as in \cite[Lem.~4.2]{Bridgeland2013} we can prove the following:
\begin{lem}
The set
\[ \{K_P[i]|\ i\in\mathbb{Z}, P\in\mathcal{P} \text{ indecomposable}\}\cup \{C_M[i]|\ i\in\mathbb{Z}, M\in\mathrm{mod}(\Bbbk Q) \text{ indecomposable}\} \]
yields a system of representatives of the isomorphism classes of indecomposable objects in $C^b(\mathcal{P})$.
Moreover, any projective-injective object is a direct sum of objects $K_P[i]$.
\end{lem}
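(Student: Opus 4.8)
The strategy is to lift everything to the triangle equivalence $K^b(\mathcal{P})\simeq D^b(Q)$ and to the structure of $D^b(Q)$ for the hereditary algebra $\Bbbk Q$. First I would note that $C^b(\mathcal{P})$ is a Krull--Schmidt category, being $\Bbbk$-linear, $\mathrm{Hom}$-finite and idempotent complete. It therefore suffices to establish: (a) every object of $C^b(\mathcal{P})$ is a finite direct sum of objects from the stated list; (b) each object in the list is indecomposable; (c) distinct objects in the list are non-isomorphic.

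For (a) I would invoke the standard structure theory of complexes of projectives over the finite-dimensional, hence semiperfect, algebra $\Bbbk Q$: every bounded complex of projectives $P^\bullet$ splits in $C^b(\mathcal{P})$ as $P^\bullet\cong P^\bullet_{\min}\oplus P^\bullet_{\mathrm{ctr}}$ with $P^\bullet_{\min}$ minimal (differentials with image in the radical) and $P^\bullet_{\mathrm{ctr}}$ contractible, the minimal summand being unique up to isomorphism, and two complexes of projectives being homotopy equivalent precisely when their minimal summands agree. A bounded contractible complex of projectives over a semiperfect ring is a finite direct sum of elementary contractibles $0\to P\xrightarrow{\mathrm{id}}P\to 0$, that is, of objects $K_P[i]$ with $P$ indecomposable; so $P^\bullet_{\mathrm{ctr}}$ is already of the required form. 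For $P^\bullet_{\min}$ the heredity of $\Bbbk Q$ enters: in $D^b(Q)$ every object is the direct sum of its shifted cohomology objects, so $P^\bullet_{\min}$ is quasi-isomorphic, hence homotopy equivalent (both sides being complexes of projectives), to a direct sum of the minimal projective resolutions of the modules $H^i(P^\bullet_{\min})$ placed in the appropriate degrees; these resolutions are two-term because $\Bbbk Q$ is hereditary, and after decomposing each cohomology module into indecomposable summands this direct sum becomes $\bigoplus C_M[i]$ (with $C_P[i]$ a stalk complex whenever the indecomposable summand is a projective $P$). Being minimal and homotopy equivalent to $P^\bullet_{\min}$, it is isomorphic to $P^\bullet_{\min}$ in $C^b(\mathcal{P})$. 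Hence $P^\bullet$ is a finite direct sum of objects $C_M[i]$ and $K_P[i]$.

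For (b), a chain endomorphism of $K_P=(0\to P\xrightarrow{\mathrm{id}}P\to 0)$ is given by one element of $\mathrm{End}_{\Bbbk Q}(P)$ in both degrees, so $\mathrm{End}_{C^b(\mathcal{P})}(K_P)\cong\mathrm{End}_{\Bbbk Q}(P)$, which is local for $P$ indecomposable; thus each $K_P[i]$ is indecomposable. The complex $C_M$ (with $M$ indecomposable) is minimal, so every direct summand of it is minimal; its image in $D^b(Q)$ is a shift of $M$, which is indecomposable, so a summand of $C_M$ that vanishes in $D^b(Q)$ is contractible as well as minimal, hence zero; therefore each $C_M[i]$ is indecomposable. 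For (c), pass to $D^b(Q)$: each $K_P[i]$ becomes zero while each $C_M[i]$ becomes a non-zero object, so no $K$-object is isomorphic to a $C$-object; if $K_P[i]\cong K_{P'}[i']$ then comparing supports forces $i=i'$ and comparing the term in a fixed degree forces $P\cong P'$; if $C_M[i]\cong C_{M'}[i']$ then the corresponding shifts of $M$ and $M'$ agree in $D^b(Q)$, whence $M\cong M'$ and $i=i'$ by Happel's description of the indecomposables of $D^b(Q)$. This proves the first assertion; the second is then immediate, since a projective-injective object of $C^b(\mathcal{P})$ is, by the description recalled above, a contractible complex, hence has zero minimal part, hence by (a) is a direct sum of objects $K_P[i]$.

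The main obstacle is step (a), and precisely the interplay of its two structural ingredients: the minimal-plus-contractible decomposition of bounded complexes of projectives over a semiperfect algebra, together with the hereditary-specific fact that $D^b(Q)$ splits every object into its shifted cohomology objects, which is what forces a minimal complex to be a direct sum of two-term complexes. Once these are in hand, the remainder is Krull--Schmidt bookkeeping, running parallel to Bridgeland's treatment of the $2$-periodic case in \cite[Lem.~4.2]{Bridgeland2013}.
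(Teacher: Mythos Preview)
Your proof is correct and follows precisely the route the paper indicates: the paper does not give its own argument but simply writes ``Proceeding as in \cite[Lem.~4.2]{Bridgeland2013}'', and your proposal is exactly an unwinding of Bridgeland's argument adapted from the $2$-periodic to the $\mathbb{Z}$-graded setting, using the minimal-plus-contractible decomposition over a semiperfect ring together with the hereditary splitting in $D^b(Q)$.
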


We define the \emph{twisted semi-derived Hall algebra} $\mathcal{SDH}_{tw}(C^b(\mathcal{P}))$\label{symb_SDH_twisted_CbP} as the $\mathbb{Q}(v^{1/2})$-algebra obtained from $\mathbb{Q}(v^{1/2})\otimes_{\mathbb{Q}}\mathcal{SDH}(C^b(\mathcal{P}))'$ by twisting the multiplication  by the factor (Sheng factor \cite{Sheng2010}) 
\[ (M,N) \mapsto
v^{\sum_{i\in \mathbb{Z}}(-1)^i\text{dim}_k\text{Hom}_{D^b(Q)}(M,N[i])}.
\]
Similarly, by applying the same twist to the multiplication of the derived Hall algebra $\mathcal{DH}(D^b(Q))$, we define the \emph{twisted derived Hall algebra} $\mathcal{DH}_{tw}(Q)$.

We write $\mathcal{SDH}^0_{tw}(C^b(\mathcal{P}))$ for the subalgebra of $\mathcal{SDH}_{tw}(C^b(\mathcal{P}))$ generated by the elements \[\{K_P[i]|\ i\in\mathbb{Z}, P\in\mathcal{P} \text{ indecomposable}\}.\]

\begin{rem}
Notice that, compared to the semi-derived Hall algebra $\mathcal{SDH}(C^b(\mathcal{P}))$, the product of the twisted semi-derived Hall algebra is twisted by the factor

\begin{equation}
\label{eq_twist}
v^{\sum_{i \in\mathbb{Z}} (-1)^i\text{dim}_k\text{Hom}_{D^b(Q)}(M,N[i])}\cdot v^{2\text{dim}_k\text{Hom}_{C^b(\mathcal{P})}(M,N)-2\sum_{i\leq 0}(-1)^i\text{dim}_k\text{Hom}_{D^b(Q)}(M,N[i])}
\end{equation}
Alternatively, by multiplying (\ref{eq_twist}) with the additional factor
\[v^{-2\text{dim}_k\text{Hom}_{C^b(\mathcal{P})}(M,N)}\cdot v^{-2\sum_{i>0}(-1)^i\text{dim}_k\text{Hom}_{D^b(Q)}(M,N[i])},\]
we find that the twisted semi-derived Hall algebra is obtained by twisting the multiplication used in \cite{Zhang2022} by the factor

\begin{equation}
\label{eq_twist_2}   
v^{-\sum_{i\in \mathbb{Z}}(-1)^i\text{dim}_k\text{Hom}_{D^b(Q)}(M,N[i])}.
\end{equation}
\end{rem}

Let $\alpha$ be an element of the Grothendieck group $\mathcal{K}(\mathrm{mod}(\Bbbk Q))$.
Using (bounded) projective resolutions, we can express $\alpha$ as
\[\alpha=\overline{P}-\overline{Q},\]
for some $P,Q\in \mathcal{P}$. 
For $i\in\mathbb{Z}$, we define the element
\[\label{symb_K_alpha_i}
K_{\alpha,i}=[K_P[i]][K_Q[i]]^{-1}\in \mathcal{SDH}_{tw}(C^b(\mathcal{P})).\]
Moreover, for $M\in\mathrm{mod}(\Bbbk Q)$ and $i\in\mathbb{Z}$, we define the elements 
\[\label{symb_E_M_i} E_{M,i}=[K_{-P_M}[i]][C_M[i]]/a_M \in \mathcal{SDH}_{tw}(C^b(\mathcal{P}))\]
and
\[Z_{M,i}=[M[i]]/a_M \in \mathcal{DH}_{tw}(Q),\]
where 
\[ a_M = \mathrm{Aut}_{\Bbbk Q}(M)\prod_{i>0}|\mathrm{Ext}_{\Bbbk Q}^{-i}(M,M)|.\]
\begin{rem}
    The coefficient $a_M$ does not appear in the definition of the element corresponding to $E_{M,i}$ given in \cite{Bridgeland2013}, \cite{Gorsky2013semiderived} and \cite{Zhang2022}. It plays the role of the rescaling factor in~(\ref{eq_rescaling}).
\end{rem}

Recall Ringel's bilinear form on the Grothendieck group $\mathcal{K}(\mathrm{mod}(\Bbbk Q))$ given by

\[\langle M,N  \rangle = \mathrm{dim(Hom}_{\Bbbk Q}(M,N)) - \mathrm{dim(Ext}_{\Bbbk Q}^1(M,N)),\ \ \ M,N\in \mathrm{mod}\Bbbk Q.\]
We denote by $(-,-)$ its symmetrization:

\[(M,N)= \langle M,N  \rangle+ \langle N,M  \rangle.\]
For $M, N, V, W\in\mathrm{mod}(\Bbbk Q)$, let $\mathrm{Ex}_{N,M}^{V,W}\subset \mathrm{Hom}_{\Bbbk Q}(V,N)\times\mathrm{Hom}_{\Bbbk Q}(N,M)\times\mathrm{Hom}_{\Bbbk Q}(M,W)$ be the set

\[ \mathrm{Ex}_{N,M}^{V,W} = \{(f,g,h)\ |\ 0\rightarrow V\xrightarrow[]{f} N \xrightarrow[]{g} M \xrightarrow[]{h} W \rightarrow 0 \text{ is exact }\}. \]
and define

\[ \gamma_{N,M}^{V,W}= \frac{|\mathrm{Ex}_{N,M}^{V,W}|}{\mathrm{Aut}_{\Bbbk Q}(M)\mathrm{Aut}_{\Bbbk Q}(N)}. \]
For $X,Y,Z\in \mathrm{mod}(\Bbbk Q)$, we set
\[ g_{X,Y}^Z = |\{ X'\subset Z\ |\ X' \cong X, \ Z/X'\cong Y\}|.\]
The \emph{Hall numbers} of $\mathrm{mod}(\Bbbk Q)$ are the numbers $g_{X,Y}^Z$ and $\gamma_{N,M}^{V,W}$.

\begin{teo}[\cite{Zhang2022}]
\label{teo_relations_semiderived}
The twisted semi-derived Hall algebra $\mathcal{SDH}_{tw}(C^b(\mathcal{P}))$ is generated by the elements   $E_{M,i} \text{ and }\ K_{\alpha,i}$ 
, where $i\in \mathbb{Z}, M \in \mathrm{mod}(\Bbbk Q), \alpha\in \mathcal{K}(\mathrm{mod}(\Bbbk Q))$, 
subject to the following relations

\begin{equation*} 
\begin{split}
        K_{\alpha,i} &\text{ is central, }\ \ \    \alpha\in \mathcal{K}(\mathrm{mod}(\Bbbk Q)),\ i\in \mathbb{Z};\\
        E_{M,i}E_{N,i} &= v^{\langle M,N\rangle } \sum_{L} g_{M,N}^L E_{L,i},\ \ \ \ M,N\in \mathrm{mod}(\Bbbk Q), i\in\mathbb{Z};\\
        E_{M,i} E_{N,i+1} &= v^{-\langle N,M\rangle } \sum_{V,W} u^{-\langle V,W\rangle } \gamma_{N,M}^{V,W} E_{W,i+1} E_{V,i} K_{[N]-[V],i}, \ \ \ \ M,N\in \mathrm{mod}(\Bbbk Q), i\in\mathbb{Z};\\
        E_{M,i}E_{N,j} &= v^{(-1)^{i-j}(M,N)}E_{N,j}E_{M,i},\ \ \ \ M,N\in \mathrm{mod}(\Bbbk Q),\ j>i+1\in\mathbb{Z}.
\end{split}
\end{equation*}

\end{teo}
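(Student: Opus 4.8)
The statement is, up to the renormalisation of the product recorded just after the definition of $\mathcal{SDH}_{tw}(C^b(\mathcal{P}))$, the presentation established by Zhang \cite{Zhang2022} (itself modelled on Bridgeland's treatment of the $2$-periodic case \cite{Bridgeland2013}), so the plan is to explain how the relations are obtained by a direct computation of Hall numbers in $\mathcal{H}(C^b(\mathcal{P}))$ and then transported through localization, the Gorsky twist of Theorem \ref{teo_gorsky}, and the Sheng twist. Two structural facts drive every computation. First, by the classification Lemma above, every object of $C^b(\mathcal{P})$ is a finite direct sum of objects $C_M[i]$ and $K_P[i]$, and the $K_P[i]$ are precisely the projective--injectives. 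Second, since $\Bbbk Q$ is hereditary, $C_M$ is a projective resolution of $M$, hence isomorphic to $M$ in $D^b(Q)$ and supported in two consecutive degrees; consequently $\mathrm{Hom}_{D^b(Q)}(M,N[k])=\mathrm{Ext}^k_{\Bbbk Q}(M,N)$ vanishes for $k\notin\{0,1\}$, conflations in the Frobenius category $C^b(\mathcal{P})$ are degreewise split so that $\mathrm{Ext}^1_{C^b(\mathcal{P})}(A,B)\cong\mathrm{Hom}_{D^b(Q)}(A,B[1])$, and in particular both $\mathrm{Hom}_{C^b(\mathcal{P})}(C_M[i],C_N[j])$ and $\mathrm{Ext}^1_{C^b(\mathcal{P})}(C_M[i],C_N[j])$ vanish once $|i-j|\ge 2$, while $\mathrm{Ext}^1_{C^b(\mathcal{P})}(C_N[i+1],C_M[i])\cong\mathrm{Hom}_{\Bbbk Q}(N,M)$. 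With these in hand, the \emph{generation} statement and the well-definedness of $K_{\alpha,i}$ are immediate: after localization the $[K_P[i]]$ are invertible, any two expressions $\alpha=\overline P-\overline Q$ differ by the class of a contractible complex (which is $1$), and $[C_M[i]]=a_M\,K_{\overline{P_M},i}\,E_{M,i}$ with $K_{\overline{P_M},i}$ a special case of $K_{\alpha,i}$; the class of an arbitrary object is then a product of classes of indecomposables by induction on the number of summands, using the standard identity expressing $[A\oplus C]$, for $\mathrm{Ext}^1(A,C)=0$, as a power of $v$ times $[A]\diamond[C]$.

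Each of the four relations is then obtained by computing the relevant product of generators and tracking the twists. For the \emph{centrality of $K_{\alpha,i}$}: $K_P[i]$ is acyclic, so $\mathrm{Hom}_{D^b(Q)}(K_P[i],-)=0=\mathrm{Hom}_{D^b(Q)}(-,K_P[i])$ and the Sheng twist involving it is trivial, while the remaining structure constants and the Gorsky correction involve only Hom- and $\mathrm{Ext}^1$-spaces in $C^b(\mathcal{P})$ between $K_P[i]$ and a test object, and these cancel in the balanced combination $[K_P[i]][K_Q[i]]^{-1}$, as in \cite[\S 3]{Bridgeland2013}. For the \emph{same-degree relation}: the horseshoe lemma lifts $0\to M\to L\to N\to 0$ to a conflation in $C^b(\mathcal{P})$ whose middle term is $C_L$ up to a contractible summand, and conversely every extension of $C_N$ by $C_M$ arises this way; summing over $L$, absorbing the contractible parts into the localization and correcting the projective content with the $K$-prefactors gives $\sum_L g_{M,N}^L E_{L,i}$, the coefficient $v^{\langle M,N\rangle}$ being exactly the Sheng twist of $(C_M[i],C_N[i])$ computed from hereditariness. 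For the \emph{far-apart relation} $j>i+1$: $[C_M[i]]$ and $[C_N[j]]$ already commute in the localized untwisted algebra by the vanishing above, so the commutation factor $v^{(-1)^{i-j}(M,N)}$ is produced entirely by the Sheng and normalisation twists via $\mathrm{Hom}_{D^b(Q)}(M,N[k])=\mathrm{Ext}^k_{\Bbbk Q}(M,N)$ for $k=0,1$. The \emph{adjacent-degree relation} is where the real work lies: an element $\phi\in\mathrm{Ext}^1_{C^b(\mathcal{P})}(C_N[i+1],C_M[i])\cong\mathrm{Hom}_{\Bbbk Q}(N,M)$ has middle term $C_{\ker\phi}[i+1]\oplus C_{\operatorname{coker}\phi}[i]$ up to a contractible complex; counting the $\phi$ with prescribed kernel $V$ and cokernel $W$ produces the four-term Hall number $\gamma^{V,W}_{N,M}$, the difference in projective content between source and target is recorded by $K_{[N]-[V],i}$, and the $v$-exponents assemble from the Sheng twist, the Gorsky twist, and Ringel's form $\langle -,-\rangle$.

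Finally I would verify that these relations \emph{present} the algebra and do not merely hold in it: using the first three relations every monomial in the generators rewrites as a $K$-monomial times a product of $E_{M,i}$ with weakly increasing degrees $i$ (reduced within a fixed degree by the same-degree relation), so such monomials span; conversely $\mathcal{SDH}_{tw}(C^b(\mathcal{P}))$ is free over $\mathcal{SDH}^0_{tw}(C^b(\mathcal{P}))$, and modulo the Gorsky ideal it specialises onto $\mathcal{DH}_{tw}(Q)$ with its tautological basis indexed by the objects of $D^b(Q)$, so a rank count over $\mathcal{SDH}^0_{tw}(C^b(\mathcal{P}))$ promotes the spanning set to a basis and the relations to a presentation. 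I expect the main obstacle to be the bookkeeping in the adjacent-degree relation: pinning down exactly which contractible complexes occur as summands of the middle terms of extensions in $C^b(\mathcal{P})$, and matching the resulting counts, the factor $K_{[N]-[V],i}$, and every power of $v$ with the right-hand side.
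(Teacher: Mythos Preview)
The paper does not give its own proof of this theorem: it is stated with attribution to \cite{Zhang2022} and used as input. So there is no ``paper's proof'' to compare against; what you have written is a (reasonable) reconstruction of the argument one finds in Zhang's paper, itself the unbounded analogue of Bridgeland's computations in \cite{Bridgeland2013}. Your outline --- classify indecomposables, compute extensions of the $C_M[i]$ degreewise using hereditarity and the horseshoe lemma, then track the Gorsky and Sheng twists --- is the right strategy, and your identification of the adjacent-degree relation as the delicate step is accurate.

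One small correction: for centrality you do not need the ``balanced combination'' $[K_P[i]][K_Q[i]]^{-1}$. Each $[K_P[i]]$ is already central after the Gorsky twist. Indeed, since $K_P[i]$ is projective--injective we have $\mathrm{Ext}^1_{C^b(\mathcal P)}(K_P[i],-)=0=\mathrm{Ext}^1_{C^b(\mathcal P)}(-,K_P[i])$, so in the untwisted algebra $[K_P[i]]\diamond[C]=|\mathrm{Hom}(C,K_P[i])|^{-1}[K_P[i]\oplus C]$ and $[C]\diamond[K_P[i]]=|\mathrm{Hom}(K_P[i],C)|^{-1}[K_P[i]\oplus C]$; the Gorsky twist multiplies each side by exactly the reciprocal $\mathrm{Hom}$-factor (the stable $\mathrm{Hom}$ with a contractible is the zero space, of cardinality $1$), yielding $[K_P[i]\oplus C]$ on both sides. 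The Sheng twist is then trivial because $K_P[i]$ vanishes in $D^b(Q)$. Your completeness argument via a PBW-type spanning set and the specialization $\pi^{\mathcal H}$ to $\mathcal{DH}_{tw}(Q)$ is exactly what underlies Proposition~\ref{prop_basis_semiderived}, also quoted from \cite{Zhang2022}.
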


We have constructed the twisted semi-derived Hall algebra in such a way that, specializing the $K_{\alpha,i}$ to 1 and replacing the generators $E_{M,i}$ with the elements $Z_{M,i}$ of the derived Hall algebra, we obtain the generators and the relations of the twisted derived Hall algebra used in \cite{HL_quantum_Groth_rings_derived_Hall} and \cite{Sheng2010}.

\begin{teo}[{\cite{Toen2006},\cite{Sheng2010}}]
The twisted derived Hall algebra $\mathcal{DH}_{tw}(Q)$ is generated by the elements  $Z_{M,i}$, where $M\in \mathrm{mod}(\Bbbk Q),\ i\in \mathbb{Z}$,
subject to the following relations

\begin{equation*} 
\begin{split}
        Z_{M,i}Z_{N,i} &= v^{\langle M,N\rangle } \sum_{L} g_{M,N}^L Z_{L,i},\ \ \ \ M,N\in \mathrm{mod}(\Bbbk Q), i\in\mathbb{Z};\\
        Z_{M,i} Z_{N,i+1} &= v^{-\langle N,M\rangle } \sum_{V,W} u^{-\langle V,W\rangle } \gamma_{N,M}^{V,W} Z_{W,i+1} Z_{V,i}, \ \ \ \ M,N\in \mathrm{mod}(\Bbbk Q), i\in\mathbb{Z};\\
        Z_{M,i}Z_{N,j} &= v^{(-1)^{i-j}(M,N)}Z_{N,j}Z_{M,i},\ \ \ \ M,N\in \mathrm{mod}(\Bbbk Q),\ j>i+1\in\mathbb{Z}.
\end{split}
\end{equation*}

\end{teo}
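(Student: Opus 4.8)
The plan is to obtain this presentation as a specialization of the presentation of the twisted semi-derived Hall algebra (Theorem~\ref{teo_relations_semiderived}), identifying the relevant quotient via Gorsky's theorem.

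\textbf{Step 1: realize $\mathcal{DH}_{tw}(Q)$ as a quotient of $\mathcal{SDH}_{tw}(C^b(\mathcal{P}))$.} First I would observe that the projective--injective objects of $C^b(\mathcal{P})$ are exactly the contractible complexes, i.e.\ the finite direct sums of the $K_P[i]$, and that $K_{\overline{P},i}=[K_P[i]]$ while $K_{\alpha,i}=[K_P[i]][K_Q[i]]^{-1}$ for $\alpha=\overline{P}-\overline{Q}$; hence the two-sided ideal $J\subset\mathcal{SDH}_{tw}(C^b(\mathcal{P}))$ generated by $\{K_{\alpha,i}-1\}$ equals the ideal generated by $\{[P]-1:P\text{ projective-injective}\}$. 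Since $\Bbbk Q$ is hereditary, the stable category $K^b(\mathcal{P})\simeq D^b(Q)$ is left locally homologically finite, so Gorsky's Theorem~\ref{teo_gorsky} applies and gives an algebra isomorphism $\mathcal{SDH}(C^b(\mathcal{P}))'/J\xrightarrow{\sim}\mathcal{DH}(D^b(Q))$ sending $E_{M,i}$ to $Z_{M,i}$ and $K_{\alpha,i}$ to $1$ (the $a_M$-normalisation built into $E_{M,i}$ and $Z_{M,i}$ is exactly the rescaling~(\ref{eq_rescaling}) intertwining the two conventions for derived Hall algebras). The point is then that the Sheng factor defining $\mathcal{SDH}_{tw}(C^b(\mathcal{P}))$ from $\mathcal{SDH}(C^b(\mathcal{P}))'$ depends on its arguments only through their images in $D^b(Q)$ — in particular it is insensitive to adding or deleting contractible summands — so it agrees with the Sheng factor defining $\mathcal{DH}_{tw}(Q)$ from $\mathcal{DH}(D^b(Q))$, and $J$ is homogeneous for the $\mathcal{K}(D^b(Q))$-grading; hence the isomorphism survives the twist and we get
\[ \mathcal{SDH}_{tw}(C^b(\mathcal{P}))/J\;\xrightarrow{\ \sim\ }\;\mathcal{DH}_{tw}(Q),\qquad E_{M,i}\mapsto Z_{M,i},\quad K_{\alpha,i}\mapsto 1. \]

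\textbf{Step 2: compare presentations.} By Theorem~\ref{teo_relations_semiderived}, $\mathcal{SDH}_{tw}(C^b(\mathcal{P}))$ is presented by the generators $\{E_{M,i}\}\cup\{K_{\alpha,i}\}$ and the four displayed families of relations. Forming the quotient by $J$ deletes the generators $K_{\alpha,i}$ and imposes $K_{\alpha,i}=1$ in every relation: the relation ``$K_{\alpha,i}$ central'' becomes vacuous, the factor $K_{[N]-[V],i}$ drops out of the $E_{M,i}E_{N,i+1}$ relation, and, renaming $E_{M,i}$ as $Z_{M,i}$, the three remaining relations become verbatim those in the statement. Thus $\mathcal{DH}_{tw}(Q)\cong\mathcal{SDH}_{tw}(C^b(\mathcal{P}))/J$ admits the asserted presentation. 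Alternatively, one may verify the three relations directly from~(\ref{product_derived_Hall}): since $\Bbbk Q$ is hereditary only the cases $j-i\in\{-1,0,1\}$ contribute, and since every object of $D^b(Q)$ is formal the cone of a map $N\to M$ splits as shifted kernel plus shifted cokernel, converting the sums over middle terms into the Hall numbers $g_{M,N}^L$ and $\gamma_{N,M}^{V,W}$, while the Sheng factor and the $a_M$ account for the powers of $v$; that these relations are complete is the content of~\cite{Toen2006,Sheng2010}, via the PBW-type basis $\{\prod^{\curvearrowright}_k Z_{M_k,i_k}\}$ indexed by the isoclasses of $D^b(Q)$.

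\textbf{Expected main obstacle.} The one genuinely delicate point is the twist bookkeeping in Step~1: checking that the Sheng factors on $C^b(\mathcal{P})$ and on $D^b(Q)$ match under Gorsky's identification, so that the quotient map is an algebra, and not merely a vector-space, isomorphism. Everything else is a routine substitution (Step~2) or is quoted from Theorems~\ref{teo_relations_semiderived} and~\ref{teo_gorsky}.
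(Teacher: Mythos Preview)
The paper does not give its own proof of this theorem: it is stated as a cited result from \cite{Toen2006,Sheng2010}, with the paper merely remarking (in the sentence immediately preceding the theorem) that the relations are obtained from those of Theorem~\ref{teo_relations_semiderived} by specializing the $K_{\alpha,i}$ to~$1$, and (in the sentence following it) that Gorsky's Theorem~\ref{teo_gorsky} yields the surjection $\pi^{\mathcal{H}}$ sending $E_{M,i}\mapsto Z_{M,i}$, $K_{\alpha,i}\mapsto 1$.

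Your proposal is correct and is precisely the argument the paper sketches in that surrounding text, just made explicit: you use Gorsky's theorem to identify $\mathcal{DH}_{tw}(Q)$ with the quotient of $\mathcal{SDH}_{tw}(C^b(\mathcal{P}))$ by the ideal generated by the $K_{\alpha,i}-1$, and then read off the presentation of the quotient from Theorem~\ref{teo_relations_semiderived}. Your observation that the Sheng twist depends only on the images in $D^b(Q)$ (hence is insensitive to contractible summands and descends to the quotient) is exactly the point needed to upgrade Gorsky's untwisted isomorphism to the twisted setting, and is implicit in the paper's claim that $\pi^{\mathcal{H}}$ is an algebra map. One caveat: this route relies on Theorem~\ref{teo_relations_semiderived}, itself a cited result from~\cite{Zhang2022}, so you have not given an independent proof---but neither does the paper.
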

It follows from Theorem~\ref{teo_gorsky} that there is a surjective map \begin{align*}
\pi^{\mathcal{H}}: \mathcal{SDH}_{tw}(C^b(\mathcal{P}))&\twoheadrightarrow \mathcal{DH}_{tw}(Q), \\
E_{M,i} &\mapsto Z_{M,i},\\
K_{\alpha,i} &\mapsto 1,
\end{align*}
whose kernel is the two-sided ideal generated by $\{K-1\ |\ K \in \mathcal{SDH}^0_{tw}(C^b(\mathcal{P}))\}$.

In the sequel, the basis given in the following proposition will be of great use.

\begin{prop}[{\cite[Prop.~5.4]{Zhang2022}}]
\label{prop_basis_semiderived}
    The elements 
    \[      E_{M_s,s}E_{M_{s-1},s-1}\dots E_{M_r,r}K_{\alpha_s,s} K_{\alpha_{s-1},s-1}\dots K_{\alpha_r,r},
    \]
    for $r\leq s\in\mathbb{Z},\ \alpha_i \in \mathcal{K}(\mathrm{mod}(\Bbbk Q)) \text{ and } M_i\in \mathrm{mod}(\Bbbk Q)$ for $r\leq i\leq s$,
    form a $\mathbb{Q}(v^{1/2})$-basis of the twisted semi-derived Hall algebra $\mathcal{SDH}_{tw}(C^b(\mathcal{P}))$.   
\end{prop}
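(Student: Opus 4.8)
The plan is to prove spanning and linear independence separately, using the presentation of Theorem~\ref{teo_relations_semiderived} for the former and a Bridgeland--Gorsky-style analysis of the localization for the latter; the latter is where the real work lies.

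\emph{Spanning.} I would argue that every word in the generators $E_{M,i}$ and $K_{\alpha,i}$ can be brought into the asserted normal form using only the relations of Theorem~\ref{teo_relations_semiderived}. Since the $K_{\alpha,i}$ are central, one first pushes all of them to the right; the identity $K_{\alpha,i}K_{\beta,i}=K_{\alpha+\beta,i}$ (immediate from the definition of $K_{\alpha,i}$ together with the vanishing of $\mathrm{Ext}^1$ out of the projective--injective complexes $K_P[i]$, which forces $[K_P[i]]\diamond[K_{P'}[i]]=[K_{P\oplus P'}[i]]$) then collapses the $K$'s to at most one per cohomological degree. For the $E$-part: the relation at equal degree merges a consecutive pair $E_{M,i}E_{N,i}$ into a single $\sum_L g_{M,N}^L E_{L,i}$; the relation at adjacent degrees rewrites an out-of-order pair $E_{M,i}E_{N,i+1}$ as a sum of in-order products $E_{W,i+1}E_{V,i}K_{[N]-[V],i}$, the new central factor again being pushed right; and the relation at degrees at least two apart simply transposes a pair up to a scalar. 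Attaching to a word the pair $(\#\{E\text{-factors}\},\ \#\{\text{pairs of }E\text{-factors in the wrong degree order}\})$ and checking that each of these moves strictly decreases it in the lexicographic order yields termination, hence a normal form of the required shape.

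\emph{Structural input for independence.} Here I would use the central subalgebra $\mathcal{SDH}^0_{tw}(C^b(\mathcal{P}))$, call it $R$. By Krull--Schmidt every object of $C^b(\mathcal{P})$ splits as $X_{\mathrm{red}}\oplus X_{\mathrm{ac}}$ with $X_{\mathrm{ac}}$ a sum of complexes $K_P[i]$ and $X_{\mathrm{red}}$ without contractible summands; since $\mathrm{Ext}^1$ between a projective--injective and any object vanishes, in the Hall algebra $[X_{\mathrm{ac}}]\diamond[X_{\mathrm{red}}]$ is an invertible scalar times $[X]$, and $[X_{\mathrm{ac}}]$ is an invertible scalar times a monomial in the $[K_{P_j}[i]]$, $P_j$ indecomposable projective. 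Consequently $R$ is a Laurent polynomial ring on the $[K_{P_j}[i]]$, and $\mathcal{SDH}_{tw}(C^b(\mathcal{P}))$ is a free $R$-module with basis the classes $[X_{\mathrm{red}}]$, which (grouping indecomposable summands by their cohomological degree) are indexed by the finitely supported sequences of modules $(M_i)_{i\in\mathbb{Z}}$. A second, triangularity, argument --- ordering reduced complexes by total dimension, say --- shows that the change of family from $\{[C_{M_s}[s]]\diamond\cdots\diamond[C_{M_r}[r]]\}$ to $\{[X_{\mathrm{red}}]\}$ is unitriangular with invertible diagonal entries, since the off-diagonal contributions to such a Hall product sit in strictly lower strata (after absorbing any contractible summands that appear into the unit part $R^{\times}$). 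As $E_{M,i}$ differs from $[C_M[i]]$ only by the unit $[K_{-P_M}[i]]\in R^{\times}$ and the nonzero scalar $a_M^{-1}$, it follows that $\{E_{M_s,s}\cdots E_{M_r,r}\}$ is itself an $R$-basis of $\mathcal{SDH}_{tw}(C^b(\mathcal{P}))$, indexed by the same sequences $(M_i)$.

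\emph{Linear independence and main obstacle.} Given a vanishing combination $\sum c_{\mu,\nu}E_{\mu}K_{\nu}=0$ of normal-form elements, where $E_\mu$ runs over the $R$-basis $\{E_{M_s,s}\cdots E_{M_r,r}\}$ and $K_\nu$ over the products $K_{\alpha_s,s}\cdots K_{\alpha_r,r}$, centrality of the $K_{\alpha,i}$ lets one rewrite it as $\sum_\mu E_\mu\big(\sum_\nu c_{\mu,\nu}K_\nu\big)=0$; freeness over $R$ forces $\sum_\nu c_{\mu,\nu}K_\nu=0$ in $R$ for each $\mu$, and since distinct $K_\nu$ are distinct Laurent monomials they are $\mathbb{Q}(v^{1/2})$-linearly independent, so all $c_{\mu,\nu}=0$. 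The main obstacle is the structural paragraph above: establishing that $\mathcal{SDH}_{tw}(C^b(\mathcal{P}))$ is $R$-free with the $[X_{\mathrm{red}}]$ --- and then the $E_{M_\bullet}$ --- as bases requires the Bridgeland-type decomposition lemma together with careful bookkeeping of the $v$-power normalizations produced by the Sheng and Gorsky twists (including checking that these twists are trivial on contractibles, so that $R$ is honestly a Laurent polynomial ring); once that is in place the rest is formal. This is the content of \cite[Prop.~5.4]{Zhang2022}.
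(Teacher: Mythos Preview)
The paper does not give its own proof of this proposition; it is quoted from \cite[Prop.~5.4]{Zhang2022}. Your sketch follows the Bridgeland--Gorsky strategy that underlies Zhang's argument, and the architecture is sound: spanning via the straightening relations of Theorem~\ref{teo_relations_semiderived}, freeness of $\mathcal{SDH}_{tw}(C^b(\mathcal{P}))$ over the Laurent ring $R=\mathcal{SDH}^0_{tw}(C^b(\mathcal{P}))$ with basis the classes of reduced complexes, and then a triangular passage from that $R$-basis to the ordered $E$-products. The termination for spanning is fine as written.

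There is one genuine slip in the triangularity step. Your proposed order, ``total dimension of the reduced complex'', does not give strict triangularity. Take $Q=A_2$ with arrow $1\to 2$, $M=S_2=P_2$, $N=P_1$. In the Hall product $[C_{S_2}[s]]\diamond[C_{P_1}[s-1]]$ the non-split extension is the two-term complex $(P_2\hookrightarrow P_1)$ placed in degrees $-s,\,1{-}s$, which equals $C_{S_1}[s-1]$; this is already reduced and has total dimension $3$, the same as the split term $C_{S_2}[s]\oplus C_{P_1}[s-1]$. So the off-diagonal entry is not ``strictly lower'' for your order. What does work is ordering by \emph{total homology dimension}: a non-split extension of $C_N[s-1]$ by $C_M[s]$ is, in $D^b(Q)$, the shift of $\mathrm{cone}(f)$ for a nonzero $f\colon M\to N$, with homology $\ker f$ and $\mathrm{coker}\,f$; since
\[
\dim\ker f+\dim\mathrm{coker}\,f=\dim M+\dim N-2\,\mathrm{rk}\,f<\dim M+\dim N
\]
for $f\neq 0$, every off-diagonal contribution has strictly smaller total homology dimension than the split one. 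Iterating over successive degrees gives the unitriangularity you want, with invertible (nonzero scalar) diagonal. With this correction, your outline agrees with the argument in \cite{Zhang2022} (building on \cite{Bridgeland2013,Gorsky2018}).
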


\section{On the grading of the (semi-)derived Hall algebra}
Let $V$ be an object of $D^b(Q)$ and consider a decomposition $V=\bigoplus_{k} M_k[i_k]$, with $M_k \in \mathrm{mod}(\Bbbk Q)$ and $i_k\in \mathbb{Z}$ for any $k$. We write $C_V$ for the complex

\[C_V = \bigoplus_{k} C_{M_k}[i_k] \in C^b(\mathcal{P}).\]
Moreover, we define $Z_V\in \mathcal{DH}_{tw}(Q)$ and \label{symb_E_V}\label{symb_K_V} $E_V, K_V\in \mathcal{SDH}_{tw}(C^b(\mathcal{P}))$ by 
\begin{align*}
Z_V &=  \prod^\curvearrowright_k Z_{M_k,i_k}, \\
E_V &=  \prod^\curvearrowright_k E_{M_k,i_k},\\
K_V &=  \prod_k K_{\overline{M_k},i_k},
\end{align*} 
where in the first two products the factors are ordered so that 
\[\mathrm{Ext}^1_{\mathcal{D}^b(Q)}(M_{k'}[i_{k'}],M_k[i_k])=0\]
if $Z_{M_k,i_k}$ precedes $Z_{M_{k'},i_{k'}}$.

Notice that there is a natural surjective map
\[ \mathcal{K}(C^b(\mathcal{P})) \twoheadrightarrow \mathcal{K}(K^b(\mathcal{P})) \cong \mathcal{K}(D^b(Q)).\]
Its kernel is the subgroup of $\mathcal{K}(C^b(\mathcal{P}))$ generated by the classes of contractible complexes.
Recall from Remark \ref{rem_grading} that $\mathcal{SDH}_{tw}(C^b(\mathcal{P}))$ is graded by the Grothendieck group $\mathcal{K}(C^b(\mathcal{P}))$. Up to some minor adjustements due to the different settings, the next lemma can also be seen as a consequence of \cite[Lem.~4.5]{Gorsky2013semiderived}. We provide an alternative proof, more coherent with our treatment.

\begin{lem}
\label{lem_uniqueness_K_degree}
Let $V,W\in D^b(Q)$ such that $K_V$ and $K_W$ have the same degree in $\mathcal{K}(C^b(\mathcal{P}))$. Then $K_V=K_W$.
\end{lem}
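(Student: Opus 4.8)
The plan is to reduce the statement to an injectivity property of the map that sends a class $\alpha \in \mathcal{K}(\mathrm{mod}(\Bbbk Q))$ and a degree $i \in \mathbb{Z}$ to the element $K_{\alpha,i} \in \mathcal{SDH}_{tw}(C^b(\mathcal{P}))$, together with an analysis of the $\mathcal{K}(C^b(\mathcal{P}))$-degree of such elements. First I would write $V = \bigoplus_k M_k[i_k]$ and $W = \bigoplus_l N_l[j_l]$, so that $K_V = \prod_k K_{\overline{M_k},i_k}$ and $K_W = \prod_l K_{\overline{N_l},j_l}$. Since the $K_{\alpha,i}$ are central (Theorem~\ref{teo_relations_semiderived}) and satisfy $K_{\alpha,i}K_{\beta,i} = K_{\alpha+\beta,i}$ (directly from the definition $K_{\alpha,i} = [K_P[i]][K_Q[i]]^{-1}$ together with the fact that $[K_{P\oplus P'}[i]] = v^{c}[K_P[i]][K_{P'}[i]]$ for a twist exponent $c$ that is symmetric in $P,P'$ and hence cancels against the analogous contribution — or more simply, because $K_{P\oplus P'}[i] \cong K_P[i]\oplus K_{P'}[i]$ and the localized Hall product of classes of contractibles with vanishing mutual $\mathrm{Ext}^1$ is their direct sum up to the Sheng/Gorsky twist), we can collect terms by degree and rewrite
\[
K_V = \prod_{i\in\mathbb{Z}} K_{\alpha_i, i}, \qquad \alpha_i = \sum_{k:\, i_k = i} \overline{M_k} \in \mathcal{K}(\mathrm{mod}(\Bbbk Q)),
\]
and similarly $K_W = \prod_i K_{\beta_i,i}$ with $\beta_i = \sum_{l:\, j_l=i} \overline{N_l}$. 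Thus it suffices to show that the $\mathcal{K}(C^b(\mathcal{P}))$-degree of $\prod_i K_{\alpha_i,i}$ determines the family $(\alpha_i)_{i\in\mathbb{Z}}$.

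Next I would compute that degree explicitly. Writing $\alpha_i = \overline{P_i} - \overline{P_i'}$ with $P_i, P_i' \in \mathcal{P}$, the element $K_{\alpha_i,i}$ is, up to an invertible scalar, $[K_{P_i}[i]]\,[K_{P_i'}[i]]^{-1}$, whose degree in $\mathcal{K}(C^b(\mathcal{P}))$ is $\overline{K_{P_i}[i]} - \overline{K_{P_i'}[i]}$. Now $K_P[i]$ is the complex with $P$ in (cohomological) degrees $i-1$ and $i$ and identity differential; so in $\mathcal{K}(C^b(\mathcal{P}))$ its class depends only on $P$ and the parity of $i$ up to the subgroup generated by contractibles — but we must be careful: $\mathcal{K}(C^b(\mathcal{P}))$ is the Grothendieck group of the \emph{Frobenius} category $C^b(\mathcal{P})$, in which short exact sequences of complexes (degreewise split, since the terms are projective) are the admissible ones, so $\overline{K_P[i]}$ is a genuinely nontrivial class. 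The key point is that $\mathcal{K}(C^b(\mathcal{P}))$ splits as a direct sum indexed by $\mathbb{Z}$ (one copy of $\mathcal{K}(\mathrm{mod}(\Bbbk Q))$ for each cohomological degree, via $M \mapsto \overline{M}$ placed in that degree), and under this identification $\overline{K_P[i]} - \overline{K_{P'}[i]}$ has component $\overline{P} - \overline{P'} = \alpha_i$ in degree $i-1$ and component $-(\overline{P}-\overline{P'}) = -\alpha_i$ in degree $i$, and zero elsewhere. Hence
\[
\deg\!\Big(\prod_{i} K_{\alpha_i,i}\Big) \;=\; \sum_i \big( (\alpha_i)_{\text{in degree } i-1} - (\alpha_i)_{\text{in degree } i}\big),
\]
so the component in cohomological degree $d$ equals $\alpha_{d+1} - \alpha_d$.

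Finally, equality of degrees $\deg(K_V) = \deg(K_W)$ gives $\alpha_{d+1} - \alpha_d = \beta_{d+1} - \beta_d$ for all $d \in \mathbb{Z}$, i.e. $\alpha_d - \beta_d$ is independent of $d$; since both families have only finitely many nonzero terms, this common value is $0$, so $\alpha_d = \beta_d$ for all $d$, whence $K_V = K_W$ (the scalar prefactors agree because they are determined by the same combinatorial data once the $\alpha_d$ agree — indeed, one can fix once and for all a choice of representatives $\overline{P}-\overline{P'}$ for each class, which the definition of $K_{\alpha,i}$ implicitly does). The main obstacle is the bookkeeping in the second step: one has to pin down precisely the structure of $\mathcal{K}(C^b(\mathcal{P}))$ as a Frobenius-category Grothendieck group and verify that $M \mapsto (\overline{M}$ in each degree$)$ gives the claimed $\mathbb{Z}$-graded splitting, and that the class of the contractible $K_P[i]$ is exactly the "difference of a bump in degree $i-1$ and degree $i$" — this is where the interplay between the exact structure on $C^b(\mathcal{P})$ and the (co)homological grading conventions must be handled with care. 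An alternative, perhaps cleaner, route for this step is to invoke \cite[Lem.~4.5]{Gorsky2013semiderived} directly, as the excerpt suggests, but the self-contained argument above avoids translating between conventions.
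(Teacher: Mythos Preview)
Your approach is essentially the paper's: both use the splitting $\mathcal{K}(C^b(\mathcal{P})) \cong \bigoplus_{d\in\mathbb{Z}} \mathcal{K}(\mathcal{P})$ (coming from the fact that conflations in $C^b(\mathcal{P})$ are degreewise split) together with the observation that $\overline{K_P[i]}$ is supported in exactly two consecutive components. The paper organises this as a recursion starting from the extremal index, peeling off one component at a time; you write down the linear system in one go and solve it. Both packagings are fine.

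There is, however, one genuine slip in your second step. The complex $K_P[i]$ has the \emph{same} module $P$ in both of its nonzero cohomological degrees, so under the degreewise splitting its class contributes $\overline{P}$ with the \emph{same} sign to each of the two consecutive components --- there is no alternating sign. (You may be conflating the exact-category Grothendieck group of $C^b(\mathcal{P})$, where no alternating signs appear, with that of the derived category.) Concretely, the degreewise-split conflation $P[0] \hookrightarrow K_P \twoheadrightarrow P[1]$ gives $\overline{K_P} = \overline{P[0]} + \overline{P[1]}$. With this correction the $d$-th component of $\deg\bigl(\prod_i K_{\alpha_i,i}\bigr)$ is $\alpha_d + \alpha_{d+1}$ (up to your labelling convention) rather than $\alpha_{d+1} - \alpha_d$. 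Your final step still goes through after adjustment: setting $\gamma_d := \alpha_d - \beta_d$ one gets $\gamma_d + \gamma_{d+1} = 0$, so $\gamma$ alternates in sign with constant magnitude, and finite support forces $\gamma \equiv 0$. The paper's recursive version sidesteps this bookkeeping by first projecting onto the extremal component, where only a single $\alpha_{i_1}$ contributes, and then inducting.
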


\begin{proof}
By definition, we can write $K_V=[K_{P^0_V}][K_{P^1_V}]^{-1}$ and $K_W=[K_{P^0_W}][K_{P^1_W}]^{-1}$, such that $P^i_V$ and $P^i_W\ (i=0,1)$ are direct sums of the form

\[ P^i_V=\bigoplus_{1\leq k\leq N} P^i_{V,k}[i_k],\ \ P^i_W=\bigoplus_{1\leq k\leq N} P^i_{W,k}[i_k],  \]
where $N\in \mathbb{N}$, $i_k< i_{k+1}\in \mathbb{Z}$ for any $1\leq k\leq N-1$ and each direct summand $P^i_{V,k}, P^i_{W,k}$ is a module in $\mathcal{P}$ or the zero object.

By hypothesis, 

\[\mathrm{deg}(K_V)=\sum_{1\leq N}(\overline{K_{P^{0}_{V,k}}}[i_k]-\overline{K_{P^{1}_{V,k}}}[i_k])=\sum_{1\leq N}(\overline{K_{P^{0}_{W,k}}}[i_k]-\overline{K_{P^{1}_{W,k}}}[i_k])=\mathrm{deg}(K_W).\]
Consider the projection on the $i_1$-summand

\[ \mathcal{K}(C^b(\mathcal{P})) \cong \coprod_{\mathbb{Z}} \mathcal{K}(\mathcal{P}) \xrightarrow{\pi_{i_{1}}} \mathcal{K}(\mathcal{P}). \]
It maps 
\begin{align*}
    \mathrm{deg}(K_V) \mapsto \overline{P^0_{V,1}} - \overline{P^1_{V,1}} ;
    \mathrm{deg}(K_W) \mapsto \overline{P^0_{W,1}} - \overline{P^1_{W,1}} ;
\end{align*}
Therefore, we have 
\[\overline{P^0_{V,1}} - \overline{P^1_{V,1}} = \overline{P^0_{W,1}} - \overline{P^1_{W,1}}\ \text{ in } \mathcal{K}(\mathcal{P}),\]

from which we deduce 
\[ [K_{P^0_{V,1}[i_1]}][K_{P^1_{V,1}[i_1]}]^{-1}= [K_{P^0_{W,1}[i_1]}][K_{P^1_{W,1}[i_1]}]^{-1}.\]

The results follows by recursion on the indices $i_k$: 
if we consider the projection $\pi_{i_k}$ on the $k$-th summand, with $k>1$, we get 
\[\overline{P^0_{V,k-1}} - \overline{P^1_{V,k-1}}+
\overline{P^0_{V,k}} - \overline{P^1_{V,k}} = \overline{P^0_{W,k-1}} - \overline{P^1_{W,k-1}}+\overline{P^0_{W,k}} - \overline{P^1_{W,k}}\ \text{ if }i_k=i_{k-1}+1 ;\]
\[\overline{P^0_{V,k}} - \overline{P^1_{V,k}} = \overline{P^0_{W,k}} - \overline{P^1_{W,k}}\ \text{ otherwise }.\]
By recursion, we can deduce $\overline{P^0_{V,k}} - \overline{P^1_{V,k}} = \overline{P^0_{W,k}} - \overline{P^1_{W,k}}$ even in the case $i_k=i_{k-1}+1$.
Therefore, for any $1\leq k\leq N$, we have
\[ [K_{P^0_{V,k}[i_k]}][K_{P^1_{V,k}[i_k]}]^{-1}= [K_{P^0_{W,k}[i_k]}][K_{P^1_{W,k}[i_k]}]^{-1},\]
from which it follows that 
\[K_V=\prod_{1\leq k\leq N }[K_{P^0_{V,k}[i_k]}][K_{P^1_{V,k}[i_k]}]^{-1}=\prod_{1\leq k\leq N }[K_{P^0_{W,k}[i_k]}][K_{P^1_{W,k}[i_k]}]^{-1}=K_W\]
\end{proof}

\begin{cor}
Let $V_1,V_2,W_1,W_2 \in D^b(Q)$ such that $\mathrm{deg}(K_{V_1}K_{V_2}^{-1})=\mathrm{deg}(K_{W_1}K_{W_2}^{-1})$. Then 
\[K_{V_1}K_{V_2}^{-1}=K_{W_1}K_{W_2}^{-1}.\]
\end{cor}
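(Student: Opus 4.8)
The plan is to reduce the statement to Lemma~\ref{lem_uniqueness_K_degree} by exploiting the multiplicativity of the assignment $V\mapsto K_V$ with respect to direct sums, together with the fact that the elements $K_V$ are central, homogeneous and invertible.

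First I would record the elementary identity $K_{V\oplus W}=K_VK_W$ for all $V,W\in D^b(Q)$. Indeed, if $V=\bigoplus_k M_k[i_k]$ and $W=\bigoplus_l N_l[j_l]$ are the decompositions into shifted indecomposable modules, then $V\oplus W$ has decomposition obtained by concatenating these families, so by definition $K_{V\oplus W}=\prod_k K_{\overline{M_k},i_k}\cdot\prod_l K_{\overline{N_l},j_l}$; since all the $K_{\alpha,i}$ are central in $\mathcal{SDH}_{tw}(C^b(\mathcal{P}))$ by Theorem~\ref{teo_relations_semiderived}, the right-hand side equals $K_VK_W$ irrespective of the ordering of the factors. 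In particular each $K_V$ lies in the abelian group of units generated by the classes of contractible complexes.

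Next I would translate the hypothesis. Since $\mathcal{SDH}_{tw}(C^b(\mathcal{P}))$ is $\mathcal{K}(C^b(\mathcal{P}))$-graded, the degree map is additive on products of homogeneous elements and sends inverses to opposites, so $\mathrm{deg}(K_{V_1}K_{V_2}^{-1})=\mathrm{deg}(K_{V_1})-\mathrm{deg}(K_{V_2})$ and likewise for the pair $W_1,W_2$. The assumption therefore reads $\mathrm{deg}(K_{V_1})+\mathrm{deg}(K_{W_2})=\mathrm{deg}(K_{W_1})+\mathrm{deg}(K_{V_2})$, which by the identity above is exactly $\mathrm{deg}(K_{V_1\oplus W_2})=\mathrm{deg}(K_{W_1\oplus V_2})$. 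Applying Lemma~\ref{lem_uniqueness_K_degree} to the objects $V_1\oplus W_2$ and $W_1\oplus V_2$ of $D^b(Q)$ gives $K_{V_1\oplus W_2}=K_{W_1\oplus V_2}$, i.e. $K_{V_1}K_{W_2}=K_{W_1}K_{V_2}$; multiplying both sides by $K_{V_2}^{-1}K_{W_2}^{-1}$ (legitimate, as these are central units) yields $K_{V_1}K_{V_2}^{-1}=K_{W_1}K_{W_2}^{-1}$, as desired.

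There is no genuine obstacle here: all the mathematical content sits in Lemma~\ref{lem_uniqueness_K_degree}, and the corollary is a formal consequence once one has the multiplicativity $K_{V\oplus W}=K_VK_W$. The only points requiring (minor) care are doing the degree bookkeeping in $\mathcal{K}(C^b(\mathcal{P}))$ with additive notation, and noting that $V_1\oplus W_2$ and $W_1\oplus V_2$ are honest objects of $D^b(Q)$ (not formal differences of objects), so that the lemma applies verbatim.
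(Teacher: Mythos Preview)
Your proof is correct and is exactly the argument the paper has in mind: the corollary is stated without proof precisely because it follows from Lemma~\ref{lem_uniqueness_K_degree} via the multiplicativity $K_{V\oplus W}=K_VK_W$ and the centrality of the $K_{\alpha,i}$, just as you wrote.
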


Recall that the \emph{degeneration order} on the objects of the bounded derived category $D^b(Q)$, introduced in \cite{Jense_Su_Zimmermann_2005}, is the partial order defined as follows:
\[ \label{symb_leq_deg_order}V_1 \leq V_2 \text{ if there exists } Z\in D^b(Q) \text{ and a triangle } V_2\rightarrow V_1\oplus Z \rightarrow Z \rightarrow  V_2[1].\]
Notice that we are using the opposite order with respect to the one defined in \cite{Jense_Su_Zimmermann_2005}.

\begin{lem}
\label{lem_find_a_K}
 Let $M$ and $N$ be objects in $D^b(Q)$ such that  $M<N$. Then there exists a unique element $K_{M,N}\in \mathcal{SDH}^0_{tw}(C^b(\mathcal{P}))$ such that the following two conditions are satisfied:
 \begin{enumerate}
 \setlength\itemsep{1.2em}
 \item[$\mathrm{(i)}$] it is of the form
 \[  K_{M,N}=\prod_{1\leq i\leq N} K_{\overline{P}_i-\overline{Q}_i,z_i},   \]
 for some $P_i,Q_i \in \mathcal{P}$ and $z_i\in\mathbb{Z}$.
 \item[$\mathrm{(ii)}$] $\mathrm{deg}(E_M)=\mathrm{deg}(E_{N}K_{M,N}) \ \text{ in }\ \mathcal{K}(C^b(\mathcal{P})).$
 \end{enumerate} 
\end{lem}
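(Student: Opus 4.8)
The plan is to establish existence and uniqueness separately. For \emph{uniqueness}, observe that if $K_{M,N}$ and $K'_{M,N}$ both satisfy (i) and (ii), then $\mathrm{deg}(E_N K_{M,N}) = \mathrm{deg}(E_N K'_{M,N})$, hence $\mathrm{deg}(K_{M,N}) = \mathrm{deg}(K'_{M,N})$ in $\mathcal{K}(C^b(\mathcal{P}))$. Both elements lie in $\mathcal{SDH}^0_{tw}(C^b(\mathcal{P}))$ and are products of the $K_{\alpha,z}$'s; writing each as $K_V K_W^{-1}$ for suitable objects $V,W$ of $D^b(Q)$ (by collecting positive and negative contributions degree by degree, as in the proof of Lemma \ref{lem_uniqueness_K_degree}), the Corollary after Lemma \ref{lem_uniqueness_K_degree} gives $K_{M,N} = K'_{M,N}$ directly.

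For \emph{existence}, the key point is to show that the degree difference $\mathrm{deg}(E_M) - \mathrm{deg}(E_N)$ in $\mathcal{K}(C^b(\mathcal{P}))$ actually lies in the kernel of the surjection $\mathcal{K}(C^b(\mathcal{P})) \twoheadrightarrow \mathcal{K}(D^b(Q))$, i.e.\ in the subgroup generated by the classes of contractible complexes. Granting this, one can pick projectives realizing this class as a $\mathbb{Z}$-combination of the $\overline{K_P[z]}$, assemble them into an element $K_{M,N}$ of the required form (i), and condition (ii) holds by construction. To verify that the degree difference lies in that kernel, I would use two facts: first, that $\pi^{\mathcal{H}}(E_V) = Z_V$ and that $Z_V$ is homogeneous of degree $\overline{V}$ in $\mathcal{K}(D^b(Q))$, so the images of $\mathrm{deg}(E_M)$ and $\mathrm{deg}(E_N)$ in $\mathcal{K}(D^b(Q))$ are $\overline{M}$ and $\overline{N}$ respectively; second, that the hypothesis $M < N$ in the degeneration order forces $\overline{M} = \overline{N}$ in $\mathcal{K}(D^b(Q))$ — indeed a triangle $N \to M \oplus Z \to Z \to N[1]$ gives $\overline{N} = \overline{M} + \overline{Z} - \overline{Z} = \overline{M}$. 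Hence $\mathrm{deg}(E_M) - \mathrm{deg}(E_N)$ maps to zero in $\mathcal{K}(D^b(Q))$, so it lies in the span of the contractible classes, which is exactly where the admissible $K_{M,N}$ of form (i) live.

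The main obstacle I anticipate is bookkeeping rather than conceptual: one must be careful that the element $K_{M,N}$ built from an arbitrary $\mathbb{Z}$-combination of contractible classes can always be written in the prescribed shape $\prod_{1\le i\le N} K_{\overline{P}_i - \overline{Q}_i, z_i}$ with $P_i, Q_i \in \mathcal{P}$ genuine projectives (not just virtual ones); this is handled by splitting each $\mathcal{K}(\mathcal{P})$-component into a difference of effective classes and using that $K_{\alpha,z} = [K_P[z]][K_Q[z]]^{-1}$ whenever $\alpha = \overline{P} - \overline{Q}$. Once uniqueness is in hand, any such choice gives the same $K_{M,N}$, so the non-canonical choices made during the construction do not matter.
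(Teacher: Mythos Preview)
Your proposal is correct and follows essentially the same approach as the paper: both arguments use that $M<N$ forces $\overline{M}=\overline{N}$ in $\mathcal{K}(D^b(Q))$, identify the kernel of $\mathcal{K}(C^b(\mathcal{P}))\twoheadrightarrow\mathcal{K}(D^b(Q))$ with the span of contractible classes to build $K_{M,N}$, and deduce uniqueness from Lemma~\ref{lem_uniqueness_K_degree} (the paper cites the lemma directly, you its corollary, which amounts to the same thing). The only cosmetic difference is that the paper works directly with the classes $\overline{C_M},\overline{C_N}$ rather than passing through $\pi^{\mathcal{H}}$, and absorbs your ``bookkeeping'' concern into a one-line ``without loss of generality $N_1=N_2$ and $i_k=i_k'$''.
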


\begin{proof}
Since $M<N$, we have $\overline{M}=\overline{N}$ in $\mathcal{K}(D^b(Q))$. Since the Grothendieck group $\mathcal{K}(D^b(Q))$ is a quotient of the Grothendieck group $\mathcal{K}(C^b(\mathcal{P}))$ by the ideal generated by the contractible complexes, we have \[\overline{C_M}=\overline{C_N}+\sum_{1\leq k\leq N_1} \overline{K_{P_k}[i_{k}}]-\sum_{1\leq k'\leq N_2} \overline{K_{P_k'}[i_{k'}'}]\ \text{  in }
\mathcal{K}(C^b(\mathcal{P})),\]
for some $N_1,N_2\in\mathbb{N}, i_{k}, i_{k'}'\in \mathbb{Z}, P_{k},P_{k'} \in\mathcal{P}$. Without loss of generality, we can assume $N_1=N_2$ and $i_k=i_k'$ for any $1\leq k\leq N_1$. 

It follows that 

\begin{align*}
 \mathrm{deg}(E_M)&=\mathrm{deg}([C_N]K_{P_M}^{-1}\prod_{1\leq k\leq N_1} K_{\overline{P_k}-\overline{P_k'},i_k})\\[1em]
 &=\mathrm{deg}(E_NK_{P_M}^{-1}K_{P_N}\prod_{1\leq k\leq N_1} K_{\overline{P_k}-\overline{P_k'},i_k}).  
\end{align*}

Therefore the elements
\[K_{M,N}= K_{P_M}^{-1}K_{P_N}\prod_{1\leq k\leq N_1} K_{\overline{P_k}-\overline{P_k'},i_k}\]
satisfy the conditions of the statement of the lemma. Uniqueness follows from Lemma \ref{lem_uniqueness_K_degree}.
\end{proof}

\section{The Hernandez--Leclerc isomorphism}
\label{sec_Hernadez_Leclerc_iso}
In the following, we freely use notations and terminology from section 2.
In particular, recall the finite-dimensional simple complex Lie algebra  $\mathfrak{g}$ of ADE type, the parity function $\Tilde{\varepsilon}$ and the height function $\varepsilon$ on the index set $I_\mathfrak{g}$ of the Dynkin diagram of $\mathfrak{g}$, the reduced expression $\underline{w}_0$ of the longest element of $W_\mathfrak{g}$ adapted to $\varepsilon$ and the admissible sequence $\mathfrak{s}$ associated to the pair $(\varepsilon,\underline{w}_0)$.
Recall the set
\[\widehat{I}_{\mathfrak{g}}=\{(i,p)\in I_\mathfrak{g}\times\mathbb{Z}\ |\ p-\Tilde{\varepsilon}_i\in 2\mathbb{Z}\}\]
is the set of vertices of the repetition quiver $\widehat{Q}$.

\begin{de}
The \emph{mesh category} $\Bbbk(\widehat{Q})$ associated to the repetition quiver $\widehat{Q}$ is the $\Bbbk$-category whose
\begin{itemize}
\item objects are identified with the vertices of $\widehat{Q}$;
\item for any two vertices $A,B\in\widehat{Q}$, the space of morphisms is the space of linear combinations of paths from $A$ to $B$, modulo the so called \emph{mesh relations}: for each $(i,p)$ in $\widehat{I}_{\mathfrak{g}}$, the sum of all paths from $(i,p)$ to $(i,p+2)$ vanishes;
\item the composition of morphisms is induced by the usual composition of paths.
\end{itemize}
\end{de}

Let $Q$ be an orientation of the Dynkin diagram of $\mathfrak{g}$ and let $\widehat{\varepsilon}$ be an height function on the Dynkin diagram of $\mathfrak{g}$ such that $Q=Q_{\widehat{\varepsilon}}$ and $\widehat{\varepsilon}_i=\tilde{\varepsilon}_i\ \mathrm{mod}\ 2$, for any $i$ in $I_\mathfrak{g}$.

We recall a fundamental result due to Happel. \label{symb_V(i,p)}
\begin{teo}[{\cite[Prop.~4.6]{Happel_articolo_1987}}]
There is an equivalence of $\Bbbk$-categories
\[V:\Bbbk(\widehat{Q})\xrightarrow{\sim} \mathrm{ind}(D^b(Q))\]
such that, for any $(i,p)$ in $\widehat{I}_{\mathfrak{g}}$, we have
\[V(i,p)=\tau^{(\widehat{\varepsilon}_i-p)/2}(I_i).\]
\end{teo}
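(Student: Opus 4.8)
The plan is to identify the mesh category $\Bbbk(\widehat{Q})$ with the orbit category $D^b(Q)/\tau$ expressed via the repetition-quiver combinatorics, and to match Happel's description of indecomposables with the vertices of $\widehat{Q}$. Recall first that $\mathrm{mod}(\Bbbk Q)$ is hereditary, so by Happel's classical theory the bounded derived category $D^b(Q)$ has Auslander--Reiten triangles, its AR-quiver is $\mathbb{Z}\Delta$ (where $\Delta$ is the underlying Dynkin diagram of $Q$), and every indecomposable object is of the form $M[i]$ for a unique indecomposable $\Bbbk Q$-module $M$ and a unique $i\in\mathbb{Z}$. Moreover the AR-translation $\tau$ on $D^b(Q)$ is an autoequivalence, and $\mathrm{ind}(D^b(Q))$ as a $\Bbbk$-linear category is equivalent to the mesh category of its AR-quiver; this is precisely Happel's theorem that the AR-quiver of $D^b(Q)$ together with the mesh relations recovers the category. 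So the first step is to recall (cite Happel) the identification $\mathrm{ind}(D^b(Q))\simeq \Bbbk(\mathbb{Z}\Delta)$.

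Next I would set up the combinatorial bijection between the vertices of $\widehat{Q}$ and those of $\mathbb{Z}\Delta$ induced by the height function $\widehat{\varepsilon}$. Concretely, a vertex $(i,p)\in\widehat{I}_\mathfrak{g}$ is sent to the $\tau$-orbit representative indexed by $i$, shifted by $(\widehat{\varepsilon}_i-p)/2$ applications of $\tau$; the congruence condition $p\equiv\widetilde{\varepsilon}_i\pmod 2$ together with $\widehat{\varepsilon}_i\equiv\widetilde{\varepsilon}_i\pmod 2$ guarantees $(\widehat{\varepsilon}_i-p)/2\in\mathbb{Z}$, so this is well-defined, and one checks it is a bijection on vertices. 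The key point is then that the arrows of $\widehat{Q}$ — namely $(i,p)\to(i,p-2)$ and $(i,p)\to(j,p+1)$ for $j\sim i$ — are exactly transported, under this bijection, to the arrows of $\mathbb{Z}\Delta$: the horizontal arrows $(i,p)\to(i,p-2)$ correspond to moving from $\tau^k(X_i)$ to $\tau^{k-1}(X_i)$... wait, to be careful with signs, to the arrow realizing the AR-translation direction, while the diagonal arrows $(i,p)\to(j,p+1)$ correspond to the arrows of the AR-quiver coming from the edge $i\sim j$ of $\Delta$, with the orientation dictated by the relative values of the height function. One verifies that the mesh relations match on both sides: the relation "sum of all paths $(i,p)\to(i,p+2)$ vanishes" is precisely the mesh/AR relation at the corresponding vertex of $\mathbb{Z}\Delta$. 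This gives an isomorphism of $\Bbbk$-categories $\Bbbk(\widehat{Q})\xrightarrow{\sim}\Bbbk(\mathbb{Z}\Delta)$.

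Composing these two identifications yields the desired equivalence $V:\Bbbk(\widehat{Q})\xrightarrow{\sim}\mathrm{ind}(D^b(Q))$, and by construction it sends $(i,p)$ to the object obtained from the vertex $i$ of $\Delta$ (which one normalizes to be the injective $I_i$, using that at height $\widehat{\varepsilon}_i$ the module $I_i$ sits in the appropriate slice of the AR-quiver) by applying $\tau$ a total of $(\widehat{\varepsilon}_i-p)/2$ times, i.e.\ $V(i,p)=\tau^{(\widehat{\varepsilon}_i-p)/2}(I_i)$. The main obstacle is the bookkeeping in the second step: pinning down the exact normalization so that the base case is the injective module $I_i$ (rather than the simple or projective), and checking that the sign conventions in the arrows $(i,p)\to(i,p-2)$ versus the direction of $\tau$, together with the orientation of the diagonal arrows forced by $\widehat{\varepsilon}$, are mutually consistent across the whole quiver — this is where one must be genuinely careful rather than invoke a black box. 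Once the vertex-level bijection is fixed and shown to be a quiver isomorphism respecting mesh relations, functoriality and the explicit formula for $V$ follow formally from Happel's theorem.
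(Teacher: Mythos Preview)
The paper does not supply its own proof of this statement: it is quoted as a classical result of Happel with the citation \cite[Prop.~4.6]{Happel_articolo_1987} and nothing more. There is therefore no ``paper's proof'' to compare your proposal against.

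That said, your outline is broadly the standard route to Happel's theorem, but there is one genuine wrinkle you glossed over. The repetition quiver $\widehat{Q}$ as defined in this paper carries \emph{two} kinds of arrows: the diagonal arrows $(i,p)\to(j,p+1)$ for $j\sim i$, and the horizontal arrows $(i,p)\to(i,p-2)$. The Auslander--Reiten quiver $\mathbb{Z}\Delta$ of $D^b(Q)$, by contrast, has only the diagonal (sectional) arrows; there are no arrows along a $\tau$-orbit. So your claimed quiver isomorphism $\widehat{Q}\cong\mathbb{Z}\Delta$ is not literally correct, and the sentence where you try to interpret the horizontal arrows as ``the arrow realizing the AR-translation direction'' does not land anywhere: $\tau$ is not an irreducible morphism and contributes no arrow to the AR-quiver. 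To make your argument go through you would need to show that the horizontal arrows, once the mesh relations are imposed, correspond to honest (composite) morphisms $\tau^{k}(I_i)\to\tau^{k+1}(I_i)$ in $D^b(Q)$ and that adjoining them does not enlarge the morphism spaces---equivalently, that the mesh category of $\widehat{Q}$ and that of $\mathbb{Z}\Delta$ coincide. This is true (the horizontal arrow is forced to equal, up to sign, the mesh path through the neighbours, so it is redundant), but it is exactly the bookkeeping you flagged as ``the main obstacle'' and it deserves an explicit sentence rather than being absorbed into a quiver isomorphism that does not exist.
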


By considering $\mathbb{Q}(v^{1/2})$ as a $\mathbb{Z}[t^{\pm 1/2}]$-module through the specialization $t^{1/2}\mapsto v^{1/2}$, we define the $\mathbb{Q}(v^{1/2})$-algebra
$$\mathcal{K}_v(\mathscr{C}_\mathfrak{g}^{\mathbb{Z}})=\mathbb{Q}(v^{1/2})\otimes_{\mathbb{Z}[t^{\pm 1/2}]}\mathcal{K}_t(\mathscr{C}_\mathfrak{g}^{\mathbb{Z}}).$$
By abuse of terminology, we still refer to the image of $M_t(m)$ in $\mathcal{K}_v(\mathscr{C}_\mathfrak{g}^{\mathbb{Z}})$ as the $(q,t)$-character of the standard module $M(m)$,  and similarly for $L_t(m)$.

\begin{teo}[{\cite[Thm.~8.2]{HL_quantum_Groth_rings_derived_Hall}}]
\label{teo_isoHL}
There is a $\mathbb{Q}(v^{1/2})$-algebra isomorphism \[\Phi:\mathcal{K}_v(\mathscr{C}_\mathfrak{g}^{\mathbb{Z}})\xrightarrow{\sim} \mathcal{DH}_{tw}(Q)\] satisfying the following:
\begin{enumerate}
\item[$(i)$] for $(i,p)\in\widehat{I}_{\mathfrak{g}}$, the $(q,t)$-character of the simple module $L(Y_{i,p})$ is mapped by $\Phi$ to a scalar multiple of $Z_{V(i,p)}$;
\item[$(ii)$] the basis of $(q,t)$-characters of standard modules is mapped by $\Phi$ to a rescaling of the natural basis of $\mathcal{DH}_{tw}(Q)$ labelled by the isoclasses of all objects of $D^b( Q)$.
\end{enumerate}
\end{teo}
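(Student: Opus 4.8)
The plan is to construct $\Phi$ by prescribing its values on a set of algebra generators, to verify that it respects the relations by comparing with the presentation of $\mathcal{DH}_{tw}(Q)$ recalled above, and then to read off bijectivity together with $(i)$--$(ii)$ from the effect of $\Phi$ on the basis of $(q,t)$-characters of standard modules. The starting point is that both sides are free $\mathbb{Q}(v^{1/2})$-modules with bases naturally indexed by $\mathcal{M}^+$: by Happel's theorem the indecomposable objects of $D^b(Q)$ are exactly the pairwise non-isomorphic $V(i,p)$, $(i,p)\in\widehat{I}_\mathfrak{g}$, so every object decomposes uniquely as $\bigoplus_{(i,p)}V(i,p)^{\oplus u_{i,p}}$, which identifies the isomorphism classes of $D^b(Q)$ with $\mathcal{M}^+$ via $m=\prod_{(i,p)}Y_{i,p}^{u_{i,p}}\mapsto V(m)$; under this identification the tautological basis $(Z_V)$ of $\mathcal{DH}_{tw}(Q)$ becomes $(Z_{V(m)})_{m\in\mathcal{M}^+}$, to be matched with $(M_t(m))_{m\in\mathcal{M}^+}$ on the left. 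Any choice of nonzero scalars then gives a linear isomorphism, so the whole point is to arrange multiplicativity; since $\mathcal{K}_v(\mathscr{C}_\mathfrak{g}^\mathbb{Z})$ is generated by the $F_t(Y_{i,p})=M_t(Y_{i,p})$, I would look for $\Phi$ with $\Phi(F_t(Y_{i,p}))=c_{i,p}\,Z_{V(i,p)}$ for suitable units $c_{i,p}\in\mathbb{Q}(v^{1/2})$.

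The core of the argument is that every relation among the $F_t(Y_{i,p})$ is sent to a relation among the $Z_{V(i,p)}$. One first checks that the $Z_{V(i,p)}$ generate $\mathcal{DH}_{tw}(Q)$: using the derived Hall multiplication and the triangles of $D^b(Q)$, an ordered product $\prod^{\curvearrowright}_k Z_{V(i_k,p_k)}$ equals a nonzero scalar times $Z_{\bigoplus_k V(i_k,p_k)}$ plus terms supported on strictly larger objects in the degeneration order, and a triangularity/induction argument then writes every $Z_V$ in terms of these generators. Restricted to indecomposable modules, the defining relations of $\mathcal{DH}_{tw}(Q)$ become a $v$-commutation relation $Z_{V(i,p)}Z_{V(j,s)}=v^{\pm(V(i,p),V(j,s))}Z_{V(j,s)}Z_{V(i,p)}$ whenever the two indecomposables sit in non-adjacent homological degrees (so that all relevant $\mathrm{Ext}^1_{D^b(Q)}$ vanish), together with the two relations living in a single or in two adjacent degrees, which encode the short exact sequences forming a mesh of $\widehat{Q}$. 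I would then verify that the $F_t(Y_{i,p})$ obey exactly these: the $v$-commutations come from the commutation relations of the quantum torus $\mathcal{Y}_t$ once $\mathcal{N}(i,p;j,s)$ (expressed through the inverse quantum Cartan matrix) is identified with the symmetrized Euler form of $D^b(Q)$ read off from Happel's formula $V(i,p)=\tau^{(\widehat{\varepsilon}_i-p)/2}(I_i)$ --- this identity being precisely what the ``Sheng factor'' twist in the definition of $\mathcal{DH}_{tw}(Q)$ is designed to absorb --- while the mesh relations are extracted from Nakajima's quantum $T$-system (Theorem \ref{teo_quantum_T_system}), available here because each $L(Y_{i,p})$ is a Kirillov--Reshetikhin module, so $F_t(Y_{i,p})=L_t(Y_{i,p})$ by Proposition \ref{prop_properties_qtcharact_simple}. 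The scalars $c_{i,p}$ are pinned down by demanding that $\Phi$ be compatible with the bar-involutions on both sides (the $F_t(Y_{i,p})$ are bar-invariant, the $Z_{V(i,p)}$ bar-invariant up to a power of $v$), with the residual sign fixed by a positivity normalization; with this choice $\Phi$ respects every relation, hence is a well-defined $\mathbb{Q}(v^{1/2})$-algebra homomorphism.

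To get bijectivity and properties $(i)$--$(ii)$, I would apply the now-defined $\Phi$ to the defining expression of $M_t(m)$, which is a commutative monomial in the $\leftidx{^t}{Y}_{i,p}$ times an ordered product $\prod^{\curvearrowright}_k F_t(Y_{i_k,p_k})$, the order being chosen so that the standard module $M(m)$ is the matching ordered tensor product of fundamental modules. Then $\Phi$ sends the monomial prefactor to a scalar and each factor to $c_{i_k,p_k}Z_{V(i_k,p_k)}$; since a product $Z_AZ_C$ in the derived Hall algebra with $\mathrm{Ext}^1_{D^b(Q)}(C,A)=0$ equals a nonzero scalar times $Z_{A\oplus C}$, and the ordering in the definition of $Z_{V(m)}$ is exactly this Ext-vanishing ordering (matching the tensor-product order), the ordered product collapses --- with no lower-order terms --- to a nonzero scalar times $Z_{V(m)}$. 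Hence $\Phi(M_t(m))=\beta(m)\,Z_{V(m)}$ with $\beta(m)\neq0$, so $\Phi$ carries a basis to a basis and is an isomorphism, which is property $(ii)$; property $(i)$ is the special case $m=Y_{i,p}$, where $M_t(Y_{i,p})=F_t(Y_{i,p})=L_t(Y_{i,p})$.

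The main obstacle is the relation matching of the second step --- showing the $F_t(Y_{i,p})$ satisfy the derived Hall algebra relations. This requires transporting the symmetrized Euler form of $D^b(Q)$ through Happel's equivalence and reconciling it with the inverse-quantum-Cartan-matrix combinatorics governing $\mathcal{Y}_t$ and the $(q,t)$-characters, together with extracting the mesh/Serre relations from the $t$-deformed $T$-system in its base cases. The remaining, essentially bookkeeping, difficulty is keeping the scalar normalizations ($c_{i,p}$, the Sheng twist, and the induced $\beta(m)$) mutually consistent, so that $(ii)$ holds exactly rather than only up to scalars.
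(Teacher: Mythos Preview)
This theorem is not proved in the present paper; it is quoted verbatim from \cite[Thm.~8.2]{HL_quantum_Groth_rings_derived_Hall} and used as a black box. There is therefore no proof in this paper to compare your proposal against.

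That said, your outline is essentially the strategy of the original Hernandez--Leclerc proof, as one can infer from the surrounding material here: the paper records (Proposition~\ref{prop_presenentation_semiderived_E_i} and the remark following Theorem~\ref{teo_braid}) that \cite[Thm.~7.3]{HL_quantum_Groth_rings_derived_Hall} gives a presentation of $\mathcal{K}_t(\mathscr{C}_\mathfrak{g}^{\mathbb{Z}})$ by generators $F_t(Y_{i,p})$ and relations, and that the rescaling factor $v^{1/2}(v-v^{-1})$ in the definition of $E_{i,m}$ is exactly the one appearing in the proof of \cite[Thm.~8.2]{HL_quantum_Groth_rings_derived_Hall}. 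So the map is indeed defined on the generators $F_t(Y_{i,p})\mapsto c_{i,p}Z_{V(i,p)}$ and the relations are checked against the Toën--Sheng presentation of $\mathcal{DH}_{tw}(Q)$, with the identification of $\mathcal{N}(i,p;j,s)$ with the symmetrized Euler form via Happel's equivalence being the key computation. One correction to your sketch: the relations you need are not only $v$-commutations and mesh relations but the full set (SD2)--(SD5) specialized at $K_{\alpha,i}=1$, including the quantum Serre relations (SD3) within a single degree; these do not come from the $T$-system but from Ringel's theorem identifying the degree-$m$ piece with $U_v(\mathfrak{n})$.
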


To each dominant monomial $m=\bigoplus_{(i,p)\in \widehat{I}_{\mathfrak{g}}} y_{i,p}^{u_{i,p}}$, we associate the object $V(m)$ of $\mathcal{D}^b(Q)$ defined by 
\[ V(m) = \bigoplus_{(i,p)\in \widehat{I}_{\mathfrak{g}}} V(i,p)^{u_{i,p}}.\]
It follows from Theorem \ref{teo_isoHL} and the definition of the $(q,t)$-characters of the simple modules that the images under the isomorphism $\Phi$ of the $(q,t)$-characters of the standard and simple modules associated to the dominant monomial $m$ are respectively of the form

\begin{equation}
\label{formula_image_qt_standard}
\Phi (M_t(m))= a(m) Z_{V(m)}\ \text{ and }
\end{equation}

\begin{equation}
\label{formula_image_qt_simple}
\Phi (L_t(m))= a(m) Z_{V(m)} +\sum_{m'<m} a(m,m') Z_{V(m')},
\end{equation}
where $a(m)$ and $a(m,m')$ are certain coeffcients in $\mathbb{Q}(v^{1/2})$.

\begin{rem}[Comparing orders]\label{order_comparison}
Through the assignment $m\mapsto V(m)$, Nakajima's order on dominant monomials is linked to the degeneration order in the derived category: Let $m,m'\in \mathcal{M}^+$ be dominant monomials. We claim that
\[ \text{if } m' \leq m \text{ and } a(m,m')\neq 0\text{ then } V(m')\leq V(m).\]
To see why, we start by defining the set
\[ \widehat{I'} = \{ (i,p)\in I\times \mathbb{Z} \ |\ (i,p-1) \in \widehat{I}_{\mathfrak{g}}\}. \]
If $m$ and $m'$ are two dominant monomials such that $m'\leq m$, we can write them as
\[ m = \prod_{(i,p) \in \widehat{I}_{\mathfrak{g}}} Y_{(i,p)}^{u_{i,p}(m)}, \ \ \ m' = \prod_{(i,p) \in \widehat{I}_{\mathfrak{g}}} Y_{(i,p)}^{u_{i,p}(m)}\prod_{(i,p)\in \widehat{I'}} A_{i,p}^{-u_{i,p}(m,m')}. \]
Define the graded $\mathbb{C}$-vector spaces
\[ W= \oplus_{(i,p)\in \widehat{I}_{\mathfrak{g}}}\  \mathbb{C}^{u_{i,p}(m)} \text{ and } V'=\oplus_{(i,p)\in \widehat{I'}} \mathbb{C}^{u_{i,p}(m,m')}.\]
It follows that (see \cite[Rem.~3.16]{Leclerc_Plamondon_2013} or \cite[Thm.~4.10]{Fujita2022} based on \cite[\S~3.3]{Nakajima2001}), if we consider the associated strata of  Nakajima's (regular) graded quiver varieties, we have the inclusion
\[ \mathfrak{M}_0^{\bullet\mathrm{reg}}(0,W)\subset \overline{\mathfrak{M}_0^{\bullet\mathrm{reg}}(V',W).}\]
Therefore, we can conclude from \cite[Thm.~2.8]{Keller_Scherotzke_2016} that 
\[ V(m') \leq V(m). \]
\end{rem}

\section{A lift of the exchange relations to the semi-derived Hall algebra}
\label{sec_lift}

We define the \emph{semiderived $(q,t)$-characters} of the standard and simple modules associated to the dominant monomial $m$ as

\begin{equation}
    \mathcal{M}_v(m)= a(m) E_{V(m)}\ \text{ and }
\end{equation} 

\begin{equation}
\label{eq_semiderived_Lv(m)}
    \mathcal{L}_v(m)= a(m) E_{V(m)} +\sum_{m'<m,\ a(m,m')\neq 0} a(m,m') E_{V(m')}K(m,m'),
\end{equation}

where $a(m)$ and $a(m,m')$ are the coefficients defined in (\ref{formula_image_qt_standard}) and (\ref{formula_image_qt_simple}) and $K(m,m')$ is an abbreviation for the element $K_{V(m),V(m')} \in \mathcal{SDH}^0_{tw}(C^b(\mathcal{P}))$ introduced in \ref{lem_find_a_K}.

\begin{ex}
    Let $Q$ be the quiver of type $A_1$, with height function $\widehat{\varepsilon}_1=1$.
    A graphical representation of the bijection induced by Happel's theorem is given by

    \[
 \begin{tikzcd}
  \dots &  (1,-1) & (1,1)\arrow[blue, dotted]{d} & (1,3) &\dots\\  
  \dots & S[-1]  & S &  S[1]  & \dots
 \end{tikzcd}
 \]

 By Theorem (\ref{teo_isoHL}), we have
 \[\Phi (L_{t}(y_{1,1}y_{1,3})) = a(y_{1,1}y_{1,3})Z_{S[1]\oplus S}+a(y_{1,1}y_{1,3},1),\]
 
and, by our definition, we have

 \[\mathcal{L}_v(y_{1,1}y_{1,3}) = a(y_{1,1}y_{1,3})E_{S[1]\oplus S}+a(y_{1,1}y_{1,3},1)K_S.\]
 \end{ex}

\begin{ex}
    Let $Q$ be the quiver $2\rightarrow 1$ of type $A_2$, with height function $\widehat{\varepsilon}_1=0, \widehat{\varepsilon}_2=1.$
    A graphical representation of the bijection induced by Happel's theorem is given by

\[
 \begin{tikzcd}
  \dots &   & (1,0) \arrow{rd} & & (1,2) \arrow{rd}&  &\dots\\
   \dots & (2,-1) \arrow{ru}& & (2,1)\arrow[blue, dotted]{dd} \arrow{ru}& & (2,3) & \dots \\  
    \dots & & P_2 \arrow{rd} & & P_1[1] \arrow{rd}& & \dots  \\
  \dots & P_1 \arrow{ru}& & S_2 \arrow{ru}& & P_2[1]  
 & \dots 
 \end{tikzcd}
 \]

 By Theorem (\ref{teo_isoHL}), we have
 \[\Phi (L_{t}(y_{1,0}y_{1,2})) = a(y_{1,0}y_{1,2})Z_{P_1[1]\oplus P_2}+a(y_{1,0}y_{1,2},y_{2,1}) Z_{S_2},\]
 
and, by our definition, we have

 \[ \mathcal{L}_v(y_{1,0}y_{1,2}) = a(y_{1,0}y_{1,2})E_{P_1[1]\oplus P_2}+a(y_{1,0}y_{1,2},y_{2,1}) E_{S_2}K_{\overline{S}_1}.\]

\end{ex}

Notice that since $E_{S_2}=[P_1\rightarrow P_2] K_{P_1}^{-1}$, both terms on the right are of degree $\overline{P}_2+\overline{P_1[1]}$ in $\mathcal{K}(C^b(\mathcal{P}))$. We can also deduce this from the defining relations of the semi-derived Hall algebra.

\begin{rem}
\label{rem_form_exchange_relation_monomials}
    Consider the quantum cluster algebra structure of $\mathcal{K}_t(\mathscr{C}_{\mathfrak{g}}^{\mathbb{Z}})$, cf. Theorem~\ref{teo_quant_clust_Groth_ring}. Let us fix a quantum seed of $\mathcal{K}_t(\mathscr{C}_{\mathfrak{g}}^{\mathbb{Z}})$. Let $k$ be a vertex of the corresponding quiver. It follows from \cite[Thm.~6.15]{HFOO_iso_quant_groth_ring_clust_alg} that, in the notation of section \ref{sec_def_Groth_rings}, the exchange relation associated to a mutation at $k$ is of the form
\begin{equation}
\label{eq_exchange_relation_qt_characters}
L_t(m_k)L_t(m_k') = a(t)\overrightarrow{\prod_{i\rightarrow k}} L_t(m_i) +   b(t)\overrightarrow{\prod_{j\leftarrow k}} L_t(m_j), \end{equation} 

where the coefficients $a(t),b(t)$ belong to $\mathbb{Z}[t^{\pm 1/2}]$, $( L_t(m_k),L_t(m_k'))$ is the exchange pair and, for each arrow $i\rightarrow k$ (resp. $j \leftarrow k$), $L_t(m_i)$ (resp. $L_t(m_j)$) is the cluster variable associated to the vertex $i$ (resp. j).
It follows that one of the following holds:
\[ \begin{cases}
    m_km_k'= \prod_{i\rightarrow k} m_i; \\
    m_km_k'\geq \prod_{j\leftarrow k} m_j;
    \end{cases}
    \text{  or  }\ \begin{cases} m_km_k'= \prod_{j\leftarrow k} m_j;\\
    m_km_k'\geq \prod_{i\rightarrow k} m_i.
    \end{cases}\]
\end{rem}

In the following proposition, we keep the notation of Remark \ref{rem_form_exchange_relation_monomials}.

Denote $a(v)$ and $b(v)$ the specialization $t^{1/2}\mapsto v^{1/2}$ of the corresponding coefficients $a(t)$ and $b(t)$ of (\ref{eq_exchange_relation_qt_characters}).
\begin{prop} 
\label{prop_semiderived_exchange_rel}
    If $m_km_k'= \prod_{i\rightarrow k} m_i$, then, in the twisted semi-derived Hall algebra $\mathcal{SDH}_{tw}(C^b(\mathcal{P}))$, we have 

\begin{equation}
\label{eq_semiderived_exchange_relation}
    \mathcal{L}_v(m_k)\mathcal{L}_v(m_k') = a(v)\overrightarrow{\prod_{i\rightarrow k}} \mathcal{L}_v(m_i) +   b(v)K(m_km_k', \prod_{j\leftarrow k} m_j)\overrightarrow{\prod_{j\leftarrow k}} \mathcal{L}_v(m_j). 
\end{equation} 

Otherwise, 

\begin{equation}
\label{eq_semiderived_exchange_relation2}
\mathcal{L}_v(m)\mathcal{L}_v(m') = a(v)K(mm', \prod_{i\rightarrow k} m_i)\overrightarrow{\prod_{i\rightarrow k}} \mathcal{L}_v(m_i)+   b(v)\overrightarrow{\prod_{j\leftarrow k}} \mathcal{L}_v(m_j). 
\end{equation} 

\end{prop}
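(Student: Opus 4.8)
The strategy is to transport the exchange relation \eqref{eq_exchange_relation_qt_characters} from the quantum Grothendieck ring to the twisted semi-derived Hall algebra along the surjection $\pi^{\mathcal{H}}$, and then to \emph{correct} the resulting identity by the unique monomials in $\mathcal{SDH}^0_{tw}(C^b(\mathcal{P}))$ dictated by homogeneity. First I would apply $\Phi$ to \eqref{eq_exchange_relation_qt_characters} and use \eqref{formula_image_qt_simple} together with the definitions \eqref{eq_semiderived_Lv(m)} of $\mathcal{L}_v(\cdot)$ and the fact that $\pi^{\mathcal{H}}(\mathcal{L}_v(m)) = \Phi(L_t(m))$ (which holds because $\pi^{\mathcal{H}}(E_V) = Z_V$ and $\pi^{\mathcal{H}}(K) = 1$ for $K$ in $\mathcal{SDH}^0_{tw}$). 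This shows that both sides of \eqref{eq_semiderived_exchange_relation} (resp.\ \eqref{eq_semiderived_exchange_relation2}) have the same image under $\pi^{\mathcal{H}}$, i.e.\ they differ by an element of the kernel ideal. The heart of the argument is to promote this to an actual equality by a degree/homogeneity comparison, using Lemma~\ref{lem_uniqueness_K_degree} and Lemma~\ref{lem_find_a_K}.

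Concretely, I would argue as follows. The left-hand side $\mathcal{L}_v(m_k)\mathcal{L}_v(m_k')$ is homogeneous of degree $\deg(E_{V(m_k)})+\deg(E_{V(m_k')})$ in $\mathcal{K}(C^b(\mathcal{P}))$ (each $\mathcal{L}_v(m)$ is homogeneous of degree $\deg(E_{V(m)})$ by construction, via the defining property (ii) of $K(m,m')$ in Lemma~\ref{lem_find_a_K}, and the product of homogeneous elements is homogeneous). On the right-hand side, in the case $m_km_k' = \prod_{i\to k} m_i$, the monomial identity gives $\overline{V(m_k)} + \overline{V(m_k')} = \sum_{i\to k}\overline{V(m_i)}$ in $\mathcal{K}(D^b(Q))$ — this is where I invoke that $m\mapsto V(m)$ is additive on monomials and that the degrees in $\mathcal{K}(D^b(Q))$ match the exponent vectors. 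Hence $\overrightarrow{\prod_{i\to k}}\mathcal{L}_v(m_i)$ already has the correct degree, \emph{in $\mathcal{K}(C^b(\mathcal{P}))$}, not merely in the quotient $\mathcal{K}(D^b(Q))$ — this needs the observation (Remark~\ref{order_comparison}, via the quiver-variety inclusion and \cite[Thm.~2.8]{Keller_Scherotzke_2016}) that the correcting $K$'s in the definition of each $\mathcal{L}_v(m_i)$ conspire to give a genuinely $\mathcal{K}(C^b(\mathcal{P}))$-homogeneous element whose degree lifts $\overline{V(m_i)}$ canonically. For the second term $\overrightarrow{\prod_{j\leftarrow k}}\mathcal{L}_v(m_j)$, the product $\prod_{j\leftarrow k} m_j$ satisfies $m_km_k' \geq \prod_{j\leftarrow k} m_j$, so in general $\overline{V(m_k)}+\overline{V(m_k')} \ne \sum_{j\leftarrow k}\overline{V(m_j)}$ in $\mathcal{K}(C^b(\mathcal{P}))$ (only their images in $\mathcal{K}(D^b(Q))$ agree); the discrepancy lies in the kernel, i.e.\ is represented by a monomial in contractibles, and by Lemma~\ref{lem_find_a_K} (applied with $M = V(m_km_k')$, $N = V(\prod_{j\leftarrow k} m_j)$, noting $N < M$ so Lemma~\ref{lem_find_a_K} applies) there is a \emph{unique} such monomial $K(m_km_k', \prod_{j\leftarrow k} m_j)$ making the degrees match. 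Multiplying the second term by this $K$ renders the whole right-hand side homogeneous of the same degree as the left-hand side.

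Having established that both sides of the candidate identity \eqref{eq_semiderived_exchange_relation} are homogeneous of the same degree in $\mathcal{K}(C^b(\mathcal{P}))$ and have equal image under $\pi^{\mathcal{H}}$, I would conclude: the difference of the two sides is a homogeneous element of $\ker\pi^{\mathcal{H}}$; but $\ker\pi^{\mathcal{H}}$ is the ideal generated by $\{K-1 \mid K \in \mathcal{SDH}^0_{tw}(C^b(\mathcal{P}))\}$, and — here one uses the basis of Proposition~\ref{prop_basis_semiderived} — the only homogeneous element of this ideal of a degree that is \emph{attained} by a genuine homogeneous lift (that is, the only way for a $\mathcal{K}(C^b(\mathcal{P}))$-homogeneous element of the Hall algebra to map to a single homogeneous element of $\mathcal{DH}_{tw}(Q)$ and lie in the kernel) is zero, provided the $K$-part is pinned down. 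More precisely: writing both sides in the Proposition~\ref{prop_basis_semiderived} basis, the $E$-part of each basis element is determined by its image under $\pi^{\mathcal{H}}$, and then the $K$-part is determined by the $\mathcal{K}(C^b(\mathcal{P}))$-degree via Lemma~\ref{lem_uniqueness_K_degree}; so two homogeneous elements with the same $\pi^{\mathcal{H}}$-image and the same degree coincide. This forces \eqref{eq_semiderived_exchange_relation}. The case $m_km_k' = \prod_{j\leftarrow k} m_j$ is entirely symmetric, with the correcting monomial $K(mm', \prod_{i\to k} m_i)$ attached to the first term instead, giving \eqref{eq_semiderived_exchange_relation2}.

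\textbf{Main obstacle.} The delicate point is the degree bookkeeping in $\mathcal{K}(C^b(\mathcal{P}))$ rather than in $\mathcal{K}(D^b(Q))$: one must check that $\overrightarrow{\prod_{i\to k}}\mathcal{L}_v(m_i)$ is \emph{exactly} of degree $\deg(\mathcal{L}_v(m_k)) + \deg(\mathcal{L}_v(m_k'))$ on the nose — not merely after projecting to $\mathcal{K}(D^b(Q))$ — which requires knowing that the internal $K(m_i, m_i')$-corrections in each factor add up correctly. This rests on the compatibility of Nakajima's order with the degeneration order (Remark~\ref{order_comparison}) and on the fact, implicit in the setup of $\mathcal{L}_v$, that $\deg(\mathcal{L}_v(m))$ depends only on $m$ through the canonical lift of $\overline{V(m)}$. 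Granting that, the uniqueness statements in Lemmas~\ref{lem_uniqueness_K_degree} and \ref{lem_find_a_K} do all the remaining work, and the identification of the correcting factor as precisely $K(m_km_k', \prod_{j\leftarrow k} m_j)$ is then forced rather than chosen.
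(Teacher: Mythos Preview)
Your proposal is correct and follows essentially the same approach as the paper: expand both sides in the basis of Proposition~\ref{prop_basis_semiderived}, observe that $\pi^{\mathcal{H}}$ sends both sides to the same element of $\mathcal{DH}_{tw}(Q)$ (via the exchange relation \eqref{eq_exchange_relation_qt_characters} in the quantum Grothendieck ring), and then use homogeneity together with Lemma~\ref{lem_uniqueness_K_degree} to conclude that the $K$-parts of matching basis elements must agree. The paper's proof is terser but structurally identical; your ``main obstacle'' about the $\mathcal{K}(C^b(\mathcal{P}))$-degree matching for the uncorrected term is simply the additivity $\deg(E_{V(m)}) = \sum_{(i,p)} u_{i,p}(m)\,\deg(E_{V(i,p)})$, which follows directly from the definitions of $V(m)$ and $E_V$ --- you do not need Remark~\ref{order_comparison} or quiver-variety geometry for this step, and the paper accordingly dismisses it with ``by construction''.
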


\begin{proof}

Assume that $m_km_k'= \prod_{i\rightarrow k} m_i$ (the other case is treated in the same way). Expanding both sides of equation 
(\ref{eq_semiderived_exchange_relation}) along the  basis of the semi-derived algebra of Proposition \ref{prop_basis_semiderived}, we find two xspressions:

\[ \sum_{1\leq i\leq N} c_i E_{M_i}K(i)\ \text{ and }\ \sum_{1\leq j\leq N'} c_j' E_{M'_j}K(j)', \]
where $N,N'\in \mathbb{N}$, $c_j,c_j'\in \mathbb{Q}(v^{1/2}),\  $$M_i, M'_j \in D^b(Q)$ and $K(i)$, $K(j)'$ are some monomials in the generators $K_{\alpha,k}$ $(\alpha \in \mathcal{K}(\mathrm{mod}(\Bbbk Q)))$.
It follows from the equality (\ref{eq_exchange_relation_qt_characters}) that the images in $\mathcal{DH}_{\mathrm{tw}}(Q)$ of these two expression coincide, so that we have

\[ \sum_{1\leq i\leq N} c_i Z_{M_i} =  \sum_{1\leq j\leq N'} c_j' Z_{M'_j}, \]
Let $i,j$ be indexes such that $M_i=M_j'$.
Since, by construction, the expressions on both sides of (\ref{eq_semiderived_exchange_relation}) are homogeneous and of same the $\mathcal{K}(C^b(\mathcal{P}))$-degree, it follows from Lemma \ref{lem_uniqueness_K_degree} that $K(i)=K(j)'$ and, therefore, $E_{M_i}K(i)=E_{M_j'}K(j)'$. We deduce that $N=N'$ and, up to reordering the indices, $c_i=c_i'$ and $E_{M_i}K(i)=E_{M_j'}K(j)'$.
\end{proof}

As a corollary, we get that the semiderived $(q,t)$-characters $\mathcal{L}_v(m)$ of the Kirillov--Reshetikhin modules satisfy a lift of the $T$-system to the semi-derived Hall algebra.
For  $(i,p)\in \widehat{I}_{\mathfrak{g}}$ and an integer $k\geq 0$, we define $m_{k,p}^i$ to be the dominant monomial
\[ m_{k,p}^i=Y_{i,p}Y_{i,p+2d_i}\dots Y_{i,p+2(k-1)}.\]
We write $\mathcal{W}_{k,p}^i$ for the semiderived analogue of the Kirillov--Reshetikhin module $W_{k,p}^i$ :

\[ \mathcal{W}_{k,p}^i = \mathcal{L}_v(m_{k,p}^i).\]
Notice that, for any $(i,p)\in \widehat{I}_{\mathfrak{g}}$ and $k\in \mathbb{N}$, we have $m_{k,p}^im_{k,p+2}^i>\prod_{i\sim j} m_{k,p+1}^j$. Therefore, the element $K(m_{k,p}^im_{k,p+2}^i,\prod_{i\sim j} m_{k,p+1}^j)$ is well defined. 

\begin{cor}

Let $(i,p)\in \widehat{I}_{\mathfrak{g}}$ and $k\in \mathbb{N}$. Then the following equality holds in the twisted semiderived Hall algebra

\begin{equation}
    \mathcal{W}_{k,p}^i \mathcal{W}_{k,p+2}^i = t^a\mathcal{W}_{k-1,p+2}^i \mathcal{W}_{k+1,p}^i + t^bK(m_{k,p}^im_{k,p+2}^i,\prod_{i\sim j}m_{k,p+1}^j)\prod_{i\sim j} \mathcal{W}_{k,p+1}^j,
\end{equation}
where $t^a$ and $t^b$ are as in Theorem \ref{teo_quantum_T_system}.
\end{cor}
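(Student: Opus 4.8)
The plan is to deduce the identity directly from Proposition~\ref{prop_semiderived_exchange_rel}, once the quantum $T$-system of Theorem~\ref{teo_quantum_T_system} is recognized as one of the exchange relations~(\ref{eq_exchange_relation_qt_characters}). First I would rewrite Theorem~\ref{teo_quantum_T_system} in terms of $(q,t)$-characters of simple modules: since every $W^i_{k,p}$ is a Kirillov--Reshetikhin module, Proposition~\ref{prop_properties_qtcharact_simple} gives that its $(q,t)$-character equals $F_t(m^i_{k,p})$ and hence equals $L_t(m^i_{k,p})$, and likewise the quantum-torus products occurring in the $T$-system differ from the corresponding products $L_t(m^i_{k+1,p})L_t(m^i_{k-1,p+2})$ and $\prod_{j\sim i}L_t(m^j_{k,p+1})$ only by powers of $t$, which may be absorbed into $t^a,t^b$. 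Thus the $T$-system becomes an identity
\[
L_t(m^i_{k,p})\,L_t(m^i_{k,p+2}) \;=\; t^a\,L_t(m^i_{k+1,p})\,L_t(m^i_{k-1,p+2}) \;+\; t^b\prod_{j\sim i}L_t(m^j_{k,p+1})
\]
in $\mathcal{K}_t(\mathscr{C}_\mathfrak{g}^{\mathbb{Z}})$.

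Next I would argue that this is an instance of~(\ref{eq_exchange_relation_qt_characters}). By Theorem~\ref{teo_quant_clust_Groth_ring} (see also Theorem~\ref{teo_KKOP_monoidal_cat}$\mathrm{(ii)}$) the initial quantum cluster variables of the quantum cluster algebra attached to any chain of $i$-boxes are the $(q,t)$-characters $L_t(\mathfrak{c}_s)$ of Kirillov--Reshetikhin modules; so for a suitable chain $\mathfrak{C}$ of range $[-\infty,\infty]$ with $W^i_{k,p}=M(\mathfrak{c}_s)$ for a non-frozen $i$-box $\mathfrak{c}_s$, the exchange relation for the mutation at $\mathfrak{c}_s$ is a box move and coincides with the $T$-system displayed above, with exchange pair $\bigl(L_t(m^i_{k,p}),L_t(m^i_{k,p+2})\bigr)$. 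A one-line computation with the variables $Y_{i,\bullet}$ yields
\[
m^i_{k,p}\,m^i_{k,p+2}\;=\;Y_{i,p}\,\Bigl(\textstyle\prod_{\ell=1}^{k-1}Y_{i,p+2\ell}^{2}\Bigr)\,Y_{i,p+2k}\;=\;m^i_{k+1,p}\,m^i_{k-1,p+2},
\]
so the term $t^aL_t(m^i_{k+1,p})L_t(m^i_{k-1,p+2})$ is the side with $\prod_{i\to k}m_i=m_km_k'$; combined with the inequality $m^i_{k,p}m^i_{k,p+2}>\prod_{j\sim i}m^j_{k,p+1}$ recalled before the statement, this places us in the first case of Proposition~\ref{prop_semiderived_exchange_rel}. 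Equation~(\ref{eq_semiderived_exchange_relation}) then reads precisely
\[
\mathcal{W}^i_{k,p}\,\mathcal{W}^i_{k,p+2}\;=\;t^a\,\mathcal{W}^i_{k-1,p+2}\,\mathcal{W}^i_{k+1,p}\;+\;t^b\,K\!\Bigl(m^i_{k,p}m^i_{k,p+2},\,\textstyle\prod_{j\sim i}m^j_{k,p+1}\Bigr)\prod_{j\sim i}\mathcal{W}^j_{k,p+1},
\]
which is the asserted equality, the coefficients $t^a,t^b$ being inherited, after the specialization $t^{1/2}\mapsto v^{1/2}$, from Theorem~\ref{teo_quantum_T_system}.

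I do not expect a genuine obstacle here. The only points needing care are bookkeeping ones: identifying the $T$-system as a box-move exchange relation so that Proposition~\ref{prop_semiderived_exchange_rel} applies verbatim; the harmless passage between quantum-torus products and products of $(q,t)$-characters of the (pairwise commuting) Kirillov--Reshetikhin modules, which only shifts $t^a$ by a power of $t$; and the order of the factors in $\overrightarrow{\prod_{i\to k}}\mathcal{L}_v(m_i)$, which is immaterial since $\mathcal{W}^i_{k+1,p}$ and $\mathcal{W}^i_{k-1,p+2}$ quasi-commute in $\mathcal{SDH}_{tw}(C^b(\mathcal{P}))$. Keeping exact track of the exponents $a,b$ is the only mildly tedious part, and it has already been done in Theorem~\ref{teo_quantum_T_system}.
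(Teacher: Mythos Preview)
Your proposal is correct and follows the same route as the paper, which simply records the statement as an immediate corollary of Proposition~\ref{prop_semiderived_exchange_rel}: the quantum $T$-system of Theorem~\ref{teo_quantum_T_system} is one of the exchange relations~(\ref{eq_exchange_relation_qt_characters}) (a box move for a suitable chain of $i$-boxes), the equality $m^i_{k,p}m^i_{k,p+2}=m^i_{k+1,p}m^i_{k-1,p+2}$ together with $m^i_{k,p}m^i_{k,p+2}>\prod_{j\sim i}m^j_{k,p+1}$ puts you in the first case, and (\ref{eq_semiderived_exchange_relation}) is the claimed identity. The extra details you supply (rewriting $\leftidx{^t}{W}$'s as $L_t$'s via Proposition~\ref{prop_properties_qtcharact_simple}, and the harmless reordering of quasi-commuting factors) are exactly the bookkeeping the paper leaves implicit.
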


\section{A quantum cluster algebra structure on the semiderived Hall algebra}
\label{sec_quantu_clust_semider}
Let $[a,b]\subset \mathbb{Z}$ be an interval, with $a\in \{-\infty\}\sqcup \mathbb{Z}$, $b\in \mathbb{Z}$. Recall from Section \ref{sec_Hernadez_Leclerc_iso} that we have fixed an height function $\varepsilon$, a reduced expression $\underline{w}_0$ adapted to $\varepsilon$ of the longest element of the Weyl group of $\mathfrak{g}$ and the admissible sequence $\mathfrak{s}$ associated to the pair $(\varepsilon, \underline{w}_0)$.
Consider the cluster algebra structure on $\mathcal{K}(\mathscr{C}_{\mathfrak{g}}^{[a,b], \mathfrak{s}})$ 
\[ \varphi(\mathfrak{C}^{[a,b]}_-): \mathcal{A}(\widehat{Q}^{[a,b],\mathfrak{s}}) \xrightarrow{\sim} \mathcal{K}(\mathscr{C}_\mathfrak{g}^{[a,b],\mathfrak{s}}),\]
cf.~Theorem~\ref{teo_KKOP_monoidal_cat}.
Denote by $J$ the vertex set of $\widehat{Q}^{[a,b],\mathfrak{s}}$.
For any $(i,p)\in J$, we define the dominant monomial 
\[m_{(i,p)}=m^i_{(p^i_{\mathrm{max}}-p)/2+1,p}=Y_{i,p}Y_{i,p+2}\dots Y_{i,p^i_{\mathrm{max}}},\]
where $p^i_{\mathrm{max}}$ is the greatest integer $p'$ such that $(i,p')$ belongs to the vertex set of $\widehat{Q}^{[a,b],\mathfrak{s}}$.
It follows from Theorem~\ref{teo_KKOP_monoidal_cat} that the initial cluster is 
\[ (M(\mathfrak{c})_{\mathfrak{c}\in \mathfrak{C}_-^{[a,b]}})=([L(m_{(i,p)})])_{(i,p)\in \widehat{Q}^{[a,b],\mathfrak{s}}}.\]
The set of frozen vertices of
$\widehat{Q}^{[a,b],\mathfrak{s}}$ is
\[J^{fr}= \{(i,p) \in \widehat{Q}^{[a,b],\mathfrak{s}}\ |\ p\leq p' \text{ for any } (i,p') \text{ in } \widehat{Q}^{[a,b],\mathfrak{s}} \}.\]
We write $J^\mu=J\backslash J^{fr}$ for the set of mutable vertices.
Consider the parametrization $(x(e),Q(e))$ of the seeds of $\mathcal{K}(\mathscr{C}_\mathfrak{g}^{[a,b],\mathfrak{s}})$ by the vertices $e$ of the regular $J^{\mu}$-tree $\mathbb{T}$ rooted at $e_0$. 
In particular, 
\[(x(e_0),Q(e_0)) = (([L(m^i_{(p^i_{\mathrm{max}}-p)/2+1,p}])_{(i,p)}, \widehat{Q}^{[a,b],\mathfrak{s}}).\]
For any vertex $e$ of $\mathbb{T}$ and any vertex $i$ of $\widehat{Q}^{[a,b],\mathfrak{s}}$, we write $m_i(e)$ for the dominant monomial such that $x_i(e)=[L(m_i(e))]$. Notice that $m_{(i,p)}(e_0)=m_{(i,p)}$.

\begin{lem}
\label{lem_mutation_m_greenred}
Let $e$ and $e'$ be vertices of $\mathbb{T}$ linked by an edge labelled by the mutable vertex $k$ of $Q(e)$. The vertex k is green if and only if the product of the dominant monomials $m_k(e)$ and $m_k(e')$ is equal to the product of the monomials associated to the vertices receiving an arrow from $k$:
\[m_k(e)m_k(e')= \prod_{k\rightarrow i} m_j(e). \]
\end{lem}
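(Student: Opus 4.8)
The plan is to relate the green/red status of a mutable vertex $k$ in a seed $(x(e),Q(e))$ of $\mathcal{K}(\mathscr{C}_\mathfrak{g}^{[a,b],\mathfrak{s}})$ to the shape of the associated exchange relation, and then to read off the monomial identity by applying $\mathrm{ev}_{t=1}$ and the $q$-character map. First I would recall that, by Theorem~\ref{teo_quant_clust_Groth_ring} (or its bounded-interval version, Theorem~\ref{teo_quantum_analogue_monoidal_cat}), the isomorphism $\varphi_t(\mathfrak{C})$ sends each quantum cluster variable $x_k(e)$ to a $(q,t)$-character $L_t(m_k(e))$ of a simple module, and each cluster monomial to an $L_t(m)$. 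In particular the exchange relation at $k$ takes the form \eqref{eq_exchange_relation_qt_characters}:
\[
L_t(m_k(e))\,L_t(m_k(e')) = a(t)\overrightarrow{\prod_{k\to i}} L_t(m_i(e)) + b(t)\overrightarrow{\prod_{j\to k}} L_t(m_j(e)),
\]
where the products run over the arrows of $Q(e)$ incident to $k$, and (as recorded in Remark~\ref{rem_form_exchange_relation_monomials}) exactly one of the two monomial products $\prod_{k\to i} m_i(e)$ or $\prod_{j\to k} m_j(e)$ equals $m_k(e)m_k(e')$, while the other is $\leq$ it in Nakajima's order.

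The key step is to identify which of the two alternatives corresponds to $k$ being green. I would argue as follows. Specializing the exchange relation via $\mathrm{ev}_{t=1}$ and applying $\chi_q$, we get the classical exchange relation in $\mathcal{K}(\mathscr{C}_\mathfrak{g}^{[a,b],\mathfrak{s}})$, which by the standard cluster-algebra sign conventions reads
\[
[L(m_k(e))]\,[L(m_k(e'))] = \prod_{k\to i}[L(m_i(e))]^{[b_{ik}]_+} + \prod_{i\to k}[L(m_i(e))]^{[-b_{ik}]_+},
\]
where $B(e)=(b_{ik})$ is the exchange matrix of $Q(e)$. Here the first monomial on the right is the one recording the arrows \emph{out of} $k$. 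By definition, $k$ is green in the seed $(x(e),Q(e))$ — viewed inside the ambient framed/principal-coefficients setting used in section~\ref{subsect_green_red_vertices} — precisely when the $c$-vector of $k$ is positive, equivalently when, under the corresponding tropical sign, the exchange monomial $\prod_{k\to i} m_i(e)$ is the \emph{dominant} term, i.e. the one equal to $m_k(e)m_k(e')$. Since $q$-characters are determined by their leading (dominant) monomials, and since $\chi_q$ is multiplicative and injective, comparing leading monomials on both sides of the classical exchange relation yields that the leading monomial of the left side, namely $m_k(e)m_k(e')$, coincides with $\prod_{k\to i} m_i(e)$ exactly in the green case, and with $\prod_{i\to k} m_i(e)$ in the red case. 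This is exactly the claimed equivalence.

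The main obstacle, and the place requiring the most care, is pinning down the precise bookkeeping that makes "$k$ green" $\iff$ "$\prod_{k\to i} m_i(e)$ is the term equal to $m_k(e)m_k(e')$" rather than the reverse — this depends on the sign conventions fixed for exchange matrices, for the direction of arrows versus the sign of $b_{ik}$, and for the definition of green in section~\ref{subsect_green_red_vertices}. Once the conventions are laid out, the argument is a one-line comparison of leading monomials. I would handle this by first treating the initial seed $e_0=e$, where $Q(e_0)=\widehat{Q}^{[a,b],\mathfrak{s}}$ and $m_{(i,p)}(e_0)=m_{(i,p)}$ is explicitly the product $Y_{i,p}Y_{i,p+2}\cdots Y_{i,p^i_{\max}}$, so that the monomial identity $m_k(e_0)m_k(e_0') = \prod_{k\to i} m_i(e_0)$ can be checked directly against the $T$-system (Theorem~\ref{teo_quantum_T_system}) — every mutation of an initial vertex is a $T$-system relation, and there the left-hand product is $W^i_{k,p}W^i_{k,p+2}$ whose leading monomial matches $W^i_{k+1,p}W^i_{k-1,p+2}$, the "out" side; and then propagating to an arbitrary seed $e$ by induction along the mutation path, using the compatibility of the green/red coloring with mutation (a mutable vertex's color in the new seed is governed by the sign of the corresponding $c$-vector component, which transforms by the standard $c$-vector mutation rule).
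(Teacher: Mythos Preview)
Your approach differs substantially from the paper's, and it contains a genuine gap at the step you yourself flag as ``the main obstacle''.

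The paper's proof does not go through exchange relations or leading monomials at all. Instead it uses the explicit formula, taken from the proof of \cite[Prop.~4.16]{HL_Clust_alg_approach_q_char}, expressing $m_{(i,p)}(e)$ as a monomial in the initial KR-monomials with exponents given by the components of the $g$-vector $g_{(i,p)}(e)$. Since $g$-vectors mutate according to the green/red rule \eqref{eq_g_vector}, the monomials $m_k(e)$ inherit exactly the same mutation rule, and the claimed equivalence is immediate. This is a two-line argument once the $g$-vector formula for $m_k(e)$ is in hand.

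Your argument, by contrast, never establishes the link between green/red and which side of the exchange relation carries the dominant term. You write that ``$k$ is green \ldots\ precisely when \ldots\ the exchange monomial $\prod_{k\to i} m_i(e)$ is the dominant term'', but this is the content of the lemma, not an available fact: the definition of green in section~\ref{subsect_green_red_vertices} is in terms of arrows to framing vertices (equivalently, signs of $c$-vectors), and nothing you have cited connects $c$-vector signs to which of the two monomial products in Remark~\ref{rem_form_exchange_relation_monomials} is the larger one. Your fallback plan---check at $e_0$ via the $T$-system and propagate by induction---could in principle be made to work, but the induction step is the whole difficulty: after a mutation, both the green/red colouring and the monomial identity change, and you would need precisely a statement like the $g$-vector formula for $m_k(e)$ to track them simultaneously. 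Without that formula (or an equivalent bookkeeping device), the induction does not close.
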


\begin{proof}
By the mutation rule for $g$-vectors (\ref{eq_g_vector}), we have

\[
g_k(e')=\begin{cases}
    -g_k(e)+\sum_{k\rightarrow k'} g_{k'}(e) & \text{if $k$ is green in $Q(e)$ with respect to $e_0$},\\
    -g_k(e)+\sum_{k'\rightarrow k} g_{k'}(e) & \text{if $k$ is red in $Q(e)$ with respect to $e_0$ }.
\end{cases}
\]
As shown in the proof of \cite[Prop.~4.16]{HL_Clust_alg_approach_q_char}, for this choice of the inital seed, for any vertex $(i,p)\in \widehat{Q}^{[a,b],\mathfrak{s}}$, we have

\[ m_{(i,p)}(e) = \prod_{j,s}(\prod_{k\geq 0, r+2k\leq p^i_{\mathrm{max}}} Y_{i,r+2k})^{g_{(i,p)}(e;j,s)}, \]
where $g_{(i,p)}(e;j,s)$ is the $(j,s)$-component of $g_{(i,p)}$.

It follows that 
\[
m_k(e')=\begin{cases}
    m_k(e)^{-1}\prod_{k\rightarrow k'} m_{k'}(e) & \text{if $k$ is green in $Q(e)$ with respect to $e_0$},\\
    m_k(e)^{-1}\prod_{k'\rightarrow k} m_{k'}(e) & \text{if $k$ is red in $Q(e)$ with respect to $e_0$ }.
\end{cases}
\]
\end{proof}

\begin{con}
\label{con_QSDH_quiver}
For any mutable vertex $(i,p)$ in $\widehat{Q}^{[a,b],\mathfrak{s}}$, using the notation of Section \ref{sec_Hernadez_Leclerc_iso}, we define the modules

\[V_{(i,p)}^\text{in}=\bigoplus_{(j,s)\rightarrow (i,p) }V(m_{(j,s)})=(\bigoplus_{p+2\leq p+2+2k\leq p^i_{\mathrm{max}}} V(i,p+2+2k))\oplus \bigoplus_{i\sim j} V(j,p-1)\oplus\dots\oplus V(j,p^j_{\mathrm{max}})\]
and 

\[V_{(i,p)}^\text{out}=\bigoplus_{(j,s)\leftarrow (i,p) }V(m_{(j,s)})=V(i,p-2)\oplus\dots \oplus V(i,p^i_{\mathrm{max}})\bigoplus_{i\sim j} (\bigoplus_{p+1\leq p+1+2k\leq p^j_{\mathrm{max}}} V(j,p+1+2k)),\]
where, by convention, we omit the terms $V(i',p')$ if $(i',p')$ is not a vertex of $\widehat{Q}^{[a,b],\mathfrak{s}}$.
Notice that $V_{(i,p)}^\text{in}$ is smaller in the degeneration order than $V_{(i,p)}^\text{out}$ since $\bigoplus_{i\sim j} V(j,p-1)$
can be obtained as an extension of $V(i,p)$ by $V(i,p-2)$. In particular, it follows that if $V(i,p-2)$ and $V(i,p)$ belong to the same shift of the canonical heart of $D^b(Q)$, then the corresponding elements of the semi-derived Hall algebra have the same degree:
\begin{equation}
\label{eq_deg_Vin_Vout_1}
    \mathrm{deg}(E_{V_{(i,p)}^\text{out}}) = \mathrm{deg}(E_{V_{(i,p)}^\text{in}}),
\end{equation}
cf. Theorem \ref{teo_relations_semiderived}.
Otherwise, we have 
\[ V(i,p) = P_j[z+1] \text{ and } V(i,p-2) = I_j[z], \]
for an integer $z$ and $j=i$ or $i^*$. Consider the exact sequence
\[ 0\rightarrow  \mathrm{rad}(P_j) \rightarrow P_j \rightarrow I_j \rightarrow I_j/S_j \rightarrow 0.\]
It follows from the relations (Theorem \ref{teo_relations_semiderived}) of the twisted semi-derived Hall algebra that we have 
\begin{equation}
\label{eq_deg_Vin_Vout_2} \mathrm{deg}(E_{V_{(i,p)}^\text{out}}) = \mathrm{deg}(E_{V_{(i,p)}^\text{in}}K_{\overline{S}_j,z}).\end{equation}
 We extend $\widehat{Q}^{[a,b],\mathfrak{s}}$ to a quiver $\widetilde{Q}^{[a,b],\mathfrak{s}}$ in the following way:
\begin{itemize}
    \item for each non-frozen vertex $(i,p)$ such that $V(i,p)=P_j[z+1]$ for $j=i$ or $j=i^*$ and $z\in \mathbb{Z}$, add a frozen vertex $\widetilde{S}_{j,z}$ and a frozen arrow $\widetilde{S}_{j,z}\rightarrow (i,p)$.
\end{itemize}
\end{con}

Let $\widetilde{J}$ be the set of vertices of $\widetilde{Q}^{[a,b],\mathfrak{s}}$. We write $J^{fr}_K$ for the set of frozen vertices of the form $\widetilde{S}_{j,z}$. Therefore, we have
\begin{equation}
\label{eq_subsets_J}
 \widetilde{J}=J\sqcup J_K^{fr}=J^\mu\sqcup J^{fr} \sqcup J^{fr}_K,
 \end{equation}
where the union $J^{fr} \sqcup J^{fr}_K,$ is the set of frozen vertices of $\widetilde{Q}^{[a,b],\mathfrak{s}}$.

\begin{ex}
    Let $\mathfrak{g}$ be of type $A_3$. Let $\varepsilon$ be the height function defined by
    \[ \varepsilon_1=1,\ \varepsilon_2=0,\ \varepsilon_3=1\]
    and fix the expression $\underline{w}_0=s_2s_1s_3s_2s_1s_3$ adapted to $\varepsilon$. The quiver $\widetilde{Q}^{[-\infty,3],\mathfrak{s}}$ is represented in Figure~\ref{fig_quiver_semiderived_-_A3}, where we have labelled each vertex $(i,p)$ by $V(i,p)$.
\end{ex}

\begin{figure}
\centering

\begin{tikzpicture}[xscale= 2.2, yscale=2.2]

\node[blue, scale=0.8] (K11) at (6.5, 4.6) {$\widetilde{S}_{1,-1}$};
\node[blue, scale=0.8] (K12) at (4.3, 4.6) {$\widetilde{S}_{3,-2}$};
\node[blue, scale=0.8] (K13) at (2.1, 4.6) {$\widetilde{S}_{1,-3}$};

\node[blue, scale=0.8] (K21) at (6, 3.6) {$\widetilde{S}_{2,-1}$};
\node[blue, scale=0.8] (K22) at (3.8, 3.6) {$\widetilde{S}_{2,-2}$};
\node[blue, scale=0.8] (K23) at (1.6, 3.6) {$\widetilde{S}_{2,-3}$};

\node[blue, scale=0.8] (K31) at (6.5, 2.6) {$\widetilde{S}_{3,-1}$};
\node[blue, scale=0.8] (K32) at (4.3, 2.6) {$\widetilde{S}_{1,-2}$};
\node[blue, scale=0.8] (K33) at (2.1, 2.6) {$\widetilde{S}_{3,-3}$};

    \node (N10) at (7.6,4) {$I_3$};
    \node (N11) at (6.5,4) {$P_1$};
    \node (N12) at (5.4,4) {$I_1[-1]$};
    \node (N13) at (4.3,4) {$P_3[-1]$};
    \node (N14) at (3.2,4) {$I_3[-2]$};
    \node (N15) at (2.1,4) {$P_1[-2]$};
    \node (N16) at (1,4) {$I_1[-3]$};

    \node (N20) at (7.1,3) {$I_2$};
    \node (N21) at (6,3) {$P_2$};
    \node (N22) at (4.9,3) {$I_2[-1]$};
    \node (N23) at (3.8,3) {$P_2[-1]$};
    \node (N24) at (2.7,3) {$I_2[-2]$};
    \node (N25) at (1.6,3) {$P_2[-2]$};
    \node (N26) at (0.5,3) {$I_2[-3]$};

    \node (N30) at (7.6,2) {$I_1$};
    \node (N31) at (6.5,2) {$P_3$};
    \node (N32) at (5.4,2) {$I_3[-1]$};
    \node (N33) at (4.3,2) {$P_1[-1]$};
    \node (N34) at (3.2,2) {$I_1[-2]$};
    \node (N35) at (2.1,2) {$P_3[-2]$};
    \node (N36) at (1,2) {$I_3[-3]$};

    \node (d1) at (0.5,4) {$\dots$};
    \node (d2) at (0,3) {$\dots$};
    \node (d3) at (0.5,2) {$\dots$};

    \draw[->] (N10) -- (N11);
    \draw[->] (N11) -- (N12);
    \draw[->] (N12) -- (N13);
    \draw[->] (N13) -- (N14);
    \draw[->] (N14) -- (N15);
    \draw[->] (N15) -- (N16);

    \draw[->] (N11) -- (N20);
     \draw[->] (N12) -- (N21);
      \draw[->] (N13) -- (N22);
       \draw[->] (N14) -- (N23);
        \draw[->] (N15) -- (N24);
         \draw[->] (N16) -- (N25);

    \draw[->] (N20) -- (N21);
    \draw[->] (N21) -- (N22);
    \draw[->] (N22) -- (N23);
    \draw[->] (N23) -- (N24);
    \draw[->] (N24) -- (N25);
    \draw[->] (N25) -- (N26);
    \

    \draw[->] (N20) -- (N10);
    \draw[->] (N21) -- (N11);
    \draw[->] (N22) -- (N12);
    \draw[->] (N23) -- (N13);
    \draw[->] (N24) -- (N14);
    \draw[->] (N25) -- (N15);
    \draw[->] (N26) -- (N16);

    \draw[->] (N20) -- (N30);
    \draw[->] (N21) -- (N31);
    \draw[->] (N22) -- (N32);
    \draw[->] (N23) -- (N33);
    \draw[->] (N24) -- (N34);
    \draw[->] (N25) -- (N35);
    \draw[->] (N26) -- (N36);

   \draw[->] (N30) -- (N31);
   \draw[->] (N31) -- (N32);
    \draw[->] (N32) -- (N33);
    \draw[->] (N33) -- (N34);
    \draw[->] (N34) -- (N35);
    \draw[->] (N35) -- (N36);

    \draw[->] (N31) -- (N20);
     \draw[->] (N32) -- (N21);
      \draw[->] (N33) -- (N22);
       \draw[->] (N34) -- (N23);
        \draw[->] (N35) -- (N24);
         \draw[->] (N36) -- (N25);

    \draw[->, blue] (K11) -- (N11);
    \draw[->, blue] (K12) -- (N13);
    \draw[->, blue] (K13) -- (N15);

    \draw[->, blue] (K21) -- (N21);
    \draw[->, blue] (K22) -- (N23);
    \draw[->, blue] (K23) -- (N25);

   \draw[->, blue] (K31) -- (N31);
    \draw[->, blue] (K32) -- (N33);
    \draw[->, blue] (K33) -- (N35);
    
\end{tikzpicture}
\caption{The quiver $\widetilde{Q}^{[-\infty,3],\mathfrak{s}}$ for $\mathfrak{g}$ of type $A_3$}
  \label{fig_quiver_semiderived_-_A3}
\end{figure}
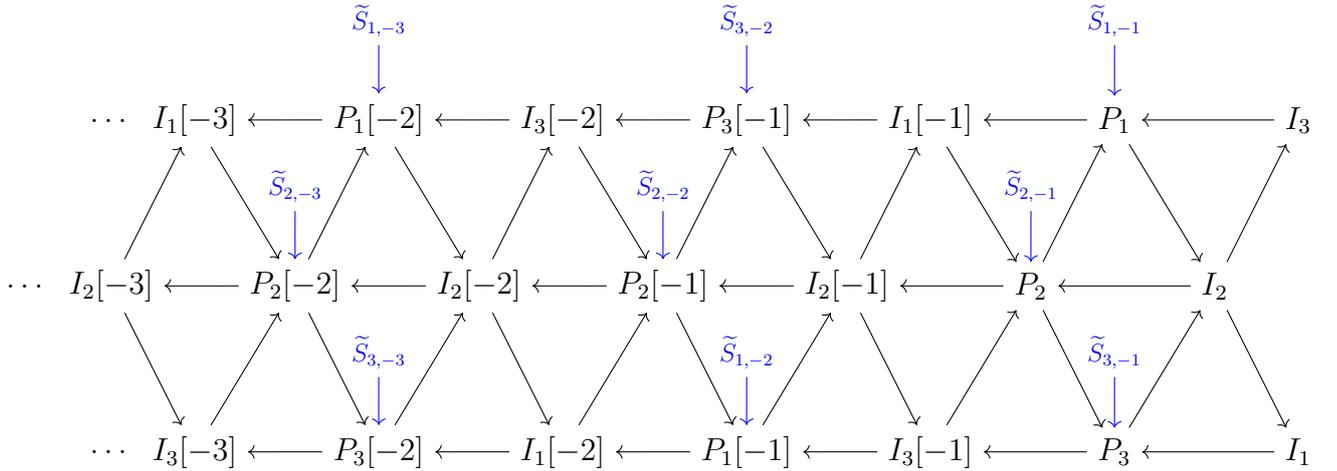

We define the $J\times J$-antisymmetric matrix $\Lambda^{[a,b],\mathfrak{s}}=(\lambda_{(i,p),(j,s)})$ by
\[ \lambda_{(i,p),(j,s)} = \Lambda(L(m_{(i,p)}), L(m_{(j,s)})),\]
where the expression on the right-hand side is the invariant defined by Kashiwara--Kim--Oh--Park, cf Remark \ref{rem_invariant_Lambda}.
Let $\widehat{B}^{[a,b],\mathfrak{s}}$ be the exchange matrix of the quiver $\widehat{Q}^{[a,b],\mathfrak{s}}$.
Up to reordering the indices, the matrix $\Lambda^{[a,b],\mathfrak{s}}=(\lambda_{(i,p),(j,s)})$ coincides with the matrix $\Lambda(\mathfrak{C}^{[a,b]}_-)$. In particular, it follows from \ref{teo_quantum_analogue_monoidal_cat} that $(\Lambda^{[a,b],\mathfrak{s}},\widehat{B}^{[a,b],\mathfrak{s}})$ is a compatible pair. Let $\mathcal{A}_t(\widehat{Q}^{[a,b],\mathfrak{s}})$ be the associated quantum cluster algebra.
Let us extend $\Lambda^{[a,b],\mathfrak{s}}$ to a $\widetilde{J}\times \widetilde{J}$-antisymmetric matrix $\widetilde{\Lambda}^{[a,b],\mathfrak{s}}$ by setting each new entry to 0. 
Notice that $(\widetilde{\Lambda}^{[a,b],\mathfrak{s}},\widetilde{B}^{[a,b],\mathfrak{s}})$ is still a compatible pair. We write $\mathcal{A}_t(\widetilde{Q}^{[a,b],\mathfrak{s}})$ for the associated quantum cluster algebra \emph{localized} at the frozen quantum cluster variables associated to the vertices of $J_K^{fr}$. We denote by $\pi$ the surjection
\[\pi: \mathcal{A}_t(\widetilde{Q}^{[a,b],\mathfrak{s}}) \rightarrow \mathcal{A}_t(\widehat{Q}^{[a,b],\mathfrak{s}}),\]
which sends each frozen quantum cluster variable associated to a vertex of $J_K^{fr}$ to 1.

Recall that (cf.~Section~\ref{sec_semiderived_Hall_complexes}), for any $i\in I_\mathfrak{g}$, we have fixed a minimal projective resolution
\[ 0\rightarrow P_{S_i} \rightarrow Q_{S_i} \rightarrow S_i.\]
Similarly to Lemma \ref{lem_uniqueness_K_degree}, we can prove the following
\begin{lem}
\label{lem_invertible_frozen_to_Groth_projectives}
Let $\mathcal{A}_t(\widetilde{Q}^{[a,b],\mathfrak{s}})_K^{\times} $ be the subgroup of the group of invertible elements of $\mathcal{A}_t(\widetilde{Q}^{[a,b],\mathfrak{s}})$ generated by the frozen quantum cluster variables $X_{\widetilde{S}_{j,z}}$, for $\widetilde{S}_{j,z}$ in $J_K^{fr}$. The map 
\[\mathcal{A}^{\times}_t(\widetilde{Q}^{[a,b],\mathfrak{s}})_K  \rightarrow \mathcal{K}(C^b(\mathcal{P})),\ X_{\widetilde{S}_{j,z}} \mapsto K_{\overline{S}_{j},z}=[K_{Q_{S_j}}[z]][K_{P_{S_j}}[z]]^{-1}\]is injective.
\end{lem}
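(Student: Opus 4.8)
The plan is to follow the proof of Lemma~\ref{lem_uniqueness_K_degree}: exploit the $\mathcal{K}(C^b(\mathcal{P}))$-grading, combined with the fact that the classes $(\overline{S_j})_{j\in I_\mathfrak{g}}$ of the simple $\Bbbk Q$-modules form a $\mathbb{Z}$-basis of $\mathcal{K}(\mathrm{mod}(\Bbbk Q))$.

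First I would note that, since every entry of $\widetilde{\Lambda}^{[a,b],\mathfrak{s}}$ involving a vertex of $J^{fr}_K$ vanishes, the frozen quantum cluster variables $X_{\widetilde{S}_{j,z}}$ commute with one another; hence every element of $\mathcal{A}^{\times}_t(\widetilde{Q}^{[a,b],\mathfrak{s}})_K$ is a Laurent monomial $\prod_{\widetilde{S}_{j,z}\in J^{fr}_K} X_{\widetilde{S}_{j,z}}^{c_{j,z}}$ with only finitely many $c_{j,z}$ nonzero, and its image is $\prod_{j,z}K_{\overline{S}_j,z}^{c_{j,z}}$. By Lemma~\ref{lem_uniqueness_K_degree} and the corollary following it, the degree map is injective on the subgroup of $\mathcal{SDH}^0_{tw}(C^b(\mathcal{P}))$ generated by the $K_{\overline{S}_j,z}$, so it suffices to show that $\deg\!\big(\prod_{j,z}K_{\overline{S}_j,z}^{c_{j,z}}\big)=\sum_{j,z}c_{j,z}\big(\overline{K_{Q_{S_j}}[z]}-\overline{K_{P_{S_j}}[z]}\big)$ vanishes in $\mathcal{K}(C^b(\mathcal{P}))$ only when all the $c_{j,z}$ are zero; this yields injectivity both of the stated map and of the corresponding map into $\mathcal{SDH}^0_{tw}(C^b(\mathcal{P}))$.

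Next I would argue by contradiction: suppose some $c_{j,z}\neq 0$. Under the identification $\mathcal{K}(C^b(\mathcal{P}))\cong\coprod_{\mathbb{Z}}\mathcal{K}(\mathcal{P})$ used in the proof of Lemma~\ref{lem_uniqueness_K_degree}, the class $\overline{K_{Q_{S_j}}[z]}-\overline{K_{P_{S_j}}[z]}$ is supported in two consecutive summands and has component $\overline{Q_{S_j}}-\overline{P_{S_j}}=\overline{S_j}$ in each, the last equality being the minimal projective resolution $0\to P_{S_j}\to Q_{S_j}\to S_j\to 0$. Let $z_0$ be the extreme value (in the direction fixed by the shift convention) of $z$ among those with some $c_{j,z}\neq 0$, so that the extreme summand of $\coprod_{\mathbb{Z}}\mathcal{K}(\mathcal{P})$ actually occurring is reached only by the terms with $z=z_0$. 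Projecting the vanishing degree onto that summand gives
\[ \sum_j c_{j,z_0}\,\overline{S_j}=0 \quad\text{in }\mathcal{K}(\mathrm{mod}(\Bbbk Q)), \]
and since the $\overline{S_j}$ are linearly independent in $\mathcal{K}(\mathrm{mod}(\Bbbk Q))$, all the $c_{j,z_0}$ vanish, contradicting the choice of $z_0$. Hence every $c_{j,z}$ is zero and the map is injective.

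The argument presents essentially no obstacle: the only point requiring a little care is the bookkeeping of which graded pieces of $\mathcal{K}(C^b(\mathcal{P}))$ each $K_{\overline{S}_j,z}$ occupies, i.e.\ making sure that at the extreme level occurring only the $z=z_0$ terms survive, and this is precisely the computation already carried out in the proof of Lemma~\ref{lem_uniqueness_K_degree}; the substantive input is just the linear independence of the simples in $\mathcal{K}(\mathrm{mod}(\Bbbk Q))$.
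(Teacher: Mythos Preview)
Your proposal is correct and follows essentially the same approach as the paper: the paper simply states that the lemma is proved ``similarly to Lemma~\ref{lem_uniqueness_K_degree}'', and what you have written is precisely a careful unpacking of that argument, projecting the degree onto the extreme graded piece of $\mathcal{K}(C^b(\mathcal{P}))\cong\coprod_{\mathbb{Z}}\mathcal{K}(\mathcal{P})$ and invoking the linear independence of the $\overline{S_j}$.
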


By considering $\mathbb{Q}(v^{1/2})$ as a $\mathbb{Z}[t^{\pm 1/2}]$-algebra through the specialization $t^{1/2}\mapsto v^{1/2}$, we define the $\mathbb{Q}(v^{1/2})$-algebras

\begin{align*}
\mathcal{A}_v(\widehat{Q}^{[a,b],\mathfrak{s}}) &= \mathbb{Q}(v^{1/2})\otimes_{\mathbb{Z}[t^{\pm 1/2}]} \mathcal{A}_t(\widehat{Q}^{[a,b],\mathfrak{s}}) \text{ and } \\\mathcal{A}_v(\widetilde{Q}^{[a,b],\mathfrak{s}}) &= \mathbb{Q}(v^{1/2})\otimes_{\mathbb{Z}[t^{\pm 1/2}]} \mathcal{A}_t(\widetilde{Q}^{[a,b],\mathfrak{s}}).
\end{align*}

We assign a $\mathcal{K}(C^b(\mathcal{P}))$-degree to each initial quantum cluster variable of $\mathcal{A}_v(\widetilde{Q}^{[a,b],\mathfrak{s}})$ as follows:

\begin{itemize}
    \item for any vertex of $ \widetilde{Q}^{[a,b],\mathfrak{s}}$ of the form $(i,p)$, we set
    \[ \mathrm{deg}(X_{(i,p)}) = \mathrm{deg}(\mathcal{L}_v(m_{(i,p)})); \]
    \item for any vertex of $ \widetilde{Q}^{[a,b],\mathfrak{s}}$ of the form $\widetilde{S}_{j,z}$, we set 
    \[\mathrm{deg}(X_{\widetilde{S}_{j,z}})=\mathrm{deg}(K_{\overline{S}_j,z}).\]
\end{itemize}

Consider the parametrization $(X(e),Q(e))$ of the seeds of $\mathcal{A}_v(\widetilde{Q}^{[a,b],\mathfrak{s}})$ by the vertices of the regular $J^{\mu}$-tree $\mathbb{T}$ rooted at $e_0$. Write $b_{ij}$ for the $ij$-coefficient of the matrix $\widetilde{B}^{[a,b],\mathfrak{s}}$.
It follows from the construction of the quiver $\widetilde{Q}^{[a,b],\mathfrak{s}}=(b_{ij})_{i,j\in \widetilde{J}}$ that, for each mutable vertex $k$ of $\widetilde{Q}^{[a,b],\mathfrak{s}}$, each exchange relation is homogeneous, that is
\[ \sum_{i\rightarrow k} b_{ik}\mathrm{deg}(X_i(e_0)) = \sum_{j\rightarrow i} -b_{jk}\mathrm{deg}(X_j(e_0)).\]
As shown in \cite[\S~3]{Grabowsky_Launois_2014}, this allows to recursively define the degree of each quantum cluster variable, making $\mathcal{A}_v(\widetilde{Q}^{[a,b],\mathfrak{s}})$ into a $\mathcal{K}(C^b(\mathcal{P}))$-graded algebra.

\begin{rem}
\label{rem_subquiver_framed_quiver}
Notice that we can obtain the quiver $\widetilde{Q}^{[a,b],\mathfrak{s}}$ by removing from the framed quiver of $\widehat{Q}^{[a,b],\mathfrak{s}}$ some vertices of the type $i'$, for $i \in J^\mu$ (cf.~subsection~\ref{subsect_green_red_vertices}). It follows that the vertex $i$ of $Q(e)$ is green (resp. red) with respect to $e_0$ if and only if then there are no arrows $i\rightarrow \widetilde{S}_{j,z}$ (resp. $\widetilde{S}_{j,z} \rightarrow i$), for a vertex $\widetilde{S}_{j,z} \in J^{fr}_{K}$ .
\end{rem}

Let $\mathcal{DH}_{tw}(Q)^{[a,b],\mathfrak{s}}$ be the subalgebra of $\mathcal{DH}_{tw}(Q)$ generated by the $Z_{V(i,p)}$, for $(i,p)\in J$.
Let $\mathcal{SDH}_{tw}(C^b(\mathcal{P}))^{[a,b],\mathfrak{s}}$ be the subalgebra of $\mathcal{SDH}_{tw}(C^b(\mathcal{P}))$ generated by the $E_{V(i,p)}$, for $(i,p)\in J$, and by the $K_{\overline{S}_j,z}$, for each frozen vertex $\widetilde{S}_{j,z}$ of $J_K^{fr}$. Notice that this subalgebra is generated by homogeneous elements and thus inherits the $\mathcal{K}(C^b(\mathcal{P}))$-grading of the twisted semi-derived Hall algebra.

\begin{prop}
\label{prop_l'image_est_la_bonne}
For any dominant monomial $m$ in the variables $(Y_{i,p})_{(i,p)\in J}$, the semi-derived $(q,t)$-character $\mathcal{L}_v(m)$ is contained in $\mathcal{SDH}_{tw}(C^b(\mathcal{P}))^{[a,b],\mathfrak{s}}$.
\end{prop}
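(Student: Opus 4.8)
The plan is to prove the statement by induction on Nakajima's order, exploiting the fact that $\mathcal{L}_v(m)$ is, by definition (\ref{eq_semiderived_Lv(m)}), a linear combination of terms $E_{V(m')}K(m,m')$ with $m'\leq m$ and $m'$ ranging over dominant monomials occurring in $\Phi(L_t(m))$. The key observation is that, since $L_t(m)$ belongs to $\mathcal{K}_t(\mathscr{C}_\mathfrak{g}^{[a,b],\mathfrak{s}})$ when $m$ is a monomial in the variables $(Y_{i_k,p_k})_{k\in[a,b]}$ (this was recalled after Theorem \ref{teo_quant_clust_Groth_ring}, citing \cite[Thm.~4.3]{HFOO_iso_quant_groth_ring_clust_alg}), every dominant monomial $m'$ appearing in the expansion $L_t(m)=\leftidx{^t}{m}+\sum_{m'<m}b_{m'}\leftidx{^t}{m'}$ — and hence in $M_t(m')$ occurring in (\ref{formula_image_qt_simple}) — is again a monomial in the variables $(Y_{i_k,p_k})_{k\in[a,b]}$; therefore $V(m')$ is a direct sum of shifts $V(i,p)$ with $(i,p)\in J$. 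This already shows that each $E_{V(m')}$ lies in $\mathcal{SDH}_{tw}(C^b(\mathcal{P}))^{[a,b],\mathfrak{s}}$, since that subalgebra is generated by the $E_{V(i,p)}$ for $(i,p)\in J$ together with the $K_{\overline{S}_j,z}$, and $E_{V(m')}$ is (by the defining relations of Theorem \ref{teo_relations_semiderived} and the definition of $E_V$) a product of the $E_{V(i,p)}$ up to scalars.

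**Key steps.** First I would verify that for $m$ a dominant monomial in the variables indexed by $J$, the support of $\Phi(L_t(m))$ (in the tautological basis of $\mathcal{DH}_{tw}(Q)$) consists of objects $V(m')$ with $m'$ also supported on $J$. This follows by combining (\ref{formula_image_qt_simple}), the triangular expansion of $L_t(m)$ in $M_t$'s with coefficients in $t\mathbb{Z}[t]$, the triangular expansion of each $M_t(m')$ in commutative monomials, and the cited characterization that membership of $L_t(m)$ in $\mathcal{K}_t(\mathscr{C}_\mathfrak{g}^{[a,b],\mathfrak{s}})$ is equivalent to $m$ being a monomial in $(Y_{i_k,p_k})_{k\in[a,b]}$. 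Second, for each such $m'<m$ I need that the correction element $K(m,m')\in\mathcal{SDH}^0_{tw}(C^b(\mathcal{P}))$ introduced in Lemma \ref{lem_find_a_K} actually lies in the subalgebra generated by the $K_{\overline{S}_j,z}$ with $\widetilde{S}_{j,z}\in J_K^{fr}$, rather than in the larger group $\mathcal{SDH}_{tw}(C^b(\mathcal{P}))_K^\times$ of all contractible monomials. For this I would argue that $K(m,m')$ is the unique element of $\mathcal{SDH}^0_{tw}$ whose degree equals $\mathrm{deg}(E_{V(m)})-\mathrm{deg}(E_{V(m')})$, invoke Lemma \ref{lem_uniqueness_K_degree}, and then exhibit an element built from the $K_{\overline{S}_j,z}$ with the correct degree — here one uses that the degree discrepancy between $E_{V(i,p)}$ and $E_{V(i,p-2)}$ (when these lie in different shifts of the heart) is exactly $\mathrm{deg}(K_{\overline{S}_j,z})$, as computed in Construction \ref{con_QSDH_quiver}, equations (\ref{eq_deg_Vin_Vout_1})–(\ref{eq_deg_Vin_Vout_2}), so that the group generated by the $K_{\overline{S}_j,z}$ surjects onto the relevant subgroup of kernel-degrees. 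Combining, $\mathcal{L}_v(m)$ is a $\mathbb{Q}(v^{1/2})$-linear combination of products $E_{V(m')}K(m,m')$, each a product of generators of $\mathcal{SDH}_{tw}(C^b(\mathcal{P}))^{[a,b],\mathfrak{s}}$, hence lies in that subalgebra.

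**Main obstacle.** The delicate point is the second step: showing that $K(m,m')$ can be expressed using only the $K_{\overline{S}_j,z}$ attached to the frozen vertices of $J_K^{fr}$, and not arbitrary $K_{\alpha,i}$. Equivalently, one must show that the subgroup of $\mathcal{K}(C^b(\mathcal{P}))$ generated by $\{\mathrm{deg}(K_{\overline{S}_j,z}) : \widetilde{S}_{j,z}\in J_K^{fr}\}$ contains $\mathrm{deg}(E_{V(m)})-\mathrm{deg}(E_{V(m')})$ for all relevant $m'<m$. I expect the cleanest route is to reduce to the case $V(m')<V(m)$ covered in Remark \ref{order_comparison}, write the passage from $V(m)$ to $V(m')$ as a succession of elementary degenerations each governed by a mesh relation (an extension of $V(i,p)$ by $V(i,p-2)$ giving $\bigoplus_{i\sim j}V(j,p-1)$, as in Construction \ref{con_QSDH_quiver}), and observe that each such elementary step changes the $\mathcal{K}(C^b(\mathcal{P}))$-degree by $0$ or by $\pm\mathrm{deg}(K_{\overline{S}_j,z})$ for an appropriate frozen vertex. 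Telescoping then yields the claim, and uniqueness from Lemma \ref{lem_uniqueness_K_degree} identifies this explicitly built element with $K(m,m')$.
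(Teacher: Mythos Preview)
Your proposal is correct and takes essentially the same approach as the paper: both show first that each relevant $m'$ is supported on $J$ (so $E_{V(m')}$ lies in the subalgebra) and then that $K(m,m')$ is a product of the distinguished $K_{\overline{S}_j,z}$ with $\widetilde{S}_{j,z}\in J_K^{fr}$. The only difference is cosmetic: for the second step the paper factors $m(m')^{-1}$ as a product $\prod_k A_{i_k,p_k+1}$ and computes each $\deg(A_{i_k,p_k+1})$ directly from the relations of Theorem~\ref{teo_relations_semiderived} (obtaining either $0$ or $\deg(K_{\overline{S}_j,z})$ exactly as in (\ref{eq_deg_Vin_Vout_1})--(\ref{eq_deg_Vin_Vout_2})), whereas you phrase the same telescoping over elementary mesh degenerations in $D^b(Q)$ --- these are identical computations under Happel's equivalence.
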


\begin{proof}
Recall the definition of $\mathcal{L}_v(m)$ given in (\ref{eq_semiderived_Lv(m)}). 
Let $m'\leq m$ be a dominant monomial. Therefore, there is a positive integer $N$ and a sequence of indices $(i_k,p_k)$ in $\widehat{I}_{\mathfrak{g}}$ such that
\[ m = m' \prod_{1\leq k\leq N} A_{i_k,p_k+1}.\]
It follows from the fact that $\mathfrak{s}$ is an admissible sequence that
\begin{itemize}
    \item[(i)] $m'$ is a monomial in the variables $Y_{i,p}$, $(i,p)\in J$;
    \item[(ii)] for any $k$, the vertices $(i_k,p_k), (i_k,p_k+2)$ and $(j,p_k+1)$, for $j\sim i_k$, are in $J$.
\end{itemize}
 It follows from (i) that $E_{V(m')}$ is an ordered product of some elements $E_{V(i,p)}$, with $(i,p)\in J$. Assign an element of $\mathcal{K}(C^b(\mathcal{P})$ to each monomial $\widetilde{m}$ in the variables $(Y_{i,p}^{\pm})_{(i,p)\in\widehat{I}_{\mathfrak{g}}}$, by extending linearly the assignment 
 \[ \mathrm{deg}(Y_{i,p}^{\pm} ) = \pm \mathrm{deg}(E_{V(i,p)}).\]
 Therefore, for any dominant monomial $m$, we have that $\mathrm{deg}(m)=\mathrm{deg}(E_{V(m)})$. Moreover, similarly to how we have deduced (\ref{eq_deg_Vin_Vout_1}) and (\ref{eq_deg_Vin_Vout_2}), we can deduce from (ii) and the relations of the twisted semi-derived Hall algebra (cf.~Theorem~\ref{teo_relations_semiderived}) that, for any $1\leq k\leq N$, we have
 \[
 \mathrm{deg}(A_{i_k,p_k+1})=\begin{cases}
     \mathrm{deg}(K_{\overline{S}_j,z}) &\text{if  $V(i_k,p_k+2)=P_j[z+1]$, for $j\in \{i,i^*\}$ and $z\in \mathbb{Z}$},\\
     0 & \text{otherwise.}
 \end{cases}
 \]
 Therefore, we have that 
 \[ \mathrm{deg}(E_{V(m)}) = \mathrm{deg}(E_{V(m')}\prod_{k'} K_{\overline{S}_{j_{k'}},z_{k'}}),\]
 for certain $\widetilde{S}_{j_{k'},z_{k'}}$ in $J_{K}^{fr}$. It follows from Lemma \ref{lem_find_a_K} that 
 \[  K(m,m')=  \prod_{k'} K_{\overline{S}_{j_{k'}},z_{k'}}.\]
 In conclusion, since $E_{V(m')}K(m,m')$ is contained in $\mathcal{SDH}_{tw}(C^b(\mathcal{P}))^{[a,b],\mathfrak{s}}$ for any $m'\leq m$, the same is true for $\mathcal{L}_v(m)$.
\end{proof}

Let $\mathcal{K}_v(\mathscr{C}_\mathfrak{g}^{[a,b],\mathfrak{s}})$ be the $\mathbb{Q}(v^{1/2})$-algebra
\[ \mathcal{K}_v(\mathscr{C}_\mathfrak{g}^{[a,b],\mathfrak{s}})=\mathbb{Q}(v^{1/2})\otimes_{\mathbb{Z}[t^{\pm 1/2}]}\mathcal{K}_t(\mathscr{C}_\mathfrak{g}^{[a,b],\mathfrak{s}}).\]
We write $\varphi_v^{[a,b],\mathfrak{s}}$ for  the $\mathbb{Q}(v^{1/2})$-isomorphism 
\[ \mathcal{A}_v(\widehat{Q}^{[a,b],\mathfrak{s}}) \rightarrow \mathcal{K}_v(\mathscr{C}_\mathfrak{g}^{[a,b],\mathfrak{s}})
\]
induced by $\varphi_t(\mathfrak{C}_-^{[a,b]})$, cf.~Theorem~\ref{teo_quantum_analogue_monoidal_cat}.

\begin{teo}
\label{teo_quantum_clust_semiderived_[a,b]}
    There is an isomorphism of $\mathcal{K}(C^b(\mathcal{P}))$-graded $\mathbb{Q}(v^{1/2})$-algebras 
    \[\widetilde{\theta}: \mathcal{A}_v(\widetilde{Q}^{[a,b],\mathfrak{s}}) \xrightarrow{\sim} \mathcal{SDH}_{tw}(C^b(\mathcal{P}))^{[a,b],\mathfrak{s}} \]
which makes the following diagram commute:

\[
\begin{tikzcd} \mathcal{A}_v(\widetilde{Q}^{[a,b],\mathfrak{s}}) \arrow[d, "\pi", twoheadrightarrow] \arrow[rr, dashed, "\Tilde{\theta}"] & &\mathcal{SDH}_{tw}(C^b(\mathcal{P}))^{[a,b],\mathfrak{s}} \arrow[d, "\pi^{\mathcal{H}}", twoheadrightarrow] \\
    \mathcal{A}_v(\widehat{Q}^{[a,b],\mathfrak{s}}) \arrow[r, "\sim", "\varphi_v^{[a,b],\mathfrak{s}}"'] & \mathcal{K}_v(\mathscr{C}_\mathfrak{g}^{[a,b],\mathfrak{s}}) \arrow[r, "\Phi"', "\sim"] &\mathcal{DH}_{tw}(Q)^{[a,b],\mathfrak{s}}.
\end{tikzcd}
\]
\end{teo}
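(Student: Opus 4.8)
The strategy is to define the isomorphism $\widetilde\theta$ on the initial seed, verify that it extends to all seeds because it respects exchange relations, and then check that the resulting map is a $\mathcal{K}(C^b(\mathcal{P}))$-graded isomorphism making the diagram commute. Concretely, I would set $\widetilde\theta(X_{(i,p)})=\mathcal{L}_v(m_{(i,p)})$ for each vertex $(i,p)\in J$ and $\widetilde\theta(X_{\widetilde S_{j,z}})=K_{\overline S_j,z}$ for each frozen vertex in $J_K^{\mathrm{fr}}$. By Proposition~\ref{prop_l'image_est_la_bonne}, each $\mathcal{L}_v(m_{(i,p)})$ lies in $\mathcal{SDH}_{tw}(C^b(\mathcal{P}))^{[a,b],\mathfrak{s}}$, and the $K_{\overline S_j,z}$ are invertible (central) elements of $\mathcal{SDH}^0_{tw}(C^b(\mathcal{P}))$, so the target of $\widetilde\theta$ is correct and the localization at the frozen variables of $J_K^{\mathrm{fr}}$ is accounted for. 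The degrees were assigned to the initial quantum cluster variables precisely so that $\mathrm{deg}(X_{(i,p)})=\mathrm{deg}(\mathcal{L}_v(m_{(i,p)}))$ and $\mathrm{deg}(X_{\widetilde S_{j,z}})=\mathrm{deg}(K_{\overline S_j,z})$, so $\widetilde\theta$ is degree-preserving on the initial seed by construction.

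**Extending to all seeds.** The heart of the argument is to show that $\widetilde\theta$ is compatible with mutation, i.e. that for every mutable vertex $k$ the image under $\widetilde\theta$ of the exchange relation in $\mathcal{A}_v(\widetilde Q^{[a,b],\mathfrak{s}})$ is a valid identity in the semi-derived Hall algebra. Here I would argue by induction along the $J^\mu$-tree $\mathbb{T}$: assuming $\widetilde\theta(X(e))$ is a seed whose non-frozen variables are the $\mathcal{L}_v(m)$'s with $m=m_i(e)$ (using Theorem~\ref{teo_quant_clust_Groth_ring} and the Hernandez--Leclerc isomorphism $\Phi$ to match up the dominant monomials), I examine the mutation at $k$. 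By Lemma~\ref{lem_mutation_m_greenred}, the vertex $k$ is green in $Q(e)$ precisely when $m_k(e)m_k(e')=\prod_{k\to i}m_i(e)$; by Remark~\ref{rem_subquiver_framed_quiver}, $k$ green (resp.\ red) corresponds to the absence of arrows $k\to\widetilde S_{j,z}$ (resp.\ $\widetilde S_{j,z}\to k$) from the frozen vertices of $J_K^{\mathrm{fr}}$. So the exchange relation in $\mathcal{A}_v(\widetilde Q^{[a,b],\mathfrak{s}})$, when transported through $\widetilde\theta$, has no extra frozen factor on the ``$\prod_{i\to k}$'' side if $k$ is green, and none on the ``$\prod_{j\leftarrow k}$'' side if $k$ is red — exactly matching the shape of equations~\eqref{eq_semiderived_exchange_relation} and~\eqref{eq_semiderived_exchange_relation2}. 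Proposition~\ref{prop_semiderived_exchange_rel} then says these identities do hold in $\mathcal{SDH}_{tw}(C^b(\mathcal{P}))$, with the coefficients $a(v),b(v)$ being the specializations of the quantum Grothendieck ring coefficients, and with the unique frozen factor $K(m_km_k',\prod m_j)$ on the appropriate side. Crucially, one must check that this factor $K(m_km_k',\cdot)$ from Lemma~\ref{lem_find_a_K} coincides with the monomial in the $K_{\overline S_j,z}$ dictated by the arrows of $\widetilde Q^{[a,b],\mathfrak{s}}$ at $k$; this is where the fact that $\widetilde Q^{[a,b],\mathfrak{s}}$ sits inside the principally framed quiver of $\widehat Q^{[a,b],\mathfrak{s}}$ (Remark~\ref{rem_subquiver_framed_quiver}), combined with the degree computation in Construction~\ref{con_QSDH_quiver} (equations~\eqref{eq_deg_Vin_Vout_1}--\eqref{eq_deg_Vin_Vout_2}) and Lemma~\ref{lem_uniqueness_K_degree}, pins down the frozen part uniquely. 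Granting this, $\widetilde\theta$ sends each seed of $\mathcal{A}_v(\widetilde Q^{[a,b],\mathfrak{s}})$ to a compatible collection in the Hall algebra, hence extends to a well-defined algebra homomorphism.

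**Bijectivity, grading, and commutativity.** Surjectivity follows because the image of $\widetilde\theta$ contains all $E_{V(i,p)}$ — each $\mathcal{L}_v(m_{(i,p)})$ has leading term $a(m_{(i,p)})E_{V(i,p)}$ with lower-order corrections involving products of $E_{V(i,p)}$'s and $K$'s already in the image, so by triangularity with respect to the degeneration order one recovers each $E_{V(i,p)}$ — together with all $K_{\overline S_j,z}$; these generate $\mathcal{SDH}_{tw}(C^b(\mathcal{P}))^{[a,b],\mathfrak{s}}$ by definition. Injectivity can be deduced by projecting: $\pi^{\mathcal{H}}\circ\widetilde\theta=\Phi\circ\varphi_v^{[a,b],\mathfrak{s}}\circ\pi$ holds on generators (both sides send $X_{(i,p)}\mapsto\Phi(L_t(m_{(i,p)}))=\Phi\varphi_v^{[a,b],\mathfrak{s}}(x_{(i,p)})$ via~\eqref{formula_image_qt_simple} and kill the $J_K^{\mathrm{fr}}$-variables), so the square commutes; since $\Phi$ and $\varphi_v^{[a,b],\mathfrak{s}}$ are isomorphisms and $\pi,\pi^{\mathcal{H}}$ have kernels generated respectively by $\{X_{\widetilde S_{j,z}}-1\}$ and $\{K-1 : K\in\mathcal{SDH}^0_{tw}\}$ — which correspond under $\widetilde\theta$ by Lemma~\ref{lem_invertible_frozen_to_Groth_projectives} — a diagram chase gives injectivity. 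Finally, $\widetilde\theta$ is graded: it preserves degree on the initial seed by the choice of degrees, and preserves it on every seed because the exchange relations of $\mathcal{A}_v(\widetilde Q^{[a,b],\mathfrak{s}})$ are homogeneous (as recorded after Construction~\ref{con_QSDH_quiver}) and match homogeneous identities in the Hall algebra.

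**Main obstacle.** The delicate point is the bookkeeping of the frozen factors at each mutation — showing that the monomial in the $K_{\overline S_j,z}$ prescribed by the arrows of $\widetilde Q^{[a,b],\mathfrak{s}}$ incident to $k$ is \emph{exactly} the element $K(m_km_k',\prod m_j)$ of Lemma~\ref{lem_find_a_K}, for every seed reachable from $e_0$, not just the initial one. This requires controlling how the arrows between mutable vertices and the frozen vertices $\widetilde S_{j,z}$ evolve under mutation, which is where the embedding of $\widetilde Q^{[a,b],\mathfrak{s}}$ in the principally framed quiver and the green/red dichotomy of Remark~\ref{rem_subquiver_framed_quiver} do the essential work; I expect this to occupy the bulk of the proof, with the rest being the formal diagram chase above.
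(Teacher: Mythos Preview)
Your plan shares the essential ingredients with the paper's proof (green/red dichotomy via Remark~\ref{rem_subquiver_framed_quiver}, Lemma~\ref{lem_mutation_m_greenred}, Proposition~\ref{prop_semiderived_exchange_rel}, Lemma~\ref{lem_invertible_frozen_to_Groth_projectives}), but two steps are incomplete, and the order of the argument differs in a way that matters.

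\textbf{Well-definedness.} Verifying that exchange relations are preserved does not by itself yield an algebra homomorphism: the quantum cluster algebra is not presented by cluster variables modulo exchange relations alone. To make your approach work you would first need to check that the images $\mathcal{L}_v(m_{(i,p)})$ and $K_{\overline{S}_j,z}$ of the initial seed satisfy the $\widetilde{\Lambda}$-commutation relations (so as to obtain a map out of the initial quantum torus), which you do not do. The paper bypasses this: it defines $\widetilde{\theta}$ on \emph{all} cluster variables at once (via $X\mapsto\mathcal{L}_v(m)$ with $m$ determined by $\theta(\pi(X))=\Phi(L_t(m))$), proves by induction on $\mathbb{T}$ the degree equality $\deg(X)=\deg(\mathcal{L}_v(m))$ --- using precisely the green/red argument you describe, but only to know \emph{which side} of the exchange relation carries frozen $J_K^{\mathrm{fr}}$-factors, not what those factors are --- and then shows directly that any homogeneous polynomial relation $f=0$ among cluster variables satisfies $\widetilde{\theta}(f)=0$: expand $\widetilde{\theta}(f)$ in the basis of Proposition~\ref{prop_basis_semiderived}, note that $\pi^{\mathcal{H}}(\widetilde{\theta}(f))=\theta(\pi(f))=0$, and use homogeneity plus Lemma~\ref{lem_find_a_K} to force all coefficients to vanish. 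This also dissolves your ``main obstacle'': the paper never verifies a priori that the monomial in the $X_{\widetilde{S}_{j,z}}$ at each mutation equals the particular $K(m_km_k',\cdot)$; that equality is recorded only \emph{a posteriori} from the degree argument (Remark~\ref{rem_exchange_relation_semiderived_proof}).

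\textbf{Surjectivity.} Your triangularity argument fails as written. The leading term of $\mathcal{L}_v(m_{(i,p)})$ is $a(m_{(i,p)})E_{V(m_{(i,p)})}$ with $V(m_{(i,p)})=\bigoplus_{k\ge 0}V(i,p+2k)$, so $E_{V(m_{(i,p)})}$ is an ordered \emph{product} of several $E_{V(i,\cdot)}$'s, not the single generator $E_{V(i,p)}$. Triangularity recovers these products but not the individual factors, and you cannot divide by the non-invertible $E_{V(m_{(i,p+2)})}$. The paper instead produces, for each $(i,p)\in J$, a \emph{non-initial} cluster variable mapping to $\mathcal{L}_v(Y_{i,p})=a(Y_{i,p})E_{V(i,p)}$: take a chain $\mathfrak{C}$ with $\mathfrak{c}_1=\{c\}$ where $c$ corresponds to $(i,p)$, and transport back along the canonical chain transformation $\mathfrak{C}_-^{[a,b]}\mapsto\mathfrak{C}$ using Theorem~\ref{teo_quantum_analogue_monoidal_cat}.
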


\begin{proof}
Let $\theta$ be the composition $\theta=\Phi\circ \varphi_v^{[a,b],\mathfrak{s}}$.
We start by defining a map from the set of quantum cluster variables of $\mathcal{A}_v(\widetilde{Q}^{[a,b],\mathfrak{s}})$ to $\mathcal{SDH}_{tw}(C^b(\mathcal{P}))^{[a,b],\mathfrak{s}}$. Let $X$ be a non-frozen quantum cluster variable of $\mathcal{A}_v(\widetilde{Q}^{[a,b],\mathfrak{s}})$. Let $m\in \mathcal{M}^+$ be the dominant monomial such that $\theta(\pi(X))=\Phi(L_t(m))$. We set
\[ \widetilde{\theta}: X \mapsto \mathcal{L}_v(m).\]
For any frozen vertex $\widetilde{S}_{j,z}$ in $J_K^{fr}$, we set
\[ \widetilde{\theta}: X_{\widetilde{S}_{j,z}} \mapsto K_{\overline{S}_j,z}.\]
It follows from Proposition \ref{prop_l'image_est_la_bonne} that, under this assignment, the image of each quantum cluster variable is contained in $\mathcal{SDH}_{tw}(C^b(\mathcal{P}))^{[a,b],\mathfrak{s}}$.
We want to show that the we can extend the map $\widetilde{\theta}$ to a graded-algebra homomorphsim. \\
First of all, we show by induction that, if $X$ is a quantum cluster variable such that $\Tilde{\theta}(X)=\mathcal{L}_v(m)$, then 
\begin{equation}
\label{eq_degree_cluster_Ltm}
\mathrm{deg}(X)=\mathrm{deg}(\mathcal{L}_v(m)).
\end{equation}
We proceed by recursion on the vertices of the regular $J^{\mu}$-tree $\mathbb{T}$.
For the initial cluster it is true by the definition of the $\mathcal{K}(C^b(\mathcal{P}))$-degree on the quantum cluster algebra $\mathcal{A}_v(\widetilde{Q}^{[a,b],\mathfrak{s}})$. Let $e$ and $e'$ be vertices of $\mathbb{T}$ linked by an edge labelled by $k\in J^{\mu}$ such that (\ref{eq_degree_cluster_Ltm}) holds for any $X_i(e)\in X(e)$. We need to show that (\ref{eq_degree_cluster_Ltm}) holds for $X_k(e')$. Suppose that the vertex $k$ is green in $Q(e)$ with respect to $e_0$ (if it is red, we can proceed in a similar way). The exchange relation associated to the mutation of $X(e)$ at $k$ is of the form
\begin{equation}
\label{eq_exchange_rel_X_k(e)}
X_k(e)X_k(e')=a(v)\overrightarrow{\prod_{i\rightarrow k}} X_i(e) + b(v)\Bigl(\overrightarrow{\prod_{k\rightarrow i'}} X_{i'}(e)\Bigr) \prod_{\widetilde{S}_{j,z}\rightarrow k}X_{\widetilde{S}_{j,z}},
\end{equation}
where the coefficients $a(v)$ and $b(v)$ in $\mathbb{Q}(v^{1/2})$ are the images under the specialization $t^{1/2}\rightarrow v^{1/2}$ of the corresponding coefficients $a(t)$ and $b(t)$ of (\ref{eq_exchange_relation_qt_characters}). Indeed, the image in $\mathcal{K}_v(\mathscr{C}_\mathfrak{g}^{[a,b],\mathfrak{s}})$ of (\ref{eq_exchange_rel_X_k(e)}) via $\varphi_v^{[a,b],\mathfrak{s}}\circ \pi$ gives the exchange relation 
\[L_t(m_k(e))L_t(m_k(e'))=a(v)\overrightarrow{\prod_{i\rightarrow k}} L_t(m_i(e)) + b(v)\overrightarrow{\prod_{k\rightarrow i'}} L_t(m_{i'}(e)).\]
Since $k$ is green in $Q(e)$ with respect to $e_0$, it follows from Lemma \ref{lem_mutation_m_greenred} that we have
\begin{equation}
\label{eq_prod_m_in_proof}
m_k(e)m_k(e')= \prod_{i\rightarrow k}m_i(e).
\end{equation}
Moreover, by induction, we have that
\[
\mathrm{deg}\bigr(\overrightarrow{\prod_{i\rightarrow k}} \mathcal{L}_v(m_i(e))\bigl)=\mathrm{deg}(\overrightarrow{\prod_{i\rightarrow k}} X_i(e))\ \text{ and }\ \mathrm{deg}\bigl(\mathcal{L}_v(m_k(e))\bigr)=\mathrm{deg}(X_k(e)).
\]
We can deduce that
\begin{align*}
\mathrm{deg}\bigl(\mathcal{L}_v(m_k(e'))\bigr)&=\mathrm{deg}  \bigl(\overrightarrow{\prod_{i\rightarrow k}} \mathcal{L}_v(m_i(e))\bigr)-\mathrm{deg}\bigl(\mathcal{L}_v(m_k(e))\bigl)\\
&=\mathrm{deg}(\overrightarrow{\prod_{i\rightarrow k}} X_{i}(e))-\mathrm{deg}(X_k(e))=\mathrm{deg}(X_k(e')),
\end{align*}
where the first equality follows from (\ref{eq_prod_m_in_proof}) and Proposition \ref{prop_semiderived_exchange_rel}.

Let $N$ be a positive integer and let $f$ be an homogeneous polynomial expression degree in positive powers of certain quantum cluster variables $X_1,\dots,X_N$ and possibly negative powers of the frozen quantum cluster variables associated to the vertices of $J_K^{fr}$. Suppose that $f$ equals $0$ in $\mathcal{A}_v(\widetilde{Q}^{[a,b],\mathfrak{s}})$.
We need to show that  $\widetilde{\theta}(f) = 0$ in $\mathcal{SDH}_{tw}(C^b(\mathcal{P}))^{[a,b],\mathfrak{s}}$.
Consider the expansion of $\widetilde{\theta}(f)$ along the  basis of the semi-derived algebra of Proposition \ref{prop_basis_semiderived}:

\[ \widetilde{\theta}(f)= \sum_{1\leq s\leq N'} b_s E_{M_s}K(s)\ \]
where $N'\in \mathbb{N}$, $b_s\in \mathbb{Q}(v^{1/2}),\ M_s\in D^b(Q)$ and $K(s)$ are some monomials in the generators $K_{\alpha,k}$ $(\alpha \in \mathcal{K}(\mathrm{mod}(\Bbbk Q)))$.
Since $f$ is homogeneous, it follows from (\ref{eq_degree_cluster_Ltm}) that we have that $\mathrm{deg}(E_{M_s}K(s))=\mathrm{deg}(E_{M_{s'}}K(s'))$ for any $1\leq s,s'\leq N'$.

Consider that
\[ \theta(\pi(f)) = \pi^{\mathcal{H}}(\widetilde{\theta}(f))= \sum_{1\leq s\leq N'} b_s Z_{M_s} = 0 .\]

Suppose that there exist $s\neq s'$ such that $Z_{M_s}=Z_{M_{s'}}$. Then $E_{M_s}=E_{M_{s'}}$, and, since $\mathrm{deg}(E_{M_s}K(s))=\mathrm{deg}(E_{M_{s'}}K(s'))$ we would have, by Lemma \ref{lem_find_a_K}, $K(s)=K(s')$, too. Therefore $E_{M_s}K(s)=E_{M_{s'}}K(s')$. Since this is not possible, all the $Z_{M_s}$ are distinct. Therefore, $b_s=0$ for any $s$. Therefore, the map $\widetilde{\theta}$ extends to a graded-algebra homomorphsim.

For the injectivity of $\widetilde{\theta}$, let $g$ be an homogeneous element of $\mathcal{A}_v(\widetilde{Q}^{[a,b],\mathfrak{s}})$ such that $\widetilde{\theta}(g)=0$. 
Write $g$ as a Laurent polynomial in the initial cluster variables:

\[ g = \sum_{s=1}^{N''} \Bigl( a_s \bigr(\overrightarrow{\prod_{i\in J}} X_{i}^{c(s;i)}\bigr) \prod_{\alpha \in J^{fr}_K} X_\alpha^{c(s;\alpha)'} \Bigr) ,\]
for some $a_s \in \mathbb{Q}(v^{1/2})$ and  $c(s;j), c(s;\alpha)' \in \mathbb{Z}$.

Therefore, we also have that \[\pi(g)=\sum_{s=1}^{N''} a_s \bigr(\overrightarrow{\prod_{i\in J}} X_{i}^{c(s;j)}\bigr)  =0.\] 
Let $s\neq s'$ such that 
\[ \bigr(\overrightarrow{\prod_{i\in J}} X_{i}^{c(s;j)}\bigr) = \bigr(\overrightarrow{\prod_{i\in J}} X_{i}^{c(s';j)}\bigr),\]
which implies
\[ \mathrm{deg(\prod_{\alpha \in J^{fr}_K} X_\alpha^{c(s;\alpha)'}) }= \mathrm{deg}(\prod_{\alpha \in J^{fr}_K} X_\alpha^{c(s';\alpha)'}).\]
Since it follows from Lemma \ref{lem_invertible_frozen_to_Groth_projectives} that
\[\prod_{\alpha \in J^{fr}_K} X_\alpha^{c(s;\alpha)'}=\prod_{\alpha \in J^{fr}_K} X_\alpha^{c(s';\alpha)'},\]
we can conclude that $g=0$.

For the surjectivity of $\widetilde{\theta}$, let $(i,p)$ be in $J$ and let $c\in [a,b]$ be the vertex of $Q(\underline{w}_0)$ corresponding to $(i,p)$ under the bijection of section \ref{sec_Hernadez_Leclerc_iso}. Let $\mathfrak{C}=(\mathfrak{c}_s)_s$ be a chain of $i$-boxes of range $[a,b]$ such that $\mathfrak{c}_1=\{c\}$. The canonical chain transformation $\mathfrak{C}\mapsto \mathfrak{C}_{-}^{[a,b]} $ (cf.~Remark~\ref{rem_sequence_canonical_chain_transform}) induces an isomorphism of quantum cluster algebras $\mathcal{A}(Q(\mathfrak{C}),\Lambda(\mathfrak{C}))\cong\mathcal{A}_t(\widehat{Q}^{[a,b],\mathfrak{s}})$ such that the composition
\[ \mathcal{A}(Q(\mathfrak{C}),\Lambda(\mathfrak{C}))\cong\mathcal{A}_t(\widehat{Q}^{[a,b],\mathfrak{s}})\xrightarrow{\varphi_t(\mathfrak{C}_-^{[a,b]})}  \mathcal{K}_t(\mathscr{C}_\mathfrak{g}^{[a,b],\mathfrak{s}}) \]
is the isomorphism $\varphi_t(\mathfrak{C})$ of Theorem \ref{teo_quantum_analogue_monoidal_cat}. Let $X_1'$ be the initial quantum cluster variable of $\mathcal{A}(Q(\mathfrak{C}),\Lambda(\mathfrak{C}))$ associated to the vertex $1\in K(\mathfrak{C})$. Since by Theorem \ref{teo_quantum_analogue_monoidal_cat} we have $\varphi_t(\mathfrak{C})(X_1')=L_t(\mathfrak{c}_1)=L_t(Y_{i,p}),$
we can conclude that there exists a quantum cluster variable of 
$\mathcal{A}_t(\widehat{Q}^{[a,b],\mathfrak{s}})$ whose image under $\varphi_t(\mathfrak{C}_-^{[a,b]})$ is $L_t(Y_{i,p})$ (see \cite[Lem.~5.17]{HFOO_iso_quant_groth_ring_clust_alg} for a proof, in a different setting, of the same fact). Therefore, we deduce that there exists a quantum cluster variable $X$ of $\mathcal{A}_v(\widetilde{Q}^{[a,b],\mathfrak{s}})$ whose image under $\widetilde{\theta}$ is $\mathcal{L}_v(Y_{i,p})$. By definition (cf. section \ref{sec_lift}), we have $\mathcal{L}_v(Y_{i,p})=a(Y_{i,p})E_{V(i,p)}$, with a non-zero $a(Y_{i,p})\in \mathbb{Q}(v^{1/2})$. Therefore, for any $(i,p)\in J$  and $\widetilde{S}_{j,z}$ in $J_K^{fr}$, the elements $E_{V(i,p)}$ and $K_{\overline{S}_{j},z}^{\pm 1}$ belong to the image of $\widetilde{\theta}$. Since these elements generate $\mathcal{SDH}_{tw}(C^b(\mathcal{P}))^{[a,b],\mathfrak{s}}$ as a $\mathbb{Q}(v^{1/2})$-algebra, the homomorphism $\widetilde{\theta}$ is surjective.
\end{proof}

\begin{rem}
\label{rem_exchange_relation_semiderived_proof}
With the notation of the proof of Theorem \ref{teo_quantum_clust_semiderived_[a,b]}, we have that 
\begin{align*}
\mathrm{deg}\bigl(\mathcal{L}_v(m_k(e))\mathcal{L}_v(m_k(e'))\bigr) &=\mathrm{deg}(\Bigl(\overrightarrow{\prod_{k\rightarrow i'}} X_{i'}(e)\Bigr) \prod_{\widetilde{S}_{j,z}\rightarrow k}X_{\widetilde{S}_{j,z}}) \\&=\mathrm{deg}\bigr(\prod_{j\rightarrow i} \mathcal{L}_v(m_j(e))\prod_{\widetilde{S}_{j,z}\rightarrow k}K_{\overline{S}_{j,z}}\bigl).
\end{align*}
 It follows from Lemma \ref{lem_find_a_K} that the image of the exchange relation (\ref{eq_exchange_rel_X_k(e)}) via the map $\widetilde{\theta}$ coincides with (\ref{eq_semiderived_exchange_relation}).
 In particular, any exchange relation of the cluster algebra structure of $\mathcal{SDH}_{tw}(C^b(\mathcal{P}))^{[a,b],\mathfrak{s}}
 $ is of the form (\ref{eq_semiderived_exchange_relation}) or (\ref{eq_semiderived_exchange_relation2}).
\end{rem}

Let $\mathfrak{C}=(\mathfrak{c}_k)_{k\geq 1}$ be a chain of $i$-boxes of range $[-\infty,\infty]$. Consider the sequence $(\mathfrak{C}_s)_{s\geq 1}$ of chains 
of $i$-boxes defined by $\mathfrak{C}_s=(\mathfrak{c}_k)_{1\leq k\leq s}$. For any $s\geq 1$, let $[a_s,b_s]$ be the range of $\mathfrak{C}_s$. We write $J_s,J^\mu_s J^{fr}_s,J^{fr}_{K,s},\widetilde{J}_s$ for the subsets of the vertex set of $\widetilde{Q}^{[a_s,b_s],\mathfrak{s}}$ corresponding to the subsets appearing in (\ref{eq_subsets_J}).
Let $\widetilde{Q}(\mathfrak{C}_s)$ be the quiver obtained from $\widetilde{Q}(\mathfrak{C}_-^{[a_s,b_s]})\cong \widetilde{Q}^{[a_s,b_s],\mathfrak{s}}$ using the mutation sequence associated to the canonical chain transformation $\mathfrak{C}_-^{[a_s,b_s]}\mapsto \mathfrak{C}_s$.
Let $\widetilde{\Lambda}(\mathfrak{C}_s)$ be the $\widetilde{K}(\mathfrak{C}_s)\times \widetilde{K}(\mathfrak{C}_s)$-matrix obtained from $\Lambda(\mathfrak{C}_s)$ by setting each new entry to 0.
Let $\mathcal{A}_t(\widetilde{Q}(\mathfrak{C}_s))$ be the associated quantum cluster algebra localized at the frozen quantum cluster variables associated to the vertices of $J_{K,s}^{fr}$. We write $\mathcal{A}_v(\widetilde{Q}(\mathfrak{C}_s))$ for the $\mathbb{Q}(v^{1/2})$-algebra

\[
\mathcal{A}_v(\widetilde{Q}(\mathfrak{C}_s)) = \mathbb{Q}(v^{1/2})\otimes_{\mathbb{Z}[t^{\pm 1/2}]} \mathcal{A}_t(\widetilde{Q}(\mathfrak{C}_s)).\]

Remark that, if we write $\widetilde{B}(\mathfrak{C}_s)$ for the exchange matrix associated to the quiver $\widetilde{Q}(\mathfrak{C}_s)$, we have that
\begin{align*}
&\widetilde{B}(\mathfrak{C}_s)_{|J_{K,s}^{fr}\times J_{K,s}^{fr}}=0 \text{ and };\\
&\widetilde{B}(\mathfrak{C}_s)_{|K(\mathfrak{C}_s)\times K(\mathfrak{C}_s)} = B(\mathfrak{C}_s).
\end{align*} 
Moreover, for any $s'\geq s$, by Remark \ref{rem_sequence_canonical_chain_transform}, we have that

\[B(\mathfrak{C}_{s'})_{|K(\mathfrak{C}_s)\times K^{\mu}(\mathfrak{C}_s)} = B(\mathfrak{C}_s)_{|K(\mathfrak{C}_s)\times K^{\mu}(\mathfrak{C}_s)}\text{  and  } B(\mathfrak{C}_{s'})_{|(K(\mathfrak{C})_{s'}\backslash K (\mathfrak{C}_s))\times K^{\mu}(\mathfrak{C}_s)}=0.\]
Write $\widetilde{K}(\mathfrak{C}_s)$ for the vertex set $K(\mathfrak{C}_s)\sqcup J_{K,s}^{fr}$ of $\widetilde{Q}(\mathfrak{C}_-^{[a_s,b_s]})$.
The next Lemma states that also the restriction to the subsets of indices $\widetilde{K}(\mathfrak{C}_s)\times K^{\mu}(\mathfrak{C}_s)$ stabilizes.

\begin{lem}
\label{lem_B_tilde_stabilizes}
    For any $s'\geq s$, we have that
\begin{equation}
\label{eq_restriction_sequence_B(C)_SDH}
\widetilde{B}(\mathfrak{C}_{s'})_{|\widetilde{K}(\mathfrak{C}_s)\times K^{\mu}(\mathfrak{C}_s)} = \widetilde{B}(\mathfrak{C}_s)_{|\widetilde{K}(\mathfrak{C}_s)\times K^{\mu}(\mathfrak{C}_s)}\text{  and  } \widetilde{B}(\mathfrak{C}_{s'})_{|(\widetilde{K}(\mathfrak{C})_{s'}\backslash \widetilde{K} (\mathfrak{C}_s))\times K^{\mu}(\mathfrak{C}_s)}=0.
\end{equation}
\end{lem}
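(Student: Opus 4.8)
The plan is first to peel off the part of the statement that is already known. Since $\widetilde{J}=J\sqcup J_K^{fr}$ and $\widetilde{B}(\mathfrak{C}_s)_{|K(\mathfrak{C}_s)\times K(\mathfrak{C}_s)}=B(\mathfrak{C}_s)$, the rows of both displayed identities indexed by vertices of $K(\mathfrak{C}_{s'})$ reduce to the stabilization of $B$ recorded just before the lemma. Hence it suffices to prove, for $k\in K^{\mu}(\mathfrak{C}_s)\subseteq K^{\mu}(\mathfrak{C}_{s'})$, that the entries $\widetilde{B}(\mathfrak{C}_{s'})_{\widetilde{S}_{j,z},k}$ equal $\widetilde{B}(\mathfrak{C}_s)_{\widetilde{S}_{j,z},k}$ for $\widetilde{S}_{j,z}\in J^{fr}_{K,s}$ and vanish for $\widetilde{S}_{j,z}\in J^{fr}_{K,s'}\setminus J^{fr}_{K,s}$. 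So the whole argument will be about the arrows from the new frozen vertices into a fixed mutable vertex $k$.

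The mechanism I would use is the uniqueness of the $K$-factor in an exchange relation. For $r\in\{s,s'\}$, the seed $\widetilde{Q}(\mathfrak{C}_r)$ of $\mathcal{A}_v(\widetilde{Q}^{[a_r,b_r],\mathfrak{s}})$ is reached from the initial seed by the canonical chain transformation, i.e. by mutations at mutable vertices only, so Theorem~\ref{teo_quantum_clust_semiderived_[a,b]} provides the isomorphism $\widetilde{\theta}_r$ under which every cluster variable goes to some $\mathcal{L}_v(m)$ and every $X_{\widetilde{S}_{j,z}}$ goes to $K_{\overline{S}_j,z}$. Applying $\widetilde{\theta}_r$ to the exchange relation at $k$ and invoking Remark~\ref{rem_exchange_relation_semiderived_proof}, which says every exchange relation of the cluster structure on $\mathcal{SDH}_{tw}(C^b(\mathcal{P}))^{[a_r,b_r],\mathfrak{s}}$ has the shape (\ref{eq_semiderived_exchange_relation}) or (\ref{eq_semiderived_exchange_relation2}), the frozen factor of the quantum exchange relation must be sent to the $K$-term appearing there, namely (in the green case, the red one being symmetric) $\widetilde{\theta}_r\bigl(\prod_{\widetilde{S}_{j,z}\to k}X_{\widetilde{S}_{j,z}}^{\,\widetilde{B}(\mathfrak{C}_r)_{\widetilde{S}_{j,z},k}}\bigr)=K\bigl(m_km_k',\prod_{j\leftarrow k}m_j\bigr)$, where $m_k,m_k'$ and the $m_j$ are the dominant monomials of the relevant cluster variables; here I use that the $K_{\alpha,i}$ are central (Theorem~\ref{teo_relations_semiderived}) to cancel the non-frozen factors and to ignore orderings. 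Since $\widetilde{\theta}_r$ restricts on the group generated by the $X_{\widetilde{S}_{j,z}}$ to the injective map $X_{\widetilde{S}_{j,z}}\mapsto K_{\overline{S}_j,z}$ of Lemma~\ref{lem_invertible_frozen_to_Groth_projectives}, the exponents $\widetilde{B}(\mathfrak{C}_r)_{\widetilde{S}_{j,z},k}$ are uniquely determined by this element of $\mathcal{SDH}^0_{tw}(C^b(\mathcal{P}))$, and only those $\widetilde{S}_{j,z}$ with $K_{\overline{S}_j,z}$ actually occurring in it can carry a nonzero arrow to $k$.

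It then remains to compare $r=s$ and $r=s'$. Because $\mathfrak{c}_1,\dots,\mathfrak{c}_s$ are literally the same $i$-boxes in $\mathfrak{C}_s$ and $\mathfrak{C}_{s'}$, Theorem~\ref{teo_KKOP_monoidal_cat}$(\mathrm{ii})$ identifies the cluster variable at $k$ in $\widetilde{Q}(\mathfrak{C}_r)$ with $[M(\mathfrak{c}_k)]$ in both cases, so $m_k$ does not depend on $r$; the stabilization of $B$ forces the vertices $j\leftarrow k$ and their multiplicities to agree and their cluster variables $[M(\mathfrak{c}_j)]$ to agree, hence $\prod_{j\leftarrow k}m_j$ and then $m_k'$ (determined by the exchange relation $L_t(m_k)L_t(m_k')=a(v)\prod_{i\to k}L_t(m_i)+b(v)\prod_{j\leftarrow k}L_t(m_j)$) agree for $r=s$ and $r=s'$. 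Thus $K(m_km_k',\prod_{j\leftarrow k}m_j)$ is one and the same element for both $r$. Finally, by the construction of $K(-,-)$ in Lemma~\ref{lem_find_a_K} and the analysis in the proof of Proposition~\ref{prop_l'image_est_la_bonne}, that element is a product of factors $K_{\overline{S}_j,z}$ indexed by vertices $(i,p)$ of $\widehat{Q}^{[a_s,b_s],\mathfrak{s}}$ with $V(i,p)=P_j[z+1]$; each such vertex is mutable in $\widehat{Q}^{[a_s,b_s],\mathfrak{s}}$, so $\widetilde{S}_{j,z}\in J^{fr}_{K,s}$. Combining the last two sentences: the arrows from $J^{fr}_{K,s'}\setminus J^{fr}_{K,s}$ into $k$ vanish, and those from $J^{fr}_{K,s}$ into $k$ coincide in $\widetilde{B}(\mathfrak{C}_{s'})$ and $\widetilde{B}(\mathfrak{C}_s)$, which is exactly what was needed.

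I expect the main obstacle to be the bookkeeping rather than any conceptual difficulty: one must check that in the green/red dichotomy the frozen factor $\prod X_{\widetilde{S}_{j,z}}$ is matched against the $K$-term on the \emph{correct} side of (\ref{eq_semiderived_exchange_relation})/(\ref{eq_semiderived_exchange_relation2}) — this needs careful tracking of arrow directions and of the fact that the $\widetilde{\Lambda}$-entries involving $J^{fr}_K$ are zero by construction, so the frozen variables are central — and one must justify cleanly that $K(m_km_k',\prod_{j\leftarrow k}m_j)$ involves no frozen vertex absent for the smaller interval, i.e. that a minimal-$p$ (frozen) vertex $(i,p)$ of $\widehat{Q}^{[a_s,b_s],\mathfrak{s}}$ never carries a shifted projective $V(i,p)=P_j[z+1]$. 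Everything else is a routine diagram chase through Theorems~\ref{teo_quantum_clust_semiderived_[a,b]} and~\ref{teo_KKOP_monoidal_cat} and Lemmas~\ref{lem_find_a_K} and~\ref{lem_invertible_frozen_to_Groth_projectives}.
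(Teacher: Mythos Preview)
Your proposal is correct and follows essentially the same route as the paper: reduce to the frozen rows via the known stabilization of $B$, push the exchange relation at a mutable vertex $k$ through $\widetilde{\theta}_s$ and $\widetilde{\theta}_{s'}$ using Remark~\ref{rem_exchange_relation_semiderived_proof}, identify the resulting $K$-factors, and then invoke the injectivity of Lemma~\ref{lem_invertible_frozen_to_Groth_projectives} to read off the arrow multiplicities. The only difference is in how you handle the vanishing on $J^{fr}_{K,s'}\setminus J^{fr}_{K,s}$: you go through the explicit structure of $K(m_km_k',\prod m_j)$ via Proposition~\ref{prop_l'image_est_la_bonne} and worry about whether a boundary vertex could carry a shifted projective, whereas the paper gets this for free from injectivity alone --- since $\widetilde{\theta}_s(\prod_s X_{\widetilde{S}_{j,z}})=\widetilde{K}=\widetilde{\theta}_{s'}(\prod_{s'} X_{\widetilde{S}_{j,z}})$ and the left product only involves indices in $J^{fr}_{K,s}$, injectivity forces every exponent from $J^{fr}_{K,s'}\setminus J^{fr}_{K,s}$ in the right product to be zero. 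This sidesteps the obstacle you flag.
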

\begin{proof}
Notice the $i$-boxes $(\mathfrak{c}_k)_{1\leq k\leq s}$ give alternative labels for the vertices of the quiver $Q(\mathfrak{C}_s)$.
Let $\mathfrak{c}\in \mathfrak{C}_s$ be an $i$-box associated to a mutable vertex. By Theorem \ref{teo_quantum_analogue_monoidal_cat}, there are isomorphisms
\[\varphi_t(\mathfrak{C}_s): \mathcal{A}(Q(\mathfrak{C}_s),\Lambda(\mathfrak{C_s})) \xrightarrow{\sim} \mathcal{K}_t(\mathscr{C}_\mathfrak{g}^{[a_s,b_s],\mathfrak{s}}),
\]
\[\varphi_t(\mathfrak{C}_{s'}): \mathcal{A}(Q(\mathfrak{C}_{s'}),\Lambda(\mathfrak{C}_{s'})) \xrightarrow{\sim} \mathcal{K}_t(\mathscr{C}_\mathfrak{g}^{[a_{s'},b_{s'}],\mathfrak{s}}),
\]
such that, for any $\mathfrak{c'}\in \mathfrak{C}_s$ , we have $\varphi_t(\mathfrak{C}_s)(X_{\mathfrak{c}'}) = \varphi_t(\mathfrak{C}_{s'})(X_{\mathfrak{c}'})=L_t(\mathfrak{c}')$.
Therefore, it follows from (\ref{eq_restriction_sequence_B(C)_SDH}) that the images under $\varphi_t(\mathfrak{C}_{s'})$ and $\varphi_t(\mathfrak{C}_{s})$ of the exchange relation for the mutation at $\mathfrak{c}$ 
coincide and provide an equation of the form

\begin{equation}
L_t(\mathfrak{c})L_t(m) = a(t)\overrightarrow{\prod_{\mathfrak{c}'\rightarrow \mathfrak{c}}} L_t(\mathfrak{c}') +   b(t)\overrightarrow{\prod_{\mathfrak{c}''\leftarrow \mathfrak{c}}} L_t(\mathfrak{c}''), 
\end{equation} 
for a certain dominant monomial $m$.

It follows from Remark \ref{rem_exchange_relation_semiderived_proof} that also the images of the exchange relation for the mutation at $\mathfrak{c}$ under the maps 

\begin{align*}
\widetilde{\theta}_s:\mathbb{Q}(v^{1/2})\otimes_{\mathbb{Z}[t^{\pm 1/2}]} \mathcal{A}(\widetilde{Q}(\mathfrak{C}_{s}),\widetilde{\Lambda}(\mathfrak{C}_{s}))\cong \mathcal{A}_v(\widetilde{Q}^{[a_s,b_s],\mathfrak{s}}) \xrightarrow{\sim} \mathcal{SDH}_{tw}(C^b(\mathcal{P}))^{[a_s,b_s],\mathfrak{s}} \\
\widetilde{\theta}_{s'}:\mathbb{Q}(v^{1/2})\otimes_{\mathbb{Z}[t^{\pm 1/2}]}\mathcal{A}(\widetilde{Q}(\mathfrak{C}_{s'}),\widetilde{\Lambda}(\mathfrak{C}_{s'}))    \cong\mathcal{A}_v(\widetilde{Q}^{[a_{s'},b_{s'}],\mathfrak{s}}) \xrightarrow{\sim} \mathcal{SDH}_{tw}(C^b(\mathcal{P}))^{[a_{s'},b_{s'}],\mathfrak{s}} 
\end{align*}
coincide and are of the form

\begin{equation}
  \label{eq_exchange_relation_proof_stabilized_frozen}  \mathcal{L}_v(\mathfrak{c})\mathcal{L}_v(m) = a(v)\overrightarrow{\prod_{\mathfrak{c}'\rightarrow \mathfrak{c}}} \mathcal{L}_v(\mathfrak{c}') +   b(v)\widetilde{K}\overrightarrow{\prod_{\mathfrak{c}''\leftarrow \mathfrak{c}}} \mathcal{L}_v(\mathfrak{c}'') 
\end{equation} 

or 

\begin{equation}
\mathcal{L}_v(\mathfrak{c})\mathcal{L}_v(m) = a(v)\widetilde{K}\overrightarrow{\prod_{\mathfrak{c}'\rightarrow \mathfrak{c}}} \mathcal{L}_v(\mathfrak{c}')+   b(v)\overrightarrow{\prod_{\mathfrak{c}''\leftarrow \mathfrak{c}}} \mathcal{L}_v(\mathfrak{c}''),
\end{equation} 
where $\widetilde{K}$ is a monomial in certain $K_{\alpha,z}$, $\alpha\in \mathcal{K}(\mathrm{mod}(\Bbbk Q)), z\in\mathbb{Z}$ and, for any $i$-box $\widetilde{c}$, we write $\mathcal{L}_v(\widetilde{\mathfrak{c}})$ for the semi-derived $(q,t)$-character of the simple module $M(\mathfrak{c})$. Let us assume that the exchange relation is of the form (\ref{eq_exchange_relation_proof_stabilized_frozen}). Then we have

\[ \widetilde{\theta}_s(\prod X_{\widetilde{S}_{j,z}})= \widetilde{K}=\widetilde{\theta}_{s'}(\prod X_{\widetilde{S}_{j',z'}}),\]
where the product on the left hand side is taken over the set of arrows $\mathfrak{c}\rightarrow \widetilde{S}_{j,z} \text{ in the quiver } \widetilde{Q}(\mathfrak{C}_{s})$ 
and the product on the right hand side over the set of arrows $\mathfrak{c}\rightarrow\widetilde{S}_{j',z'}  \text{ in the quiver } \widetilde{Q}(\mathfrak{C}_{s'})$

It follows from Lemma \ref{lem_invertible_frozen_to_Groth_projectives} that the number of arrows  $\mathfrak{c}\rightarrow\widetilde{S}_{j,z}  \text{ in } \widetilde{Q}(\mathfrak{C}_{s'})$ equals the number of arrows $\mathfrak{c}\rightarrow\widetilde{S}_{j,z} \text{ in } \widetilde{Q}(\mathfrak{C}_{s})$, which proves the Lemma.
\end{proof}

Let $J_{K,\mathbb{Z}}^{fr}$ be the set of indices 
\[ J_{K,\mathbb{Z}}^{fr} = \{ \widetilde{S}_{j,z} \ |\ j\in I_\mathfrak{g}, z\in \mathbb{Z}\} .\]
We write $\widetilde{K}(\mathfrak{C})$ for the union $\widetilde{K}(\mathfrak{C})=K(\mathfrak{C})\sqcup J_{K,\mathbb{Z}}^{fr}.$ Let $\widetilde{B}(\mathfrak{C})$ be the exchange matrix of dimension $\widetilde{K}(\mathfrak{C})\times\widetilde{K}(\mathfrak{C})$ defined as follows:

\begin{itemize}
    \item $\widetilde{B}(\mathfrak{C})_{|K(\mathfrak{C})\times K(\mathfrak{C})} = B(\mathfrak{C})$ ; 
    \item $\widetilde{B}(\mathfrak{C})_{|J_{K,\mathbb{Z}}^{fr}\times J_{K,\mathbb{Z}}^{fr}} = 0; $
    \item $\widetilde{B}(\mathfrak{C})_{\widetilde{S}_{j,z},k} = B(\mathfrak{C}_s)_{\widetilde{S}_{j,z},k}, \text{ for any } (\widetilde{S}_{j,z},k) \in J^{fr}_{K,\mathbb{Z}} \times K(\mathfrak{C}) \text{ and for any } s\geq 1$ such that $(\widetilde{S}_{j,z},k) \in J^{fr}_{K,s} \times K^\mu(\mathfrak{C}_s)$.    
\end{itemize}
It follows from Lemma \ref{lem_B_tilde_stabilizes} that the matrix $\widetilde{B}(\mathfrak{C})$ is well-defined. Let $\widetilde{Q}(\mathfrak{C})$ be the corresponding quiver and let $\widetilde{\Lambda}(\mathfrak{C})$ be the $\widetilde{K}(\mathfrak{C})\times \widetilde{K}(\mathfrak{C})$-matrix obtained from $\Lambda(\mathfrak{C})$ by setting each new entry to 0. Then $(\widetilde{B}(\mathfrak{C}),\widetilde{\Lambda}(\mathfrak{C}))$ is a compatible pair. Let $\mathcal{A}_t(\widetilde{Q}(\mathfrak{C}))$ be the associated quantum cluster algebra localized at the frozen quantum cluster variables associated to the vertices of $J_{K,\mathbb{Z}}^{fr}$. We write $\mathcal{A}_v(\widetilde{Q}(\mathfrak{C}))$ for the $\mathbb{Q}(v^{1/2})$-algebra

\[
\mathcal{A}_v(\widetilde{Q}(\mathfrak{C})) = \mathbb{Q}(v^{1/2})\otimes_{\mathbb{Z}[t^{\pm 1/2}]} \mathcal{A}_t(\widetilde{Q}(\mathfrak{C})).\]

We assign a $\mathcal{K}(C^b(\mathcal{P}))$-degree to each initial quantum cluster variable of $\mathcal{A}_v(\widetilde{Q}(\mathfrak{C}))$ as follows:

\begin{itemize}
    \item for any vertex $i$ of $K(\mathfrak{C})$, we set
    \[ \mathrm{deg}(X_{i}) = \mathrm{deg}(\mathcal{L}_v(m)); \]
    where $m$ is the dominant monomial such that $L_t(\mathfrak{c}_i)=L_t(m)$.
    \item for any vertex of $ \widetilde{Q}(\mathfrak{C})$ of the form $\widetilde{S}_{j,z}$, we set 
    \[\mathrm{deg}(X_{\widetilde{S}_{j,z}})=\mathrm{deg}(K_{\overline{S}_j,z}).\]
\end{itemize}
Notice that, by construction of the quiver $\widetilde{Q}(\mathfrak{C})$, for any vertex $i\in \widetilde{K}(\mathfrak{C}))$ and any $s\geq 1$ such that $i$ belongs to $\widetilde{K}(\mathfrak{C}_s)$, the degree of the initial quantum cluster variable $X_i$ of $\mathcal{A}_v(\widetilde{Q}(\mathfrak{C}))$ is the same as the degree the corresponding initial quantum cluster variable of \[\mathbb{Q}(v^{1/2})\otimes_{\mathbb{Z}[t^{\pm 1/2}]} \mathcal{A}(\widetilde{Q}(\mathfrak{C}_{s}),\widetilde{\Lambda}(\mathfrak{C}_{s}))\cong \mathcal{A}_v(\widetilde{Q}^{[a_s,b_s],\mathfrak{s}}).\] In particular, for each vertex $i\in K(\mathfrak{C})$, the associated exchange relation is homogeneous. Therefore, as we have done for $\mathcal{A}_v(\widetilde{Q}^{[a,b],\mathfrak{s}})$, we can make $\mathcal{A}_v(\widetilde{Q}(\mathfrak{C}))$ into a $\mathcal{K}(C^b(\mathcal{P}))$-graded algebra.

\begin{cor}  
\label{cor_clust_semiderived}
    There is a unique isomorphism of $\mathcal{K}(C^b(\mathcal{P}))$-graded $\mathbb{Q}(v^{1/2})$-algebras 
    \[\widetilde{\theta}: \mathcal{A}_v(\widetilde{Q}(\mathfrak{C})) \xrightarrow{\sim} \mathcal{SDH}_{tw}(C^b(\mathcal{P})) \]
such that, for any $\widetilde{S}_{j,z} \in J^{fr}_{K,\mathbb{Z}}$ and $i \in K(\mathfrak{C})$
\begin{align*}
 &\widetilde{\theta}:   X_{S_{j,z}} \mapsto K_{\overline{S}_j,z} \\
 &\widetilde{\theta}:  X_{i} \mapsto \mathcal{L}_v(m),
\end{align*}
where $m$ is the dominant monomial such that $L_t(\mathfrak{c}_i)=L_t(m)$.
Moreover, the isomorphism $\widetilde{\theta}$ makes the following diagram commute:

\[
\begin{tikzcd}
    \mathcal{A}_v(\widetilde{Q}(\mathfrak{C})) \arrow[d, "\pi", twoheadrightarrow] \arrow[rr, dashed, "\Tilde{\theta}"] & &\mathcal{SDH}_{tw}(C^b(\mathcal{P}))\arrow[d, "\pi^{\mathcal{H}}", twoheadrightarrow] \\
    \mathcal{A}_v(Q(\mathfrak{C})) \arrow[r, "\sim", "\varphi_v(\mathfrak{C})"'] & \mathcal{K}_v(\mathscr{C}_\mathfrak{g}^{\mathbb{Z}}) \arrow[r, "\Phi"', "\sim"] &\mathcal{DH}_{tw}(Q).
\end{tikzcd}
\]
\end{cor}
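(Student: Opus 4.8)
The plan is to obtain Corollary~\ref{cor_clust_semiderived} as a colimit of the interval statements proved in Theorem~\ref{teo_quantum_clust_semiderived_[a,b]}. First I would fix the chain of $i$-boxes $\mathfrak{C}=(\mathfrak{c}_k)_{k\geq 1}$ of range $[-\infty,\infty]$ and the sequence of subchains $\mathfrak{C}_s=(\mathfrak{c}_k)_{1\leq k\leq s}$, together with the associated quivers $\widetilde{Q}(\mathfrak{C}_s)$, the matrices $\widetilde{\Lambda}(\mathfrak{C}_s)$ and the algebras $\mathcal{A}_v(\widetilde{Q}(\mathfrak{C}_s))$. For each $s$, Theorem~\ref{teo_quantum_clust_semiderived_[a,b]} applied to the interval $[a_s,b_s]$, combined with the fact (Remark~\ref{rem_sequence_canonical_chain_transform}) that the canonical chain transformation $\mathfrak{C}_-^{[a_s,b_s]}\mapsto \mathfrak{C}_s$ is realized by a mutation sequence under which the compatible pair $(B(\mathfrak{C}_-^{[a_s,b_s]}),\Lambda(\mathfrak{C}_-^{[a_s,b_s]}))$ goes to $(B(\mathfrak{C}_s),\Lambda(\mathfrak{C}_s))$, yields a graded isomorphism
\[
\widetilde{\theta}_s: \mathcal{A}_v(\widetilde{Q}(\mathfrak{C}_s))\xrightarrow{\sim}\mathcal{SDH}_{tw}(C^b(\mathcal{P}))^{[a_s,b_s],\mathfrak{s}}
\]
sending $X_{\widetilde{S}_{j,z}}\mapsto K_{\overline{S}_j,z}$ and $X_i\mapsto \mathcal{L}_v(m)$ with $L_t(\mathfrak{c}_i)=L_t(m)$, and fitting into the commutative square of the theorem. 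Here I use that mutating the quiver $\widetilde{Q}^{[a_s,b_s],\mathfrak{s}}$ along a sequence of \emph{mutable} vertices changes neither the frozen part nor the compatibility of the pair $(\widetilde{B}(\mathfrak{C}_s),\widetilde{\Lambda}(\mathfrak{C}_s))$, and that the image of such a mutated seed under $\widetilde{\theta}$ is again a seed whose quantum cluster variables are of the form $\mathcal{L}_v(m)$ or $K_{\overline{S}_j,z}$ (this is exactly the content of Remark~\ref{rem_exchange_relation_semiderived_proof}).

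Next I would assemble these into a directed system. By Theorem~\ref{teo_KKOP_monoidal_cat}(v) the quiver $Q(\mathfrak{C})$ is the colimit of the $Q(\mathfrak{C}_s)$; by construction $\widetilde{Q}(\mathfrak{C})$ is the colimit of the $\widetilde{Q}(\mathfrak{C}_s)$, because Lemma~\ref{lem_B_tilde_stabilizes} guarantees that the relevant columns $\widetilde{B}(\mathfrak{C}_{s'})_{|\widetilde{K}(\mathfrak{C}_s)\times K^\mu(\mathfrak{C}_s)}$ stabilize for $s'\geq s$. Passing to quantum cluster algebras, one gets transition maps $\mathcal{A}_v(\widetilde{Q}(\mathfrak{C}_s))\hookrightarrow\mathcal{A}_v(\widetilde{Q}(\mathfrak{C}_{s'}))$ realizing the larger algebra as the union (colimit) of the smaller ones — the key point is the same one used in the proof of Theorem~\ref{teo_quantum_analogue_monoidal_cat}: since there are no arrows between non-frozen vertices of $\widetilde{Q}(\mathfrak{C}_s)$ and the vertices of $\widetilde{Q}(\mathfrak{C}_{s'})$ outside $\widetilde{Q}(\mathfrak{C}_s)$, the inclusion of quivers induces an inclusion of quantum cluster algebras. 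On the other side, $\mathcal{SDH}_{tw}(C^b(\mathcal{P}))^{[a_s,b_s],\mathfrak{s}}\hookrightarrow \mathcal{SDH}_{tw}(C^b(\mathcal{P}))^{[a_{s'},b_{s'}],\mathfrak{s}}$ and $\bigcup_s\mathcal{SDH}_{tw}(C^b(\mathcal{P}))^{[a_s,b_s],\mathfrak{s}}=\mathcal{SDH}_{tw}(C^b(\mathcal{P}))$, since every generator $E_{V(i,p)}$ and every $K_{\overline{S}_j,z}$ eventually lies in one of the interval subalgebras (the range $[a_s,b_s]$ exhausts $\mathbb{Z}$ as $s\to\infty$). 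I would then check that the $\widetilde{\theta}_s$ are compatible with the transition maps — on initial cluster variables this is immediate from the explicit formulas $X_i\mapsto\mathcal{L}_v(m)$, $X_{\widetilde{S}_{j,z}}\mapsto K_{\overline{S}_j,z}$, and on the rest of the quantum cluster algebra it follows because both sides are determined by the same exchange relations — so they glue to a well-defined graded algebra isomorphism $\widetilde{\theta}:\mathcal{A}_v(\widetilde{Q}(\mathfrak{C}))\xrightarrow{\sim}\mathcal{SDH}_{tw}(C^b(\mathcal{P}))$ with the stated values on initial cluster variables.

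Finally, commutativity of the big square and uniqueness. The diagram
\[
\begin{tikzcd}
    \mathcal{A}_v(\widetilde{Q}(\mathfrak{C})) \arrow[d, "\pi"', twoheadrightarrow] \arrow[rr, "\widetilde{\theta}"] & &\mathcal{SDH}_{tw}(C^b(\mathcal{P}))\arrow[d, "\pi^{\mathcal{H}}", twoheadrightarrow] \\
    \mathcal{A}_v(Q(\mathfrak{C})) \arrow[r, "\varphi_v(\mathfrak{C})"', "\sim"] & \mathcal{K}_v(\mathscr{C}_\mathfrak{g}^{\mathbb{Z}}) \arrow[r, "\Phi"', "\sim"] &\mathcal{DH}_{tw}(Q)
\end{tikzcd}
\]
commutes because it is the colimit of the corresponding commutative squares at finite level from Theorem~\ref{teo_quantum_clust_semiderived_[a,b]} (using Theorem~\ref{teo_quant_clust_Groth_ring} to identify $\varphi_v(\mathfrak{C})$ as the colimit of the $\varphi_v^{[a_s,b_s],\mathfrak{s}}$ and the commutativity of $\pi^{\mathcal{H}}$, $\pi$ with the transition maps); alternatively it suffices to check it on the generators $X_i$, $X_{\widetilde{S}_{j,z}}$, where it reduces to $\pi^{\mathcal{H}}(\mathcal{L}_v(m))=\Phi(L_t(m))$ and $\pi^{\mathcal{H}}(K_{\overline{S}_j,z})=1$, both of which hold by definition. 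Uniqueness follows because the $X_i$ ($i\in K(\mathfrak{C})$) together with the $X_{\widetilde{S}_{j,z}}^{\pm1}$ generate $\mathcal{A}_v(\widetilde{Q}(\mathfrak{C}))$ as a $\mathbb{Q}(v^{1/2})$-algebra, so any graded algebra homomorphism with the prescribed values on these generators is forced. I expect the main obstacle to be the careful bookkeeping that the transition maps on both sides of the diagram are genuine inclusions compatible with the $\widetilde{\theta}_s$ and with the degree assignments — i.e.\ verifying that the finite-level data really do form a filtered system whose colimit is the asserted isomorphism — rather than any single hard computation; everything substantive (the degree equality~\eqref{eq_degree_cluster_Ltm}, the form of the exchange relations, the stabilization of $\widetilde{B}$) has already been established in the preceding results.
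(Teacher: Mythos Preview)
Your proposal is correct and follows essentially the same approach as the paper: both obtain the result by realizing $\mathcal{A}_v(\widetilde{Q}(\mathfrak{C}))$, $\mathcal{SDH}_{tw}(C^b(\mathcal{P}))$ and $\mathcal{DH}_{tw}(Q)$ as colimits of their interval versions (using Lemma~\ref{lem_B_tilde_stabilizes} for the first) and then gluing the commutative squares of Theorem~\ref{teo_quantum_clust_semiderived_[a,b]}. Your write-up is more detailed than the paper's very terse argument, but the underlying strategy is identical.
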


\begin{proof}
The algebras $\mathcal{DH}_{tw}(Q)$ and $\mathcal{SDH}_{tw}(C^b(\mathcal{P}))$  are, respectively, the colimits of the diagrams

\[
\mathcal{SDH}_{tw}(C^b(\mathcal{P}))^{[a_{1},b_{1}],\mathfrak{s}}  \hookrightarrow \mathcal{SDH}_{tw}(C^b(\mathcal{P}))^{[a_{2},b_{2}],\mathfrak{s}} \hookrightarrow \mathcal{SDH}_{tw}(C^b(\mathcal{P}))^{[a_{3},b_{3}],\mathfrak{s}} \hookrightarrow \dots \]
and 
\[
\mathcal{DH}_{tw}(Q)^{[a_{1},b_{1}]} \hookrightarrow \mathcal{DH}_{tw}(Q)^{[a_{2},b_{2}]} \hookrightarrow \mathcal{DH}_{tw}(Q)^{[a_{3},b_{3}]} \hookrightarrow \dots .
\]

It follows from Lemma \ref{lem_B_tilde_stabilizes} that the quantum cluster algebra $\mathcal{A}_v(\widetilde{B}(\mathfrak{C}))$ is the colimit of the diagram 

\[
\mathcal{A}_v(\widetilde{Q}(\mathfrak{C}_1))\hookrightarrow \mathcal{A}_v(\widetilde{Q}(\mathfrak{C}_2)) \hookrightarrow \mathcal{A}_v(\widetilde{Q}(\mathfrak{C}_3)) \hookrightarrow \dots .
\]

The statement of the theorem is a consequence of the fact that these colimits are compatible with the commutative diagrams  

\[
\begin{tikzcd}
    \mathcal{A}_v(\widetilde{Q}(\mathfrak{C})_s) \arrow[d, twoheadrightarrow] \arrow[r, "\sim"]  &\mathcal{SDH}_{tw}(C^b(\mathcal{P}))^{[a_s,b_s],\mathfrak{s}}\arrow[d, twoheadrightarrow] \\
    \mathcal{A}_v(Q(\mathfrak{C}_s)) \arrow[r, "\sim"] &\mathcal{DH}_{tw}(Q)^{[a_s,b_s],\mathfrak{s}}
\end{tikzcd}
\]
provided by Theorem \ref{teo_quantum_clust_semiderived_[a,b]}. 
\end{proof}

\begin{ex}
Let $\mathfrak{g}$ be of type $A_4$. Let $Q: 1\rightarrow 2 \leftarrow 3 \rightarrow 4$ be the quiver of type $A_4$ with height function
$\varepsilon$ defined by 
\[ \varepsilon_1 = 0 ,\ \varepsilon_2=-1, \varepsilon_3=0,\ \varepsilon_4=-1.\]
Fix the $\varepsilon$-adpated reduced expression $\underline{w}_0=s_2s_4s_1s_3s_2s_4s_1s_3s_2s_4$. Let $\mathfrak{s}$ be the admissible sequence associated to the pair $(\varepsilon, \underline{w}_0)$:
\[ \mathfrak{s}: \dots,\underset{\textcolor{blue}{-4}}{(1,-4)}, \underset{\textcolor{blue}{-3}}{(4,-3)}, \underset{\textcolor{blue}{-2}}{(2,-3)}, \underset{\textcolor{blue}{-1}}{(3,-1)}, \underset{\textcolor{blue}{0}}{(1,-2)}, \underset{\textcolor{blue}{1}}{(2,-1)}, \underset{\textcolor{blue}{2}}{(4,-1)}, \underset{\textcolor{blue}{3}}{(1,0)}, \underset{\textcolor{blue}{4}}{(3,0)},
\underset{\textcolor{blue}{5}}{(2,1)}, \dots\]
Consider the chain of $i$-boxes $\mathfrak{C}=(\mathfrak{c}_k)_{k\geq 1}$ associated to the rooted sequence of expansion operators 
\[ (3;(R, L, L, L, L, R, R, R, R, L, L, L, L, R, R, R, R, \dots).\]
More explicitly
\[\mathfrak{C} = ([3]_1,[4]_3,[1]_2,[0]_4,[0,3]_1,[-1,4]_3,[1,5]_2,[2,6]_4, [0,7]_1, [-1,8]_3, \dots ).\]

Then, by applying the methods Remark \ref{rem_sequence_canonical_chain_transform}, we can obtain the quiver $Q(\mathfrak{C})$, as shown in Figure \ref{fig_quiver_derived_A4} (see also \cite[Example~B.1]{HFOO_iso_quant_groth_ring_clust_alg}). Notice that we have labelled the vertex associated to the $i$-box $\mathfrak{c}$ as $V(m)$, where $m$ is the dominant monomial such that $[L(m)]=M(\mathfrak{c})$.
By adding the vertices $\widetilde{S}_{j,z}$ with the same procedure as in Construction \ref{con_QSDH_quiver}, we obtain the quiver $\widetilde{Q}(\mathfrak{C})$, as shown in Figure \ref{fig_quiver_semiderived_A4} (notice that we have simplified the notation for the vertices).
Each frozen vertex of $\widetilde{Q}(\mathfrak{C})$ is the target and the source of an infinite number of arrows.
\end{ex}

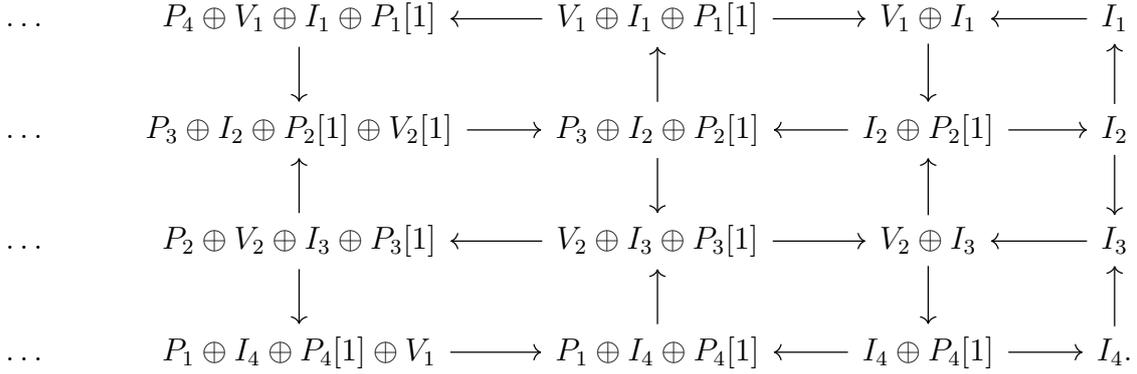
\begin{figure}
\centering
\begin{tikzcd}[ampersand replacement=\&]
\dots \& P_4\oplus V_1\oplus I_1  \oplus P_1[1]  \arrow [d] \& V_1\oplus I_1  \oplus P_1[1] \arrow[l]\arrow[r] \& V_1\oplus I_1 \arrow[d] \& I_1 \arrow[l]  \\
\dots \& P_3\oplus I_2\oplus P_2[1]\oplus V_2[1] \arrow[r]\& P_3\oplus I_2\oplus P_2[1] \arrow[u]\arrow[d] \& I_2\oplus P_2[1]\arrow[l]\arrow[r] \& I_2 \arrow[u]\arrow[d]\\
\dots \& P_2\oplus V_2\oplus I_3 \oplus P_3[1] \arrow[d]\arrow[u] \&  V_2\oplus I_3 \oplus P_3[1]\arrow[l]\arrow[r] \& V_2\oplus I_3 \arrow[u] \arrow[d] \& I_3 \arrow[l]  \\
\dots \& P_1\oplus I_4\oplus P_4[1]\oplus V_1 \arrow[r] \& P_1\oplus I_4\oplus P_4[1] \arrow[u] \& I_4\oplus P_4[1]\arrow[r]\arrow[l] \& I_4 \arrow[u] .
\end{tikzcd}
\caption{The quiver ${Q}(\mathfrak{C})$ for $\mathfrak{g}$ of type $A_4$}
  \label{fig_quiver_derived_A4}
\end{figure}

\begin{figure}
    \centering
\begin{tikzpicture}[xscale= 2.2, yscale=2.2]
    \node (N11) at (7,4) {$I_1$};
    \node (N12) at (6,4) {$V_1$};
    \node (N13) at (5,4) {$P_1[1]$};
    \node (N14) at (4,4) {$P_4$};
    \node (N15) at (3,4) {$I_4[1]$};
    \node (N16) at (2,4) {$I_4[-1]$};
    \node (N17) at (1,4) {$P_4[2]$};
    
    \node (N21) at (7,3) {$I_2$};
    \node (N22) at (6,3) {$P_2[1]$};
    \node (N23) at (5,3) {$P_3$};
    \node (N24) at (4,3) {$V_2[1]$};
    \node (N25) at (3,3) {$I_3[-1]$};
    \node (N26) at (2,3) {$I_3[1]$};
    \node (N27) at (1,3) {$V_2[-1]$};

    \node (N31) at (7,2) {$I_3$};
    \node (N32) at (6,2) {$V_2$};
    \node (N33) at (5,2) {$P_3[1]$};
    \node (N34) at (4,2) {$P_2$};
    \node (N35) at (3,2) {$I_2[1]$};
    \node (N36) at (2,2) {$I_2[-1]$};
    \node (N37) at (1,2) {$P_2[2]$};

    \node (N41) at (7,1) {$I_4$};
    \node (N42) at (6,1) {$P_4[1]$};
    \node (N43) at (5,1) {$P_1$};
    \node (N44) at (4,1) {$V_1[1]$};
    \node (N45) at (3,1) {$I_1[-1]$};
    \node (N46) at (2,1) {$I_1[1]$};
    \node (N47) at (1,1) {$V_1[-1]$};

    \node (d1) at (0.5,4) {$\dots$};
    \node (d2) at (0.5,3) {$\dots$};
    \node (d3) at (0.5,2) {$\dots$};
    \node (d4) at (0.5,1) {$\dots$};

    \node[blue, scale=0.8] (K11) at (5.5, 4.5) {$\widetilde{S}_{1,0}$};
    \node[green, scale=0.8] (K12) at (2.5, 4.5) {$\widetilde{S}_{4,-1}$};
    \node[blue, scale=0.8] (K13) at (1.5, 4.5) {$\widetilde{S}_{4,1}$};

    \node[blue, scale=0.8] (K21) at (6.5, 3.5) {$\widetilde{S}_{2,0}$};
    \node[green, scale=0.8] (K22) at (3.5, 3.5) {$\widetilde{S}_{3,-1}$};
    \node[blue, scale=0.8] (K23) at (0.5, 3.5) {$\widetilde{S}_{3,1}$};

    \node[blue, scale=0.8] (K31) at (5.5, 2.5) {$\widetilde{S}_{3,0}$};
    \node[green, scale=0.8] (K32) at (2.5, 2.5) {$\widetilde{S}_{2,-1}$};
    \node[blue, scale=0.8] (K33) at (1.5, 2.5) {$\widetilde{S}_{2,-1}$};

     \node[blue, scale=0.8] (K41) at (6.5, 1.5) {$\widetilde{S}_{4,0}$};
    \node[green, scale=0.8] (K42) at (3.5, 1.5) {$\widetilde{S}_{1,-1}$};
    \node[blue, scale=0.8] (K43) at (0.5, 1.5) {$\widetilde{S}_{1,1}$};

    \draw[->] (N11) -- (N12);
    \draw[->] (N12) -- (N22);
    \draw[->] (N13) -- (N12);
    \draw[->] (N13) -- (N14);
    \draw[->] (N14) -- (N24);
    \draw[->] (N15) -- (N14);
    \draw[->] (N15) -- (N16);
    \draw[->] (N16) -- (N26);
    \draw[->] (N17) -- (N16);
    
    \draw[->] (N21) -- (N11);
    \draw[->] (N21) -- (N31);
    \draw[->] (N22) -- (N21);
    \draw[->] (N22) -- (N23);
    \draw[->] (N23) -- (N13);
    \draw[->] (N23) -- (N33);
    \draw[->] (N24) -- (N23);
    \draw[->] (N24) -- (N25);
    \draw[->] (N25) -- (N15);
    \draw[->] (N25) -- (N35);
    \draw[->] (N26) -- (N25);
    \draw[->] (N26) -- (N27);
    \draw[->] (N27) -- (N17);
    \draw[->] (N27) -- (N37);

    \draw[->] (N31) -- (N32);
    \draw[->] (N32) -- (N22);
    \draw[->] (N32) -- (N42);
    \draw[->] (N33) -- (N32);
    \draw[->] (N33) -- (N34);
    \draw[->] (N34) -- (N24);
    \draw[->] (N34) -- (N44);
    \draw[->] (N35) -- (N34);
    \draw[->] (N35) -- (N36);
    \draw[->] (N36) -- (N26);
    \draw[->] (N36) -- (N46);
    \draw[->] (N37) -- (N36);

    \draw[->] (N41) -- (N31);
    \draw[->] (N42) -- (N41);
    \draw[->] (N42) -- (N43);
    \draw[->] (N43) -- (N33);
    \draw[->] (N44) -- (N43);
    \draw[->] (N44) -- (N45);
    \draw[->] (N45) -- (N35);
    \draw[->] (N46) -- (N45);
    \draw[->] (N46) -- (N47);
    \draw[->] (N47) -- (N37);

    \draw[->, blue] (K11) -- (N13);
    \draw[->, blue] (K11) -- (N15);
    \draw[->, blue] (K11) -- (N17);
    \draw[->, blue] (N12) -- (K11);
    \draw[->, blue] (N14) -- (K11);
    \draw[->, blue] (N16) -- (K11);

    \draw[->, green] (K12) -- (N15);
    \draw[->, green] (K12) -- (N17);;
    \draw[->, green] (N16) -- (K12);

    \draw[->, blue] (K13) -- (N17);
    \draw[->, blue] (N16) -- (K13);

    \draw[->, blue] (K21) -- (N22);
    \draw[->, blue] (K21) -- (N24);
    \draw[->, blue] (K21) -- (N26);
    \draw[->, blue] (N21) -- (K21);
    \draw[->, blue] (N23) -- (K21);
    \draw[->, blue] (N25) -- (K21);
    \draw[->, blue] (N27) -- (K21);
    
   \draw[->, green] (K22) -- (N24);
    \draw[->, green] (K22) -- (N26);
    \draw[->, green] (N25) -- (K22);
    \draw[->, green] (N27) -- (K22);

    \draw[->, blue] (N27) -- (K23);

    \draw[->, blue] (K31) -- (N33);
    \draw[->, blue] (K31) -- (N35);
    \draw[->, blue] (K31) -- (N37);
    \draw[->, blue] (N32) -- (K31);
    \draw[->, blue] (N34) -- (K31);
    \draw[->, blue] (N36) -- (K31);

    \draw[->, green] (K32) -- (N35);
    \draw[->, green] (K32) -- (N37);
    \draw[->, green] (N36) -- (K32);
    
    \draw[->, blue] (K33) -- (N37);
    \draw[->, blue] (N36) -- (K33);

    \draw[->, blue] (K41) -- (N42);
    \draw[->, blue] (K41) -- (N44);
    \draw[->, blue] (K41) -- (N46);
    \draw[->, blue] (N41) -- (K41);
    \draw[->, blue] (N43) -- (K41);
    \draw[->, blue] (N45) -- (K41);
    \draw[->, blue] (N47) -- (K41);
    
    \draw[->, green] (K42) -- (N44);
    \draw[->, green] (K42) -- (N46);
    \draw[->, green] (N45) -- (K42);
    \draw[->, blue] (N47) -- (K43);

\end{tikzpicture}
\caption{The quiver $\widetilde{Q}(\mathfrak{C})$ for $\mathfrak{g}$ of type $A_4$}
  \label{fig_quiver_semiderived_A4}
\end{figure}

\section{A braid group action on the semi-derived Hall algebra}
\label{sec_braid}
Let $A=(a_{ij})_{i,j\in I_\mathfrak{g}}$ be the Cartan matrix of $\mathfrak{g}$ and let $(-,-)$ be the symmetric bilinear such on the root lattice of $\mathfrak{g}$ such that $(\alpha_i,\alpha_j)=a_{ij}$, where the $(\alpha_i)_{i,j\in I_\mathfrak{g}}$ are the the simple roots of $\mathfrak{g}$.
Let $S_i$ be the simple module associated to the vertex $i$ of $Q$. We will use the notations
\[K_{i,m}=K_{\overline{S}_i,m},\ \ \ E_{i,m}=v^{1/2}(v-v^{-1})E_{S_i,m},\]
for any $i$ is in $I_\mathfrak{g}$ and $m$ in $\mathbb{Z}$.
\begin{rem}
The coefficient $v^{1/2}(v-v^{-1})$ is the rescaling factor used in the proof of \cite[Thm.~8.2]{HL_quantum_Groth_rings_derived_Hall}
\end{rem}

\begin{prop}\cite{HL_quantum_Groth_rings_derived_Hall}
\label{prop_presenentation_semiderived_E_i}
The twisted semi-derived Hall algebra is generated by the elements $E_{i,m}$ and $K_{i,m}$, where $i\in I_\mathfrak{g}, m\in \mathbb{Z}$, subject to the following relations: 

    \begin{align}
        \text{ the } K_{i,m} &\text{ are central };\label{sd1} \tag{SD1}\\
        E_{i,m}E_{j,m}-E_{j,m}E_{i,m} &= 0, \ \ \ \text{if }(\alpha_i,\alpha_j)=0;\label{sd2} \tag{SD2} \\
        E_{i,m}^2E_{j,m}-(v+v^{-1})E_{i,m}E_{j,m}E_{i,m}+&E_{j,m}E_{i,m}^2=0,\ \ \ \text{if }(\alpha_i,\alpha_j)=-1;\label{sd3}\tag{SD3} \\
        E_{i,m+1}E_{j,m} &=v^{(\alpha_i,\alpha_j)}E_{j,m}E_{i+1,m}+\delta_{ij}(1-v^{2})K_{i,m};\label{sd4}\tag{SD4}\\
        E_{j,r}E_{i,l}&=v^{-(-1)^{r-l}(\alpha_i,\alpha_j)}E_{i,l}E_{j,r},\ \ \ \forall r>l+1.\label{sd5} \tag{SD5}
    \end{align}
\end{prop}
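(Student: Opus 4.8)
The plan is to deduce the presentation from Zhang's presentation of $\mathcal{SDH}_{tw}(C^b(\mathcal{P}))$ by the generators $E_{M,i}$, $K_{\alpha,i}$ (Theorem~\ref{teo_relations_semiderived}) together with Ringel's realization of $U_v(\mathfrak{n})$ as the twisted Hall algebra of $\mathrm{mod}(\Bbbk Q)$. First I would check that the $E_{i,m}$ and $K_{i,m}$ (the latter understood to be invertible) generate the algebra: since $(\overline{S_j})_{j\in I_\mathfrak{g}}$ is a $\mathbb{Z}$-basis of $\mathcal{K}(\mathrm{mod}(\Bbbk Q))$ and $\alpha\mapsto K_{\alpha,i}$ is multiplicative, each $K_{\alpha,i}$ is a Laurent monomial in the $K_{j,i}$; and the second relation of Theorem~\ref{teo_relations_semiderived} shows that, for fixed $i\in\mathbb{Z}$, the map $[M]\mapsto E_{M,i}$ is an algebra homomorphism out of the twisted Ringel--Hall algebra of $\mathrm{mod}(\Bbbk Q)$, which by Ringel's theorem is generated by the classes of the simple modules; hence every $E_{M,i}$ lies in the subalgebra generated by the $E_{S_j,i}$. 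The rescaling by $v^{1/2}(v-v^{-1})$ plays no role here.

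Next I would verify the relations. Relation~\eqref{sd1} is immediate. Relations~\eqref{sd2} and \eqref{sd3} involve only generators of a common index $m$, so via the homomorphism $[M]\mapsto E_{M,m}$ they reduce to the commutation relation (non-adjacent vertices) and the quantum Serre relation (adjacent vertices) in the twisted Ringel--Hall algebra of the corresponding rank-$\le 2$ sub-quiver, i.e.\ to Ringel's defining relations for $U_v(\mathfrak{n})$; since both sides of each are homogeneous of the same degree in the $E$-factors ($2$, resp.\ $3$), the rescaling factor cancels. Relation~\eqref{sd5} follows from the last relation of Theorem~\ref{teo_relations_semiderived}, using that the symmetrized Euler form on $\mathcal{K}(\mathrm{mod}(\Bbbk Q))$ restricts to $(\alpha_i,\alpha_j)$ on simples. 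The crux is \eqref{sd4}: I would feed $M,N\in\{S_i,S_j\}$ into the third relation of Theorem~\ref{teo_relations_semiderived} and compute the Hall numbers $\gamma_{N,M}^{V,W}$ (and, where needed, the same-index numbers $g^{L}_{S_a,S_b}$). For distinct adjacent $i,j$ only the ``doubly split'' sequences $0\to N\to N\to M\to M\to 0$ contribute, giving the braiding $E_{i,m+1}E_{j,m}=v^{(\alpha_i,\alpha_j)}E_{j,m}E_{i,m+1}$. For $i=j$ there are two contributions, $V=W=S_i$ and $V=W=0$; the latter carries the factor $K_{\overline{S_i},m}=K_{i,m}$, and after the rescaling $E_{i,m}=v^{1/2}(v-v^{-1})E_{S_i,m}$ --- which multiplies a degree-$2$ $E$-monomial by $v(v-v^{-1})^2=(v^2-1)^2/v$ but fixes $K_{i,m}$ --- the coefficient $v/(v^2-1)$ of this contribution becomes exactly $v^2-1$, producing the term $(1-v^2)K_{i,m}$.

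For completeness I would proceed as follows. Let $\mathcal{A}$ be the algebra with presentation \eqref{sd1}--\eqref{sd5}; the preceding step gives a surjection $\rho\colon\mathcal{A}\twoheadrightarrow\mathcal{SDH}_{tw}(C^b(\mathcal{P}))$. Relations~\eqref{sd4} and \eqref{sd5} let one reorder the $E$-factors of any word by their index, up to corrections lying in the central subalgebra $\mathcal{Z}=\langle K_{j,m}^{\pm1}\rangle$, while \eqref{sd2} and \eqref{sd3} collapse, at each fixed index, a monomial in the $E_{j,m}$ to a PBW-type combination via Ringel's theorem; hence $\mathcal{A}$ is spanned over $\mathcal{Z}$ by the ordered monomials $E_{M_s,s}\cdots E_{M_r,r}$ appearing in Proposition~\ref{prop_basis_semiderived}. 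Since $\rho$ sends those monomials to a $\mathbb{Q}(v^{1/2})$-basis of $\mathcal{SDH}_{tw}(C^b(\mathcal{P}))$, it carries a spanning set to a basis and is therefore an isomorphism. An alternative route is to pass to the quotient by the central ideal $(K_{i,m}-1)$, where \eqref{sd1}--\eqref{sd5} become Hernandez--Leclerc's presentation of $\mathcal{DH}_{tw}(Q)$, and then lift using the freeness of $\mathcal{SDH}_{tw}(C^b(\mathcal{P}))$ over $\mathcal{SDH}^0_{tw}(C^b(\mathcal{P}))$. I expect the main obstacle to be the bookkeeping in \eqref{sd4} --- tracking the powers of $v$ and the single $K$-correction through the rescaling --- and, for completeness, making the straightening argument fully rigorous (a Bergman diamond-lemma style confluence check); everything else is routine.
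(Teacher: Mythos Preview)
Your proposal is correct and follows essentially the same route as the paper: both verify that \eqref{sd1}--\eqref{sd5} hold in $\mathcal{SDH}_{tw}(C^b(\mathcal{P}))$ (the paper outsources this to \cite[Prop.~8.1]{HL_quantum_Groth_rings_derived_Hall} while you sketch the Hall-number computation directly), obtain surjectivity from Ringel's theorem applied at each fixed index, and prove injectivity by straightening via \eqref{sd4}--\eqref{sd5} into ordered monomials whose images are linearly independent by Proposition~\ref{prop_basis_semiderived}. The only cosmetic difference is that the paper keeps the spanning set as ordered monomials in the $E_{i,m}$ themselves, whereas you additionally collapse each same-index block to a single $E_{M,m}$ via Ringel before invoking Proposition~\ref{prop_basis_semiderived}; either way the argument is the same.
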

\begin{proof}
We follow the arguments of the proof of \cite[Thm.~7.3]{HL_quantum_Groth_rings_derived_Hall}.
Let $\mathscr{A}$ be the $\mathbb{Q}(v^{1/2})$-algebra generated by the elements $E_{i,m}$ and $K_{i,m}$, $(i\in I_\mathfrak{g}, m\in \mathbb{Z})$, subject to the relations in the statetement of the theorem. Since the above relations are a particular case of those appearing in \ref{teo_relations_semiderived} (see \cite[Prop.~8.1]{HL_quantum_Groth_rings_derived_Hall}), we have an algebra homomorpshim $\alpha: \mathscr{A}\rightarrow \mathcal{SDH}_{tw}(C^b(\mathcal{P}))$. For $m\in \mathbb{Z}$, define $\mathcal{SDH}_{tw}(C^b(\mathcal{P}))(m)$ as the subalgebra of the twisted semi-derived Hall algebra generated by the $E_{M,m}$, for $M\in \mathrm{mod}(\Bbbk Q)$ and $m\in \mathbb{Z}$. Notice that $\mathcal{SDH}_{tw}(C^b(\mathcal{P}))(m)$ is isomorphic to  the (twisted) Hall algebra $\mathcal{H}(Q)$ of the category $\mathrm{mod}(\Bbbk Q)$. Since by Ringel's theorem (cf. \cite{Ringel_Hall_polynomials, Ringel_Hall_algebra_quant_group_1990}) the Hall algebra $\mathcal{H}(Q)$ is generated by the representatives of the simple modules, we deduce that the map $\alpha$ is surjective. In order to prove the injectivity, fix a total order on the set of pairs $(i,m), i\in I_\mathfrak{g}, m\in \mathbb{Z}$, such that 
\[ (i,m) < (i',m') \text{ if } m \geq m' \text{ or } (m=m' \text{ and } \mathrm{Ext}^1_{\Bbbk Q}(S_{i'}, S_i)=0).\] 

consider the set $\mathscr{A}_b$ of elements of $\mathscr{A}$ of the form
\[   E_{i_s,m_s}E_{i_{s-1},m_{s-1}}\dots E_{i_1,m_1}K_{i'_{s},m'_{s}} K_{i'_{s'-1},m'_{s'-1}}\dots K_{i'_{1},m'_{1}},
    \]
where $E_{i_k,m_k}$ (resp. $K_{i_k,m_k}$)  precedes  $E_{i_j,m_j}$ (resp. $K_{i_j,m_j}$) if and only if $(i_k,m_k)\leq (i_j,m_j)$. By applying relations (\ref{sd4}) and (\ref{sd5}), we can express each element of $\mathscr{A}$ as a linear combination of some elements of $\mathscr{A}_b$. Moreover, since, as discussed above in the proof, the relations in the presentation of $\mathscr{A}$ can be seen as a subset of the relations in the presentation of $\mathcal{SDH}_{tw}(C^b(\mathcal{P}))$, it follows from Proposition \ref{prop_basis_semiderived} that the elements of $\mathscr{A}_b$ are linearly independent. 
\end{proof}

\begin{teo}
\label{teo_braid}
The braid group $B_\mathfrak{g}$ of $\mathfrak{g}$ acts on the twisted semiderived Hall algebra $\mathcal{SDH}_{tw}(Q)$ by the following formulas
\[\sigma_i(E_{j,m})=\begin{cases}
E_{i,m+1}K_{i,m}^{-1}, & (\alpha_i,\alpha_j)=2,\\[0.4cm]
\frac{v^{1/2}E_{i,m}E_{j,m}-v^{-1/2}E_{j,m}E_{i,m}}{v-v^{-1}}, &(\alpha_i,\alpha_j)=-1,\\[0.4cm]
E_{j,m}, &\text{otherwise}.
\end{cases}\]

\[\sigma_i(K_{j,m})=\begin{cases}
K_{i,m}^{-1}, & (\alpha_i,\alpha_j)=2,\\
K_{i,m}K_{j,m}, &(\alpha_i,\alpha_j)=-1,\\
K_{j,m}, &\text{otherwise}.
\end{cases}\]

The inverse action is given by

\[\sigma_i^{-1}(E_{j,m})=\begin{cases}
E_{i,m-1}K_{i,m-1}^{-1}, & (\alpha_i,\alpha_j)=2,\\[0.4cm]
\frac{v^{1/2}E_{j,m}E_{i,m}-v^{-1/2}E_{i,m}E_{j,m}}{v-v^{-1}}, &(\alpha_i,\alpha_j)=-1,\\[0.4cm]
E_{j,m}, &\text{otherwise}.
\end{cases}\]

\[\sigma_i^{-1}(K_{j,m})=\begin{cases}
K_{i,m}^{-1}, & (\alpha_i,\alpha_j)=2,\\
K_{i,m}K_{j,m}, &(\alpha_i,\alpha_j)=-1,\\
K_{j,m}, &\text{otherwise}.
\end{cases}\]

\end{teo}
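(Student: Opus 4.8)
The plan is to exhibit each $\sigma_i$ as an algebra automorphism of $\mathcal{SDH}_{tw}(Q)$ by means of the presentation in Proposition~\ref{prop_presenentation_semiderived_E_i}, and then to verify the braid relations on generators. For well-definedness one must check that the elements $\sigma_i(E_{j,m})$ and $\sigma_i(K_{j,m})$ given by the stated formulas satisfy the relations (\ref{sd1})--(\ref{sd5}). Relation (\ref{sd1}) is immediate, since $\sigma_i$ sends every $K_{j,m}$ to a monomial in the central generators $K_{\ell,m}$. For (\ref{sd2})--(\ref{sd5}) the key reduction is that $\sigma_i$ fixes $E_{j,m}$ and $K_{j,m}$ whenever $(\alpha_i,\alpha_j)=0$, and sends the generators indexed by $i$ or by a neighbour of $i$ into the subalgebra generated by the $E_{\ell,m},K_{\ell,m}$ with $\ell$ equal to $i$ or a neighbour of $i$. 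Since each of the relations (\ref{sd2})--(\ref{sd5}) involves at most two vertices $j,k$ of $I_\mathfrak{g}$, it suffices to verify its image under $\sigma_i$ inside the twisted semi-derived Hall subalgebra of the full subquiver of $Q$ on $\{i,j,k\}$; as $\mathfrak{g}$ is of $ADE$ type this subquiver is a disjoint union of type-$A$ quivers of rank at most $3$ (a tree has no triangles), so the whole verification reduces to a finite list of identities in these small algebras.

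Among these, the purely $A_1$ identities (the instances of (\ref{sd4}) and (\ref{sd5}) at the single vertex $i$) reduce, after cancelling the central $K_{i,m}$'s, to shifted copies of the same relations. The substantive cases are the type-$A_2$ ones with $i\sim j$: the images of the two Serre relations (\ref{sd3}) and of the four mixed relations (\ref{sd4}) must be rewritten, using (\ref{sd4}) and (\ref{sd5}) repeatedly to move between the time slots $m$, $m+1$ (and $m+2$) introduced by $\sigma_i(E_{i,m})=E_{i,m+1}K_{i,m}^{-1}$, until they collapse to $0$, respectively to the desired identity. The type-$A_3$ cases $j\sim i\sim k$, $j\not\sim k$ amount to showing that the $q$-commutators $\sigma_i(E_{j,m})$ and $\sigma_i(E_{k,m})$ still commute up to the appropriate power of $v$; this follows from (\ref{sd2}), (\ref{sd5}) and the fact that $E_{j,m}$ and $E_{k,m}$ commute, and these are exactly the rank-$\le 3$ identities that occur in the construction of Lusztig's symmetries. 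Invertibility of $\sigma_i$ is obtained by running the same argument for the operators $\sigma_i^{-1}$ given by the last two displays of the theorem, and then checking on generators that $\sigma_i\circ\sigma_i^{-1}=\mathrm{id}=\sigma_i^{-1}\circ\sigma_i$; the only non-formal case, $i\sim j$, is a short computation using (\ref{sd4}).

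It then remains to prove $\sigma_i\sigma_j=\sigma_j\sigma_i$ when $(\alpha_i,\alpha_j)=0$ and $\sigma_i\sigma_j\sigma_i=\sigma_j\sigma_i\sigma_j$ when $(\alpha_i,\alpha_j)=-1$; being identities of algebra endomorphisms, they need only be checked on generators. On the $K$-part the action reads $\sigma_i(K_{j,m})=K_{i,m}^{-a_{ij}}K_{j,m}$, which for each fixed $m$ is the simple reflection $s_i$ acting on the corresponding copy of $\mathcal{K}(\mathrm{mod}(\Bbbk Q))$ under $K_{j,m}\leftrightarrow\alpha_j$, so the braid relations on the $K$'s are those of the Weyl group $W_\mathfrak{g}$. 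On the $E$-part, since $\sigma_i$ only moves the generators at $i$ and its neighbours, evaluating the braid relation on $E_{k,m}$ or $K_{k,m}$ involves only the vertices $i$, $j$, $k$, whose full subquiver again has rank $\le 3$; one is thus reduced to the classical rank-$\le 3$ braid identities for symmetries of this kind, checked directly (or read off from the quantum group case). Finally, quotienting by the ideal generated by the $K_{i,m}-1$ — equivalently by all $K-1$ with $K\in\mathcal{SDH}^0_{tw}(C^b(\mathcal{P}))$, since the $\overline{S}_i$ form a $\mathbb{Z}$-basis of $\mathcal{K}(\mathrm{mod}(\Bbbk Q))$ — turns the formulas into $\sigma_i(E_{i,m})=E_{i,m+1}$, $\sigma_i(E_{j,m})=(v^{1/2}E_{i,m}E_{j,m}-v^{-1/2}E_{j,m}E_{i,m})/(v-v^{-1})$ for $i\sim j$, and $\sigma_i(E_{j,m})=E_{j,m}$ otherwise; under the isomorphism of Theorem~\ref{teo_isoHL} and the rescaling $E_{i,m}=v^{1/2}(v-v^{-1})E_{S_i,m}$ these are the formulas of Kashiwara--Kim--Oh--Park, so the action specializes to theirs.

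I expect the main obstacle to be the well-definedness step in the case $i\sim j$: unwinding the images of the Serre relation (\ref{sd3}) and of (\ref{sd4}) requires a careful and somewhat lengthy calculation mixing the time slots $m$, $m+1$ and $m+2$, and it is there — rather than in the braid relations themselves — that the central generators $K_{i,m}$ must be carried through with care, this being the only genuinely new ingredient compared with the computation in the quantum Grothendieck ring.
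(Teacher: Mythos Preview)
Your proposal is correct and follows essentially the same approach as the paper: both proceed by direct verification that the stated formulas respect the presentation of Proposition~\ref{prop_presenentation_semiderived_E_i}, then check the braid relations and invertibility on generators. Your packaging of the case analysis as a ``reduction to rank $\leq 3$ subquivers'' is a clean way to organise the work, but the computations it reduces to are exactly the long list of cases the paper writes out explicitly; in particular the paper, like you, handles (\ref{sd1}) trivially, treats the cases where $\sigma_k$ fixes both generators as immediate, and then works through each remaining configuration of $(\alpha_i,\alpha_j),(\alpha_i,\alpha_k),(\alpha_j,\alpha_k)$ for (\ref{sd2})--(\ref{sd5}) and for the braid identities. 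One caution: your parenthetical ``or read off from the quantum group case'' is not quite available here, since the central elements $K_{i,m}$ appearing via (\ref{sd4}) must be tracked through the rank-$2$ and rank-$3$ computations (as you yourself note in your final paragraph); the paper does not appeal to Lusztig's symmetries but carries these terms through each case by hand, and that is where the genuine content beyond \cite{Jang-Lee_Oh_braid_virtual_2023} lies.
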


\begin{rem} 
Notice that, specializing the $K_{i,m}$ to 1, Proposition~\ref{prop_presenentation_semiderived_E_i} reduces to the presentation of the quantum Grothendieck ring by generators and relations given in \cite[Thm.~7.3]{HL_quantum_Groth_rings_derived_Hall} and the braid group action of Theorem \ref{teo_braid} reduced to the braid group action on the quantum Grothendieck ring announced in \cite[Thm. 2.3]{KKOP_braid_2020} and proved in \cite[Thm.~8.1]{Jang-Lee_Oh_braid_virtual_2023}. 
\end{rem}

\begin{proof}

We start by checking that, for any $k\in I$, the operator $\sigma_k$ is an algebra homomorphism, showing that it respects the relations:
\begin{itemize}
    \item  Relations (\ref{sd1}): they are respected since $\sigma_k$ preserves the subalgebra generated by the $K_{i,m}$.

    \item Relation (\ref{sd2}), with $(\alpha_i,\alpha_k)=-1$, $(\alpha_j,\alpha_k)=-1$:
    We start by computing
    \begin{align*}
        \sigma_k(E_{i,m}E_{j,m})&=\frac{1}{(v-v^{-1})^2}(v^{1/2}E_{k,m}E_{i,m}-v^{-1/2}E_{i,m}E_{k,m})(v^{1/2}E_{k,m}E_{j,m}-v^{-1/2}E_{j,m}E_{k,m})\\
        &=\frac{1}{(v-v^{-1})^2}(vE_{k,m}E_{i,m}E_{k,m}E_{j,m}-E_{k,m}E_{i,m}E_{j,m}E_{k,m}+\\
        &-E_{i,m}E_{k,m}E_{k,m}E_{j,m}+v^{-1}E_{i,m}E_{k,m}E_{j,m}E_{k,m}).
    \end{align*}
    By applying (\ref{sd3}) to the first and the fourth summand, we find that the last expression is equal to
    
    \begin{align*}
    \frac{1}{(v-v^{-1})^2}(\frac{v}{(v+v^{-1})}E_{k,m}^2E_{j,m}E_{i,m}-E_{k,m}E_{j,m}E_{i,m}E_{k,m}+\frac{v^{-1}}{(v+v^{-1})}E_{i,m}E_{j,m}E_{k,m}^2),
    \end{align*}

    which, by further application of (\ref{sd3}) to the first and the third summand, is equal to
    
\begin{align*}
     \frac{1}{(v-v^{-1})^2}&(vE_{k,m}E_{i,m}E_{k,m}E_{j,m}-E_{k,m}E_{i,m}E_{j,m}E_{k,m}\\
    &-E_{i,m}E_{k,m}E_{k,m}E_{j,m}+v^{-1}E_{i,m}E_{k,m}E_{j,m}E_{k,m})=\sigma_k(E_{j,m}E_{i,m}).
\end{align*}

\item Relation (\ref{sd2}), with $(\alpha_i,\alpha_k)=-1$, $(\alpha_j,\alpha_k)=0$:
\begin{align*}
    \sigma_k(E_{i,m}E_{j,m})=\frac{v^{1/2}E_{k,m}E_{i,m}-v^{-1/2}E_{i,m}E_{k,m}}{v-v^{-1}}E_{j,m}=E_{j,m}\frac{v^{1/2}E_{k,m}E_{i,m}-v^{-1/2}E_{i,m}E_{k,m}}{v-v^{-1}}=\sigma_k(E_{j,m}E_{i,m}).
\end{align*}

\item Relation (\ref{sd2}) with $(\alpha_i,\alpha_k)=0$, $(\alpha_j,\alpha_k)=0$: $\sigma_k$ leaves the relation unchanged.

\item Relation (\ref{sd3}), with $k=j$:
\begin{align*}
    &\sigma_j(E_{i,m}E_{j,m}E_{i,m})=\\
    &\frac{K_{j,m}^{-1}}{(v^{-1}-v)^2}(v^{1/2}E_{j,m}E_{i,m}-v^{-1/2}E_{i,m}E_{j,m})E_{j,m+1}(v^{1/2}E_{j,m}E_{i,m}-v^{-1/2}E_{i,m}E_{j,m}).      
\end{align*}

Notice that, applying relation (\ref{sd4}), we get
\begin{align*}
    &v^{-1}E_{j,m+1}(v^{1/2}E_{j,m}E_{i,m}-v^{-1/2}E_{i,m}E_{j,m})=\\
    &(1-v^{2})(v^{-1/2}-v^{-5/2})K_{j,m}E_{i,m}+v^{1/2}E_{j,m}E_{i,m}E_{j,m+1}-v^{-1/2}E_{i,m}E_{j,m}E_{j,m+1}.
\end{align*}
Therefore,
\begin{align*}
    &v^{-1}\sigma_j(E_{i,m}E_{j,m}E_{i,m})=-v^{-1/2}(v^{1/2}E_{j,m}E_{i,m}-v^{-1/2}E_{i,m}E_{j,m})E_{i,m}+\\
    &K_{j,m}^{-1}(v^{1/2}E_{j,m}E_{i,m}-v^{-1/2}E_{i,m}E_{j,m})^2E_{i,m+1}/(v^{-1}-v)^2=\\
    &-(E_{j,m}E_{i,m}-v^{-1}E_{i,m}E_{j,m})E_{i,m}+\sigma_j(E_{i,m}^2E_{j,m}).
\end{align*}

Similarly,
\begin{align*}
    &v(v^{1/2}E_{j,m}E_{i,m}-v^{-1/2}E_{i,m}E_{j,m})E_{j,m+1}=\\
    &(1-v^{2})(v^{1/2}-v^{-3/2})K_{j,m}E_{i,m}+v^{1/2}E_{j,m+1}E_{j,m}E_{i,m}-v^{-1/2}E_{j,m+1}E_{i,m}E_{j,m},
\end{align*}

and 
\begin{align*}
    &v\sigma_j(E_{i,m}E_{j,m}E_{i,m})=-E_{i,m}(vE_{j,m}E_{i,m}-E_{i,m}E_{j,m})+\sigma_j(E_{i,j}E_{i,m}^2).
\end{align*}


Therefore, cancelling the necessary terms, we end up with
\begin{align*}
    &\sigma_j(E_{i,m}^2E_{j,m}-(v+v^{-1})E_{i,m}E_{j,m}E_{i,m}+E_{j,m}E_{i,m}^2)=\\
    &v^{-1/2}(v^{1/2}E_{j,m}E_{i,m}-v^{-1/2}E_{i,m}E_{j,m})E_{i,m}+v^{1/2}E_{i,m}(v^{1/2}E_{j,m}E_{i,m}-v^{-1/2}E_{i,m}E_{j,m})=\\
    &E_{i,m}^2E_{j,m}-(v+v^{-1})E_{i,m}E_{j,m}E_{i,m}+E_{j,m}E_{i,m}^2=0.
\end{align*}

\item Relation (\ref{sd3}) with $k=i$:
we compute 
\begin{align*}
    &\sigma_i(E_{i,m}E_{j,m}E_{i,m})=\frac{K_{i,m}^{-2}}{(v-v^{-1})}(\underbrace{v^{1/2}E_{i,m+1}E_{i,m}E_{j,m}E_{i,m+1}}_a-\underbrace{v^{-1/2}E_{i,m+1}E_{j,m}E_{i,m}E_{i,m+1}}_b);\\
    &\sigma_i(E_{i,m}^2E_{j,m})=\frac{K_{i,m}^{-2}}{(v-v^{-1})}E_{i,m+1}^2(v^{1/2}E_{i,m}E_{j,m}-v^{-1/2}E_{j,m}E_{i,m}).
\end{align*}
Notice that, by using relation (\ref{sd4}),
\begin{align*}
&a=\underbrace{v^{3/2}E_{i,m}E_{j,m}E_{i,m+1}^2}_{a_1}+\underbrace{v^{1/2}(1-v^{2})K_{i,m}E_{j,m}E_{i,m+1}}_{a_2};\\
&\ =\underbrace{v^{-1/2}E_{i,m+1}^2E_{i,m}E_{j,m}}_{a_3}-\underbrace{v^{-1/2}(1-v^{2})K_{i,m}E_{i,m+1}E_{j,m}}_{a_4};\\
&b=\underbrace{v^{1/2}E_{j,m}E_{i,m}E_{i,m+1}^2}_{b_1}+\underbrace{v^{-3/2}(1-v^{2})K_{i,m}E_{j,m}E_{i,m+1}}_{b_2};\\
&\ =\underbrace{v^{-3/2}E_{i,m+1}^2E_{j,m}E_{i,m}}_{b_3}-\underbrace{v^{-5/2}(1-v^{2})K_{i,m}E_{i,m+1}E_{j,m}}_{b_4}.
\end{align*}
Then, we have that 
\begin{align*}
&\sigma_i(E_{i,m}^2E_{j,m})=v^{-1}(a_1-b_1); \ \ \ \sigma_i(E_{j,m}E_{i,m}^2)=v(a_3+b_3);\\
&v^{-1}a_2-va_4=0; \ \ \ v^{-1}b_2-vb_4=0.
\end{align*}
Therefore, all the terms cancel out and the relation is preserved.

\item relation (\ref{sd3}), with $(\alpha_i,\alpha_k)=-1$, $(\alpha_j,\alpha_k)=0$:
We start by computing
\begin{align*}
    \sigma_k(E_{i,m}E_{j,m}E_{i,m})=&\frac{1}{(v-v^{-1})^2}(\underbrace{vE_{k,m}E_{i,m}E_{j,m}E_{k,m}E_{i,m}}_a-\underbrace{E_{k,m}E_{i,m}E_{j,m}E_{i,m}E_{k,m}}_b+\\
&\underbrace{E_{i,m}E_{k,m}E_{j,m}E_{k,m}E_{i,m}}_c+\underbrace{v^{-1}E_{i,m}E_{k,m}E_{j,m}E_{i,m}E_{k,m}}_d).
\end{align*}
By repeated applications of (\ref{sd3}), we compute
\begin{align*}
    &a=\frac{v}{v+v^{-1}}(\underbrace{\frac{E_{k,m}^2E_{i,m}^2E_{j,m}}{v+v^{-1}}}_{a_1}+\underbrace{\frac{E_{j,m}E_{k,m}^2E_{i,m}^2}{v+v^{-1}}}_{a_2}+\underbrace{E_{i,m}E_{k,m}^2E_{j,m}E_{i,m}}_{a_3}) ; \\
    &b=\frac{1}{v+v^{-1}}(\underbrace{E_{k,m}E_{j,m}E_{i,m}^2E_{k,m}}_{b_1}+\underbrace{E_{k,m}E_{i,m}^2E_{j,m}E_{k,m}}_{b_2}); \\
    &d=\frac{v^{-1}}{v+v^{-1}}(\underbrace{\frac{E_{j,m}E_{i,m}^2E_{k,m}^2}{v+v^{-1}}}_{d_1}+\underbrace{\frac{E_{i,m}^2E_{k,m}^2E_{j,m}}{v+v^{-1}}}_{d_2}+\underbrace{E_{i,m}E_{k,m}^2E_{j,m}E_{i,m}}_{d_3}).
\end{align*}
Notice that 
\[\frac{va_2}{v+v^{-1}}+\frac{v^{-1}d_3}{v+v^{-1}}-c=0.\]
Then, we compute
\begin{align*}
    \sigma_k(E_{i,m}^2E_{j,m})&=(vE_{k,m}E_{i,m}E_{k,m}E_{i,m}+v^{-1}E_{i,m}E_{k,m}E_{i,m}E_{k,m}-E_{k,m}E_{i,m}^2E_{k,m}-E_{i,m}E_{k,m}^2E_{i,m})E_{j,m}\\
    &=(\frac{v}{(v+v^{-1})}(E_{k,m}^2E_{i,m}^2+E_{i,m}E_{k,m}^2E_{i,m})+\frac{v^{-1}}{(v+v^{-1}}(E_{i,m}^2E_{k,m}^2+E_{i,m}E_{k,m}^2E_{i,m})+\\
    &\ \ \ \ \ -E_{k,m}E_{i,m}^2E_{k,m}-E_{i,m}E_{k,m}^2E_{i,m})E_{j,m}\\
    &=va_1+v^{-1}d_2-b_2.
\end{align*}
Making a similar computation for $\sigma_k(E_{j,m}E_{i,m}^2)$, we see that all the terms cancel out, and so the relation is preserved.

\item Relation (\ref{sd3}) with $(\alpha_k,\alpha_i)=0$, $(\alpha_k,\alpha_j)=-1$:
\begin{align*}
    &(v+v^{-1})\sigma_k(E_{i,m}E_{j,m}E_{i,m})=\frac{(v+v^{-1})}{(v-v^{-1})}E_{i,m}(v^{1/2}E_{k,m}E_{j,m}-v^{-1/2}E_{j,m}E_{k,m})E_{i,m}\\
    &=\frac{1}{(v-v^{-1})}(v^{1/2}E_{k,m}E_{i,m}^2E_{j,m}+E_{k,m}E_{j,m}E_{i,m}^2-v^{-1/2}E_{i,m}^2E_{j,m}E_{k,m}-v^{-1/2}E_{j,m}E_{k,m}E_{i,m}^2)\\
    &=\sigma_k(E_{i,m}^2E_{j,m}+E_{j,m}E_{i,m}^2).
\end{align*}

\item Relation (\ref{sd3}) with $(\alpha_i,\alpha_k)=0$, $(\alpha_j,\alpha_k)=0$: $\sigma_k$ leaves the relation unchanged.

\item Relation (\ref{sd4}), with $j=i$ and $k=i$:
We compute
\begin{align*}
\sigma_i(E_{i,m+1}E_{i,m})&=K_{i,m+1}^{-1}K_{i,m}^{-1}E_{i,m+2}E_{i,m+1} \\
& =v^{2}K_{i,m+1}^{-1}K_{i,m}^{-1}E_{i,m+1}E_{i,m+2}+(1-v^{2})K_{i,m+1}^{-1}K_{i,m}^{-1}K_{i,m+1}\\
&=\sigma_i(v^{2}E_{i,m}E_{i,m+1}+(1-v^{2})K_{i,m})
\end{align*}

\item Relation (\ref{sd4}), with $j=i$ and $(\alpha_i,\alpha_k)=-1$:
\begin{align*}
\sigma_k(E_{i,m+1}E_{i,m})&=\frac{1}{(v-v^{-1})^2}(v^{1/2}E_{k,m+1}E_{i,m+1}-v^{-1/2}E_{i,m+1}E_{k,m+1})(v^{1/2}E_{k,m}E_{i,m}-v^{-1/2}E_{i,m}E_{k,m})\\
&=\frac{1}{(v^{1}-v^{-1})^2}(\underbrace{vE_{k,m+1}E_{i,m+1}E_{k,m}E_{i,m}}_a-\underbrace{E_{k,m+1}E_{i,m+1}E_{i,m}E_{k,m}}_b+\\
&\ \ \ -\underbrace{E_{i,m+1}E_{k,m+1}E_{k,m}E_{i,m}}_c+\underbrace{v^{-1}E_{i,m+1}E_{k,m+1}E_{i,m}E_{k,m}}_d).\\
\end{align*}
By repeated applications of (\ref{sd4}), we compute
\begin{align*}
    a&=v^{3}E_{k,m}E_{i,m}E_{k,m+1}E_{i,m+1}+v^{2}(1-v^{2})K_{k,m}E_{i,m}E_{i,m+1}\\
    &\ \ \ +v^{2}(1-v^{2})K_{i,m}E_{k,m}E_{k,m+1}+(1-v^{2})^2K_{i,m}K_{k,m}.\\
    b&=v^{2}E_{i,m}E_{k,m}E_{k,m+1}E_{i,m+1}+(1-v^{2})K_{k,m}E_{i,m}E_{i,m+1}\\
    &\ \ \ +v^{2}(1-v^{2})K_{i,m}E_{k,m}E_{k,m+1}+(1-v^{2})^2K_{i,m}K_{k,m}.\\
    c&=v^{2}E_{k,m}E_{i,m}E_{i,m+1}E_{k,m+1}+(1-v^{2})K_{i,m}E_{k,m}E_{k,m+1}\\
    &\ \ \ +v^{2}(1-v^{2})K_{k,m}E_{i,m}E_{i,m+1}+(1-v^{2})^2K_{i,m}K_{k,m}.\\
    a&=vE_{i,m}E_{k,m}E_{i,m+1}E_{k,m+1}+(1-v^{2})K_{i,m}E_{k,m}E_{k,m+1}\\
    &\ \ \ +(1-v^{2})K_{k,m}E_{i,m}E_{i,m+1}+v^{-2}(1-v^{2})^2K_{i,m}K_{k,m}.
\end{align*}

Therefore, cancelling up terms when possible, we end up with
\begin{align*}
    \sigma_k(E_{i,m+1}E_{i,m})&=v^{2}\frac{1}{(v-v^{-1})^2}(vE_{k,m}E_{i,m}E_{k,m+1}E_{i,m+1}-E_{i,m}E_{k,m}E_{k,m+1}E_{i,m+1}+\\
    &\ \ \ -E_{k,m}E_{i,m}E_{i,m+1}E_{k,m+1}+v^{-1}E_{i,m}E_{k,m}E_{i,m+1}E_{k,m+1})+(1-v^{2})K_{i,m}K_{k,m}\\
    &=\sigma_k(v^{2}E_{i,m}E_{i,m+1}))+\sigma_k(K_{i,m}).
\end{align*}

\item Relation (\ref{sd4}), with $(\alpha_i,\alpha_k)=0$ and $(\alpha_j,\alpha_k)=0$: $\sigma_k$ leaves the relation unchanged.

\item Relation (\ref{sd4}), with $(\alpha_i,\alpha_j)=-1$ and $k=i$:

\begin{align*}
    \sigma_k(E_{i,m+1}E_{j,m})&=\frac{K_{i,m+1}^{-1}E_{i,m+2}}{(v-v^{-1})}(v^{1/2}E_{i,m}E_{j,m}-v^{-1/2}E_{j,m}E_{i,m})\\
    &=\frac{v^{-(\alpha_i,\alpha_i)-(\alpha_i,\alpha_j)}}{(v-v^{-1})}(v^{1/2}E_{i,m}E_{j,m}-v^{-1/2}E_{j,m}E_{i,m})K_{i,m+1}^{-1}E_{i,m+2}=\sigma_k(v^{(\alpha_i,\alpha_j)}E_{j,m}E_{i,m+1}).
\end{align*}

Notice that we have used (\ref{sd5}).

\item Relation (\ref{sd4}), with $(\alpha_i,\alpha_j)=-1$ and $k=j$:

\begin{align*}
    \sigma_k(E_{i,m+1}E_{j,m})&=\frac{1}{(v-v^{-1})}(v^{1/2}E_{j,m+1}E_{i,m+1}-v^{-1/2}E_{i,m+1}E_{j,m+1})K_{j,m}^{-1}E_{j,m+1}\\
    &=\frac{v^{-1}K_{j,m}^{-1}E_{j,m+1}}{(v-v^{-1})}(v^{1/2}E_{j,m+1}E_{i,m+1}-v^{-1/2}E_{i,m+1}E_{j,m+1})=\sigma_k(v^{(\alpha_i,\alpha_j)}E_{j,m}E_{i,m+1}).
\end{align*}

Notice that we have used (\ref{sd3}).

\item  Relation (\ref{sd4}), with $(\alpha_i,\alpha_j)=0$ and $k=j$: it follows from the commutativity of $E_{i,m+1}$ and $E_{j,m+1}$.

\item  Relation (\ref{sd4}), with $(\alpha_i,\alpha_j)=0$ and $k=i$: it follows from the commutativity of $E_{i,m+2}$ and $E_{j,m}$.

\item Relation (\ref{sd4}), with $(\alpha_i,\alpha_j)=a_{ij}=0$ and $(\alpha_i,\alpha_k)=-1$, $(\alpha_j,\alpha_k)=-1$:

\begin{align*}
    \sigma_k(E_{i,m+1}E_{j,m})&=\frac{1}{(v-v^{-1})^2}(v^{1/2}E_{k,m+1}E_{i,m+1}-v^{-1/2}E_{i,m+1}E_{k,m+1})(v^{1/2}E_{k,m}E_{j,m}-v^{-1/2}E_{j,m}E_{k,m})\\
    &=\frac{1}{(v-v^{-1})^2}(\underbrace{vE_{k,m+1}E_{i,m+1}E_{k,m}E_{j,m}}_a-\underbrace{E_{k,m+1}E_{i,m+1}E_{j,m}E_{k,m}}_b\\
    &\ \ -\underbrace{E_{i,m+1}E_{k,m+1}E_{k,m}E_{j,m}}_c+\underbrace{v^{-1}E_{i,m+1}E_{k,m+1}E_{j,m}E_{k,m}}_d).
\end{align*}

By repeated applications of (\ref{sd4}), we have 
\begin{align*}
    &a=vv^{((\alpha_i,\alpha_k)+a_{ij}+(\alpha_k,\alpha_k)+(\alpha_i,\alpha_k))}E_{k,m}E_{j,m}E_{k,m+1}E_{i,m+1}+(1-v^{2})vv^{(\alpha_i,\alpha_k)}K_{k,m}E_{i,m+1}E_{j,m};\\
    &b=v^{((\alpha_i,\alpha_k)+a_{ij}+(\alpha_k,\alpha_k)+(\alpha_j,\alpha_k))}E_{j,m}E_{k,m}E_{k,m+1}E_{i,m+1}+(1-v^{2})v^{(\alpha_i,\alpha_k)+(\alpha_j,\alpha_k)}K_{k,m}E_{i,m+1}E_{j,m};\\
    &c=v^{((\alpha_i,\alpha_k)+a_{ij}+(\alpha_k,\alpha_k)+(\alpha_i,\alpha_k))}E_{k,m}E_{j,m}E_{i,m+1}E_{k,m+1}+(1-v^{2})K_{k,m}E_{i,m+1}E_{j,m};\\
    &d=v^{-1}v^{((\alpha_i,\alpha_k)+a_{ij}+(\alpha_k,\alpha_k)+(\alpha_i,\alpha_k))}E_{j,m}E_{k,m}E_{i,m+1}E_{k,m+1}+(1-v^{2})v^{(\alpha_i,\alpha_k)}K_{k,m}E_{i,m+1}E_{j,m};
\end{align*}
Therefore, cancelling terms when possible, 

\begin{align*}
    \sigma_k(E_{i,m+1}E_{j,m})&=\frac{v^{a_{ij}}}{(v-v^{-1})^2}(vE_{k,m}E_{j,m}E_{k,m+1}E_{i,m+1}-E_{j,m}E_{k,m}E_{k,m+1}E_{i,m+1}\\
    &\ \ -E_{k,m}E_{j,m}E_{i,m+1}E_{k,m+1}+v^{-1}E_{j,m}E_{k,m}E_{i,m+1}E_{k,m+1}=\sigma_k(E_{j,m}E_{i,m+1}).
\end{align*}

\item Relation (\ref{sd4}), with $(\alpha_i,\alpha_j)=a_{ij}$, $a_{ij}\neq 2$, and $(\alpha_i,\alpha_k)=-1$, $(\alpha_j,\alpha_k)=0$:

\begin{align*}
    \sigma_k(E_{i,m+1}E_{j,m})&=\frac{1}{(v-v^{-1})}(v^{1/2}E_{k,m+1}E_{i,m+1}-v^{-1/2}E_{i,m+1}E_{k,m+1})E_{j,m}\\
    &=\frac{v^{a_{ij}}E_{j,m}}{(v-v^{-1})}(v^{1/2}E_{k,m+1}E_{i,m+1}-v^{-1/2}E_{i,m+1}E_{k,m+1})=\sigma_k(v^{a_{ij}}E_{j,m}E_{i,m+1}).
\end{align*}

\item Relation (\ref{sd4}), with $(\alpha_i,\alpha_j)=a_{ij}$, $a_{ij}\neq 2$, and $(\alpha_i,\alpha_k)=0$, $(\alpha_j,\alpha_k)=-1$:

\begin{align*}
    \sigma_k(E_{i,m+1}E_{j,m})&=\frac{E_{i,m+1}}{(v-v^{-1})}(v^{1/2}E_{k,m}E_{j,m}-v^{-1/2}E_{j,m}E_{k,m})\\
    &=\frac{v^{a_{ij}}}{(v-v^{-1})}(v^{1/2}E_{k,m}E_{j,m}-v^{-1/2}E_{j,m}E_{k,m})E_{i,m+1}=\sigma_k(v^{a_{ij}}E_{j,m}E_{i,m+1}).
\end{align*}

\item Relation (\ref{sd5}), with $j=i$ and $k=i$:
We compute
\begin{align*}
\sigma_i(E_{i,r}E_{i,l})&=K_{i,r}^{-1}K_{i,l}^{-1}E_{i,r+1}E_{i,l+1} \\
& =v^{-(-1)^{(r-l)}(\alpha_i,\alpha_i)}K_{i,l}^{-1}K_{i,r}^{-1}E_{i,l+1}E_{i,r+1}\\
&=\sigma_i(v^{-(-1)^{(r-l)}(\alpha_i,\alpha_i)}E_{i,m}E_{i,m+1})
\end{align*}

\item Relation (\ref{sd5}), with $(\alpha_i,\alpha_k)=0$ and $(\alpha_j,\alpha_k)=0$: $\sigma_k$ leaves the relation unchanged.

\item Relation (\ref{sd5}), with $(\alpha_i,\alpha_j)=a_{ij}$ and $(\alpha_i,\alpha_k)=-1$, $(\alpha_j,\alpha_k)=-1$:

\begin{align*}
    \sigma_k(E_{i,r}E_{j,l})&=\frac{1}{(v-v^{-1})^2}(v^{1/2}E_{k,r}E_{i,r}-v^{-1/2}E_{i,r}E_{k,r})(v^{1/2}E_{k,l}E_{j,l}-v^{-1/2}E_{j,l}E_{k,l})\\
    &=\frac{1}{(v-v^{-1})^2}(\underbrace{vE_{k,r}E_{i,r}E_{k,l}E_{j,l}}_a-\underbrace{E_{k,r}E_{i,r}E_{j,l}E_{k,l}}_b\\
    &\ \ -\underbrace{E_{i,r}E_{k,r}E_{k,l}E_{j,l}}_c+\underbrace{v^{-1}E_{i,r}E_{k,r}E_{j,l}E_{k,l}}_d).
\end{align*}

By repeated applications of (\ref{sd5}), we have 
\begin{align*}
    a&=vv^{-((-1)^{(r-l)}(\alpha_i,\alpha_k)+(-1)^{(r-l)}(\alpha_i,\alpha_j)+(-1)^{(r-l)}(\alpha_k,\alpha_k)+(-1)^{(r-l)}(\alpha_k,\alpha_j))}E_{k,l}E_{i,l}E_{k,r}E_{i,r}.\\
    b&=v^{-((-1)^{(r-l)}(\alpha_i,\alpha_k)+(-1)^{(r-l)}(\alpha_i,\alpha_i)+(-1)^{(r-l)}(\alpha_k,\alpha_k)+(-1)^{(r-l)}(\alpha_k,\alpha_j))}E_{i,l}E_{k,l}E_{k,r}E_{i,r}.\\
    c&=v^{-((-1)^{(r-l)}(\alpha_i,\alpha_k)+(-1)^{(r-l)}(\alpha_i,\alpha_i)+(-1)^{(r-l)}(\alpha_k,\alpha_k)+(-1)^{(r-l)}(\alpha_k,\alpha_j))}E_{k,l}E_{i,l}E_{i,r}E_{k,r}.\\
    a&=v^{-1}v^{-((-1)^{(r-l)}(\alpha_i,\alpha_k)+(-1)^{(r-l)}(\alpha_i,\alpha_i)+(-1)^{(r-l)}(\alpha_k,\alpha_k)+(-1)^{(r-l)}(\alpha_k,\alpha_j))}E_{i,l}E_{k,l}E_{i,r}E_{k,r}.
\end{align*}

Notice that $(-1)^{(r-l)}(\alpha_i,\alpha_k)+(-1)^{(r-l)}(\alpha_j,\alpha_k)+(-1)^{(r-l)}(\alpha_k,\alpha_k)=0$. Therefore, cancelling terms when possible, 

\begin{align*}
    \sigma_k(E_{i,r}E_{j,l})&=\frac{v^{-(-1)^{(r-l)}(\alpha_i,\alpha_j)}}{(v-v^{-1})^2}(vE_{k,l}E_{j,l}E_{k,r}E_{i,r}-E_{j,l}E_{k,l}E_{k,r}E_{i,r}+\\
    &\ \ \ -E_{k,l}E_{j,l}E_{i,r}E_{k,r}+v^{-1}E_{j,l}E_{k,l}E_{i,r}E_{k,r})\\
    &=\sigma_k(v^{-(-1)^{(r-l)}(\alpha_i,\alpha_j)}E_{j,l}E_{i,r})).
\end{align*}

\item Relation (\ref{sd5}), with $(\alpha_i,\alpha_j)=-1$ and $k=i$:

By using (\ref{sd5}), we compute 
\begin{align*}
    \sigma_k(E_{i,r}E_{j,l})&=\frac{K_{i,r}^{-1}E_{i,r+1}}{(v-v^{-1})}(v^{1/2}E_{i,l}E_{j,l}-v^{-1/2}E_{j,l}E_{i,l})\\
    &=\frac{v^{-(-1)^{(r+1-l)}(\alpha_i,\alpha_i)-(-1)^{(r+1-l)}(\alpha_i,\alpha_j)}}{(v-v^{-1})}(v^{1/2}E_{i,l}E_{j,l}-v^{-1/2}E_{j,l}E_{i,l})K_{i,r}^{-1}E_{i,r}\\
    &=\sigma_k(v^{-(-1)^{(r-l)}(\alpha_i,\alpha_j)}E_{j,l}E_{i,r}).
\end{align*}

\item Relation (\ref{sd5}), with $(\alpha_i,\alpha_j)=-1$, $k=j$ and $r>l+2$:

By using (\ref{sd5}), we compute 

\begin{align*}
    \sigma_k(E_{i,r}E_{j,l})&=\frac{1}{(v-v^{-1})}(v^{1/2}E_{j,r}E_{i,r}-v^{-1/2}E_{i,r}E_{j,r})K_{j,l}^{-1}E_{j,l+1}\\
    &=\frac{v^{-(-1)^{(r-l-1)}(\alpha_i,\alpha_i)-(-1)^{(r-l-1)}(\alpha_i,\alpha_j)}K_{j,l}^{-1}E_{j,l+1}}{(v-v^{-1})}(v^{1/2}E_{j,r}E_{i,r}-v^{-1/2}E_{i,r}E_{j,r})\\
    &=\sigma_k(v^{-(-1)^{(r-l)}(\alpha_i,\alpha_j)}E_{j,r}E_{i,l}).
\end{align*}

\item Relation (\ref{sd5}), with $(\alpha_i,\alpha_j)=-1$, $k=j$ and $r=l+2$:

By using (\ref{sd4}), we compute 

\begin{align*}
    \sigma_k(E_{i,l+2}E_{j,l})&=\frac{1}{(v-v^{-1})}(v^{1/2}E_{j,l+2}E_{i,l+2}-v^{-1/2}E_{i,l+2}E_{j,l+2})K_{j,l}^{-1}E_{j,l+1}\\
    &=\frac{v^{(\alpha_i,\alpha_i)+(\alpha_i,\alpha_j)}K_{j,l}^{-1}E_{j,l+1}}{(v-v^{-1})}(v^{1/2}E_{j,r}E_{i,r}-v^{-1/2}E_{i,r}E_{j,r})+\\
    &\ \ \ +\frac{(1-v^{2})K_{j,l}^{-1}K_{j,l+1}^{-1}}{(v-v^{-1})}(v^{-1/2-(\alpha_i,\alpha_j)}v^{1/2})\\
    &=\sigma_k(v^{-(-1)^{(r-l)}(\alpha_i,\alpha_j)}E_{j,r}E_{i,l}).
\end{align*}

\item  Relation (\ref{sd5}), with $(\alpha_i,\alpha_j)=0$ and $k=j$: it follows from the commutativity of $E_{i,r}$ and $E_{j,l+1}$.

\item  Relation (\ref{sd5}), with $(\alpha_i,\alpha_j)=0$ and $k=i$: it follows from the commutativity of $E_{i,r+1}$ and $E_{j,l}$.

\item Relation (\ref{sd5}), with $(\alpha_i,\alpha_j)=a_{ij}$, $a_{ij}\neq 2$, and $(\alpha_i,\alpha_k)=-1$, $(\alpha_j,\alpha_k)=0$:

\begin{align*}
    \sigma_k(E_{i,r}E_{j,l})&=\frac{1}{(-v^{-1})}(v^{1/2}E_{k,r}E_{i,r}-v^{-1/2}E_{i,r}E_{k,r})E_{j,l}\\
    &=\frac{v^{-(-1)^{(r-l)}(\alpha_i,\alpha_j)}E_{j,l}}{(v-v^{-1})}(v^{1/2}E_{k,r}E_{i,r}-v^{-1/2}E_{i,r}E_{k,r})=\sigma_k(v^{-(-1)^{(r-l)}(\alpha_i,\alpha_j)}E_{j,l}E_{i,r}).
\end{align*}

\item Relation (\ref{sd5}), with $(\alpha_i,\alpha_j)=a_{ij}$, $a_{ij}\neq 2$, and $(\alpha_i,\alpha_k)=0$, $(\alpha_j,\alpha_k)=-1$:

\begin{align*}
    \sigma_k(E_{i,r}E_{j,l})&=\frac{E_{i,r}}{(v-v^{-1})}(v^{1/2}E_{k,l}E_{j,l}-v^{-1/2}E_{j,l}E_{k,l})\\
    &=\frac{v^{-(-1)^{(r-l)}(\alpha_i,\alpha_j)}}{(v-v^{-1})}(v^{1/2}E_{k,l}E_{j,l}-v^{-1/2}E_{j,l}E_{k,l})E_{i,r}=\sigma_k(v^{-(-1)^{(r-l)}(\alpha_i,\alpha_j)}E_{j,l}E_{i,r}).
\end{align*}

\end{itemize}

Therefore, each $\sigma_i$ is an algebra homomorphism. 
We check now that the $\sigma_i$ satisfy the braid group relations: 

\begin{itemize}

\item[$\circ$] Case $(\alpha_i,\alpha_j)=-1$, $(\alpha_i,\alpha_k)=-1$ $(\alpha_j,\alpha_k)=0$, $\sigma_i\sigma_j\sigma_i(E_{k,m})=\sigma_j\sigma_i\sigma_j(E_{k,m})$:
\begin{align*}
\sigma_i\sigma_j\sigma_i(E_{k,m})=\frac{\sigma_i\sigma_j(v^{1/2}E_{i,m}E_{k,m}-v^{-1/2}E_{k,m}E_{i,m})}{v-v^{-1}} = \\
\frac{\sigma_i((vE_{j,m}E_{i,m}-E_{i,m}E_{j,m})E_{k,m}-E_{k,m}(E_{j,m}E_{i,m}-v^{-1}E_{k,m}E_{i,m}E_{j,m}))}{(v-v^{-1})^2}=\\
    \frac{K_{i,m}^{-1}}{(v-v^{-1})^4}\bigl( 
    \underbrace{v^{2}E_{i,m}E_{j,m}E_{i,m+1}E_{i,m}E_{k,m}}_{a_1}-\underbrace{vE_{i,m}E_{j,m}E_{i,m+1}E_{k,m}E_{i,m}}_{a_2}\\
    -\underbrace{vE_{j,m}E_{i,m}E_{i,m+1}E_{i,m}E_{k,m}}_{a_3}+\underbrace{E_{j,m}E_{i,m}E_{i,m+1}E_{k,m}E_{i,m}}_{a_4}\\
    -\underbrace{vE_{i,m+1}E_{i,m}E_{j,m}E_{i,m}E_{k,m}}_{a_5}+\underbrace{E_{i,m+1}E_{i,m}E_{j,m}E_{k,m}E_{i,m}}_{a_6}\\
    +\underbrace{E_{i,m+1}E_{j,m}E_{i,m}E_{i,m}E_{k,m}}_{a_7}-\underbrace{v^{-1}E_{i,m+1}E_{j,m}E_{i,m}E_{k,m}E_{i,m}}_{a_8}\\
    -\underbrace{vE_{i,m}E_{k,m}E_{i,m}E_{j,m}E_{i,m+1}}_{a_9}+\underbrace{E_{i,m}E_{k,m}E_{j,m}E_{i,m}E_{i,m+1}}_{a_{10}}\\
    +\underbrace{E_{k,m}E_{i,m}E_{i,m}E_{j,m}E_{i,m+1}}_{a_{11}}-\underbrace{v^{-1}E_{k,m}E_{i,m}E_{j,m}E_{i,m}E_{i,m+1}}_{a_{12}}\\
    +\underbrace{E_{i,m}E_{k,m}E_{i,m+1}E_{i,m}E_{j,m}}_{a_{13}}-\underbrace{v^{-1}E_{i,m}E_{k,m}E_{i,m+1}E_{j,m}E_{i,m}}_{a_{14}}\\
    -\underbrace{v^{-1}E_{k,m}E_{i,m}E_{i,m+1}E_{i,m}E_{j,m}}_{a_{15}}+\underbrace{v^{-2}E_{k,m}E_{i,m}E_{i,m+1}E_{j,m}E_{i,m}}_{a_{16}}.
\end{align*}
Using the relations, we obtain
\begin{align*}
    a_5=a_1+v(1-v^{2})K_{i,m}E_{j,m}E_{i,m}E_{k,m},\\
    a_6=a_2+(1-v^{2})K_{i,m}E_{j,m}E_{k,m}E_{i,m},\\
    a_7=a_3+v^{-1}(1-v^{2})K_{i,m}E_{j,m}E_{i,m}E_{k,m},\\
    a_8=a_4+v^{-2}(1-v^{2})K_{i,m}E_{j,m}E_{k,m}E_{i,m},\\
    a_{13}=a_9+(1-v^{2})E_{i,m}E_{k,m}E_{j,m},\\
    a_{14}=a_{10}+v^{-2}(1-v^{2})E_{i,m}E_{k,m}E_{j,m},\\
    a_{15}=a_{11}+v^{-1}(1-v^{2})E_{k,m}E_{i,m}E_{j,m},\\
    a_{16}=a_{12}+v^{-3}(1-v^{2})E_{k,m}E_{i,m}E_{j,m}.
\end{align*}
Therefore, we remain with
\begin{align*}
    \sigma_i\sigma_j\sigma_i(E_{k,m})=\frac{-1}{(v-v^{-1})^{-3}}(-v^{2}E_{j,m}E_{i,m}E_{k,m}+vE_{j,m}E_{k,m}E_{i,m}+E_{j,m}E_{i,m}E_{k,m}+\\-v^{-1}E_{j,m}E_{k,m}E_{i,m}
    +vE_{i,m}E_{k,m}E_{j,m}-v^{-1}E_{i,m}E_{k,m}E_{j,m}-E_{k,m}E_{i,m}E_{j,m}+v^{-2}E_{k,m}E_{i,m}E_{j,m})=\\
    \frac{1}{(v-v^{-1})^{-2}}(vE_{j,m}E_{i,m}E_{k,m}-E_{j,m}E_{k,m}E_{i,m}-E_{i,m}E_{k,m}E_{j,m}+v^{-1}E_{k,m}E_{i,m}E_{j,m}).
\end{align*}
On the other hand

\begin{align*}
\sigma_j\sigma_i\sigma_j(E_{k,m})=\sigma_j\sigma_i(E_{k,m})=\sigma_j(v^{1/2}E_{i,m}E_{k,m}-v^{-1/2}E_{i,m}E_{k,m})/(v-v^{-1})=\\
\frac{1}{(v-v^{-1})^2}(v^{1/2}(v^{1/2}E_{j,m}E_{i,m}-v^{-1/2}E_{i,m}E_{j,m})E_{k,m}-v^{-1/2}E_{k,m}(v^{-1/2}E_{j,m}E_{i,m}-v^{-1/2}E_{i,m}E_{j,m}))=\\
\frac{1}{(v-v^{-1})^2}(vE_{j,m}E_{i,m}E_{k,m}-E_{j,m}E_{k,m}E_{i,m}-E_{i,m}E_{j,m}E_{k,m}+v^{-1}E_{k,m}E_{i,m}E_{j,m}).
\end{align*}
Using that $E_{j,m}$ and $E_{k,m}$ commute, we conclude that $\sigma_j\sigma_i\sigma_j(E_{k,m})=\sigma_i\sigma_j\sigma_i(E_{k,m})$

\item[$\circ$] Case $(\alpha_i,\alpha_j)=-1$, $(\alpha_i,\alpha_k)=0$ $(\alpha_j,\alpha_k)=0$, $\sigma_i\sigma_j\sigma_i(E_{k,m})=\sigma_j\sigma_i\sigma_j(E_{k,m})$: There is nothing to check, since $\sigma_i$ and $\sigma_j$ stabilize $E_{k,m}$. 

\item[$\circ$] \ Case $(\alpha_i,\alpha_j)=-1$, $\sigma_i\sigma_j\sigma_i(E_{i,m})=\sigma_j\sigma_i\sigma_j(E_{i,m})$ :  

\begin{align*}
\sigma_i\sigma_j\sigma_i(E_{i,m})=\sigma_i\sigma_j(E_{i,m+1}K_{i,m}^{-1})=\sigma_i(\frac{K_{i,m}^{-1}K_{j,m}^{-1}}{v-v^{-1}}(v^{1/2}E_{j,m+1}E_{i,m+1}-v^{-1/2}E_{i,m+1}E_{j,m+1}))=\\
\frac{K_{j,m}^{-1}K_{i,m+1}^{-1}}{(v-v^{-1})^2}((vE_{i,m+1}E_{j,m+1}-E_{j,m+1}E_{i,m+1})E_{i,m+2}-E_{i,m+2}(E_{i,m+1}E_{j,m+1}-v^{-1}E_{j,m+1}E_{i,m+1}))=\\
\frac{K_{j,m}^{-1}K_{i,m+1}^{-1}}{(v-v^{-1})^2}(-(1-v)K_{i,m+1}E_{j,m+1})+v^{-2}(1-v^{2})K_{i,m+1}E_{j,m+1})=E_{j,m+1}K_{j,m}^{-1}.
\end{align*}

\begin{align*}
\sigma_j\sigma_i\sigma_j(E_{i,m})=\sigma_j\sigma_i((v^{1/2}E_{j,m}E_{i,m}-v^{-1/2}E_{i,m}E_{j,m}))/(v-v^{-1})=\\
\frac{1}{(v-v^{-1})^2}\sigma_j((vE_{i,m}E_{j,m}-E_{j,m}E_{i,m})E_{i,m+1}K_{i,m}^{-1}-E_{i,m+1}K_{i,m}^{-1}((E_{i,m}E_{j,m}-v^{-1}E_{j,m}E_{i,m}))=\\
\frac{1}{(v-v^{-1})^2}\sigma_j(-(1-v)E_{j,m}+v^{-2}(1-v)E_{j,m})=\sigma_j(E_{j,m})=E_{j,m+1}K_{j,m}^{-1}
\end{align*}

\item[$\circ$] \ Case $(\alpha_i,\alpha_j)=-1$, $\sigma_i\sigma_j\sigma_i(K_{i,m})=\sigma_j\sigma_i\sigma_j(K_{i,m})$: 
\begin{align*}   \sigma_i\sigma_j\sigma_i(K_{i,m})=\sigma_i\sigma_j(K^{-1}_{i,m})=\sigma_i(K_{i,m}^{-1}K_{j,m}^{-1})=K_{j,m}^{-1}=\\
    \sigma_j(K_{j,m})=\sigma_j\sigma_i(K_{i,m}K_{j,m})=\sigma_j\sigma_i\sigma_j(K_{i,m}).
\end{align*}

\item[$\circ$] Relation $\sigma_j\sigma_i(E_{i,m})=\sigma_i\sigma_j(E_{i,m})$, with $(\alpha_i,\alpha_j)=0$ : 

\begin{align*}
\sigma_j\sigma_i(E_{i,m})&=\sigma_j(E_{i,m+1}K_{i,m}^{-1})=E_{i,m+1}K_{i,m}^{-1}=\sigma_i(E_{i,m})=\sigma_i\sigma_j(E_{i,m}).
\end{align*}

$\sigma_j\sigma_i(E_{i,m})=\sigma_i\sigma_j(E_{i,m})$, with $(\alpha_i,\alpha_j)=0$, $(\alpha_i,\alpha_k)=-1$, $(\alpha_j,\alpha_k)=-1$ : 

\end{itemize}

\end{proof}

Finally, we show that the inverse to $\sigma_i$ is given by the formula in the statement. 
\begin{align*}
    &\sigma_i^{-1}\sigma_i(E_{i,m})=\sigma_i^{-1}(E_{i,m+1}K_{i,m}^{-1})=E_{i,m}K_{i,m}^{-1}K_{i,m}=E_{i,m}; \\
    &\sigma_i\sigma_i^{-1}(E_{i,m})=\sigma_i(E_{i,m-1}K_{i,m-1}^{-1})=E_{i,m}K_{i,m-1}^{-1}K_{i,m-1}=E_{i,m}.
\end{align*}

For $(\alpha_i,\alpha_j)=-1$
\begin{align*}
    &\sigma_i^{-1}\sigma_i(E_{j,m})=\sigma_i^{-1}(\frac{v^{1/2}E_{i,m}E_{j,m}-v^{-1/2}E_{j,m}E_{i,m}}{v-v^{-1}})=\\
    &\ \ \ \frac{E_{i,m-1}K_{i,m-1}^{-1}(vE_{j,m}E_{i,m}-E_{i,m}E_{j,m})-(E_{j,m}E_{i,m}-v^{-1}E_{i,m}E_{j,m})E_{i,m-1}K_{i,m-1}^{-1}}{(v-v^{-1})^2}\\
    &=\frac{K_{i,m-1}^{-1}(vE_{i,m-1}E_{j,m}E_{i,m}-E_{i,m-1}E_{i,m}E_{j,m}-E_{j,m}E_{i,m}E_{i,m-1}+v^{-1}E_{i,m}E_{j,m}E_{i,m+1})}{(v-v^{-1})^2}
\end{align*}
By applying the relation (\ref{sd4}) to the last two terms and cancelling summands when possible, we find that the last expression is equal to
\begin{align*}
    &\frac{K_{i,m-1}^{-1}(-(1-v^{2})K_{i,m-1}E_{j,m}+(1-v^{2})v^{-2}K_{i,m-1}E_{j,m}}{(v-v^{-1})^2}=E_{j,m}.
\end{align*}

\appendix
\section{A quick reminder on (quantum) cluster algebras}
\label{appendix_clust_alg}

\subsection{The definition of cluster algebra}
Let $J$ be a (possibly infinite) countable set and let $J^{\text{fr}}$ be a subset of $J$. 
Let $Q$ be a quiver with vertex set $J$. The vertices in $J^{fr}$ are called \emph{frozen}. A vertex which is not frozen is said to be \emph{mutable}. We suppose that
\begin{itemize}
    \item between two vertices, there are finitely many arrows;
    \item $Q$ does not have loops nor $2$-cycles;
    \item each non-frozen vertex is incident with at most a finite number of arrows.
\end{itemize} 

For any vertex $k\in J\backslash J^{\text{fr}}$, the \emph{mutation} of the quiver $Q$ at $k$ is the quiver $\mu_k(Q)$ obtained from $Q$ performing, in order, the following operations: 

\begin{enumerate}
    \item[(i)]  for any path $i\rightarrow k\rightarrow j$ in $Q$, add an arrow $i\rightarrow j$;
    \item[(ii)] reverse all the arrows incident to $k$; 
    \item[(iii)] remove the arrows in a maximal subset of disjoint $2$-cycles.
\end{enumerate}
Let $B=(b_{ij})_{i,j\in J}$ be the \emph{exchange matrix} of $Q$, that is, the skew-symmetric $J\times J$-matrix whose entry $b_{ij}$ is the difference between the numbers of arrows from $i$ to $j$ and the numbers of arrows from $j$ to $i$ in $Q$. For any mutable vertex $k$, the exchange matrix of $\mu_k(Q)$, denoted by $\mu_k(B)=(b_{ij}')_{i,j\in J}$, is given by

\[
b_{ij}' = 
\begin{cases}
    -b_{ij}, & \text{if } i = k \text{ or } j = k, \\
    \\
    \displaystyle b_{ij} + \frac{|b_{ik}| b_{kj} + b_{ik} |b_{kj}|}{2}, & \text{otherwise}.
\end{cases}
\]
We call $\mu_k(B)$ the \emph{matrix mutation} of $B$ at $k$.
Equivalently, if we define the matrices $E_k=(e^k_{ij})_{i,j\in J}$ and $F_k=(f^k_{ij})_{i,j\in J}$ by

\[
e_{ij}^k=\begin{cases}
    \delta_{ij} & \text{if } j\neq k, \\
    -1 & \text{if } i=j=k,\\
    \mathrm{max}(0,-b_{ik}) & \text{if } i\neq j=k,
\end{cases}\ \text{ and }\ f_{ij}^k=\begin{cases}
    \delta_{ij} & \text{if } j\neq k, \\
    -1 & \text{if } i=j=k,\\
    \mathrm{max}(0,-b_{ik}) & \text{if } i=j\neq k,
\end{cases}\
\]
we can express the mutation of the matrix $B$ at $k$ as 

\[\mu_k(B)=E_kBF_k.\]

Let $\mathbb{F}$ be the field of rational functions with rational coefficients in the indeterminates $x_i, (i\in J)$. A \emph{seed} in $\mathbb{F}$ is a pair $(u,R)$ where $R$ is a quiver as above and $u=(u_i)_{i\in J}$ is a set of algebraically independent elements of $\mathbb{F}$ which generates $\mathbb{F}$ as a field. We call $((x_i)_{i\in J},Q))$ the \emph{initial seed}.
For any seed $(u,R)$ and any mutable vertex $k$ of $R$, the \emph{seed mutation} of $(u,R)$ at $k$ is the seed 

\[ (u', R'), \]
where $R'=\mu_k(R)$ and $u'$ is defined by

\[
u'_i = 
\begin{cases}
    u_i, & \text{if } i \neq k, \\
    \\
    \displaystyle \frac{
    \left( \prod\limits_{b_{ij} > 0} u_j^{b_{ij}} \right) + \left( \prod\limits_{b_{ij} < 0} u_j^{-b_{ij}} \right)
    }{u_k}, & \text{if } i = k.
\end{cases}
\]

We introduce the following terminology:

\begin{itemize}
    \item a \emph{cluster} is a set of elements $u=(u_j)_{j\in J}$ appearing in a seed $(u,R)$ obtained from the initial seed by iterated mutations. 
    \item the elements $u_j$ of a a cluster $u$ are called  \emph{cluster variables}.
    \item a \emph{cluster monomial} is a product of non-negative powers of cluster variables belonging to the same cluster.
\end{itemize}

\begin{de}
The \emph{cluster algebra} $\mathcal{A}(Q)$ associated with $Q$ is the $\mathbb{Z}$-subalgebra of $\mathbb{F}$ generated by the cluster variables.
\end{de}

The next result, known as the \emph{Laurent phenomenon}, is central in the theory of cluster algebras.

\begin{teo}[{\cite[Thm.~3.1]{Fomin_Zelevinsky_clustalg_I}}]
Let $u=(u_i)_{i\in J}$ be a cluster of a cluster algebra $\mathcal{A}$. Each cluster variable of $\mathcal{A}$ can be expressed as a Laurent polynomial in the variables $(u_i)_{i\in J\backslash J_f}$ with coefficients in $\mathbb{Z}[(u_i)_{i\in J_{f}}]$.
\end{teo}

\subsection{Green and red vertices} 
\label{subsect_green_red_vertices}
We now introduce a convenient way to parametrize the seeds and the clusters variables of a cluster algebra. Let $\mathbb{T}$\label{symb_T_regular_tree} be the regular tree whose edges incident to each vertex are parametrized by the elements of $J\backslash J^{fr}$. We call it the \emph{regular $(J\backslash J^{fr})$-tree}. Pick a vertex $e_0$ of $\mathbb{T}$ and associate to it the initial seed of $\mathcal{A}(Q)$. We associate to each other vertex $e$ of $\mathbb{T}$ a seed $(x(e),Q(e))$ of $\mathcal{A}(Q)$ in such a way that, if $e$ is connected to an other vertex $e'$ by an edge labeled $k$, then the associated seeds $(x(e),Q(e))$  and $(x(e'),Q(e'))$ are related by the mutation at $k$. We denote by $(x_i(e))_{i\in J}$ the cluster variables of the cluster $x(e)$.

The \emph{framed quiver} of $Q$ is the quiver $\overline{Q}$ obtained by adding, for any mutable vertex $i$ of $Q$, a frozen vertex $i'$ and an arrow $i'\rightarrow i$. For any vertex $e$ of $\mathbb{T}$ and any $i\in J\backslash J^{fr}$, the vertex $i$ of $Q(e)$ is \emph{green} (resp. \emph{red}) with respect to $e_0$ if in $\overline{Q}(e)$ there are no arrows $i\rightarrow j'$ (resp. $j' \rightarrow i$), where $j$ is in $J\backslash J^{fr}$.
Notice that every mutable vertex of $Q$ is green. Remarkably, for any vertex $e$ of $\mathbb{T}$, each mutable vertex of $Q(e)$ is either green or red \cite{Derksen_Weyman_Zelevinsky_quivpot2}.

The framed quiver $\overline{Q}$ was used in \cite{Fomin_Zelevinsky_clustalg_IV} to give the definition of $g$-vector of a cluster variable. Here, we provide an equivalent characterization, conjectured in \cite[Conj.~7.12]{Fomin_Zelevinsky_clustalg_IV} and proved in \cite[Thm.~1.7]{Derksen_Weyman_Zelevinsky_quivpot2}. Let $(z_i)_{i\in J}$ be the standard basis of the lattice $\mathbb{Z}^J$. For any vertex $e$ of $\mathbb{T}$ and any $i\in J$, the \emph{$g$-vector} $g_i(e)$ of the cluster variable $x_i(e)$ is recursively defined as follows:
\begin{itemize}
    \item if $e=e_0$, then $g_i(e_0)=z_i$;
    \item if $e$ is connected to an other vertex $e'$ by an edge labeled $k$, then 
\begin{equation}
\label{eq_g_vector}
g_i(e')=\begin{cases}
    g_i(e) & \text{if $i\neq k$},\\
    -g_k(e)+\sum_{k\rightarrow k'} g_{k'}(e) & \text{if $i=k$ and $k$ is green in $Q(e)$ with respect to $e_0$},\\
    -g_k(e)+\sum_{k'\rightarrow k} g_{k'}(e) & \text{if $i=k$ and $k$ is red in $Q(e)$ with respect to $e_0$}.
\end{cases}
\end{equation}
\end{itemize}

\subsection{Quantum cluster algebras}
Let $t$ be an indeterminate with a formal square root $t^{1/2}$ and let $\Lambda=(\lambda_{ij})_{i,j\in J}$ be a $\mathbb{Z}$-valued skew symmetric $J\times J$-matrix.

\begin{de}
    The \emph{quantum torus} $\mathcal{T}(\Lambda)$ associated to the matrix $\Lambda$ is the $\mathbb{Z}[t^{\pm 1/2}]$-algebra generated by the indeterminates $X_{i}^{\pm 1}$ $(i\in J)$, subject to the following relations
\end{de}

\begin{enumerate}
    \item[(i)]
     $X_{i}X_{i}^{-1} = 1 = X_{i}^{-1}X_{i},\ \ i\in J$;
    \item[(ii)]
    $X_{i}X_{j} = t^{\Lambda_{ij}} X_{j}X_{i},\ \ i,j\in J$.
\end{enumerate}

For any $J$-tuple of integers $\alpha=(\alpha_i)_{i\in J}$ and an arbitrary total ordering of the set $J$, we define the \emph{commutative monomial}
\[ X^{\alpha}= t^{-\frac{1}{2}\sum_{i<j}\alpha_i\alpha_j\Lambda_{ij}}\overrightarrow{\prod_{j\in J}}X_j^{\alpha_j}.\]
This definition ensures that the resulting element does not depend on the choice of the total ordering.

Let $B=(b_{ij})_{i,j\in J}$ be an exchange matrix. The pair $(B,\Lambda)$ is \emph{compatible} if there exist positive integers $(d_j)_{j\in J\backslash J^{fr}}$ such that

\[ \sum_{k\in J} b_{kj}\lambda_{ki} = d_j\delta_{ij} \ \ \ j\in J\backslash J^{fr}, i\in J.\]
For any $k\in J\backslash J^{fr}$, the \emph{mutation} of $\Lambda$ at $k$, denoted $\mu_k(\Lambda)$, is defined to be
\[ \mu_k(\Lambda) = E_k^T \Lambda E_k.\]
The pair $(\mu_k(B),\mu_k(\Lambda))$, denoted $\mu_k(B,\Lambda)$, is still a compatible pair. 

The \emph{initial quantum seed} associated to the compatible pair $(B,\Lambda)$ is the triple
\[ ((X_i)_{i\in J},B,\Lambda),\]
where the $X_i$ are the generators of the quantum torus $\mathcal{T}(\Lambda)$.

Simililary to the construction of cluster algebras, starting from the inital quantum seed we construct recursively a set of triples $( X(e),B(e), \Lambda(e))$ labelled by the vertices of the regular $(J\backslash J^{fr})$-tree, where $(B(e),\Lambda(e))$ is a compatible pair and $X(t)=(X_i(e))_{i\in J}$ is a tuple of algebraically independent elements of $\mathcal{T}(\Lambda)$.
We proceed in the following way:

\begin{enumerate}
    \item the triple $(X(e_0),B(e_0), \Lambda(e_0))$ is the initial quantum seed;
    \item when the vertices $e$ and $e'$ are linked by an edge labelled by $k$, we have 
    \[(X(e'),B(e'), \Lambda(e'))=(\mu_k(X(e)),\mu_k(B(e)), \mu_k(\Lambda(e))),\]
    with $\mu_k(X(e)))=(X_i(e'))_{i\in J}$ defined by
    \[ X_i(e') = \begin{cases}
        t^{-\frac{1}{2}\sum_{i<j,i}\alpha_i\alpha_j\Lambda_{ij}(t)}\overrightarrow{\prod_{j\in J}}X_j^{\alpha_j}(e)+t^{-\frac{1}{2}\sum_{i<j,i}\beta_i\beta_j\Lambda_{ij}(e)}\overrightarrow{\prod_{j\in J}}X_j^{\beta_j}(t)&\\
        X_i(e) & \text{if } i\neq k,
    \end{cases}\]
    where, for any $i\in J$,
    \[ \alpha_i=\begin{cases}
        -1 & \text{if } i=k,\\
        \mathrm{max}(0,b_{ik}(e)) \text{if } i\neq k,
    \end{cases} \text{ and } \beta_i=\begin{cases}
        -1 & \text{if } i=k,\\
        \mathrm{max}(0,-b_{ik}(e)) \text{if } i\neq k.
    \end{cases}\]
\end{enumerate}

For any $i\in J$ and vertex $e$ of the vertex regular $(J\backslash J^{fr})$-tree, we call $X_i(e)$ a \emph{quantum cluster variables}. For any $i\in J^{fr}$, we say that the quantum cluster variable $X_i$ \emph{frozen}. By The quantum analogue of the Laurent phenomenon \cite{Berenstein_Zelevinsky_quant_clust_alg}, we have that each quantum cluster variable is indeed an element of the quantum torus $\mathcal{T}(\Lambda)$. 

\begin{de}
The \emph{quantum cluster algebra} $\mathcal{A}(Q,\Lambda)$ associated with $Q$ is the $\mathbb{Z}[t^{\pm 1/2}]$-subalgebra of the skew field $\mathcal{F}(\mathcal{T}(\Lambda))$ generated by the quantum cluster variables.
\end{de}

In the following, when dealing with quantum cluster algebras, we will still make use of the notation and terminology of subsection \ref{subsect_green_red_vertices}. 

The quantum cluster algebra $\mathcal{A}(Q,\Lambda)$ can be thought of as a non-commutative deformation of the cluster algebra $\mathcal{A}(Q)$. Consider the evaluation morphism
\[\mathrm{ev}_{t=1}: \mathcal{T}(\Lambda) \rightarrow \mathbb{Z}[(x^{\pm 1}_i)_{i\in J}], \ X_i \mapsto x_i,\ t^{1/2}\mapsto 1.\]
As shown in \cite[Lemma~3.3]{GLS_quant_clust_alg_spec}, it restricts to an epimorphism of $\mathbb{Z}$-algebras 
\[ \mathrm{ev}_{t=1}: \mathcal{A}(Q,\Lambda) \rightarrow \mathcal{A}(Q).\]
Moreover, under this surjection, the set of quantum cluster variables of $\mathcal{A}(Q,\Lambda)$ and the set of cluster variables of $\mathcal{A}(Q)$ are in bijection (see \cite[Lemma~A.4]{HFOO_iso_quant_groth_ring_clust_alg} and \cite[Thm.~6.1]{Berenstein_Zelevinsky_quant_clust_alg}).

\section*{Acknowledgement}
This article is part of the author's PhD thesis. The author is grateful to his supervisor Bernhard Keller for his guidance. Moreover, he is indebted to Ryo Fujita, Mikhail Gorsky, David Hernandez, Geoffrey Janssens and Bernand Leclerc for discussions and useful comments.


\providecommand{\bysame}{\leavevmode\hbox to3em{\hrulefill}\thinspace}
\providecommand{\MR}{\relax\ifhmode\unskip\space\fi MR }
\providecommand{\MRhref}[2]{%
  \href{http://www.ams.org/mathscinet-getitem?mr=#1}{#2}
}
\providecommand{\href}[2]{#2}

\end{document}